\def\sqz{\kern -0.2em}
\newtheorem{prop}{Proposition}
\theoremstyle{definition}
\newtheorem{model}{Model}
\newcommand{\abs}[1]{\left\vert#1\right\vert}
\renewcommand{\hat}{\widehat}
\renewcommand{\tilde}{\widetilde}
\newcommand{\T}{T\wedge \gamma_n}
\newcommand{\N}{\mathbb{N}}
\def\beq{ \begin{equation} }
	\def\eeq{ \end{equation} }
\def\mn{\medskip\noindent}
\def\EE{\hat{\E}}
\def\S{\overline{S}}
\def\X{\overline{X}}
\def\I{\overline{I}}
\def\M{\overline{M}}
\def\s{\overline{s}}
\def\x{\overline{x}}
\def\z{z\wedge \gamma_n}
\def\t{t\wedge \gamma_n}
\def\square{\vcenter{\vbox{\hrule height .4pt
			\hbox{\vrule width .4pt height 5pt \kern 5pt
				\vrule width .4pt} \hrule height .4pt}}}
\def\CP{\xrightarrow{\P}}
\def\ep{\epsilon}
\def\D{\mathbb{D}}
\def\BD{\mathbf{D_n}}
\def\var{\hbox{Var}\,}
\def\ER{Erd\H{o}s-R\'enyi}
\def\E{\mathbb{E}}
\def\P{\mathbb{P}}
\def\CM{\mathbb{CM}}
\definecolor{darkblue}{rgb}{0,0,0.6}
\def\clearp{\clearpage}
\def\clearp{}
\numberwithin{equation}{subsection}
\numberwithin{theorem}{section}
\begin{document}
	
	
	
	\section{Introduction}

	In the SIR model, individuals are in one of  three states: $S=$ susceptible, $I=$ infected, $R=$ removed (cannot be infected). Often this epidemic takes place in a homogeneously mixing population. However, here, we have a graph $G$ that gives the social structure of the population; vertices represent individuals and edges connections between them. $S-I$ edges become $I-I$ at rate $\lambda$, i.e., after a time with an exponential($\lambda$) distribution.  An individual remains infected for an amount of time $T$ which can be either deterministic or random with a pre-specified distribution. Once individuals leave the infected state, they enter the removed state. In addition, we will allow the graph to evolve: $S-I$ edges are broken at rate $\rho$ and the susceptible individual connects to an individual chosen uniformly at random from the graph. This process is called evoSIR where `evo' stands for `evolving'. We will also consider the simpler $SI$ epidemic in which infecteds never recover, and its evolving version $evoSI$, as well as some other variations on this theme. 
	
	We will use the term \emph{final epidemic size} or \emph{final size of the epidemic} to refer to the number of vertices that are eventually removed in SIR epidemics or eventually infected in SI epidemics.  We say   a large epidemic (or large outbreak) occurs if the epidemic infects more than $\ep n$ individuals ($n$ is the size of the total population) for some $\ep>0$ independent of $n$. 
	The critical value is the smallest infection rate such that a large outbreak occurs with probability bounded away from 0 as $n\to\infty$. 
	
	As the main contribution of this paper, we show that, if the underlying graph $G$ is sampled from the configuration model $\CM(n,D)$ (defined in Model \ref{config} in Section \ref{britton}), then there is an explicit quantity $\Delta$ given in \eqref{deldef} (determined by the first three moments of $D$) such that the following holds for the evoSI model on $G$:
	\begin{itemize}
		\item If $\Delta>0$, then the fraction of infected vertices (conditionally on a large outbreak) doesn't converge to 0 as $\lambda$ approaches the critical value $\lambda_c$. In other words, there is a discontinuous phase transition.
		\item If $\Delta<0$, then the phase transition is continuous for $\lambda$ near $\lambda_c$.
	\end{itemize}
	See Theorem \ref{Q1new} for a precise statement. 
	
	The introduction is organized into seven subsections. Sections \ref{sec;DOM}--\ref{sec:BB} are devoted to a review of  previous work. The main results of the paper (Theorem
	\ref{critical_value} and Theorem \ref{Q1new}) are  presented in Section \ref{sec:state}. Sections \ref{pfth15}--\ref{pfth18} sketch the proof of Theorem \ref{Q1new}. 
	
	\subsection{DOMath \cite{DOMath}}\label{sec;DOM}
	evoSIR was stiudied by three Duke undergraduates in the summer and fall of 2018 (Yufeng Jiang, Remy Kassem, and Grayson York) under the direction of  Matthew Junge and Rick Durrett. They considered two possibilities for the infection time: the Markovian case in which infections last for an exponential time with mean 1, and the case in which each infection lasts for exactly time 1. Here we restrict out attention to their results for the second case, which is simpler due to its connection with independent bond percolation. In any SIR model each edge will be $S-I$ (or $I-S$) only once. When that happens, in the fixed infection time case without rewiring, the infection will be transferred 
	to the other end with probability 
	\beq
	\tau^f = \P(T \le 1 ) = 1-e^{-\lambda},
	\label{tauf}
	\eeq
	and the transfers for different edges are independent. Here the `$f$' in the superscript is for ``fixed time." Due to the last observation, we can delete edges with probability $e^{-\lambda}$ and the connected components of the resulting graph will give the epidemic sizes when one member of the cluster is infected.  
	
	In \cite{DOMath} $G$ was an \ER($n,\mu/n$) random graph in which there are $n$ vertices and each pair is independently connected with probability $\mu/n$. The following result is well-known.
	
	\begin{theorem}\label{thin}
		Consider the SIR process
		on \ER($n,\mu/n$) with fixed infection time. The reduced graph after deletion of edges as described above is \ER$(n,\mu\tau^f/n)$. So, if we start with one infected and the rest of the population susceptible, a large outbreak occurs with positive probability if and only if $\mu\tau^f > 1$. If $z_0$ is the fixed point smaller than $1$ of the generating function 
		\beq
		G_0(z) = \exp(-\mu\tau^f (1-z)), 
		\label{ftgf}
		\eeq
		then $1-z_0$ gives both the limiting probability that an infected individual will start a large epidemic, and the fraction of individuals who will become infected when a large epidemic occurs.
	\end{theorem}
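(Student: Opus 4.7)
The plan is to reduce the problem to the classical giant component analysis of Erd\H os-R\'enyi graphs by exploiting the independence of edge transmissions that is special to the fixed infection time case.

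First I would establish the edge-independence reduction carefully. Each edge of $G$ is of type $S\text{-}I$ during at most one interval of time, namely the interval between when its first endpoint becomes infected and when either (a) the other endpoint is infected by someone else and the edge becomes $I\text{-}I$, or (b) the first endpoint recovers at time $T=1$ after its own infection. During that window the transmission clock rings at rate $\lambda$, so conditional on the window having length $T=1$, transmission across the edge occurs with probability $\tau^f = 1-e^{-\lambda}$, and these events are independent across distinct edges because each edge owns its own exponential clock. Declare an edge \emph{open} if it would transmit should it ever become $S\text{-}I$. Then the set of vertices eventually infected (i.e., removed), starting from a single infected vertex $v$, is exactly the connected component of $v$ in the subgraph $G'$ consisting of open edges. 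The key fact is that $G'$ can be sampled \emph{a priori}, before running the epidemic, by independently retaining each edge of $G$ with probability $\tau^f$.

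The second step is the distributional identification. Since $G \sim$ \ER$(n,\mu/n)$ and we retain each edge independently with probability $\tau^f$, the thinned graph $G'$ is \ER$(n,\mu\tau^f/n)$ because the product of two independent Bernoulli trials with parameters $\mu/n$ and $\tau^f$ is again Bernoulli$(\mu\tau^f/n)$, and the edges remain mutually independent.

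The third step invokes the classical result on the giant component of \ER$(n,c/n)$ with $c=\mu\tau^f$. The exploration of a component from a random vertex is well approximated (for small generations) by a Galton--Watson branching process whose offspring distribution is Poisson$(c)$, with probability generating function $G_0(z)=\exp(-c(1-z))$. The extinction probability of this branching process is the unique fixed point $z_0\in[0,1)$ of $G_0$ when $c>1$, and equals $1$ otherwise. Standard results (coupling of exploration to the branching process, followed by a sprinkling/second-moment argument for the giant component) give that for $c>1$ the largest component has size $(1-z_0)n+o_p(n)$, all other components are of size $O(\log n)$, and the probability that a uniformly chosen vertex lies in the giant component converges to $1-z_0$. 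For $c\le 1$ the largest component has size $O(\log n)$, so no large outbreak can occur.

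Putting these together, a large outbreak starting from a single seed occurs with asymptotic probability $1-z_0$, the probability that the seed lies in the giant component of $G'$; and conditional on a large outbreak, the final epidemic size equals the size of the giant component, which is $(1-z_0)n+o_p(n)$. The only nontrivial work is the giant component statement in step three, but since it is the standard Erd\H os--R\'enyi result this is what justifies the theorem being labeled ``well-known''; the novel content of the theorem is simply the reduction in step one and the bookkeeping that identifies $\tau^f$ with $1-e^{-\lambda}$.
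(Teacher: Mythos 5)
Your proposal is correct and follows essentially the same route as the paper: the paper's discussion preceding the theorem performs exactly your step-one reduction (each edge is $S$-$I$ at most once, transmission occurs independently with probability $\tau^f=1-e^{-\lambda}$, so one may delete edges with probability $e^{-\lambda}$ in advance), after which the statement is the classical \ER\ giant-component result with $c=\mu\tau^f$, which the paper invokes as well-known rather than reproving.
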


	Things become more complicated when we introduce rewiring of $S-I$ edges at rate $\rho$. To be able to use the ideas in the proof of Theorem \ref{thin}, \cite{DOMath} introduced the delSIR model in which edges  are deleted instead of rewired. In the evo (or del) version of the model, in order for the infection to be transmitted along an edge, infection must come  before any rewiring (or deletion) and before time 1. To compute this probability, note that (i) the probability that infection occurs before rewiring is $\lambda/(\lambda+\rho)$ and (ii) the minimum of two independent exponentials with rates $\lambda$ and $\rho$ is an exponential with rate $\lambda+\rho$, so the transmission probability is
	\beq
	\tau^f_r = \frac{\lambda}{\lambda+\rho}(1-e^{-(\lambda+\rho)}).
	\label{taufr}
	\eeq
	Here the `$r$' subscript is for ``rewire."  By a standard coupling argument one can show that evoSI dominates delSI.
	
	\begin{lemma} \label{couple}
		For fixed parameters, there exists a coupling of evoSI and delSI so that there are no fewer infections in the delSI model than in evoSI. Same is true if we replace SI by SIR.  
	\end{lemma}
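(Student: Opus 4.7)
The plan is to construct an explicit coupling of the two processes on a common probability space. Both models start with the same initial graph $G$ and the same initially infected set. For each edge $e$ of $G$, I attach two independent exponential clocks: $\sigma_e$ at rate $\lambda$ for transmission and $\tau_e$ at rate $\rho$ for the break event. These clocks are shared between the two models, so that both evolve in lockstep on the original edges. When a break clock $\tau_e$ fires on a current $S$-$I$ edge, delSI deletes the edge, while evoSI rewires the susceptible endpoint to a uniformly random vertex. Any new edge created in evoSI through rewiring receives fresh, independent clocks.

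I would then prove by induction on the ordered sequence of events that at every time $t$ the set of vertices infected in delSI contains the set infected in evoSI. For events firing on original edges of $G$, the invariant is preserved automatically, since both models perform identical transitions under the shared clocks: a transmission on an $S$-$I$ edge infects the susceptible endpoint in both, and the race between $\sigma_e$ and $\tau_e$ on such an edge has the same outcome in both. The hard part will be handling transmissions in evoSI that occur through newly created (rewired) edges, since these have no counterpart in delSI. To control them, one argues that whenever a rewired edge conveys infection in evoSI at some time $t$, the newly infected vertex is already infected in delSI at or before $t$ via a path in the original graph enabled by the shared clocks; the uniform choice of rewiring target, together with the fact that delSI retains all non-broken original edges, is the key input, and the combinatorial bookkeeping required for this correspondence is the core technical step.

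For the SIR extension, the same coupling is augmented with shared recovery clocks for each vertex (either deterministic of length $1$, corresponding to the transmission probability $\tau_r^f$ in (\ref{taufr}), or exponential in the Markovian variant). Because all three races—transmission, breakage, and recovery—on shared edges are coupled identically between the two models, the inductive argument extends without essential change, with the main obstacle of rewired edges handled exactly as in the SI case.
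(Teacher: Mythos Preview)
You have the inequality backwards. The lemma as printed contains an unfortunate phrasing, but the direction that is actually true (and the one the paper proves in Section~2.1) is that every vertex infected in delSI is also infected in evoSI, i.e.\ evoSI dominates delSI. This is the direction needed downstream: it gives $\lambda_c(\text{evoSI})\le\lambda_c(\text{delSI})$, which is how the paper uses Lemma~\ref{couple} right after stating it.

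Your key step, ``whenever a rewired edge conveys infection in evoSI, the newly infected vertex is already infected in delSI via a path in the original graph enabled by the shared clocks,'' is simply false and cannot be repaired. A single counterexample: take $G$ with two components, the initially infected vertex in one of them. In delSI no vertex in the other component is ever infected. In evoSI a susceptible neighbour of an infected vertex may rewire into the second component, get infected through a different edge, and then transmit along the rewired edge into that component. There is no path in the original graph to appeal to.

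The paper's argument goes in the correct direction and is short: couple the two processes using the \emph{same} clocks $T_{e,\ell},R_{e,\ell}$ on each edge, and prove by induction on the sequence of delSI infections that each vertex infected in delSI is infected no later in evoSI. The point is that delSI only ever uses the first activation $\ell=1$ of each original edge; if $T_{e,1}<R_{e,1}$ then the same transmission fires in evoSI (possibly earlier, since the infecting endpoint was infected earlier by induction). Rewired edges in evoSI can only add infections, which is harmless for this direction. The SIR case follows by adding shared recovery clocks, exactly as you suggested, but again with the inequality reversed.
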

	
	The next result, Theorem 1 in \cite{DOMath}, shows that  evoSIR has the same critical value as delSIR, and in the subcritical case the expected cluster sizes are the same.

	\begin{theorem}\label{ftcrit}
		The critical value $\lambda_c$ for a large epidemic in fixed infection time delSIR or evoSIR  epidemic with rewiring is given by the solution of $\mu\tau^f_r(\lambda)=1$. 
		Moreover, if $\lambda < \lambda_c$, then the ratio of the expected epidemic size in delSIR to the size in evoSIR converges to 1 as the number of vertices goes to $\infty$.
	\end{theorem}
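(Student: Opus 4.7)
The plan is to reduce delSIR to bond percolation on Erd\H{o}s--R\'enyi, then transfer the conclusion to evoSIR by combining Lemma \ref{couple} in one direction with a local coupling in the early phase in the other.

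For delSIR, I would attach to each edge $e$ of the underlying \ER$(n,\mu/n)$ graph independent exponentials $T_e\sim\exp(\lambda)$ and $D_e\sim\exp(\rho)$. Any given edge becomes $S$--$I$ at most once, and its fate depends only on these two clocks together with the deterministic infectious period $1$, so the indicators ``$e$ transmits whenever it becomes $S$--$I$'' are mutually independent Bernoullis of parameter $\tau^f_r$ from (\ref{taufr})---exactly the argument behind Theorem \ref{thin}. The reduced graph of transmissible edges is therefore \ER$(n,\mu\tau^f_r/n)$, and the classical Erd\H{o}s--R\'enyi threshold pins the delSIR critical value at $\mu\tau^f_r=1$, with survival probability and limiting final fraction $1-z_0$, where $z_0$ is the smaller fixed point of $G_0(z)=\exp(-\mu\tau^f_r(1-z))$.

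The inequality $\lambda_c^{\mathrm{evo}}\ge\lambda_c^{\mathrm{del}}$ is immediate from Lemma \ref{couple}: subcriticality passes from the dominating delSIR process down to evoSIR. For the reverse inequality, I would couple evoSIR and delSIR on the same underlying graph using shared infection and deletion/rewire clocks, and track both until the first time $\tau_A$ at which the ever-infected set reaches a slowly diverging size $A_n$ (say $A_n=\log\log n$). Prior to $\tau_A$ one has $|S|/n\ge 1-A_n/n$, so the probability that any of the $O(A_n)$ rewirings before $\tau_A$ targets a non-susceptible vertex is $O(A_n^2/n)=o(1)$. On the complementary good event, every rewiring produces an $S$--$S$ edge, dynamically identical to a deletion as far as the epidemic is concerned; hence evoSIR and delSIR coincide up to $\tau_A$, and $\P(\text{evoSIR reaches }A_n)=\P(\text{delSIR reaches }A_n)+o(1)$, which is positive when $\mu\tau^f_r>1$ by the branching-process survival of delSIR. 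A standard barely-supercritical branching argument (using that $|S|/n\to 1$ while the epidemic is of sublinear size) then pushes evoSIR from $A_n$ to $\ep n$ with probability bounded below.

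For the ratio claim in the subcritical regime, Lemma \ref{couple} gives $\E[\text{delSIR}]\ge\E[\text{evoSIR}]$, hence ratio $\ge 1$. To bound the excess, $\E[\text{delSIR}]=O(1)$ by standard subcritical percolation on \ER$(n,\mu\tau^f_r/n)$, so the expected total number of rewire/delete events is also $O(1)$, and each has probability $O(1/n)$ of picking a non-$S$ target and thereby causing the coupled processes to disagree. Thus the expected size difference is $O(1/n)=o(1)$, while the denominator stays bounded below by the initial infection, so the ratio tends to $1$. The main obstacle is the early-phase coupling in the previous paragraph: one must carefully rule out the rare rewirings hitting non-$S$ vertices (quantifying the error is delicate because rewirings are driven by the evolving $S$--$I$ edge set, not a fixed schedule), and then hand off the epidemic from the branching-process regime to the global regime once it exceeds $A_n$.
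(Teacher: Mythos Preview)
You have the direction of Lemma~\ref{couple} reversed. The statement as printed contains a typo; the proof in Section~\ref{sec:pfth2} (and the paper's own use of the lemma in the paragraph following Theorem~\ref{ftcrit} and again in the proof of Theorem~\ref{critical_value}) establishes that every vertex infected in delSI is also infected in evoSI, i.e.\ evoSIR dominates delSIR. This is also the intuitive direction: a rewired $S$--$I$ edge becomes an $S$--$S$ edge and can later turn $S$--$I$ again, giving evoSIR extra transmission opportunities that delSIR forfeits. Consequently Lemma~\ref{couple} yields $\lambda_c^{\mathrm{evo}}\le\lambda_c^{\mathrm{del}}$, and the inequality requiring work is the opposite one: when $\lambda<\lambda_c^{\mathrm{del}}$ one must show that evoSIR cannot have a large outbreak despite its extra edges. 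Your early-phase coupling is the right tool but deployed in the wrong regime; the paper (in outline here, and in full for the configuration-model analogue in Section~\ref{sec:pfth2}) runs delSIR to extinction, bounds the subcritical cluster size and hence the number of rewirings, and shows that with probability $1-o(1)$ no rewired edge lands on an already-infected vertex, so the two processes coincide. Your supercritical push from $A_n$ to $\ep n$ is unnecessary, since the domination already gives that direction for free.

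The same reversal breaks the ratio argument: the correct inequality from Lemma~\ref{couple} is $\E[\text{evoSIR}]\ge\E[\text{delSIR}]$, so the ratio is $\le 1$ and one must bound the excess $\E[\text{evoSIR}]-\E[\text{delSIR}]$ from above. Your line ``expected size difference is $O(1/n)$'' also has a gap independent of the orientation: from $\P(\text{coupling breaks})=O(1/n)$ alone you only get an $O(1)$ bound on the expected difference, since on the bad event the excess could a priori be of order $n$. One needs a further step---for instance, arguing that each rewire-to-$I$ event seeds at most another subcritical-sized epidemic, so the conditional excess given a break is $O(1)$ and the product is indeed $O(1/n)$.
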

	
	The formula for the critical value is easily seen to be correct for the delSIR since, by the reasoning above, there is a large epidemic if and only if the reduced graph in which edges are retained with probability $\tau^f_r$ has a giant component. From Lemma \ref{couple} one can see that the delSIR model has a larger ($\ge$) critical value than evoSIR. Thus, one only has to prove the reverse inequality. Intuitively, the equality of the two critical values holds because a subcritical delSIR epidemic dies out quickly, so it is unlikely that rewirings will influence the outcome.

	When $n$ is large, the degree distribution, which is Binomial($n-1,\mu/n$), is approximately Poisson with mean $\mu$. Due to Poisson thinning,  the number of new infections directly caused by  one $I$ in delSIR in an otherwise susceptible population is asymptotically Poisson with mean $\mu\tau^f_r$, and hence has limiting generating function of the distribution  Poisson($\mu\tau^f$)
	\beq
	G_1(z) = \exp(-\mu\tau^f_r (1-z)).
	\label{hatG}
	\eeq
	The following result, Theorem 2 in \cite{DOMath}, identifies the probability of a large outbreak.
	
	\begin{theorem} \label{ftperc}
		If $z_0<1$ is the fixed point of $G_1(z)$, then $1-z_0$ gives the probability of a large delSIR or evoSIR outbreak.
	\end{theorem}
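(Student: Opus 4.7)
The plan is to reduce both models to a Galton--Watson branching process with Poisson$(\mu\tau^f_r)$ offspring (whose survival probability is $1-z_0$), handling delSIR via the percolation viewpoint and evoSIR via a direct exploration coupling, with Lemma \ref{couple} supplying the upper bound.

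For delSIR, the reasoning leading to (\ref{taufr}) shows that each edge of $G$ independently decides whether transmission could occur, with probability $\tau^f_r$, so the delSIR epidemic starting from $v_0$ infects exactly the connected cluster of $v_0$ in the random subgraph retained by this independent thinning. Since $G$ is \ER$(n,\mu/n)$, the thinned graph is distributed as \ER$(n,\mu\tau^f_r/n)$, and in the supercritical regime $\mu\tau^f_r>1$ (equivalently $z_0<1$) the classical giant component theorem provides a unique giant of size $(1-z_0)n+o_P(n)$, with all other components of size $O(\log n)$. A fixed vertex $v_0$ lies in the giant with probability $1-z_0+o(1)$, which proves the theorem for delSIR.

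For evoSIR, Lemma \ref{couple} immediately gives $\P(\text{large evoSIR outbreak})\le \P(\text{large delSIR outbreak})=1-z_0+o(1)$. For the matching lower bound I would explore the evoSIR-infected cluster of $v_0$ by BFS. Up to any level $k_n$ with $k_n\to\infty$ and $k_n=o(\sqrt n)$, each freshly revealed vertex has degree Binomial$(n-O(k_n),\mu/n)$, which is asymptotically Poisson$(\mu)$; each potential $S$-$I$ edge independently transmits with probability $\tau^f_r$; and the probability that any of the $O(k_n)$ rewirings reconnects to an already-explored vertex is $O(k_n^2/n)=o(1)$. Hence the first $k_n$ steps of the evoSIR exploration agree with the Poisson$(\mu\tau^f_r)$ Galton--Watson process up to $o(1)$ error, and $\P(\text{evoSIR cluster reaches size } k_n)=1-z_0+o(1)$.

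The main obstacle is the final step, upgrading ``the cluster reaches size $k_n$'' to ``the cluster reaches size $\ep n$.'' In delSIR this is built into the giant-component dichotomy, but in evoSIR rewirings become non-negligible once the cluster is a positive fraction of $n$. I would proceed by continuing the exploration: as long as the infected/removed set has size at most $\delta n$ for small $\delta$, each rewiring still reconnects to an unexplored vertex with probability $1-O(\delta)$, so the per-vertex expected number of new infections remains $\mu\tau^f_r(1+O(\delta))>1$ in the supercritical regime. A standard drift or large-deviation argument then shows that conditional on reaching level $k_n$, the exploration reaches $\ep n$ with probability $1-o(1)$. Combined with the upper bound from Lemma \ref{couple}, this establishes $\P(\text{large evoSIR outbreak})\to 1-z_0$.
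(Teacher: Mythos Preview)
Your delSIR argument is correct and matches the paper's percolation viewpoint. For evoSIR, however, you have inverted the direction of Lemma~\ref{couple}. As actually proved in Section~\ref{sec:pfth2} (the lemma's wording in the introduction is unfortunately backwards) and as used throughout the paper, the coupling shows that every vertex infected in delSIR is also infected in evoSIR, so evoSIR \emph{dominates} delSIR. Hence the coupling yields
\[
\P(\text{large evoSIR outbreak}) \;\ge\; \P(\text{large delSIR outbreak}) \;=\; 1-z_0 + o(1),
\]
which is the lower bound, not the upper bound.

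This swap means your ``main obstacle''---pushing the evoSIR cluster from size $k_n$ to size $\epsilon n$---is no obstacle at all: it comes for free from the coupling, since delSIR already reaches $\epsilon n$ with probability $1-z_0+o(1)$ and evoSIR dominates it. The genuinely nontrivial direction is the \emph{upper} bound $\P(\text{large evoSIR outbreak})\le 1-z_0+o(1)$, and in fact your branching-process coupling already delivers it: once the evoSIR exploration agrees with the Poisson$(\mu\tau^f_r)$ Galton--Watson tree up to size $k_n$, the inclusion $\{\text{large outbreak}\}\subset\{\text{cluster reaches size }k_n\}$ bounds the probability by the survival probability $1-z_0+o(1)$. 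No drift argument past $k_n$ is needed for this direction. Your ingredients are all correct; they are just assigned to the wrong inequalities.

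The paper's own proof (for the configuration-model analogue, Theorem~\ref{critical_value}(ii), in Section~\ref{sec:pfth2}) is organized around exactly these two ingredients. It first shows that the delSI and evoSI explorations coincide whp for the first $\log n$ steps, because the $O(\log n)$ rewired edges are unlikely to land on the $O(n^{1/3})$ already-explored vertices; this yields the upper bound, since if the exploration has died out by step $\log n$ there is no large outbreak. For the lower bound, on the event that the exploration is still large at step $\log n$ the paper invokes the coupling with delSI, for which the large-outbreak conclusion is standard, to conclude the same for evoSI.
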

	
	\noindent
	In the case of the delSIR model, $1-z_0$ is the fraction of individuals infected in a large epidemic. It is easy to see that this proportion goes to 0
	at the critical value $\mu_c = 1/\tau_r ^f =1$. The next simulation suggests that this is not true in the case of evoSIR.

	\begin{figure}[h] 
		\centering
		\includegraphics[width=4in,keepaspectratio]{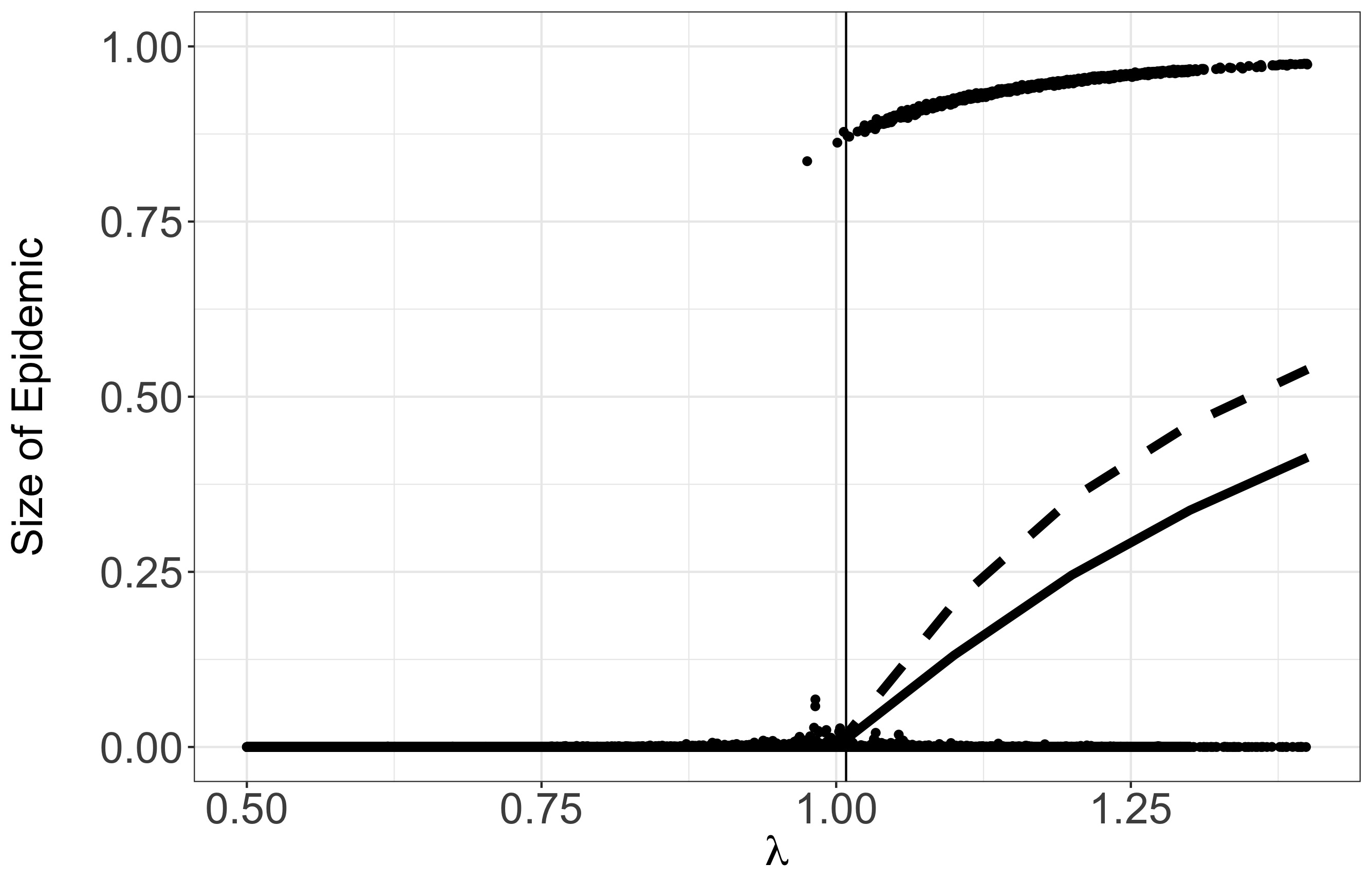}
		\caption{Simulation of the fixed time evoSIR on an \ER\, graph with $\mu=5$, $\rho=4$ and $\lambda$ varying.  $\lambda_c \approx 1.0084$ in agreement with Theorem \ref{ftcrit}. The bottom curve is the final size of the delSIR epidemic with the same parameters. The dashed line above it is an approximation derived in \cite{DOMath} that  turned out to be inaccurate. The top curve comes from simulating evoSIR.}
		\label{fig:disco}
	\end{figure}
	
	\subsection{Britton et al. \cite{BJS,LBSB} }\label{britton}

	As the authors of \cite{DOMath} were finishing up the writing of their paper, they learned of two papers by Britton and collaborators that study epidemics on evolving  graphs with  exponential infection times. \cite{BJS} studies a one parameter family of models (SIR-$\omega$) that interpolates between delSIR and evoSIR. To facilitate later referencing we attach labels to the next two descriptions.

	\begin{model}\label{siromega}
		{\bf SIR-$\omega$ epidemic.} In this model, an infected individual infects each neighbor at rate $\lambda$, and recovers at rate $\gamma$. A susceptible individual drops its connection to an infected individual at rate $\omega$. The edge is rewired with probability $\alpha$ and dropped with probability $1-\alpha$. Since evoSIR ($\alpha=1$) and delSIR ($\alpha=0$) have the same critical values and survival probability it follows that this holds for all $0 \le\alpha\le 1$ since by a coupling argument the final epidemic size of the case $0<\alpha<1$ can be sandwiched between delSIR and evoSIR.
	\end{model}
	
	These epidemics take place on graphs generated by 
	\begin{model} \label{config}
		{\bf Configuration model.} Given a nonnegative integer $n$ and a positive integer valued random variable $D$, take $n$ i.i.d.~copies $D_1,\ldots, D_n$ of $D$. If the sum  $\sum_{i=1}^n D_i$ is odd, then we replace $D_n$ by $D_n+1$. We then construct a graph $G$ on $n$ vertices as follows. We attach $D_1,\ldots, D_n$ half-edges to vertices $1, 2\ldots, n$, respectively and then pair these half-edges uniformly at random to form a graph. We call this random graph \textit{the configuration model} on $n$ vertices with degree distribution $D$ and denote it by $\CM(n,D)$.  We assume $D$ has finite second moment so that the resulting graph will be a simple graph with nonvanishing probability as $n\to\infty$. See Theorem 3.1.2 in \cite{RGD}. We refer readers to \cite[Chapter 7]{vdH1}, \cite[Chapters 4 and 7]{vdH2} and \cite[Chapter 2]{vdH3} for more details on the configuration model.
	\end{model}
	
	\begin{remark}\label{rem:config}
		Throughout the paper, unless otherwise specified,  we always consider the annealed probability measure with respect to the configuration model. In other words the randomness is taken over both the degrees $D_1, \ldots, D_n$ and the construction of $\CM(n,D)$ based on  the degrees.
	\end{remark}
	Britton, Juher, and Saldana \cite{BJS} studied the initial phase of the epidemic starting with one infected at vertex $x$ (chosen uniformly at random from all vertices) using a  branching process approximation. Let $Z^n_m$ be the number of vertices at distance $m$ from $x$ in the graph $\CM(n,D)$. For any fixed $k\in \N$, $\{Z^n_m, 0\leq m\leq k\}$ converges to  the following two-phase branching process $\{Z_m,0\leq m\leq k\}$.
	The number of children in the first generation has the distribution $D$ while subsequent generations have the distribution $D^*-1$ where $D^*$ is the {\it size-biased} degree distribution 
	$$
	\P( D^* = j) = \frac{jp_j}{m_1}, \quad j\geq 0.
	$$
	Here $p_j=\P(D=j)$ and $m_1 = \E(D)$ is the mean of $D$. 
	Later we will also use $m_i:=\E(D^i)$ to denote the $i$-th moment of $D$ for $i\geq 1$. This  follows  from the construction of the configuration model: $x$ connects to other vertices with probability proportional to their degrees so individuals in generations $m \ge 1$ have the $D^*-1$ children instead of $D$.  The `-1' is because   one edge is used in making the connection from $x$.
	Before moving on to epidemics on the configuration model, we note that if
	\beq
	G(z) = \sum_{k=0}^\infty p_k z^k,
	\label{F_2}
	\eeq
	then the generating function of $D^*-1$ is
	\beq
	\hat{G}(z) = \sum_{j=1}^\infty \frac{jp_j}{m_1} z^{j-1} 
	= \frac{G'(z)}{G'(1)}.
	\label{G1f}
	\eeq
	
	In the SIR-$\omega$ model, the probability that an infection will cross an $S-I$ edge is 
	$$
	\tau=\frac{\lambda}{\lambda+\gamma + \omega}.
	$$ 
	Thus we get another two-phase branching process $\bar Z_m$ defined as follows. $\bar Z_0=1$, $\bar	Z_1=\mbox{Binomial}(D,\tau)$ and  future generations have offspring distrbution Binomial$(D^*-1,\tau)$. 
	One can see from this description that the limiting branching process $\bar Z_m$ will have positive survival probability if 
	$$
	1 < \tau \E(D^*-1) = \tau \left(\frac{m_2}{m_1} - 1 \right)= \frac{(m_2-m_1)\tau}{m_1}.
	$$
	Correspondingly, 
	there will be a large epidemic in the SIR-$\omega$ model if
	\beq
	R_0 = \frac{\lambda}{\lambda+\gamma + \omega} \cdot \frac{m_2-m_1}{m_1} > 1.
	\label{R0omega}
	\eeq
	This follows from results on percolation in random graphs (see \cite{Fou} and \cite{Jperc}). 
	Thus for fixed values of $\gamma$ and $\omega$
	\beq
	\lambda_c = (\gamma+\omega) \frac{m_1}{m_2-2m_1}.
	\label{crvomega}
	\eeq
	When $\gamma=0$ and $\omega=\rho$ which is the SI-$\omega$ model in our notation,
	\beq
	\lambda_c = \frac{\rho m_1}{m_2-2m_1}.
	\label{crvSI}
	\eeq
	The critical values of delSI and evoSI only depends on the ratio $\rho/\lambda$, so it is natural to define a parameter  (this $\alpha$ is different from the $\alpha$ used in the definition of SIR-$\omega$ model)
	\begin{equation}\label{alpha}
		\alpha=\rho m_1/\lambda
	\end{equation} 
	that has $\alpha_c =m_2-2m_1$.
	We will only consider the del  and evo endpoints of the one parameter family of models SIR-$\omega$, so after the discussion of previous work is completed, there should be no confusion between our $\alpha$ and theirs.

	Work of Leung, Ball, Sirl, and Britton \cite{LBSB} demonstrated the paradoxical fact that individual preventative measures may lead to a larger final size of the epidemic. They proved this rigorously for SI epidemics on the configuration model with two degrees and conducted simulation studies for many social networks.  Note that in Figure \ref{fig2} (taken from Figure 1 of \cite{LBSB}) the final size increases with the rewirng rate when the rewiring rate is small. This simulation does not suggest that the phase transition was discontinuous.
	
	\begin{figure}[h!]
		\centering
		\includegraphics[width=4in,keepaspectratio]{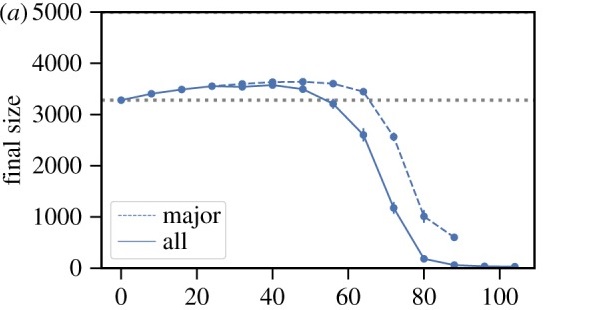}
		\caption{Social distancing can lead to an increase in the final epidemic size in the configuration model. The $x$-axis indicates the rewiring rate. The horizontal line is the final size when $\omega=0$.}
		\label{fig2}
	\end{figure}

	On the other  hand, Figure \ref{fig:disco2} from \cite{DOMath} gives a similar simulation that clearly shows the discontinuity. 

	\begin{figure}[h!] 
		\centering
		\includegraphics[width=4in,keepaspectratio]{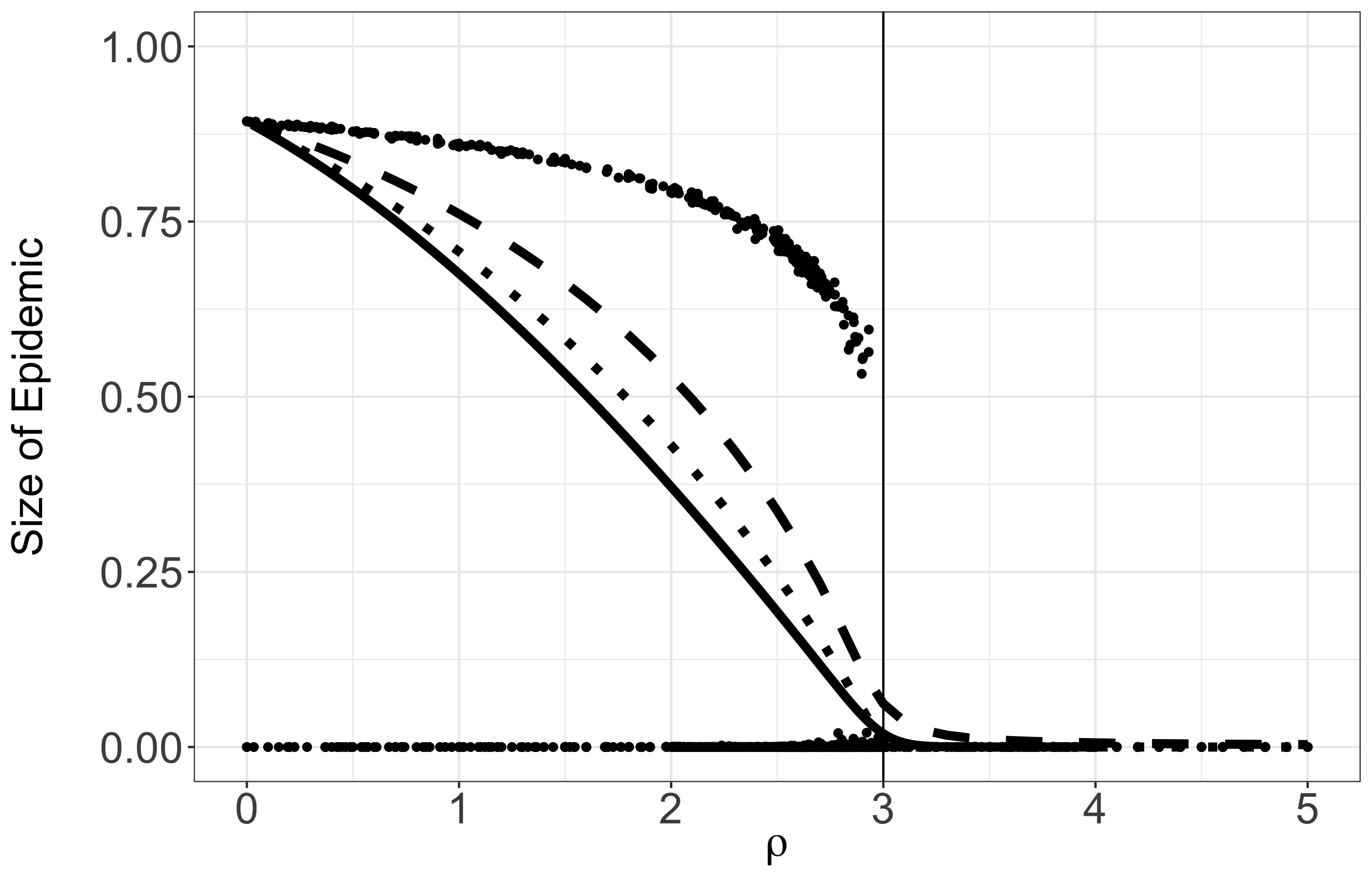}
		\caption{Simulation of a continuous time \ER \ graph with $\mu=5$, $\gamma=1$, $\lambda =1$ and $\rho$ varying. Note that $\rho_c \approx 3$, the value predicted by \eqref{crvomega}. The lowest line is the final size of delSIR which is continuous. The dotted and dashed lines are approximations derived in \cite{DOMath} that  turned out to be inaccurate. The top curve comes from simulating evoSIR.}
		\label{fig:disco2}
	\end{figure}

	\subsection{Ball and Britton \cite{BB}} \label{sec:BB}
	
	Ball and Britton \cite{BB} analyzed the evoSIR and evoSI epidemics on  \ER\ graphs. Their construction
	uses properties that are special to that case. See \cite[Section 2.3]{BB} for
	details and  also \cite{BrOn,Neal}
	for earlier examples of the use of this
	construction.  They solved the case of the SI-$\omega$ model on \ER\ graphs completely, but for the SIR-$\omega$ model there is a gap between the necessary and sufficient condition for a discontinuous phase transition.
	See \eqref{gap0} and the comment after it. 
	
	To prove results about the epidemics on an \ER \ random graph with mean degree $\mu$ they first consider a tree in which each vertex has a Poisson($\mu$) number of descendants and develop a branching process approximation for the SIR-$\omega$ epidemic in which infections (births) cross an edge with probability $\lambda/(\lambda+\gamma+\omega)$. Let $I(t)$ be the total number of infected individuals, $I_E(t)$ be the number of infectious edges, and $T(t)$ be the total progeny in the branching process on the tree. Let $I^n(t)$, $I^n_E(t)$ and $T^n(t)$ be the corresponding quantities for an \ER($n,\mu/n$) random graph on $n$ vertices where initially a randomly chosen vertex is infected. They show in their Theorem 2.1 that if $t_n = \inf\{ t : T(t) \ge \log n\}$ then the two systems can be defined on the same space so that
	$$
	\sup_{0 \le t \le t_n} | ( I^n(t), I^n_E(t), T^n(t)) -   ( I(t), I_E(t), T(t)) |
	\CP 0
	$$
	as $n\to\infty$.
	
	Let $S^n(t)$ be the number of susceptibles at time $t$ and 
	$W^n(t)$ be the number of susceptible-susceptible edges created by rewiring by time $t$ and let $X^n(t) = ( S^n(t), I^n(t), I^n_E(t), W^n(t))$. Let $x(t) =(s(t), i(t), i_E(t),w(t))$ be the solution of the ODE
	\begin{align}
		\frac{ds}{dt} & = -\lambda i_E,
		\nonumber \\
		\frac{di}{dt} & = - \gamma i + \lambda i_E,
		\label{BBode} \\
		\frac{di_E}{dt} & = -\lambda i_E + \lambda\mu i_E s - \lambda \frac{i_E^2}{2}
		+ 2\lambda i_E \frac{w}{s} -\omega i_E(1-\alpha + \alpha (1-i)),
		\nonumber\\
		\frac{dw}{dt} & = w\alpha i_E s -  2\lambda i_E \frac{w}{s}.
		\nonumber
	\end{align}
	
	\noindent
	Here $\alpha$ is the probability that an edge is rewired as in the definition of SIR-$\omega$ model. 
	Theorem 2.2 in \cite{BB}, which is proved  using results of Darling and Norris \cite{DarNor}, shows that for any $t_0>0$
	$$
	\sup_{0 \le t \le t_0} | X^n(t)/n - x(t) | \CP 0
	$$
	as $n\to\infty$, provided that $I^n(t)/n\to i(0)>0$. It is interesting to note, see their Section 3, that the ODE in \eqref{BBode} is closely related to the ``pair approximation'' for SIR-$\omega$ model. 
	
	To explain the phrase in quotes, we note that ``mean-field equations'' come from pretending that the states of site are independent; the pair approximation from assuming it is a Markov chain. In practice, this approach means that probabilities involving three sites are reduced to probabilities involving 1 and 2 sites using a conditional independence property. For the details of the computation see Chapter 7 in \cite{AB}.

	Letting $T^{n} = n - S^n(\infty)$ (which is the final size of the epidemic) they make Conjecture 2.1 that one can interchange two limits $n\to\infty$ and $t\to\infty$ to conclude
	$$
	T^n/n \to 1 - s(\infty).
	$$
	To formulate a result that is independent of the validity of the conjecture they 
	let $$x^\ep(t) = (s^\ep(t),i^\ep(t),i_E(t),w^\ep(t))$$ be the solution to the ODE 
	when 
	$$
	x^\ep(0) = (1-\ep,\ep,L^{-1}\ep,0) \quad\hbox{where}\quad
	L = \frac{\lambda}{\lambda(\mu-1) - \omega}.
	$$
	Letting $\tau_{SIR} = 1-\lim_{\ep\downarrow 0} s^\ep(\infty)$, their Theorem 2.3 states that
	\beq
	\lim_{\lambda \downarrow \lambda_c} \tau_{SIR}
	\begin{cases} = 0 
		&\hbox{ if $\gamma > \omega(2\alpha-1)$ 
			or $\mu < 2\omega\alpha/[\omega(2\alpha-1) - \gamma]$,}\\
		> 0 
		&\hbox{ if $\gamma < \omega(2\alpha-1)$ 
			and $\mu > 2\omega\alpha/[\omega(2\alpha-1) - \gamma]$.}
	\end{cases}
	\label{BBSIR}
	\eeq
	When $\alpha=1$, $\omega=\rho$ and $\gamma=1$ then we have a discontinuous phase transition if 
	$$
	\rho>1 \quad\hbox{and} \quad \mu > \frac{2\rho}{\rho-1}.
	$$

	For the case of the delSIR model ($\alpha=0$, $\omega=\rho$ and $\gamma=1$) studied in \cite{DOMath}, the transition is always continuous. This follows from Theorem 2.3 since  $1 > -\rho$. From the last calculation we see that the phase transition is always continuous if $\alpha < 1/2$.
	
	In the case of the SI-$\omega$ model they show (see Theorem 2.6 of \cite{BB}) that if $\mu > 1$ and  $\omega$ and $\alpha$ are held fixed then the phase transition is discontinuous if and only if $\alpha > 1/3$ and $\mu > 3\alpha/(3\alpha-1)$. When $\alpha=1$ this is $\mu > 3/2$ which is the condition in Example \ref{PoiSI} in Section \ref{sec:state}.
	
	Theorem 2.4 in \cite{BB} gives results for the epidemic starting from a single infected individual. If we let $\tau^1_{SIR}$  be the limiting fraction of final epidemic size conditionally on a large outbreak then
	\begin{equation}\label{gap0}
		\lim_{\lambda \downarrow \lambda_c} \tau^1_{SIR}
		\begin{cases} = 0 
			&\hbox{ if $\gamma > \omega(3\alpha-1)$ 
				or $\mu < 3\omega\alpha/[\omega(3\alpha-1) - \gamma]$,}\\
			> 0 
			&\hbox{ if $\gamma < \omega(2\alpha-1)$ 
				and $\mu > 2\omega\alpha/[\omega(2\alpha-1) - \gamma]$.}
		\end{cases}
	\end{equation}
	See their paper for a precise statement. Remark 2.4 in \cite{BB} states the conjecture that the 3's in the first condition should be 2's.

	For SI-$\omega$ model they compute the fraction of final epidemic size $\tau^1_{SI}$
	$$
	\lim_{\lambda\downarrow \lambda_c} \tau^1_{SI} := \tau_0(\mu,\alpha).
	$$
	To give the value of  $\tau_0(\mu,\alpha)$, we need some notations. For $\mu>1$ and $\alpha\in[0,1]$ let
	$$
	\theta(\mu,\alpha) = \frac{2\alpha(\mu-1)}{\mu + \alpha(\mu-1)}\mbox{ and }
	f_0(x) = \log(1-x) + \frac{x}{1-\theta(\mu,\alpha)},
	$$
	then
	$\tau_0(\mu,\alpha)$ is the largest solution in $[0,1)$ of $f_0(x)=0$. 
	
	Ball and Britton also made connections of their paper with our paper (as well as an earlier version of this paper) in \cite[Section 4]{BB}. In particular, they showed in their Figure 5 that 
	in the case $\mu=2$  their predicted final size $\tau_0(2,1)$  agrees with simulation results well.

	\subsection{Statement of our main results}\label{sec:state}
	
	From now on the reader can forget about the meaning of notations used by Ball and Britton. We fix $\rho$, the rewiring rate, and vary $\lambda$. We let $\alpha=\rho m_1/\lambda$.  In view of the definition of
	$\Delta$ in \eqref{deldef}, the natural assumption is $\E(D^3) < \infty$. Some of our
	results can be proved under this assumption, while some need something a
	little stronger. Specifically, we need finite fifth moment to prove 
	\eqref{delta>0new}.
	To simplify things \textbf{we assume  $\E(D^5) < \infty$ throughout.}
	
	\begin{theorem}\label{critical_value}
		Assume $\E(D^5)<\infty$.
		Consider the delSI model and evoSI model on $\CM(n,D)$.
		(i) the critical values of delSI and evoSI are the same, i.e., 
		\begin{equation*}
			\alpha_c=m_2-2m_1; \mbox{ equivalently, }\lambda_c=\frac{\rho m_1}{m_2-2m_1}. 
		\end{equation*} (ii) When $\alpha < \alpha_c$, which is the supercritical case, the probability of a large epidemic is the same in the two models, which is equal to the survival probability $q(\lambda)$ of the two-stage branching process $\bar Z_m$ defined in Section \ref{britton} (with the $\tau$ there equal to $\lambda/(\lambda+\rho)$ in our notation).
	\end{theorem}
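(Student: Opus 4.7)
For part (i), I would first settle the critical value for delSI via a bond-percolation reduction. In delSI each $S$--$I$ edge races transmission (rate $\lambda$) against deletion (rate $\rho$), the races on distinct edges are independent (by memorylessness, once both endpoints' states are revealed), and SI has no recovery, so each edge independently transmits with probability $\tau=\lambda/(\lambda+\rho)$. The set of eventually infected vertices is therefore the component of the seed in the bond percolation on $\CM(n,D)$ with retention probability $\tau$. Standard percolation on configuration models (\cite{Fou,Jperc}) gives a giant component if and only if $\tau(m_2-m_1)/m_1>1$, which rearranges to $\alpha<m_2-2m_1$, so $\alpha_c^{\mathrm{del}}=m_2-2m_1$. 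Exploring the cluster of a uniformly chosen vertex in the local limit produces precisely the two-stage branching process $\bar Z_m$ of Section \ref{britton}, whose survival probability is $q(\lambda)$.

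For evoSI, Lemma \ref{couple} supplies one direction of the critical-value comparison for free. The other direction---that when $\alpha>\alpha_c$ the evoSI epidemic started from a single infective dies out w.h.p.---is the heart of the argument, and I would prove it by running the Janson--Luczak--Windridge half-edge exploration in parallel for the two models. Free infected half-edges pair at rate $\lambda$ and detach at rate $\rho$; in delSI a detached half-edge is discarded, whereas in evoSI it reattaches to a uniformly random vertex and thereby creates a new $S$--$I$ edge only when that vertex is currently infected, an event of probability $I/n$. In the subcritical regime the delSI exploration is dominated by a subcritical branching process and has total progeny $O(\log n)$ w.h.p.; the same bound applies to the number of rewirings in the coupled evoSI exploration. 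A union bound gives that no rewiring hits an infective with probability $1-O((\log n)^2/n)\to 1$, the two explorations coincide on this event, and so evoSI is subcritical as well.

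For part (ii), take $\alpha<\alpha_c$ and run either exploration from a single infective up to the first time either extinction occurs or $(\log n)^2$ vertices are infected. The same rewiring estimate shows that throughout this initial phase delSI, evoSI and the branching process $\bar Z_m$ are coupled with probability $1-o(1)$, so the initial phase dies out with probability $1-q(\lambda)+o(1)$ in both models. On the survival event the exploration has already entered the fluid-limit regime, and a standard continuation argument---for delSI the classical component analysis of the percolated configuration model, and for evoSI the ODE/branching-process sprinkling argument adapted from \cite{JLW} and the later sections of the present paper---shows that the cluster grows to a linear fraction of $n$ in both models. Combining the two phases, the probability of a large outbreak in each model is $q(\lambda)+o(1)$.

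The principal technical obstacle is making the rewiring contribution uniformly negligible while handing off from the branching-process phase to the macroscopic phase. Keeping the exploration truncated at polylogarithmic size makes the rewire-hits-an-infective bound routine, but one must then argue that macroscopic survival in evoSI follows from macroscopic survival of the branching process without losing track of the growing number of rewirings, which is what forces the detailed fluid/ODE analysis of evoSI that occupies the paper's later sections.
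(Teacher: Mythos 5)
Your overall strategy for part (i) is the same as the paper's: identify delSI with bond percolation at retention probability $\lambda/(\lambda+\rho)$, use Lemma \ref{couple} for one inequality, and for the other show that whp no rewired edge ever lands on an infected (or active) vertex, so that evoSI and delSI produce the same infected set in the subcritical regime. One caution on the quantitative step: your claim that the subcritical delSI exploration has total progeny $O(\log n)$ whp is the kind of statement that needs exponential tails on the offspring law, which are \emph{not} available here (only $\E(D^5)<\infty$ is assumed; the paper explicitly notes this is what changes the estimates relative to \cite{DOMath}). The paper instead invokes Janson's subcritical largest-component bound (Theorem \ref{Janbd}, from \cite{Jmax}) to get a whp bound of order $n^{1/3}$ on the number of exploration steps, controls the number of deletions and the size of the active-plus-removed set by Chebyshev at scale $n^{a}$ with $1/3<a<1/2$, and then the hit probability is $O(n^{2a-1})\to0$; the coupling with the limiting random walks (Lemma \ref{l:atst}) is also only valid up to $n^{1/3}\log n$ steps, so you cannot simply run a branching-process domination to arbitrary sizes. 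Your union bound survives with the weaker polynomial bounds, but as written the $O(\log n)$ step is not justified under the paper's hypotheses.

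For part (ii) there is a genuine gap in your continuation step. You propose to prove macroscopic survival of evoSI, conditionally on a good start, by an ``ODE/branching-process sprinkling argument adapted from \cite{JLW} and the later sections of the present paper.'' But the later sections give a fluid limit only for avoSI and bounds via AB-avoSI; there is no fluid-limit or ODE description of evoSI itself in this paper (that difficulty is exactly why those bounding processes are introduced), so the tool you cite does not exist for evoSI. The paper avoids the issue entirely with the monotone coupling: letting $B_d\subseteq B_e$ be the large-outbreak events, it decomposes on the number of active vertices at step $\log n$, shows the intermediate event $\{0<A_{\log n}<\gamma\log n\}$ has probability $O(1/\log n)$, quotes the argument of \cite[Theorem 2.9(c)]{JLW} to get $\P(B_d\mid A_{\log n}\ge\gamma\log n)\to1$, and then $B_d\subseteq B_e$ hands the macroscopic lower bound to evoSI for free, while the initial-phase coupling (valid up to $Cn^{1/3}$ steps, hence certainly up to step $\log n$) gives the matching upper bound and the identification of the limit as $q(\lambda)$ via Lemma \ref{l:atst}. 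Replacing your evoSI continuation with this domination argument repairs the proposal; without it, the hand-off you flag as the principal obstacle is not actually closed.
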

	
	\noindent
	The proof of Theorem \ref{critical_value} that we give in Section 2 is very similar to one for Theorem \ref{ftcrit} given in \cite{DOMath} for \ER \ random graphs. Here the fact that we have only  $\E(D^5)<\infty$ rather than exponential upper bounds on $\P(D\ge k)$ changes some of the estimates.
	
	Here and in what follows, formulas are sometimes easier to evaluate if we use the ``factorial moments'' $\mu_k = \E[D(D-1)\cdots (D-k+1)]$, since these can be computed  from the $k$-th derivative of the generating function. To translate between the two notations:
	$$
	\mu_1=m_1, \qquad \mu_2=m_2-m_1, \qquad \mu_3 = m_3 -3m_2 + 2m_1.
	$$
	In particular $\alpha_c = m_2-2m_1 = \mu_2 - \mu_1$.
	
	\medskip
	Our next result gives an almost sufficient and necessary condition for the discontinuous phase transition of evoSI.  We use the word `almost' since the case $\Delta=0$ is not treated here. 
	\begin{theorem}\label{Q1new} 
		Assume $\E(D^5)<\infty$. Consider the evoSI epidemic on the configuration model $\CM(n,D)$  with one uniformly randomly chosen vertex initially infected. Let 
		\beq
		\Delta= -\frac{\mu_3}{\mu_1} + 3 (\mu_2-\mu_1).
		\label{deldef}
		\eeq
		Let $I_{\infty}$ be the final epidemic size. 
		If  $\Delta>0$, then there is a discontinuous phase transition. For some $\ep_0>0$ and some $\delta_0>0$, 
		\begin{equation}\label{delta>0new}
			\lim_{\eta\to 0} \liminf_{n\to\infty}\P_{1}(I_{\infty}/n>\ep_0|I_{\infty}/n>\eta)=1 \quad\hbox{for all $\alpha_c-\delta_0<\alpha< \alpha_c$.}
		\end{equation}	
		If $\Delta<0$, then there a continuous phase transition. For any $\ep>0$, there exists some $\delta>0$, so that
		\begin{equation}\label{delta<0new}
			\lim_{n\to\infty}\P_{1}(I_{\infty}/n>\ep)=0 \quad\hbox{for $\alpha_c-\delta<\alpha< \alpha_c$.}
		\end{equation}
	\end{theorem}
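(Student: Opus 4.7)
The plan is to exploit the stochastic sandwich indicated at the end of Section \ref{evosi}: evoSI is dominated above by the process $\mathrm{evoSI}^+$ of \cite{YD1} (in which infected half-edges rewire at rate $\rho$ even across $I$-$I$ edges), and below by a process $\mathrm{evoSI}^-$ obtained by restricting those extra rewirings in a way that produces no additional transmissions. Theorem 2 of \cite{YD1}, which holds as stated for $\mathrm{evoSI}^+$, gives a limiting final fraction
$$
\nu^+ = 1 - \exp\!\left(-\tfrac{\alpha}{2}(\sigma-1)^2\right)G(\sigma), \qquad \sigma = \sup\{w\in(0,1): f(w)=0\},
$$
and a parallel analysis of $\mathrm{evoSI}^-$ yields a limit $\nu^-$ whose governing fixed-point equation agrees with $f(w)=0$ through second order at $w=1$. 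Thus both $\nu^+$ and $\nu^-$ are controlled by the behaviour of $f$ near $w=1$ as $\alpha\uparrow\alpha_c$.

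The analytic core is a Taylor expansion of $f(1-u)$. With $\beta(\alpha):=(\alpha-\mu_2)/m_1$, expanding $G$ and $G'$ at $w=1$ through cubic order gives
$$
f(1-u) = -(1+\beta(\alpha))\,u + A(\alpha)\,u^2 + O(u^3), \qquad A(\alpha_c)=\Delta/2.
$$
At the critical value one has $\beta(\alpha_c)=-1$, so the linear term vanishes; for $\alpha<\alpha_c$ close to $\alpha_c$, $-(1+\beta(\alpha))=(\alpha_c-\alpha)/m_1$ is a small positive number. The largest zero $\sigma = 1-u_*$ of $f$ in $(0,1)$ is then determined by the balance between this small linear term and the quadratic $A(\alpha_c)=\Delta/2$.

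If $\Delta<0$, the two leading coefficients have opposite signs and $u_*\sim 2(\alpha_c-\alpha)/(m_1|\Delta|)\downarrow 0$ as $\alpha\uparrow\alpha_c$, so $\sigma\uparrow 1$ and $\nu^+\downarrow 0$. Assumption $(\star)$ of Theorem 2 in \cite{YD1} is automatic because the negative quadratic term drives $f$ strictly negative just to the left of $\sigma$. The upper coupling $I_{\infty}\le I_{\infty}^+$ then yields \eqref{delta<0new}. If $\Delta>0$, both leading coefficients are positive on a whole interval $u\in(0,\delta_0)$, uniformly for $\alpha\in(\alpha_c-\delta_0,\alpha_c)$; this forces the analogous $\tilde\sigma$ for $\mathrm{evoSI}^-$ to satisfy $\tilde\sigma\le 1-\delta_0$, and hence $\nu^-\ge \ep_0>0$. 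Combined with Theorem \ref{critical_value}(ii), which identifies $\{I_\infty/n>\eta\}$ asymptotically with survival of the branching process $\bar Z_m$ shared by evoSI and $\mathrm{evoSI}^-$, the lower coupling $I_\infty\ge I_\infty^-$ yields \eqref{delta>0new}.

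The main obstacle is the construction and analysis of $\mathrm{evoSI}^-$: one needs a lower bounding process whose scaling ODE has a fixed-point equation matching $f$ to second order at $w=1$. Naive candidates such as delSI modify the effective transmission probability and shift either $\alpha_c$ or $\Delta$, losing the matching critical behaviour. The natural fix is to modify the JLW half-edge construction only at $I$-$I$ pairings, leaving $S$-$I$ transmission and rewiring untouched, so that the bulk ODE recovers the same $f$ up to $O((1-w)^3)$ corrections that are too small to affect the sign of the leading $\Delta/2$ coefficient. Verifying assumption $(\star)$ uniformly in $\alpha$ near $\alpha_c$, and confirming that these lower-order corrections preserve the strict sign change of $f$ at $\sigma$, is the delicate step.
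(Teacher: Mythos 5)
Your handling of the continuous case ($\Delta<0$) is essentially the paper's argument: dominate evoSI by the process of \cite{YD1} (called avoSI here), apply the final-size result for that process (Theorem \ref{epsize}), and use $f'(1)\to 0$ together with $f''(1)\to\Delta<0$ to force $\sigma\to 1$ and $\nu\to 0$ as $\alpha\uparrow\alpha_c$; as you say, $(\star)$ is automatic in that regime. The expansion coefficients you write down agree with \eqref{fprop}.

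The gap is in the $\Delta>0$ half. You assume there is a lower bounding process $\mathrm{evoSI}^-$ whose hydrodynamic limit is governed by a fixed-point equation agreeing with $f(w)=0$ to second order at $w=1$, so that a limiting final size $\nu^-\ge\ep_0$ can be read off from a zero $\tilde\sigma\le 1-\delta_0$. No such closed description is available, and this is exactly the obstruction the paper has to work around. The lower bound actually used, AB-avoSI (Section \ref{sec:ABavoSI}), attaches to each half-edge an infection time-stamp $A$ and a rewiring time-stamp $B$ and blocks transmission when $A(i,t)\le B(j,t)$; this yields evoSI $\succeq$ AB-avoSI (Lemma \ref{abavosi<evosi}), but the evolution equation \eqref{newStk} then contains the term $\frac{1}{\hat X_{I,t}}\sum_{i,j}1_{G_{i,j}}1_{\{S(j,k+1,t)=1\}}$, which is not a function of the macroscopic variables, so the system does not close: there is no analogue of Theorem \ref{epsize} for the lower process, no $\tilde\sigma$, and no formula for $\nu^-$ (indeed the paper notes the avoSI and AB-avoSI expansions do not match, only the signs of the linear and quadratic terms do). Instead, the proof expands $\hat X_{I,t}$ to second order in $t$ near $t=0$ (Lemma \ref{lbstep2}), giving a lower bound of the form $(\frac{\rho m_1}{2\lambda_c^2}(\lambda-\lambda_c)t+\frac{m_1\Delta}{4}t^2-\ep)n-\int_0^t E(u)\,du$, and the bulk of the argument (Sections \ref{sec:moment}--\ref{sec:complete}) is devoted to showing the error $\int_0^t E(u)\,du$ is dominated by the $\Delta t^2$ term: decompose $E\le E_1+E_2$, bound $L(b,t)$ and $X(I,b,t)$ with $b$ chosen close to $1$, and run a stopping-time/contradiction argument to keep $\hat X_{I,t}>0$ up to a fixed time $t_1$, whence $\hat I_\infty\ge m_1t_1n/8$ and \eqref{delta>0AB} follows; the conditional statement \eqref{delta>0new} then uses Lemma \ref{avocrv}. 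Your proposal flags this as "the delicate step" but supplies neither the mechanism (the $A/B$ indices) that makes a valid lower bound possible nor any substitute for the error analysis, so the discontinuous case is not established.
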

	\noindent
	
	To see what this result says we consider some examples. 
	
	\begin{example}
		{\bf Random $r$-regular graph, $r\ge 3$.} Here $nr$ must be even.
		If we choose the degree distribution $\P(D=r)=1$, and condition the graph to be simple, i.e., no self-loops or parallel edges
		then the result is a random regular graph. The case $r=2$ is excluded because in that case the graph consists of a number of circles.
		The critical value is $\alpha_c = m_2-2m_1 = r^2 - 2r > 0$ when $r>2$. 
		For $k\leq r$, $$\mu_k = r(r-1)\cdots (r-k+1),$$ so 
		\begin{align*}
			\Delta & = -(r-1)(r-2) + 3(r(r-1) - r) \\
			& = -(r-1)(r-2) + 3r(r-2) =(r-2)(2r+1) >0,
		\end{align*}
		and the phase transition is discontinuous for all $r\ge 3$.
	\end{example}

	\begin{example}
		{\bf Geometric($p$).} The factorial moments are $\mu_1=1/p$, $\mu_2 = 2(1-p)/p^2$, and $\mu_3 = 6(1-p)^2/p^3$. $\alpha_c = \mu_2 - \mu_1 = (2-3p)/p^2$, so we need to take $p<2/3$ to have $\alpha_c>0$. 
		\begin{align*}
			\Delta & = - \frac{6(1-p)^2}{p^2} + 3 \left( \frac{2(1-p)}{p^2} - \frac{1}{p} \right) \\
			& = -\frac{6}{p^2} + \frac{12}{p} -6 + \frac{6}{p^2} - \frac{6}{p} - \frac{3}{p}  = \frac{3}{p} - 6,
		\end{align*}
		so the phase transition is discontinuous if $p<1/2$.
	\end{example}

	Our last example concerns the configuration model generated from Poisson distribution:
	
	\begin{example} \label{PoiSI}
		{\bf  Poisson($\mu$).}  The factorial moments  $\mu_k = \mu^k$, so the critical value $\alpha_c = \mu_2-\mu_1 = \mu^2-\mu$, which is positive if $\mu>1$. This condition is natural since if $\mu<1$ then there is no giant component in the graph and a large epidemic is impossible. 
		$$
		\Delta= -\mu^2 +3(\mu^2-\mu) = 2\mu^2-3\mu^2,
		$$
		so the phase transition for evoSI is discontinuous if $\mu>3/2$, which is the result given in \cite{BB}.
	\end{example}
	
	\begin{remark}
		We believe that the result of Example \ref{PoiSI} also holds for \ER($n,\mu/n$). To prove this rigorously,  one first has to prove a quenched version of Theorem \ref{Q1new} (i.e., showing that \eqref{delta>0new} and \eqref{delta<0new} hold with high probability over \emph{any} degree sequence $D_1,\ldots, D_n$ (that are \emph{not necessarily i.i.d.}) such that the $k$-th factorial moment of the empirical distribution converges to $\mu^k$ for any $k\geq 0$). We believe that this can be shown using the same ideas  in the proof of Theorem \ref{Q1new}.  Then one can transfer results for the configuration model  to \ER($n,\mu/n$) using
		\cite[Theorems 7.18 and 7.19]{vdH1}, which says that conditionally on having the same degrees, the random graphs generated from these two models have the same distribution.
	\end{remark}
	As a notational note, in this paper  we will use $C, C_1, C_2,\cdots$ to denote various constants whose specific values might change from line to line. Occasionally when we have an important constant we will number it by the formula it first appeared in, e.g., $C_{\ref{maxbd}}$ below.
	
	\subsection{Sketch of Proof of Theorem  \ref{Q1new} }\label{pfth15}
	The proof of Theorem \ref{Q1new} is done by constructing auxiliary models that are upper/lower bounds for evoSI.  
	We introduce a process which we call avoSI (avo is short for avoiding infection)   in Section \ref{sec:avoSI} and prove that the final set of infected sites in avoSI stochastically dominates evoSI. We also construct a lower bounding process which we call AB-avoSI in Section \ref{sec:ABavoSI}, where we prove that the final set of infected sites in evoSI stochastically dominates AB-avoSI. The $AB$ in the name comes from the two counters associated with half-edges that prevent transmission of  infections along $S-I$ edges created by $I-I$ rewirings.
	
	The starting point to analyze evoSI via  avoSI and AB-avoSI is the following Lemma \ref{avocrv}. Let $q(\lambda)$ be the survival probability for the two-phase branching process $\{\bar Z_m,m\geq 0\}$ introduced in Section \ref{britton}. Recall that the individual in the first generation has offspring distribution $\mbox{Binormial}(D,\lambda/(\lambda+\rho))$ while later generations have offspring distribution $\mbox{Binormial}(D^*-1,\lambda/(\lambda+\rho))$ where $D^*$ is the size-biased version of $D$,
	$$
	\P(D^*=j) = \frac{j \P(D=j)}{\E(D)},j\geq 0.
	$$
	\begin{lemma} \label{avocrv}
		AvoSI, evoSI, AB-avoSI  and delSI have the same critical value $\lambda_c$ and in the supercritical regime $\lambda>\lambda_c$ the probability of a large outbreak is equal to $q(\lambda)$ in all four models.  
	\end{lemma}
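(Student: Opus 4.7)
The plan is to use the stochastic dominance relations
\[
\text{AB-avoSI}\preceq_{\mathrm{st}}\text{evoSI}\preceq_{\mathrm{st}}\text{avoSI},\qquad \text{evoSI}\preceq_{\mathrm{st}}\text{delSI}
\]
(the first pair from the couplings to be constructed in Sections \ref{sec:avoSI} and \ref{sec:ABavoSI}, the second from Lemma \ref{couple}) together with Theorem \ref{critical_value}, which already settles both assertions for evoSI and delSI. From the dominations one immediately reads off $\lambda_c^{\text{avoSI}}\le \lambda_c \le \lambda_c^{\text{AB-avoSI}}$ and, for $\lambda>\lambda_c$, $\P(\text{AB-avoSI large})\le q(\lambda)\le \P(\text{avoSI large})$. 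It therefore suffices to verify (a) avoSI is subcritical for $\lambda<\lambda_c$; (b) $\P(\text{avoSI large})\le q(\lambda)$ for $\lambda>\lambda_c$; and (c) $\P(\text{AB-avoSI large})\ge q(\lambda)$ for $\lambda>\lambda_c$.

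My next step is to approximate the initial phase of each model by a single branching process. Starting from the infected vertex, I would explore its cluster dynamically in the style of \cite{JLW}, revealing configuration-model pairings only as the dynamics require them. Since $\E(D^5)<\infty$, with probability $1-o(1)$ the first $K\log n$ pairings produce a tree, and on this tree the probability that a freshly revealed $S$-$I$ edge eventually transmits is exactly $\lambda/(\lambda+\rho)$ in all four models, since on such an edge infection competes with rewiring or deletion as independent exponentials of rates $\lambda$ and $\rho$. Hence the exploration couples with the two-phase branching process $\bar Z_m$ of Section \ref{britton} until it either dies out or first reaches $\log n$ infected vertices. The extra $I$-$I$ rewirings of avoSI act only on edges internal to the already-infected tree, so they do not affect this coupling; and on the tree no $S$-$I$ edge has yet been created by an $I$-$I$ rewiring, so the $AB$-blocking in AB-avoSI is also inactive during the initial phase.

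From the coupling the three reverse inequalities follow. For (a) and (b), when $\bar Z_m$ is subcritical the exploration extinguishes in $O(\log n)$ steps with probability $1-o(1)$, and the $O(\log n)$ extra $I$-$I$ rewirings that occur during that phase each attach an infected half-edge to a uniformly random vertex (susceptible with probability $1-o(1)$), seeding at most $O(\log n)$ secondary explorations that are themselves dominated by subcritical $\bar Z_m$'s; a union bound keeps their total size $o(n)$ and rules out a large outbreak in avoSI, and the same post-extinction control applied above criticality yields $\P(\text{avoSI large})\le q(\lambda)$. For (c), when $\bar Z_m$ is supercritical it survives with probability $q(\lambda)$, and on survival the AB-avoSI exploration reaches $\log n$ infected vertices; a standard configuration-model second-phase argument (of the type used in the proof of Theorem \ref{critical_value}) then drives the outbreak to linear size, giving $\P(\text{AB-avoSI large})\ge q(\lambda)-o(1)$.

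The hard part will be quantitative control of the extra $I$-$I$ rewiring mechanism in avoSI: one must define a stopping time at which only $O(\log n)$ such rewirings have fired, and rule out the possibility that their secondary sub-explorations collectively reach macroscopic size, uniformly for $\lambda$ in a neighborhood of $\lambda_c$. The parallel technical task for AB-avoSI is to show that the $AB$-counters block at most $o(n)$ eventual infections during the supercritical second phase, so that the outbreak generated by branching-process survival is not choked off by the blocking.
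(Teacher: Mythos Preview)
Your scheme is workable but misses one comparison that the paper uses to make this lemma nearly immediate: AB-avoSI $\succeq$ delSI (Lemma~\ref{abdel}). With the full chain avoSI $\succeq$ evoSI $\succeq$ AB-avoSI $\succeq$ delSI it suffices to match the two endpoints, and your item (c) disappears: since delSI already has $\P(\text{large})=q(\lambda)$ by percolation, so does AB-avoSI from below. The paper closes the other end (Lemma~\ref{critical_avosi}) by a structural observation you do not have: on a tree in which no rewiring has landed on an already-infected vertex, an unstable $I$--$I$ edge would force a cycle among infected vertices; hence there are \emph{no} unstable $I$--$I$ edges during the initial phase and avoSI coincides with evoSI exactly there. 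The proof of Theorem~\ref{critical_value} then carries over to avoSI verbatim.

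Both ``hard parts'' you flag are thus avoided. The cascade of secondary explorations you anticipate from $I$--$I$ rewirings in avoSI is vacuous (there are zero such rewirings whp, not $O(\log n)$). And the AB-avoSI second-phase task you identify---showing that blocking kills only $o(n)$ infections---is not the ``standard'' argument you invoke: the second phase in Theorem~\ref{critical_value} works because evoSI dominates delSI, which is the wrong direction for AB-avoSI, so a direct argument would require nontrivial control of the blocking mechanism (cf.\ Section~\ref{sec:lbSI}). The one-line comparison AB-avoSI $\succeq$ delSI replaces all of that. (Minor: your stated relation $\text{evoSI}\preceq_{\mathrm{st}}\text{delSI}$ is backward---Section~\ref{sec:pfth2} proves $\text{delSI}\preceq\text{evoSI}$---though you do not actually use that direction.)
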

	
	\begin{proof}
		Results in Section 3.1 and 4.1 imply that  
		$$
		\hbox{avoSI $\succeq$ evoSI $\succeq$ AB-avoSI $\succeq$ delSI}
		$$ 
		where  epidemic1 $\succeq$ epidemic2 means that the two epidemics can be constructed on the same space so that the final epidemic size in epidemic1 is greater than or equal to that of epidemic2.
		In fact, we will prove this chain of comparisons  in Lemmas \ref{avosi>evosi}, \ref{abavosi<evosi} and \ref{abdel}, respectively. 
		It remains to show that avoSI and delSI has the same critical value and probability of a large outbreak. This is proved in Lemma \ref{critical_avosi}.
	\end{proof}

	Below we will use $\lambda_c$ and $\alpha_c=\rho \lambda_c/m_1$  to denote the critical value.
	Recall the definition of the generating function $G$ in \eqref{F_2}. 
	Consider a function $f$ defined by
	\begin{equation}\label{defoff}
		f(w)=\log \left(\frac{m_1 w}{G'(w)+\alpha (1-w)G(w)}\right) + \frac{\alpha}{2}(w-1)^2.
	\end{equation}
	\begin{theorem}	\label{epsize}
		Assume $\E(D^5)<\infty$.
		Consider the avoSI epidemic on the configuration model $\CM(n,D)$ with one uniformly randomly chosen vertex initially infected.  Suppose $\alpha<\alpha_c$ so that we are in the supercritical regime.
		Let $\tilde{I}_{\infty}$ be the final epidemic size.
		Set
		\begin{align}
			\label{mM}
			\sigma &=\sup\{w:0<w<1,f(w)= 0\} \quad\hbox{with $\sup(\emptyset) = 0$}, \\
			\label{limitsize}
			\nu& =1-\exp\left(-\frac{\alpha}{2}(\sigma-1)^2 \right)G(\sigma).
		\end{align}
		If we suppose 
		
		\medskip
		$(\star)$ either $\sigma=0$  or  $0<\sigma<1$ and there is a $\delta>0$ so that $f<0$ on $(\sigma-\delta,\sigma)$, 
		
		\mn
		then for any $\ep>0$,
		$$
		\lim_{n\to\infty} \P(\tilde{I}_{\infty}/n<\nu+\ep )=
		\lim_{\eta\to 0}\liminf_{n\to\infty} \P(\tilde{I}_{\infty}/n>\nu-\ep |\tilde{I}_{\infty}/n>\eta)=1.
		$$
	\end{theorem}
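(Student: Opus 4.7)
The plan is to observe that Theorem \ref{epsize} is essentially the conclusion of Theorem 2 in \cite{YD1}, together with the key point made in Section \ref{sec:BB} that the construction used in \cite{YD1} was actually analyzing avoSI (not evoSI). So the proof amounts to setting up the dynamical exploration of the configuration model that yields avoSI and then re-running the ODE plus dichotomy arguments from \cite{YD1}.

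First, construct avoSI jointly with $\CM(n,D)$ via the Janson--Luczak--Windridge half-edge revealing procedure, modified so that each free infected half-edge rewires at rate $\rho$ to a uniformly chosen vertex. Let $X_n(t)$ and $Y_n(t)$ denote the numbers of free susceptible and free infected half-edges, and let $S_n^k(t)$ be the number of still-susceptible vertices of degree $k$. These form a Markov chain whose transition rates can be read off from the avoSI rules. An application of the Darling--Norris theorem \cite{DarNor} then shows that $(X_n/n, Y_n/n, S_n^k/n)$ converges in probability, uniformly on any compact time interval prior to extinction, to the solution of an ODE system. The assumption $\E(D^5)<\infty$ supplies enough moments for the martingale fluctuation bounds and for the tail control of the uniformly random half-edge selection.

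Second, re-parametrize time by $w$, essentially the probability that a tagged original susceptible half-edge has not yet been paired. In this parametrization one computes $S^k(t)/n \to p_k w^k \exp(-\alpha(w-1)^2/2)$, the extra exponential factor accounting for new half-edges acquired by susceptibles via rewiring. The function $f$ defined in \eqref{defoff} arises as the condition that the number of free infected half-edges hits zero: the epidemic terminates at $w=\sigma$, the largest zero of $f$ in $(0,1)$. Substituting yields the limiting fraction of still-susceptible vertices $\exp(-\alpha(\sigma-1)^2/2)G(\sigma)$, hence $\tilde{I}_\infty/n \to \nu$ in the bulk.

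Third, because we start with a single infected vertex, we need a dichotomy. Couple the half-edge exploration, while only $o(n)$ half-edges have been touched, to the two-phase branching process $\{\bar Z_m\}$ of Section \ref{britton}. With probability $1-q(\lambda)$ this branching process dies out and the epidemic remains small; with probability $q(\lambda)$ it grows to macroscopic size, after which the fluid limit takes over and $\tilde{I}_\infty/n$ concentrates at $\nu$. Hypothesis $(\star)$ guarantees that $f$ crosses zero transversally at $\sigma$, preventing the ODE trajectory from being trapped at a degenerate zero and ensuring that $w=\sigma$ is actually reached. The main obstacle is the rigorous matching between the branching-process regime and the fluid regime, which requires controlling the exploration until $\Theta(\sqrt{n})$ half-edges have been touched; this is exactly where $\E(D^5)<\infty$ is used. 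Since avoSI and evoSI differ only in the $I$--$I$ rewiring and this difference is absorbed cleanly in the half-edge drift equations, the remainder of the argument follows \cite{YD1} line by line.
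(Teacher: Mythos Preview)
Your high-level recognition that this is Theorem~2 of \cite{YD1} applied to avoSI is correct, and the final paragraph correctly identifies why the half-edge construction of \cite{YD1} actually produces avoSI. But the proof sketch has a substantive gap and a computational error.

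\textbf{The missing time change.} You propose to apply Darling--Norris directly to $(X_n/n,Y_n/n,S_n^k/n)$ in original time. This fails at the initial condition: with one initially infected vertex, $Y_n(0)/n\to 0$, so the limiting ODE starts at a fixed point and never moves. The paper (following \cite{JLW}) does \emph{not} use Darling--Norris. Instead it multiplies all rates by $(\tilde X_t-1)/(\lambda\tilde X_{I,t})$, which eliminates $\tilde X_{I,t}$ from the drift of $\bar S_{t,k}$ entirely (see \eqref{eqStk2}). After this time change, $\bar X_t/n\to m_1e^{-2t}$ regardless of how small the initial infection is, and the limits \eqref{3lim} hold uniformly on $[0,\gamma_n]$. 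The variable $w=e^{-t}$ you describe is the time-changed time, not original time; without making the time change explicit you cannot justify that ``a tagged susceptible half-edge has not been paired with probability $w$'' holds uniformly down to a single initial infective. This is also why the dichotomy step in the paper is much lighter than the $\Theta(\sqrt n)$ matching you describe: once \eqref{3lim} is established on $[0,\gamma_n]$, the conditioning $\{\tilde I_\infty/n>\eta\}$ simply forces $\gamma_n$ to exceed some $t_\eta>0$ (Section~\ref{sec:pfth6}), and the fluid limit does the rest. No separate branching-process-to-fluid-limit bridge is needed.

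\textbf{Wrong formula for $\bar s_{t,k}$.} You write $S^k(t)/n\to p_kw^k\exp(-\alpha(w-1)^2/2)$. This would be correct if susceptible vertices never \emph{gained} half-edges, but under rewiring a susceptible vertex of original degree $j$ can acquire extra half-edges. The correct limit is the convolution \eqref{stkeq2}, namely $\bar s_{t,k}=\exp(-\tfrac{\alpha}{2}(1-w^2))\,w^k\sum_{\ell=0}^k p_{k-\ell}(\alpha(1-w))^\ell/\ell!$. Your formula happens to give the right total $\sum_k\bar s_{t,k}=F_0(w)$, but it gives the wrong $\sum_k k\bar s_{t,k}$, and it is exactly $\bar x_{S,t}=\sum_k k\bar s_{t,k}$ that determines $f(w)$ via \eqref{whatisf} and hence $\sigma$.
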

	
	\noindent
	Though $\nu$ does not give the correct final size of the evoSI epidemic,  the formula for $f(w)$ is accurate enough for $w$ near 1 to identify when the phase transition is continuous.
	\begin{theorem}\label{Q1} 
		Consider the avoSI epidemic on the configuration model $\CM(n,D)$ and let $\tilde I_\infty$ be the final epidemic size.  Set
		\begin{equation}\label{deltadef}
			\Delta= -\frac{\mu_3}{\mu_1} + 3 (\mu_2-\mu_1).
		\end{equation}
		If $\Delta<0$, then there a continuous phase transition. For any $\ep>0$, there exists some $\delta>0$, so that
		\begin{equation}\label{delta<0}
			\lim_{n\to\infty}\P(\tilde I_{\infty}/n>\ep)=0 \quad\hbox{for $\alpha_c-\delta<\alpha< \alpha_c$.}
		\end{equation}
	\end{theorem}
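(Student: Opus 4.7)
The plan is to reduce the continuous-transition statement for avoSI to a quantitative local study of the function $f$ defined in \eqref{defoff} near $w=1$. By Theorem \ref{epsize}, provided the hypothesis $(\star)$ holds and the quantity $\nu$ defined in \eqref{limitsize} is small, we have $\tilde{I}_\infty/n<\nu+\epsilon'$ with probability tending to $1$. It therefore suffices to show: (a) for $\alpha$ sufficiently close to $\alpha_c$ from below, $\sigma$ is close to $1$ and $(\star)$ is valid; and (b) $\sigma\to 1$ forces $\nu\to 0$. Part (b) follows from the expansions $G(\sigma)=1-m_1(1-\sigma)+O((1-\sigma)^2)$ and $\exp(-\alpha(1-\sigma)^2/2)=1+O((1-\sigma)^2)$, which combine to give $\nu=m_1(1-\sigma)+O((1-\sigma)^2)$.

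The heart of the argument is part (a), and it rests on a two-term Taylor expansion of $f(1-x)$ in $x$. Using $G(1)=1$, $G'(1)=\mu_1$, $G''(1)=\mu_2$, and $G'''(1)=\mu_3$, I expect to obtain
\[
f(1-x)=\frac{\alpha_c-\alpha}{m_1}\,x+q(\alpha)\,x^2+O(x^3),
\]
where $q(\alpha)$ is a polynomial in $\alpha$ with $q(\alpha_c)=\Delta/2$. The vanishing of the linear coefficient at the critical point is the identity $\alpha_c=\mu_2-\mu_1$, and the value of the quadratic coefficient at $\alpha_c$ is precisely how $\Delta$ is designed in \eqref{deltadef}. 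Under the assumption $\Delta<0$, continuity gives $q(\alpha)<0$ in some left-neighborhood of $\alpha_c$; writing $f(1-x)=x[(\alpha_c-\alpha)/m_1+q(\alpha)x+O(x^2)]$ then shows that $f(1-x)>0$ for small $x>0$ and has a unique simple nonzero root at $x_0(\alpha)=(\alpha_c-\alpha)/(m_1|q(\alpha)|)+o(\alpha_c-\alpha)$. Hence $1-\sigma=x_0(\alpha)=O(\alpha_c-\alpha)$, and because $f$ decreases strictly through zero at this root, $f<0$ just to the left of $\sigma$, verifying $(\star)$.

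Given $\epsilon>0$, I would then choose $\delta>0$ small enough that for $\alpha\in(\alpha_c-\delta,\alpha_c)$ part (b) gives $\nu<\epsilon/2$, and apply the first clause of Theorem \ref{epsize} with $\epsilon/2$ in place of $\epsilon'$ to conclude $\P(\tilde{I}_\infty/n<\epsilon)\to 1$, which is precisely \eqref{delta<0}.

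The main obstacle is the bookkeeping in the Taylor expansion of $f$: one has to expand the numerator and denominator of the argument of the logarithm to order $x^2$, perform the division, take the logarithm so that the $O(x^3)$ remainder is genuine, and combine with $(\alpha/2)(w-1)^2$ so that at $\alpha=\alpha_c$ the quadratic coefficient collapses exactly to $-\mu_3/(2\mu_1)+3(\mu_2-\mu_1)/2=\Delta/2$. A minor supplementary check is that the root found locally near $w=1$ is the \emph{largest} zero of $f$ in $(0,1)$; but this is automatic from the expansion, since $f$ is strictly positive in a left-neighborhood of $1$ and changes sign at the constructed root, so the supremum in \eqref{mM} is attained there without any global analysis of $f$.
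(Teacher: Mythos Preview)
Your proposal is correct and follows essentially the same route as the paper: both reduce Theorem~\ref{Q1} to Theorem~\ref{epsize} and then study $f$ near $w=1$ via its Taylor expansion, using $f(1)=0$, $f'(1)=(\alpha-\alpha_c)/m_1$, and $f''(1)\big|_{\alpha=\alpha_c}=\Delta$. The paper's Section~\ref{sec:pfth7} argues slightly more softly---it notes that $f''$ is bounded above by a negative constant near $1$ when $\Delta<0$ and $\alpha$ is near $\alpha_c$, then uses the intermediate value theorem to force $\sigma\to1$---whereas you locate the simple root $x_0(\alpha)\sim(\alpha_c-\alpha)/(m_1|q(\alpha)|)$ explicitly. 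Your version has the minor advantage of making the verification of~$(\star)$ explicit (the paper invokes Theorem~\ref{epsize} without separately checking~$(\star)$ in Section~\ref{sec:pfth7}, though it is implicit in the same expansion); otherwise the two arguments are the same.
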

	
	We can show that $\Delta>0$ implies that there is a discontinuous phase transition in avoSI, but that result does not help us prove Theorem \ref{Q1new}.
	To get Theorem \ref{Q1} from Theorem \ref{epsize} we compute,
	see Section \ref{sec:pfth7}, that
	
	\begin{equation}\label{fprop}
		\begin{split}
			f'(1) &= - \left( \frac{m_2-2m_1}{m_1} - \frac{\rho}{\lambda} \right) \quad\hbox{which is $<0$ for $\alpha< \alpha_c$},\\
			f'(1) &= 0, \quad f''(1) = \Delta \quad\hbox{when $\alpha=\alpha_c$}.
		\end{split}
	\end{equation}

	When $\Delta>0$, as $w$ decreases from 1 the curve of $f$ turns up, and $\sigma$ stays bounded away from 0.
	When $\Delta<0$,   the curve  of  $f$ turns down, and $\sigma$ converges to 1 as $\alpha\to\alpha_c$. See Figure 4.

	\newpage

	\begin{figure}[h!] 
		\begin{center}
			\includegraphics[height=3.2in,keepaspectratio]{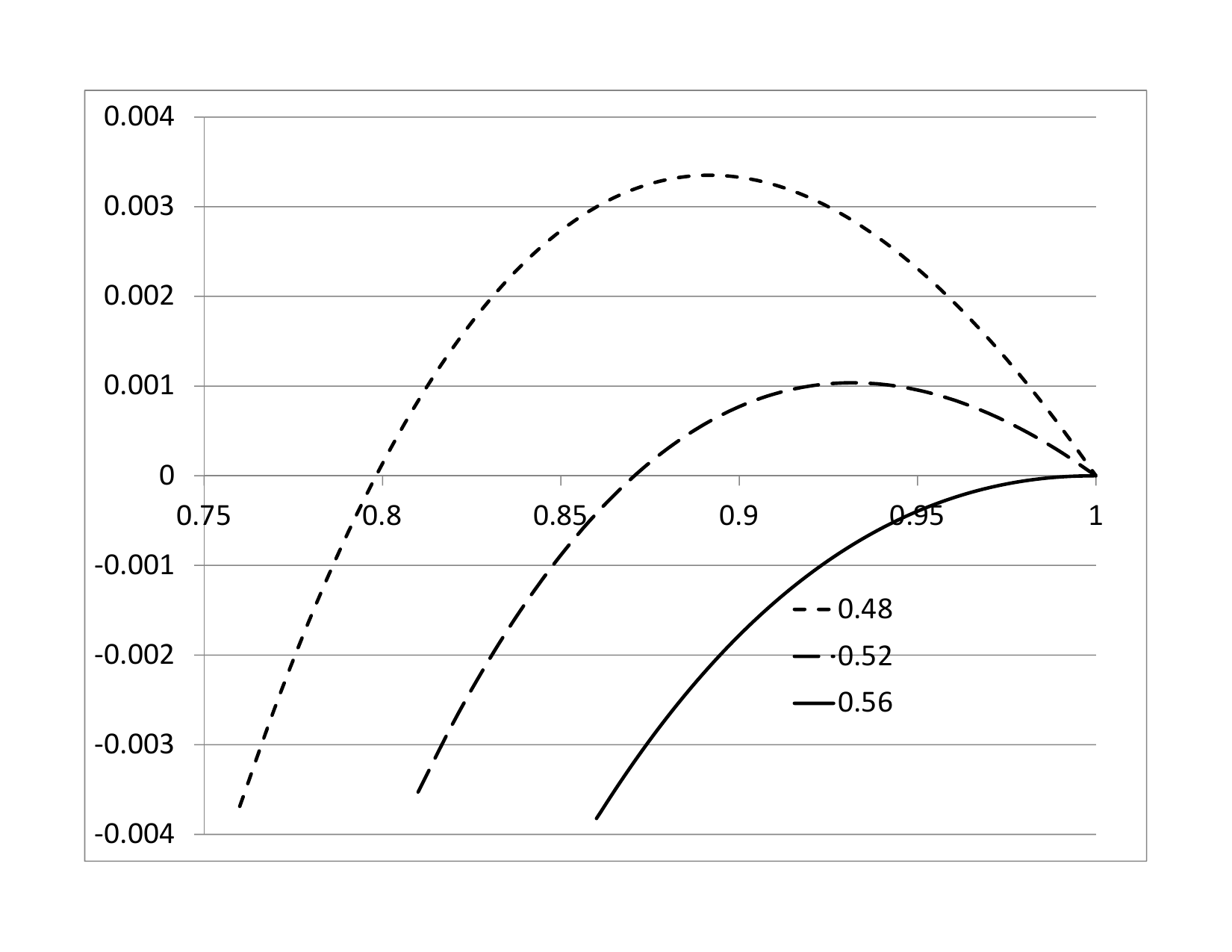}
			
			\includegraphics[height=3.2in,keepaspectratio]{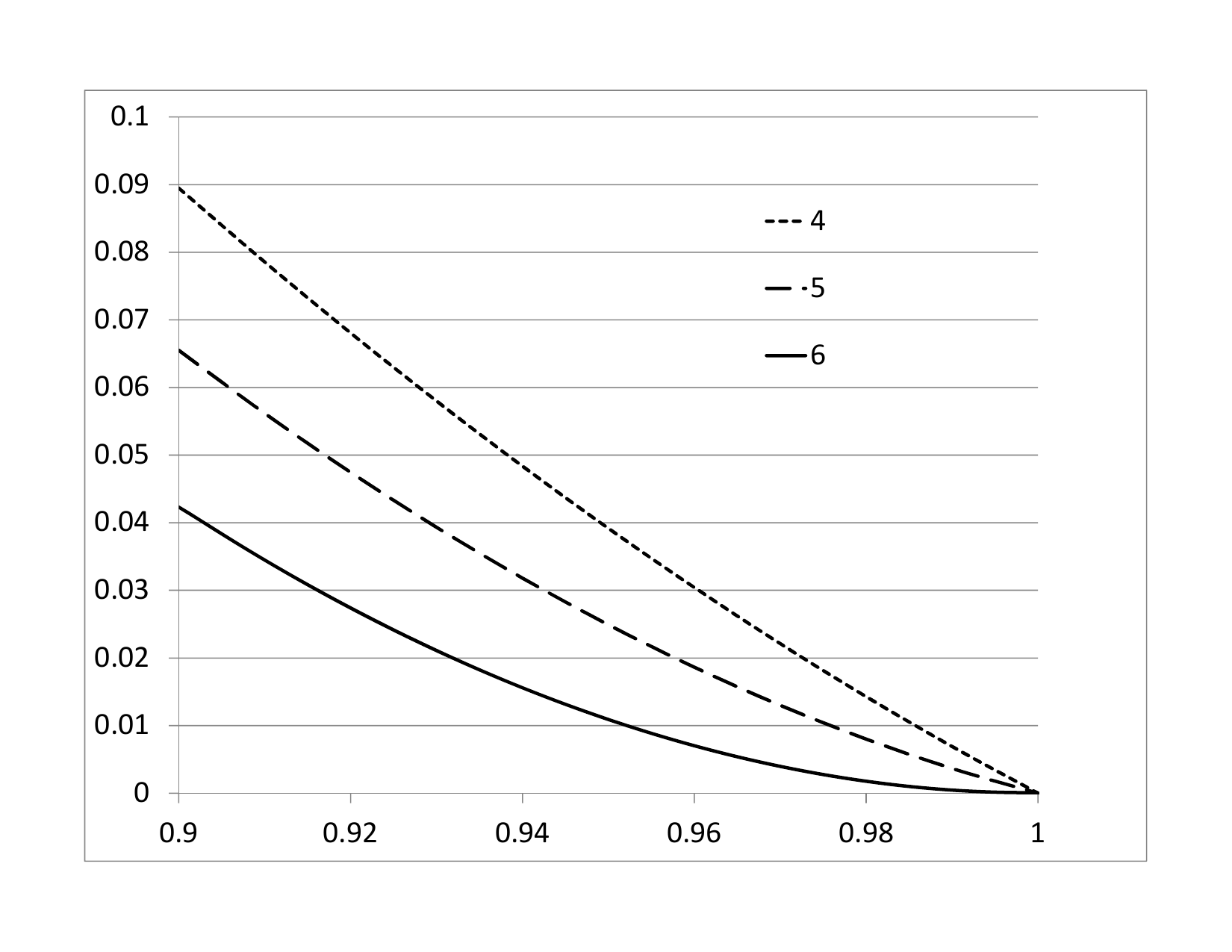}
			\caption{The behavior of $f(w)$  near 1 with respect to different $\alpha's$  for the \ER\, graph. In the top graph  $\mu=1.4$, which has $\Delta<0$. $\alpha_c = \mu^2-\mu = .56$. Notice that as $\alpha$ increases to 0.56 the intersection with the $x$ axis tends to 1, so the transition is continuous. In the bottom graph $ \mu= 3$, which has $\Delta>0$. $\alpha_c = \mu^2-\mu = 6$. Notice that when
				$\alpha\le\alpha_c$, $f(w)> 0$ for $w\in[0.9,1)$, so $\sigma$ is bounded away from 1.}
		\end{center}
		\label{fig:ER2}
	\end{figure}

	\begin{theorem}\label{Q1AB} 
		Consider the AB-evoSI epidemic on the configuration model $\CM(n,D)$. Let $\check{I}_{\infty}$ be the final epidemic size.  Let $\Delta$ be the quantity defined in \eqref{deltadef}.
		If  $\Delta>0$ then there is a discontinuous phase transition. For some $\ep_0>0$ and some $\delta_0>0$, 
		\begin{equation}\label{delta>0AB}
			\lim_{\eta\to 0} \liminf_{n\to\infty}\P_{1}(\check{I}_{\infty}/n>\ep_0|\check{I}_{\infty}/n>\eta)=1 \quad\hbox{for all $\alpha_c-\delta_0<\alpha< \alpha_c$.}
		\end{equation}	
	\end{theorem}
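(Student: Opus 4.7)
The plan is to establish the discontinuous transition for the process governing $\check I_\infty$ (the AB-avoSI / AB-evoSI epidemic) by mirroring the fluid-limit analysis used for avoSI in Theorems \ref{epsize}--\ref{Q1}, but extracting the opposite second-order consequence of the same quantity $\Delta$. Because the statement is conditional on $\{\check I_\infty/n > \eta\}$, I can work in the non-extinction regime where, by Lemma \ref{avocrv}, a large outbreak occurs with probability $q(\lambda) > 0$ throughout $(\alpha_c-\delta_0,\alpha_c)$, so the conditioning is nondegenerate.

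First I would run the Janson--{\L}uczak--Windridge dynamic edge-revelation for the AB process (Section \ref{sec:ABavoSI}), tracking the extra $A$/$B$ counters that prevent transmission along $S$-$I$ edges created by $I$-$I$ rewirings. Matching the ansatz of \cite{YD1}, this produces a scalar fixed-point equation $\check f(w) = 0$ for the normalized fraction $w$ of surviving susceptible half-edges, together with a final-size formula
\begin{equation*}
\check\nu = 1 - \exp(-\tfrac{\alpha}{2}(\check\sigma-1)^2)\, G(\check\sigma), \qquad \check\sigma = \sup\{w \in (0,1) : \check f(w) = 0\}.
\end{equation*}
The AB counters remove exactly the spurious $I$-$I$ rewiring transmissions that caused the discrepancy pointed out by Ball and Britton, so $\check f$ agrees with the avoSI function $f$ at the Taylor coefficients at $w = 1$ that are needed in the argument below.

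The analytic heart of the argument is the following local computation at the critical value, paralleling \eqref{fprop}: $\check f'(1) = -((m_2-2m_1)/m_1 - \rho/\lambda)$, which vanishes at $\alpha = \alpha_c$, and $\check f''(1) = \Delta$ at $\alpha = \alpha_c$. Granting this, when $\Delta > 0$ the curve of $\check f$ turns \emph{upward} as $w$ decreases from $1$, so there exist $\eta_0 > 0$ and $\delta_0 > 0$ with $\check f > 0$ on $(1-\eta_0, 1)$ for every $\alpha \in (\alpha_c - \delta_0, \alpha_c)$. Hence $\check\sigma \le 1-\eta_0$ uniformly in this window, and the final-size formula gives $\check\nu \ge 1 - \exp(-\alpha_c\eta_0^2/2)\, G(1-\eta_0) =: 2\varepsilon_0 > 0$, again uniformly in $\alpha$.

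To convert the fluid-limit final size $\check\nu$ into the conditional statement \eqref{delta>0AB}, I would combine two pieces: a Darling--Norris concentration argument (as in Theorem 2.2 of \cite{BB} and the proof of Theorem \ref{epsize}) giving $\check I_\infty/n \to \check\nu$ in probability whenever the epidemic survives its early phase, together with the branching-process approximation of Lemma \ref{avocrv} showing $\P(\check I_\infty/n > \eta \mid \text{survive}) \to 1$ as $n \to\infty$ followed by $\eta \to 0$. The main obstacle will be uniformity in $\alpha$ near $\alpha_c$: the drift of the limiting ODE becomes small, so one must quantify how long the fluid approximation remains valid and verify that it still captures the final size $\check\nu$ uniformly across $(\alpha_c-\delta_0,\alpha_c)$. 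This is where the condition $(\star)$ of Theorem \ref{epsize} plays its role, and the analogous condition for $\check f$ must be verified to hold automatically whenever $\Delta > 0$.
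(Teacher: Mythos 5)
Your outline rests on a step the paper explicitly cannot take: deriving a scalar fixed-point equation $\check f(w)=0$ and a final-size formula $\check\nu=1-\exp(-\tfrac{\alpha}{2}(\check\sigma-1)^2)G(\check\sigma)$ for the AB process ``matching the ansatz of \cite{YD1}.'' Unlike avoSI, the time-changed AB-avoSI equations \eqref{newStk} contain the term $\frac{1}{\hat{X}_{I,t}}\sum_{i,j}1_{G_{i,j}}1_{\{S(j,k+1,t)=1\}}$, which depends on the joint configuration of the infection indices $A(i,t)$ and rewiring indices $B(j,t)$ of individual half-edges and is not a function of the macroscopic variables $(\hat S_{t,k})_k$ alone; the system is not closed and, as stated in Section \ref{pfth18}, ``is not solvable.'' Consequently there is no $\check f$, no $\check\sigma$, and no Darling--Norris convergence of $\check I_\infty/n$ to a limiting final size to appeal to. Your further claim that the AB counters remove \emph{exactly} the spurious $I$-$I$ rewiring transmissions, so that $\check f$ shares the Taylor coefficients of $f$ at $w=1$, is also not right: the rule $A(i,t)\le B(j,t)$ blocks strictly more transmissions than those created by $I$-$I$ rewirings (that is precisely why AB-avoSI is only a lower bound for evoSI, Lemma \ref{abavosi<evosi}), and the paper's own expansion shows the two processes do \emph{not} match to second order --- compare the avoSI expansion with the AB lower bound \eqref{-lb}, whose linear and quadratic coefficients are smaller by explicit factors; only the vanishing of the linear term at $\lambda_c$ and the sign of the quadratic term $\propto m_1\Delta$ survive, and the remark following \eqref{-lb} stresses exactly this.

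What the paper actually does, and what your proposal is missing, is a purely short-time argument: a second-order expansion of $\hat X_{I,t}$ and $\hat I_t$ in $t$ near $0$ valid up to a stopping time, with the blocked-transmission term isolated as $E(t)$ (Lemma \ref{lbstep2}); then a decomposition $E\le E_1+E_2$ and refined bounds on $L(b,t)$ and $X(I,b,t)$ (Lemmas \ref{ctlet2} and \ref{ctlet3}), the latter requiring a delicate case analysis of how a half-edge can be infected by time $t$ with $A\le bt$; and finally a bootstrap/contradiction argument at the stopping time $\tau$ in Section \ref{sec:complete} showing that when $\Delta>0$ and $\lambda$ is near $\lambda_c$ the positive $\frac{m_1\Delta}{8}t^2$ term dominates $\int_0^t E(u)\,du$, so $\hat X_{I,t}$ stays positive up to a fixed time $t_1$ and $\hat I_{t_1}\ge \frac{m_1t_1}{8}n$. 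None of this is replaced by your fluid-limit scheme, so as written the proposal has a genuine gap at its central step; the only part that transfers is the algebraic identification of $\Delta$ as the relevant second-order quantity, which the paper obtains from the coefficient computation \eqref{m1delta} rather than from a second derivative of a (nonexistent) $\check f$.
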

	\mn
	Theorem \ref{Q1new} follows from Theorem \ref{Q1} and Theorem \ref{Q1AB}.
	\begin{proof}[Proof of Theorem \ref{Q1new}]
		Lemma \ref{avocrv} implies that
		\begin{equation}\label{cmp}
			\lim_{\eta\to0} \liminf_{n\to\infty}\P(I_{\infty}/n>\eta)=\lim_{\eta\to 0}\liminf_{n\to\infty} \P(\check{I}_{\infty}/n>\eta).
		\end{equation}
		Equations \eqref{cmp}, \eqref{delta>0AB} and the fact evoSI $\succeq$ AB-avoSI (proved in Lemma \ref{abavosi<evosi}) imply that
		$$
		\lim_{\eta\to 0} \liminf_{n\to\infty}\P(I_{\infty}/n>\ep_0|I_{\infty}/n>\eta)\geq 
		\lim_{\eta\to 0} \liminf_{n\to\infty}\P(\check{I}_{\infty}/n>\ep_0|\check{I}_{\infty}/n>\eta)=1, 
		$$
		for $\alpha_c-\delta_0<\alpha< \alpha_c$.
		This is exactly \eqref{delta>0new}. 	Equation \eqref{delta<0new} follows from \eqref{delta<0} and the fact that avoSI $\succeq$ evoSI. 
	\end{proof}

	\subsection{Sketch of Proof of Theorem \ref{epsize}} \label{sec:pfstrategy}

	To begin to explain the ideas behind the  analysis of epidemics on evolving graphs we need to recall some history. Volz \cite{Volz} was the first to derive a limiting ODE system for an SIR epidemic on a (static) graph generated by the configuration model. Miller \cite{Miller} later simplified the derivation to produce a single ODE. The results of Volz and Miller were based on heuristic computations, but later their conclusion was made rigorous by Decreusfond et al \cite{Decr} assuming $\E (D^5)<\infty$. 
	
	Janson, Lukzak, and Windridge \cite{JLW} proved the result under more natural assumptions. They studied the epidemic on the graph by revealing its edges dynamically while the epidemic spreads. 
	Recall that the configuration is constructed using half-edges.  The authors in \cite{JLW} call a half-edge free if it has not yet been paired with another half-edge. They call a half-edge susceptible, infected or removed according to the state of its vertex. To modify their construction to include rewiring we add the third bullet below.   Hereafter we use``randomly chosen'' and ``at random''  to mean that the distribution of the choice is uniform over the set of possibilities.
	\begin{itemize}
		\item
		Each free infected half-edge chooses a free half-edge at rate $\lambda$.
		Together the pair forms an edge and is removed from the collection of half-edges. If the pairing is with a susceptible half-edge then
		its vertex becomes infected and all its edges become infected half-edges. 
		
		\item
		Infected vertices recover and enter the removed state at rate 1.
		
		\item 
		Each infected half-edge gets removed from the vertex that it is attached to at rate $\rho$ and immediately becomes re-attached to a randomly chosen vertex. 
	\end{itemize}

	To analyze the avoSI model  we follow the approach in Janson, Luczak, and Windridge \cite{JLW} and construct the graph as we run the infection process. The construction of this process and its coupling to evoSI are described in Section \ref{sec:avoSI}. Initially the graph consists of half-edges connected to vertices, as in the configuration model construction before the half-edges are paired. 
	Let  $\tilde{X}_t$ be the total number of half-edges at time $t$ and let $\tilde{X}_{I,t}$ be the number of half-edges that are attached to infected vertices and let $\tilde S_{t,k}$ be the number of susceptible vertices with $k$ half-edges at time $t$.
	The evolution of $\tilde{S}_{t,k}$ in avoSI is given by, see \eqref{eqStk1},
	$$
	d\tilde{S}_{t,k}=-\left(\lambda \tilde{X}_{I,t}\frac{k\tilde{S}_{t,k}}{\tilde{X}_t-1}\right)dt+\left(1_{\{k\geq 1\}}\rho \tilde{X}_{I,t} 
	\frac{\tilde{S}_{t,k-1}}{n}\right)dt-\left(\rho \tilde{X}_{I,t} 
	\frac{\tilde{S}_{t,k}}{n}\right)dt+d\tilde{M}_{t,k},
	$$
	where $\tilde{M}_{t,k}$ is a martingale and we have returned to using $\lambda$ as the infection rate and $\rho$ as the rewiring rate.

	Following \cite{JLW} we time-change the process by multiplying the original transition rates by $(\tilde{X}_t-1)/(\lambda \tilde{X}_{I,t})$. 
	Let $\X_t$ be the number of half-edges at time $t$ in the time changed process, and let  $\X_{S,t}$ be the number of half-edges that are attached to susceptible vertices.
	Using $\S_{t,k}$ for the time-changed process the new dynamics are, see \eqref{eqStk2},
	\begin{align}
		d\left(\frac{\S_{t,k}}{n}\right) & =-\left(k\frac{\S_{t,k}}{n}\right) dt
		+\left(1_{(k\geq 1)} \frac{\rho }{\lambda}\frac{\X_t-1}{n}\frac{\S_{t,k-1}}{n}\right)dt
		\nonumber\\
		&-\left(\frac{\rho }{\lambda}\frac{\X_t-1}{n}\frac{\S_{t,k}}{n}\right)dt+d\left(\frac{\M_{t,k}}{n}\right).
		\label{tceq}
	\end{align}
	Note that, thanks to the time change, the number of infected half-edges $\X_{I,t}$ no longer appears in the equation.  Let $\gamma_n$ be the first time there are no infected half-edges.   Let
	$w(t)=\exp(-t)$  and $m_1=\E(D)$. The key to the proof of Theorem \ref{epsize} is to show
	\begin{align}
		&\sup_{0\leq t\leq  \gamma_n}\abs{\frac{\X_t}{n}-m_1w(t)^2}\CP 0,
		\nonumber\\
		&\sup_{0\leq t\leq \gamma_n}
		\abs{	\frac{\sum_{k=0}^{\infty} \S_{t,k}}{n}- F_0(w(t)) }\CP 0,
		\label{3lim}\\
		&\sup_{0\leq t  \leq \gamma_n}
		\abs{	\frac{\sum_{k=0}^{\infty} k\S_{t,k}}{n}-F_1(w(t)) }\CP 0,
		\nonumber 
	\end{align} where
	\begin{align}
		F_0(w) & =   \exp(-(\alpha/2)(w-1)^2 )G(w), 
		\nonumber\\
		F_1(w)& = \exp(-(\alpha/2)(w-1)^2 )w(G'(w)+\alpha (1-w)G(w)).
		\label{Fidef}
	\end{align}
	From the results above, we see that
	\beq
	\frac{\X_t}{\X_{S,t}} \CP \frac{m_1 w}{ \exp(-(\alpha/2)(w-1)^2 ) \cdot(G'(w)+\alpha (1-w)G(w)) }.
	\label{whatisf}
	\eeq
	The logarithm of the right-hand side is $f(w)$.
	Under assumption ($\star$),  
	$$
	\sigma = \sup\{ w : 0< w < 1, f(w)=0\}
	$$  
	gives the time $z=-\log(\sigma)$ at which the infection dies out in the time-changed process and $\nu$ defined in \eqref{limitsize} gives the fraction of sites which have been infected. 
	
	There are four steps in the proof of \eqref{3lim}:
	\begin{itemize}
		\item
		In Section \ref{sec:tight} we show that for each fixed $k\in \N$, $\{\S_{t,k}/n, t\geq 0 \}_{n\geq 1}$ is a tight sequence of processes.
		\item
		In Section \ref{sec:convStk} we show that any subsequential limit satisfies a system of differential equations \eqref{stk} that has a unique solution $\bar s_{t,k}$, so  $\S_{t,k}/n \to \bar s_{t,k}$.
		\item
		Section \ref{sec:sum_stk} we deal with the technicality of showing that the limit of $\sum_{k=0}^{\infty} k \bar S_{t,k}/n$ is the sum of the limits  $\sum_{k=0}^{\infty} k\bar s_{t,k}$.
		\item
		In Section \ref{sec:pfth6} we complete the proof by establishing the formulas for $\sigma$ and $\nu$.
	\end{itemize}

	\subsection{Sketch of Proof of Theorem \ref{Q1AB}}\label{pfth18}
	In the AB-avoSI model, each half-edge $i$ has two indices $A(i,t)$ and $B(i,t)$. 
	\begin{itemize}
		\item The infection index   $A(i,t)=0$ if $i$ has not been infected by time $t$. 
		If $i$ first become an infected half-edge at time $s$, then we set 
		$A(i,t)=s$ for all $t\geq s$. 
		\item 
		The rewiring index $B(i,t)=0$ if $i$ has not been rewired by time $t$.
		If $i$ gets rewired at time $s$, then we update the value of $B(i,s)$ to be $s$, regardless of whether $i$ has been rewired before or not. $B(i,\cdot)$ remains constant between consecutive rewirings. 
	\end{itemize}
	
	Suppose an infected  half-edge $i$ pairs with a susceptible half-edge $j$ at tine $t$, then (in the AB-avoSI model) $i$ will transmit an infection to $j$ if and only if $A(i,t)>B(j,t)$. See Section \ref{sec:ABavoSI} for more details about the AB-avoSI model and its relationship to evoSI. Let $\check{S}_{t,k}$ be the number of susceptible vertices with $k$ half-edges at time $t$ and set
	\begin{equation*}
		G_{i,j}=\{I(i,t)=1,  A(i,t)\leq B(j,t) \}.
	\end{equation*}
	Here $I(i,t)$ is an indicator function such that $I(i,t)=1$ if half-edge $i$ is an infected half-edge at time $t$ (see the first paragraph of Section \ref{sec:moment} for the definitions of the notations $I(i,t),S(i,t),S(i,k,t),D(j,t)$ appearing below). 
	As in the avoSI model we make a time-change by multiplying the original transition rates by $(\check{X}_t-1)/(\lambda \check{X}_{I,t})$.  Using a hat to denote the quantities after the time-change in the AB-avoSI model,  we have that, for all $k\geq 0$, 
	\begin{equation*}
		\begin{split}
			d\hat{S}_{t,k}&=-k\hat{S}_{t,k}\, dt
			+1_{\{k\geq 1\}} \frac{\rho}{\lambda}\frac{\hat{S}_{t,k-1}}{n}(\hat{X}_t-1) \, dt
			- \frac{\rho}{\lambda}\frac{\hat{S}_{t,k}}{n}(\hat{X}_t-1)\, dt\\
			&+\frac{1}{\hat{X}_{I,t}}\left( \sum_{i,j=1}^{\hat X_0} 
			1_{G_{i,j}} 1_{\{ S(j,k+1,t)=1 \}}  \right)dt+d\hat{M}_{t,k},
		\end{split}
	\end{equation*} 
	where $\hat{M}_{t,k}$ is a martingale. See equation \eqref{newStk}. This system of equations is not solvable but we can expand in powers of $t$ to study the time-changed system for small $t$. If we let $\hat{X}_t, \hat{X}_{I,t}, \hat{S}_t$ be the number of half-edges, the number of infected half-edges and the number of susceptible vertices, respectively, then we have
	\begin{equation}
		\begin{split}
			\hat{X}_{I,t}&=\hat{X}_{I,0}+\int_0^t\left(-2(\hat{X}_u-1)+\sum_{k=0}^{\infty} k^2\hat S_{u,k}-\frac{\rho}{\lambda}\frac{\hat S_u}{n}(\hat{X}_u-1)  \right) du
			+\hat{M}_t \\
			&-\frac{1}{\hat{X}_{I,t}}\int_0^t \left( \sum_{i,j=1}^{\hat X_0} 1_{G_{i,j}} (D(j,u)-1) 1_{\{ S(j,u)=1\}} \right)du, 
		\end{split}
	\end{equation}
	where $\hat{M}_t$ is a martingale. See equation \eqref{xitnew}.  Define
	\begin{equation}
		E(t)=\frac{1}{\hat{X}_{I,t}}\left( \sum_{i,j=1}^{\hat X_0} 1_{G_{i,j}} D(j,t) 1_{\{ S(j,t)=1\}} \right).
	\end{equation}
	By expanding $\hat{S}_{t,k}$ around $t=0$ up to the second order, we get, for any $\ep>0$, $\lambda$ close to $\lambda_c$ and $t_0$ close to 0, 
	\begin{equation}\label{-lb}
		\begin{split}
			\lim_{n \to \infty}\P\biggl(\hat{X}_{I,t}& \geq \left(\frac{\rho m_1}{2\lambda_c^2}(\lambda-\lambda_c)t+\frac{m_1\Delta}{4}t^2-\ep\right)n \\
			&-\int_0^t E(u)du, \forall 0\leq t\leq \gamma_n\wedge t_0\biggr)=1.
		\end{split}
	\end{equation}
	See \eqref{lbstep12} of Lemma \ref{lbstep2}.
	Here the $\Delta$ is the same as the one in \eqref{deldef}, i.e., 
	$$
	\Delta= -\frac{\mu_3}{\mu_1} + 3 (\mu_2-\mu_1).
	$$
	
	\begin{remark}
		In the case of avoSI, we have that, for $n$ large, 
		\begin{equation*}
			\X_{I,t}=\X_t-\X_{S,t}=\X_{S,t}\left(\frac{\X_t}{\X_{S,t}}-1\right) \sim \X_{S,t}(f(\exp(-t))-1)
		\end{equation*}
		with $f$ defined in \eqref{defoff}. By expanding $f$ up to the second order and using \eqref{fprop} along with the fact that $\exp(-t)=1-t+t^2/2+o(t^2)$, we get 
		\begin{equation}
			\begin{split}
				\X_{I,t}& \sim nm_1 (-f'(1)(t-t^2/2)+f''(1)t^2/2+o(t^2))\\
				&\geq nm_1\left(\rho(\lambda-\lambda_c)t/(2\lambda_c^2)+(\Delta t^2)/4 \right)
			\end{split}
		\end{equation}
		for small $t$ and $\lambda$ close to $\lambda_c$. In the case of AB-avoSI, see \eqref{xit4}, we have,
		$$
		n\left(\frac{\rho m_1(\lambda-\lambda_c)}{4\lambda_c^2} t 
		+ \frac{m_1\Delta}{8} t^2 - \ep_2 - \ep_6 \right)
		$$
		as a lower bound when $t> \ep$ (the $\ep_2$ and $\ep_6$ here are some small numbers depending on $\ep$). The two expansions do not match but both linear terms  vanish at $\lambda_c$ and the quadratic terms have the same sign so this is good enough.
	\end{remark}
	
	The proof of Theorem \ref{Q1AB} is organized into five steps:
	
	\begin{itemize}
		\item In Section \ref{sec:ABavoSI} we define the AB-avoSI process and prove that evoSI stochastically dominates AB-avoSI.
		\item In Section \ref{sec:moment} we derive basic moment estimates for various quantities that will prepare us for later proofs.  See Lemma \ref{lbstep1}. 
		\item In Section \ref{sec:rough} we give rough upper and lower bounds for $\hat{I}_t$ and $\hat{X}_{I,t}$ involving the integral of $E(t)$. See Lemma \ref{lbstep2}. We also give an easy upper bound for $E(t)$ in \eqref{eubound1}.
		\item In Section \ref{sec:refined} we decompose $E(t)$ into two parts (see \eqref{ede}) and give refined bounds for each part. See Lemmas \ref{ctlet2} and \ref{ctlet3}.
		\item In Section \ref{sec:complete} we combine our estimates to complete the proof. 
	\end{itemize}

	\clearp

	\section{Proof of Theorem \ref{critical_value}} \label{sec:pfth2}
	
	\subsection{Coupling of evoSI and delSI}\label{sec:evodel}
	We first prove Lemma \ref{couple} before proving Theorem \ref{critical_value}.
	We define three sequences of random variables, which  will serve as the joint randomness to couple evoSI and delSI:

	\begin{itemize}
		\item
		Let $T_{e,\ell}$, $\ell \ge 1$  be independent exponential random variables with mean $1/\lambda$.  
		
		\item
		Let  $R_{e,\ell}$, $\ell \ge 1$ be independent exponential random variables with mean $1/\rho$.  
		
		\item
		Let $U_{e,\ell}$, $\ell\geq 1$ be independent random variables chosen uniformly at random from all vertices. 
	\end{itemize}

	\medskip
	{\bf Construction of evoSI.}
	We define three sets of edges in evoSI at time $t$:
	\begin{itemize}
		\item  {\it  Active edges},  denoted by $\mathcal{E}^a_t$,  are the edges at time $t$ that connect an infected vertex and a susceptible vertex.
		\item  {\it Uninfected edges},  denoted by $\mathcal{E}^0_t$, connect two susceptible vertices. 
		\item {\it Inactive edges}, denoted by $\mathcal{E}^i_t$, have both ends infected. Once an edge becomes inactive it remains inactive forever.
	\end{itemize}
	The three sets form a partition of all edges.
	
	The three set-valued processes just defined are right-continuous pure jump processes.  At time 0 we randomly choose a vertex $u_0$ to be infected.  $\mathcal{E}^a_{0}$ consists of the edges with one endpoint at $u_0$.  $\mathcal{E}^0_{0}$ is the collection of all edges in the graph minus the set $\mathcal{E}^a_0$. $\mathcal{E}^i_{0}=\emptyset$. We will consider the corresponding sets for delSI, but they will be denoted by $\mathcal{D}_t$ to avoid confusion.
	
	For each undirected edge $e$, let $\tau^{e}_{e,\ell}$ be the $\ell$-th time the edge becomes active (the superscript `$e$' is short for evo). To make it easier to describe the dynamics, suppose that at time $\tau^{e}_{e,\ell}$ we have $e=\{x_{e,\ell},y_{e,\ell}\}$ with $x_{e,\ell}$ infected and $y_{e,\ell}$ susceptible.
	
	\begin{itemize}
		\item
		Let $T_{e,\ell}$, $\ell \ge 1$  be  the time between $\tau_{e,\ell}$  and the infection of $y_{e,\ell}$ by $x_{e,\ell}$.  
		
		\item
		Let  $R_{e,\ell}$, $\ell \ge 1$ be the time between $\tau_{e,\ell}$  and the time when $y_{e,\ell}$ breaks its connection to $x_{e,\ell}$ and rewires.  
		
		\item
		Let $U_{e,\ell}$ be the  vertex that $y_{e,\ell}$ connects to at time $\tau^{e}_{e,\ell} + R_{e,\ell}$ (if rewiring occurs).
	\end{itemize}

	\mn
	{\bf Initial step.} 
	To simplify writing formulas, let 
	\begin{equation}\label{defs}
		S_{e,\ell} = \min\{T_{e,\ell},R_{e,\ell}\}.
	\end{equation}
	At time 0, a randomly chosen vertex $u_0$ is infected. For a vertex $x$, let ${\cal N}^0(x,t),{\cal N}^a(x,t)$ and ${\cal N}^i(x,t)$ be the collection of uninfected, active and inactive edges  that are connected to $x$ at time $t$, respectively.  At time 0  the edges  ${\cal N}^0(u_0,0)=\{e_{0,1},\ldots, e_{0,k}\}$ are added to the list of active edges, where $k$ is the degree of $u_0$.  We have $\mathcal{E}^a_{0}=\{e_{0,1},\ldots, e_{0,k}\}$. Suppose $e_{0,j}$ connects $u_0$ and $y_j$. At time
	$$
	J^e_1 =  \min_{1\le j \le k} S_{e_{0,j},1}
	$$
	the first event occurs. ($J$ is for jump and $e$ stands for evo.)  Let $i$ be the index that achieves the minimum. 
	
	\mn
	(i) If $R_{e_{0,i},1}<T_{e_{0,i},1}$, then at time $J^e_1$ vertex $y_i$ breaks its connection with $u_0$ and rewires to $U_{e_{0,i},1}$.  If $U_{e_{0,i},1}$ is susceptible at time $J^e_1$, we move the edge $e_{0,i}$  to $\mathcal{E}^0_{J^e_1}$. On the initial step this will hold unless $U_{e_{0,i},1}=u_0$ in which case nothing has changed.
	
	\mn
	(ii) If $T_{e_i,1}<R_{e_i,1}$ then at time $J^e_1$ vertex $y_i$ becomes infected by $u_0$. We move  $e_{0,i}$ to $\mathcal{E}^i_{J^e_1}$. We add edges in ${\cal N}^0(y_i,J^e_1-)$ to ${\cal E}^a_{J^e_1}$.

	\mn
	{\bf Induction step.} 
	For any active edges $e$ present at time $t$, let $L^e(e,t) = \sup\{\ell : \tau^e_{e,\ell}\leq  t\}$
	and let $$V^e(e,t)=\tau^e_{e,L^e(e,t)}+S_{e,L^e(e,t)}$$ be the time of the next event (infection or rewiring) to affect edge $e$. Again, the superscripts `$e$' in $L^e(e,t)$ and $V^e(e,t)$ imply that these quantities are for the evoSI model. 
	Suppose we have constructed the process up to time $J^e_m$ for some $m\in \N$.
	If there are no active edges present at time $J^e_{m}$, the construction is done. Otherwise, let
	$$
	J^e_{m+1}=\min_{e\in \mathcal{E}^a_{J^e_m}} V^e(e,J^e_{m}).
	$$
	Let $e_m$ be the edge that achieves the minimum of $V^e(e,J^e_m)$,
	let $x(e_m)$ be the infected endpoint of $e_m$ and $y(e_m)$ be the susceptible endpoint of $e_m$. To simplify notation let $L_m =L^e(e_m,J^e_m)$.

	\mn
	(i) If $R_{e_m,L_m}<T_{e_m,L_m}$, then at time $J^e_{m+1}$ vertex $y(e_m)$ breaks its connection with $x(e_m)$ and rewires to $U_{e_m, L_m}$.   If $U_{e_n,L_n}$ is susceptible at time $J^e_{m+1}$, then $e_m$ is moved to  $\mathcal{E}^0_{J^e_{n+1}}$. Otherwise it remains active.
	
	\mn
	(ii) If $T_{e_n,L_n}<R_{e_n,L_n}$, then at time $J^e_{m+1}$ the vertex $y(e_n)$ is infected by $x(e_m)$ and $e_m$ is moved to  $\mathcal{E}^i_{J_{n+1}}$. Further,
	\begin{itemize}
		\item
		all edges $e'$ in ${\cal N}^0(y(e_m),J^e_{m+1}-)$ are moved to ${\cal E}^a_{J^e_{m+1}}$.  Since $y(e_m)$ has just become infected, the other end of $e'$ must be susceptible at time $J^e_{m+1}$.
		
		\item
		all edges $e''$ in ${\cal N}^a(y(e_m),J^e_{m+1}-)$ are moved to ${\cal E}^i_{J^e_{m+1}}$.  Since $y(e_m)$ has just become infected, (a) the other end of $e''$ must be infected at time $J^e_{m+1}$, and (b) $e''$ cannot have been inactive earlier.
	\end{itemize}
	
	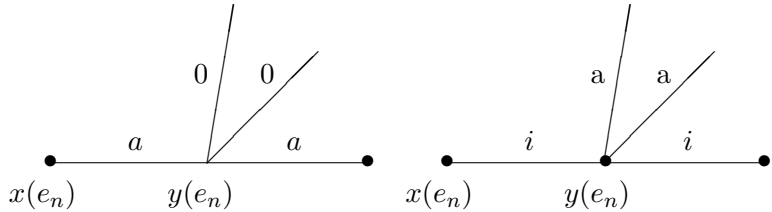
\begin{figure}[ht]
		\begin{center}
			\begin{picture}(310,100)
				\put(30,30){\line(1,0){60}}
				\put(90,30){\line(1,6){10}}
				\put(90,30){\line(1,1){42}}
				\put(90,30){\line(1,0){60}}
				\put(15,15){$x(e_m)$}
				\put(75,15){$y(e_m)$}
				\put(28,28){$\bullet$}
				\put(148,28){$\bullet$}
				\put(60,35){$a$}
				\put(120,35){$a$}
				\put(85,60){0}
				\put(110,60){0}
				\put(180,30){\line(1,0){60}}
				\put(240,30){\line(1,6){10}}
				\put(240,30){\line(1,1){42}}
				\put(240,30){\line(1,0){60}}
				\put(165,15){$x(e_m)$}
				\put(225,15){$y(e_m)$}
				\put(178,28){$\bullet$}
				\put(238,28){$\bullet$}
				\put(298,28){$\bullet$}
				\put(210,35){$i$}
				\put(270,35){$i$}
				\put(235,60){$a$}
				\put(260,60){$a$}
			\end{picture}
			\caption{Change of the status of different edges after $y(e_m)$ becomes infected by $x(e_m)$. Black dots mark infected sites. }
		\end{center}
	\end{figure}
	
	\medskip 
	{\bf Construction of delSI.} There is no rewiring in delSI, each edge will become active at most once. Thus for each undirected edge $e$ we only need two exponential random variables $T_{e,1}$ and $R_{e,1}$, defined in the beginning of Section \ref{sec:evodel}. This allows us to couple evoSI and delSI.
	Also, we use $\mathcal{D}^0_t, \mathcal{D}^a_t$ and $ \mathcal{D}^i_t$ to represent the set of uninfected, active and inactive edges in delSI, respectively.

	\mn
	{\bf Initial step.}  At time 0, a randomly chosen vertex $u_0$ is infected. The edges  ${\cal N}^0(u_0,0)=\{e_{0,1},\ldots, e_{0,k}\}$ are added to the list of active edges.   We have $\mathcal{D}^a_{0}=\{e_{0,1},\ldots, e_{0,k}\}$.  Suppose $e_j$ connects $u_0$ and $y_j$. At time
	$$
	J^d_1 = \min_{1\le j \le k} S_{e_{0,j},1}
	$$
	the first event occurs. The superscript `d' stands for delSI and  $S_{e,1}=\min\{T_{e,1}, R_{e,1}\}$, as defined in \eqref{defs}. Let $i$ be the index that achieves the minimum. 
	
	\mn
	(i) If $R_{e_{0,i},1}<T_{e_{0,i},1}$, then at time $J^a_1$ the edge $e_{0,i}$ is removed from the graph (and hence also from the set $\mathcal{D}^a_{J^d_1}$).
	
	\mn
	(ii) $T_{e_{0,i},1}<R_{e_{0,i},1}$ then at time $J^a_1$ vertex $y_i$ is infected by $x_i$. We move $e_{0,i}$   to  $\mathcal{D}^i_{J^d_1}$ . We add ${\cal N}^0(x_i,J^d_1-)$ to ${\cal D}^a_{J^d_1}$.

	\mn
	{\bf Induction step.} 
	For any active edge $e$ at time $t$, let  $\tau^d_{e,1}$ be the first time that $e$ becomes active in the delSI process. We also let
	$$
	V^d(e,t)=\tau^d_{e,1}+S_{e,1},
	$$ 
	be the time of the next event (infection or rewiring) to affect edge $e$.
	Suppose we have constructed the process up to time $J^d_m$.
	If there are no active edges present at time $J^d_{m}$, the construction is done. Otherwise, we let
	$$
	J^d_{m+1}=\min_{e\in \mathcal{D}^a_{J^d_m}} V(e,J^d_{m}).
	$$
	Let $e_m$ be the edge that achieves the minimum of $V^d(e,J^d_m)$, let $x(e_m)$ be the infected endpoint of $e_m$ and $y(e_m)$ be the susceptible endpoint of $e$.

	\mn
	(i) If $R_{e_m,1}<T_{e_m,1}$, then at time $J^d_{m+1}$ the edge $e_m$ is deleted
	from the graph (and hence also from the set $\mathcal{D}^a_{J^d_{m+1}}$).
	
	\mn
	(ii) If $T_{e_n,1}<R_{e_n,1}$, then at time $J^d_{m+1}$ the vertex $y(e_m)$ is infected by $x(e_m)$.  We move $e_m$ to $\mathcal{D}^i_{J^d_{m+1}}$. 
	
	\begin{itemize}
		\item
		We move all edges $e'$ in ${\cal N}^0_{y(e_m),J^d_{m+1}-}$ to ${\cal D}^a_{J^d_{m+1}}$.  Since $y(e_m)$ has just become infected, the other end of $e'$ must be susceptible at time $J^d_{m+1}$
		
		\item
		We move  all edges $e''$ in ${\cal N}^a_{y(e_m),J^d_{m+1}-}$  to ${\cal D}^i_{J^d_{m+1}}$.  
		Since $y(e_m)$ has just become infected, (i)  the other end of $e''$ must be infected at time $J^d_{m+1}$, and (ii) $e''$ cannot have been inactive earlier.
	\end{itemize}

	\medskip 
	We now prove by induction that avoSI dominates evoSI. 
	
	\begin{lemma}
		All vertices infected in delSI are also infected in evoSI and are infected earlier in avoSI than evoSI.
	\end{lemma}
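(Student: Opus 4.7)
The plan is to prove, by strong induction on the chronological ordering of delSI infections, the stronger claim that every vertex infected in delSI is infected in evoSI no later than in delSI. Enumerate the delSI-infected vertices as $v_1,v_2,\ldots$ in the order of their delSI infection times $t_1<t_2<\cdots$, and let $s_k$ denote the evoSI infection time of $v_k$, with the convention $s_k=\infty$ if $v_k$ is never infected in evoSI. The inductive claim $s_k\le t_k$ delivers both statements of the lemma simultaneously.

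For the base case, $v_1$ is infected in delSI through some edge $e_1=(u_0,v_1)$ that is active from time $0$ with $T_{e_1,1}<R_{e_1,1}$, so $t_1=T_{e_1,1}$. Because the two constructions share the same initial graph and have $u_0$ as the sole initial infected vertex, $e_1$ is also active in evoSI from time $0$, and the first event on $e_1$ in evoSI is governed by the shared variables $(T_{e_1,1},R_{e_1,1})$. Hence evoSI infects $v_1$ by time $T_{e_1,1}=t_1$.

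For the inductive step, assume $s_j\le t_j$ for all $j<k$. Let $e=(u,v_k)$ be the edge through which delSI infects $v_k$, with $u=v_j$ for some $j<k$, so the delSI infection time of $u$ is $t_j$ and $t_k=t_j+T_{e,1}$ with $T_{e,1}<R_{e,1}$. The induction hypothesis gives $s_j\le t_j$. If $v_k$ is already infected in evoSI by time $t_k$, the claim holds; otherwise $v_k$ is susceptible in evoSI throughout $[0,s_j)$, and so is $u$, so neither endpoint of $e$ is infected in evoSI before time $s_j$. Since rewirings in evoSI occur only on active edges (which require one endpoint to be infected), $e$ has not been rewired in evoSI and is still an original edge at time $s_j$. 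It therefore enters $\mathcal{E}^a$ in evoSI at $s_j$, its first activation is governed by the same shared variables $(T_{e,1},R_{e,1})$, and since $T_{e,1}<R_{e,1}$ the infection is transmitted along $e$ in evoSI at time $s_j+T_{e,1}\le t_j+T_{e,1}=t_k$, closing the induction.

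The main technical point will be verifying that $e$ is still an original edge in evoSI at the moment $s_j$ when the infector $u$ becomes infected there: if $e$ had already been rewired away, the coupled first-activation exponentials would no longer apply. The argument above handles this cleanly by observing that rewiring requires $e$ to have been active, which in turn requires one endpoint to be infected; under the standing assumption that $v_k$ is not yet infected in evoSI by time $s_j$, both endpoints of $e$ are susceptible throughout $[0,s_j)$, so no rewiring clock has ever started on $e$. Once this observation is in place, the shared pair $(T_{e,1},R_{e,1})$ forces the evoSI transmission to beat or tie the delSI transmission along $e$, and the remainder of the step is routine bookkeeping of the two constructions.
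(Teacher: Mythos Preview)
Your proof is correct and follows essentially the same induction scheme as the paper's own argument: order the delSI infections chronologically, and for each new infection $v_k$ reached via an edge $e$ with $T_{e,1}<R_{e,1}$, use the inductive bound on the infector's evoSI infection time together with the shared clocks on $e$ to force $s_k\le t_k$. The only real difference is expository: you spell out explicitly why $e$ cannot have been rewired in evoSI before time $s_j$ (both endpoints were susceptible, so $e$ was never active and no rewiring clock ran), whereas the paper's proof leaves this step implicit and simply asserts that $y$ will be infected at time $s'+T_{e,1}$ in the second case. Your added paragraph isolating this point is a genuine improvement in clarity over the paper's version.
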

	
	\begin{proof}
		The induction hypothesis holds for the first vertex since $u_0$ is infected at time 0 in both evoSI and delSI. Suppose the induction holds up to the $k$-st infected vertex in delSI. Assume at time $t$,  $y$ becomes the $(k+1)$-st infected vertex in delSI and $y$ is infected by vertex $x$ through edge $e$. We see from the construction of delSI that this implies $T_{e,1}<R_{e,1}$. Suppose $x$ was infected at time $s<t$ in delSI.
		The induction hypothesis implies that $x$ has also been infected in evoSI at a time $s' \le s$. There are two possible cases for $y$ in evoSI:
		\begin{itemize}
			\item $y$ has already become infected by time $s'+T_{e,1}$.
			\item $y$ was still susceptible right before $s'+T_{e,1}$. In this case, $y$ will be infected at time $s'+T_{e,1}\leq s+T_{e,1}=t.$
		\end{itemize}
		In either case $x$ has been infected by time $t$ in evoSI. This completes the induction step and thus proves Lemma \ref{couple}. 
	\end{proof}
	
	\subsection{The infected sites in delSI}
	
	In the introduction we have noted  that delSI is equivalent  to independent bond percolation. That is, we keep each edge independently with probability $\lambda/(\lambda+\rho)$ and find the component containing the initially infected vertex  (say, vertex 1). To compute the size of the delSI epidemic starting from vertex 1, we apply a standard algorithm, see e.g., \cite{ML}, for computing the size of the component containing 1 in the reduced graph in which edges have independently been deleted with probability $\rho/(\lambda+\rho)$. We call this the {\it exploration process of delSI.} At step 0 the active set ${\cal A}_0=\{1\}$, the unexplored set ${\cal U}_0 =\{2, \ldots n \}$, and the removed set ${\cal R}_0=\emptyset$. Here removed means these sites are no longer needed in the computation. In the SI model sites never enter the removed state, Let $\eta_{i,j} = \eta_{j,i}=1$ if there is an edge connecting $i$ and $j$ in the reduced graph. If $\eta_{i,j}=1$ an infection at $i$ is transmitted to $j$. At step $t$ if ${\cal A}_t \neq\emptyset$ we pick an $i_t \in {\cal A}_t$ and update the sets as follows. 
	\begin{align*}
		{\cal R}_{t+1} & =  {\cal R}_t \cup \{ i_t \}, \\
		{\cal A}_{t+1} & = ( {\cal A}_t - \{ i_t \}) \cup \{ y \in {\cal U}_t : \eta_{i_t,y} = 1 \}, \\
		{\cal U}_{t+1} & =  {\cal U}_t - \{ y \in {\cal U}_t : \eta_{i_t,y} = 1 \}.
	\end{align*}
	When ${\cal A}_t=\emptyset$ we have found the cluster containing 1 in the reduced graph, which will be the final set of infected sites in the SI model.
	The \emph{exploration process of the configuration model} can be similarly defined (just without deletion of edges) and we let $\mathcal{J}_t$ be the set of active sites at step $t$ in  this exploration. We set $R_t=\abs{\mathcal{R}_{t+1}}=t+1$,  $A_t=\abs{\mathcal{A}_{t+1}}$ and $J_t=\abs{\mathcal{J}_{t+1}}$. We make a time shift so that $A_t$ and $J_t$ can be coupled with two random walks with i.i.d. increments (see Lemma \ref{l:atst} below).

	We now study the exploration process of the configuration model itself as well as the delSI process on such graph. 
	Let $\psi_0$ have generating function $G$ defined in \eqref{F_2} and $\zeta_0$ have a  Binomial($D,\lambda/(\lambda+\rho)$) distribution whose generating function is denoted by $G^{\rho}$. The two generating functions are related by
	$$
	G^{\rho}(z)=G\left(\frac{\lambda}{\lambda+\rho}z+\frac{\rho}{\lambda+\rho}\right).
	$$ 
	Recall the definition of the generating function $\hat{G}$ in \eqref{G1f}. We can similarly define $\hat{G}^{\rho}$.
	Let $\{\chi_i,i\geq 1\}$ and   $\{\xi_i, i\geq 1\}$  be independent random variables with generating functions $\hat{G}$  and $\hat{G}^{\rho}$, respectively.
	Define two random walks (for integer-valued $t$):
	\beq
	W_0=\psi_0\quad W_t=W_0+\sum_{r=1}^t (\chi_r-1);
	\qquad S_0=\zeta_0 \quad S_t = S_0 + \sum_{r=1}^{t}  (\xi_r-1).
	\label{RW}
	\eeq
	Let $\tau^W_0=\inf\{t\geq 0:W_t= 0\}$ and set 
	$\bar W_t=W_{t\wedge \tau^W_0}$. We define $\bar S_t$ in a similar way. 
	
	\begin{lemma}\label{l:atst}
		We can couple  $\{J_t,0\leq t\leq n^{1/3}\log n\}$ and $\{\bar W_t,0\leq t\leq n^{1/3}\log n\}$ so that
		\begin{equation}\label{eq:jtwt}
			\lim_{n\to\infty}	\P\left((J_t)^{n^{1/3}\log n}_{t=0}=(\bar W_t)^{n^{1/3}\log n}_{t=0} \right)=1.
		\end{equation}
		Similarly, 	
		there exists a coupling of $\{A_t,0\leq t\leq n^{1/3}\log n\}$ and $\{\bar S_t,0\leq t\leq n^{1/3}\log n\}$ so that
		\begin{equation}\label{eq;atst}
			\lim_{n\to\infty}	\P\left((A_t)_{t=0}^{n^{1/3}\log n}=(\bar S_t)_{t=0}^{n^{1/3}\log n} \right)=1.
		\end{equation}
	\end{lemma}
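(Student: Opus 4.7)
The plan is to explicitly couple the configuration-model exploration with the random walk $W$ via a Janson-Luczak-Windridge style dynamic construction, and to argue that the two processes can be made to agree until the first \emph{collision} in the half-edge pairing; a union bound will then show that no collision occurs by step $n^{1/3}\log n$ with probability tending to $1$. The delSI coupling in \eqref{eq;atst} is obtained from the same exploration by independently thinning each discovered edge with probability $\lambda/(\lambda+\rho)$.

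Realize the exploration dynamically: process active vertices one at a time and, for each unpaired half-edge of the processed vertex, draw a uniformly random partner from the pool of unpaired half-edges. Declare $G_t$ to be the good event that, through step $t$, every such pairing lands on a half-edge of a vertex not yet in $\mathcal{R}\cup\mathcal{A}$ (so that no self-loops, back-edges, or multi-edges among visited vertices have been generated). On $G_t$ the explored subgraph is a tree, and the number of new activations at step $r$ equals the remaining degree of the freshly discovered vertex $v_r$. Under the annealed law (Remark \ref{rem:config}) the $D_i$ are i.i.d.\ copies of $D$, and conditionally on a fresh pairing, $v_r$ is selected proportionally to its residual degree on the unexplored pool; a standard size-biased coupling then realizes $D_{v_r}-1$ as an i.i.d.\ copy of $\chi_r$ with generating function $\hat G$, up to a total variation error of order $O(H_r/n)$, where $H_r$ denotes the number of half-edges paired through step $r$. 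Initializing with $\psi_0 := D_1$, we get $J_s = W_s = \bar W_s$ for every $s \le t$ on $G_t$, since both walks are nonnegative and share the recursion $J_{s+1}-J_s = \chi_{s+1}-1$ so they hit $0$ at the same step.

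To bound $\P(G_t^c)$, observe that on $G_s$ one has $H_s \le D_1 + \sum_{r<s}(\chi_r+1)$, which by $\E(D^5)<\infty$ (hence $\E((D^*)^2)<\infty$) has mean $O(s)$ and concentrates around it via Chebyshev, while $L_n = \sum_i D_i = m_1 n(1+o(1))$ with high probability. Each half-edge pairing therefore fails with probability at most $H_s/(L_n-H_s) = O(s/n)$; summing over the $O(t)$ pairings executed by step $t$ gives $\P(G_t^c) \le O(t^2/n) + o(1)$, which is $O(n^{-1/3}(\log n)^2) = o(1)$ when $t = n^{1/3}\log n$, and the aggregated size-biased-coupling error is of the same order. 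This proves \eqref{eq:jtwt}; the identical argument, with $\mathrm{Binomial}(D^*-1,\lambda/(\lambda+\rho))$-distributed increments in place of $D^*-1$, yields \eqref{eq;atst}. The main technical obstacle is cleanly matching the finite-$n$ residual size-biased draw to the target $D^*$ distribution within the good event; the moment assumption $\E(D^5)<\infty$ provides comfortable uniform control throughout the $O(n^{1/3}\log n)$ steps.
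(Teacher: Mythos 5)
Your overall architecture---dynamic exploration, a good event excluding collisions with probability $O(t^2/n)$, and independent thinning to get \eqref{eq;atst} from \eqref{eq:jtwt}---matches the shape of the paper's argument, and the collision bookkeeping is fine. The genuine gap is the one sentence you rely on most: the claim that, conditionally on a fresh pairing, the residual degree $D_{v_r}-1$ can be coupled to a copy of $\chi_r\sim D^*-1$ ``up to a total variation error of order $O(H_r/n)$.'' The natural (quenched) version of this statement is false: given the degree sequence $D_1,\dots,D_n$, the freshly discovered vertex is size-biased from the \emph{empirical} residual degrees, and the distance between the empirical size-biased law and the population law of $D^*-1$ is governed by the fluctuations $\abs{N_k-np_k}\asymp (np_k)^{1/2}$, i.e.\ roughly $n^{-1/2}$ per step (under $\E(D^5)<\infty$ the paper's Lemma \ref{l;d_n} together with the computation \eqref{chi1}--\eqref{eq:xi} yields $Cn^{-73/200}\log n$), not $O(H_r/n)=O(t/n)$. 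An annealed version of your claim---arguing that the unexplored degrees remain i.i.d.\ copies of $D$ so that the empirical fluctuations average out---is not automatic either: the exploration history is informative about the unexplored degrees, since every pairing probability carries the unexplored total degree in its denominator, so conditioning on the observed history tilts the law of the remaining degrees, and you would need to show that this tilt contributes only $o(n^{-1/3})$ per step. That control is exactly the technical heart of the lemma, and your proposal asserts it rather than proves it; your concluding remark that the moment assumption ``provides comfortable uniform control'' does not substitute for the estimate.

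For comparison, the paper splits the coupling in two. First it invokes Lemma 2.12 of \cite{vdH3} to couple the exploration $J_t$ with the walk $\bar W^n_t$ whose increments are drawn from the \emph{empirical} size-biased distribution, valid whp for $\sqrt{n/\max_i D_i}\gg n^{1/3}\log n$ steps (using $\max_i D_i\le n^{1/4}\log n$ whp). Second, it bounds the total variation distance between $(\psi^n_0,\chi^n_1,\dots)$ and $(\psi_0,\chi_1,\dots)$ over $n^{1/3}\log n$ coordinates, which requires the per-step bound $d_{\mathrm{TV}}(\chi^n_1,\chi_1)\le Cn^{-73/200}\log n$; note the margin over the required $n^{-1/3}$ is thin, which is why the concentration estimates for $N_k$, $\sum_i D_i$ and the maximal degree are needed. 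To repair your proof you should either carry out the annealed conditional-law estimate sketched above or adopt this quenched-plus-total-variation decomposition; the remainder of your argument (collision bound, stopping at $0$, Binomial thinning for delSI) then goes through.
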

	Note that $\bar W_t$ and $\bar S_t$ can be viewed as the exploration processes of two-phase branching process $Z_m$ and $\bar Z_m$ (both are defined in Section \ref{britton}), respectively. 
	The proof of Lemma \ref{l:atst} is deferred to the end of this section. 
	For the rest of this section we will always work on the event
	$$
	\{A_t=\bar S_t,0\leq t\leq n^{1/3}\log n \} \cap \{J_t=\bar W_t,0\leq t\leq n^{1/3}\log n \}
	$$ 
	and hence assume that $A_t$ and $J_t$ have independent increments until they hit 0.

	\subsection{Proof of Theorem  \ref{critical_value}(i)} 
	The formula for the critical value of delSI follows from standard results on percolation in random graphs.  Note that in delSI each edge is kept with probability $\lambda/(\lambda+\rho)$. Using \cite[Theorem 3.9]{Jperc} we see that 
	$$\lambda_c(\mbox{delSI})=\frac{\rho m_1}{m_2-2m_1}.$$ Equivalently, we have
	$$
	\alpha_c(\mbox{delSI})=\frac{\rho m_1}{\lambda_c(\mbox{delSI})}=m_2-2m_1. 
	$$
	Recall that Lemma \ref{couple} shows that  the final set of infected individuals in delSI is contained in the analogous set for evoSI with the same parameters so
	$$
	\lambda_c(\mbox{evoSI}) \le \lambda_c(\mbox{delSI}).
	$$ 
	To prove that the two are equal we will show that if $\lambda < \lambda_c(\textrm{delSI})$ then evoSI dies out, i.e., infects only a vanishing portion of the total population as $n\to\infty$.

	To compare the two evolutions, we will first run the delSI epidemic to completion. Once this is done we will randomly rewire the edges deleted in delSI. If the rewiring creates a new infection in evoSI, then we have to continue to run the process. If not, then the infected sites in the two processes are the same. 
	Let ${\cal R}$ be the set of sites that are eventually infected  in delSI, and let ${\cal R'}$ be the set of eventually infected sites in evoSI. Let $R' = |{\cal R}'|$ and $R = |{\cal R}|$. 
	
	To get started we use a result of Janson \cite{Jmax} about graphs with specified degree distributions. He works in the set-up introduced by Molloy-Reed \cite{MoRe1, MoRe2} where the degree sequence $d^n_i$, $1\le i \le n$ is specified and one assumes only that limiting moments exist as well  some other technical assumptions that are satisfied in our case (for a more recent example see \cite{JLW}):
	\begin{equation}\label{jasonthm}
		\frac{1}{n} \sum_{i=1}^n d^n_i \to \mu, \qquad \frac{1}{n} \sum_{i=1}^n d^n_i(d^n_i-1) \to \theta.
	\end{equation}
	The next result is Theorem 1.1 in \cite{Jmax}. The $\mu$ and $\theta$ in this theorem have the same meaning as  \eqref{jasonthm}. The ``whp'' below is short for with high probability, and means that the probability the inequality holds tends to 1 as $n\to\infty$. Let $D^n=d_{u_0}^n$ where $u_0$ is randomly chosen from $\{1,2,\ldots, n\}$. 
	
	\begin{theorem} \label{Janbd}
		Suppose $\mu>0$, $\theta>1$, and $\P( D^n \ge k) \le C k^{1-\gamma}$ for some $\gamma > 3$ and $C<\infty$. Then there is a constant $C_{\ref{maxbd}}$ 
		which depends on $C$ so that the largest component $\mathcal{C}_1$ has
		\beq
		|{\cal C}_1| \le C_{\ref{maxbd}} n^{1/(\gamma-1)} \qquad \hbox{whp.}
		\label{maxbd}
		\eeq
	\end{theorem}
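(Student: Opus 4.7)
The plan is to follow Janson's approach in \cite{Jmax}: the heavy-tail condition $\P(D^n \geq k) \leq C k^{1-\gamma}$ forces the maximum degree to concentrate at the scale $n^{1/(\gamma-1)}$, and in the subcritical regime under consideration every large component is organized around such a high-degree ``hub'' vertex, with the rest of the component built from bounded-expected-size subcritical branching trees hanging off the hub's neighbors.

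First, I would establish the maximum-degree bound. A union bound using the tail hypothesis and independence of the $d_i^n$ gives
\begin{equation*}
\P\Bigl( \max_{1\leq i\leq n} d_i^n > K n^{1/(\gamma-1)} \Bigr) \leq n \cdot C \bigl( K n^{1/(\gamma-1)} \bigr)^{1-\gamma} = C K^{1-\gamma},
\end{equation*}
which can be made arbitrarily small by choosing $K$ large. Hence whp every degree lies at or below the scale $n^{1/(\gamma-1)}$.

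Second, I would truncate at a large fixed threshold $K_0$ (independent of $n$), removing all hubs of degree $>K_0$. In the residual configuration model the second moment is bounded and the Molloy-Reed subcritical condition still holds, so standard results (e.g.\ Theorem~3.9 of \cite{Jperc}) give that all components of the truncated graph have size $O(\log n)$ whp. A sharper Asmussen-Foss style tail estimate on the total progeny of the subcritical branching process with power-law offspring distribution $D^*-1$, applied through the exploration-to-branching-process coupling of Lemma~\ref{l:atst}, replaces the $\log n$ factor by an $O(1)$ constant: namely $\P(|\mathcal{C}_{\text{trunc}}(v)| \geq t) \leq C' t^{2-\gamma}$ for each fixed $v$, which is summable over all $n$ vertices at the scale $t = O(1)$ after the $K_0$-truncation. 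Reinserting a hub $v$ of degree $d_v$, its component is contained in $\{v\} \cup \bigcup_{u \sim v} \mathcal{C}_{\text{trunc}}(u)$, and hence has size at most $d_v \cdot O(1) = O(n^{1/(\gamma-1)})$.

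The main obstacle is to rule out the merging of components from several distinct hubs: in principle, one hub's neighborhood could overlap another's, producing a component much larger than any individual hub's contribution. Since the number of hubs of degree exceeding $K_0$ is of order $n K_0^{1-\gamma}$ and the half-edges are paired uniformly, a first-moment computation (using $\E D^5 < \infty$ to bound the probability that two hubs share a short path through low-degree vertices) shows that with high probability no two hubs land in the same truncated component, so the hubs' components are disjoint. Taking the maximum over all hubs and applying the bound above gives $|\mathcal{C}_1| \leq C_{\ref{maxbd}} n^{1/(\gamma-1)}$ whp, with the constant $C_{\ref{maxbd}}$ depending on $C$ and $\gamma$.
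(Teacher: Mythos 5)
The paper itself does not prove Theorem \ref{Janbd}: it is quoted as Theorem 1.1 of \cite{Jmax}, so your sketch has to be judged as a stand-alone proof, and as written it has two genuine gaps. The first is the claim that after truncating at a \emph{fixed} level $K_0$ the clusters are $O(1)$ uniformly over all vertices. Once the degrees are truncated at $K_0$ the offspring distribution is bounded and the total-progeny tail is geometric, so the maximum of the $n$ truncated cluster sizes is of order $\log n$, not $O(1)$; no Asmussen--Foss refinement can make the largest of $n$ subcritical clusters bounded. The tail $\P(|\mathcal{C}_{\mathrm{trunc}}(v)|\geq t)\leq C' t^{2-\gamma}$ you invoke is the tail of the \emph{untruncated} progeny, and a union bound over $n$ vertices with that tail forces $t$ of order $n^{1/(\gamma-2)}$, which is \emph{larger} than the target $n^{1/(\gamma-1)}$ --- this is exactly why the naive first-moment argument does not give the theorem. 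What the hub step actually requires is not a per-neighbour $O(1)$ bound but a bound on the sum, $\sum_{u\sim v}|\mathcal{C}_{\mathrm{trunc}}(u)|=O(d_v)$ whp for $d_v\asymp n^{1/(\gamma-1)}$, i.e.\ a law-of-large-numbers/concentration estimate for many attached subcritical clusters with bounded mean; as written your argument yields at best $d_v\cdot O(\log n)$.

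The second gap is the disjointness step. With a fixed threshold $K_0$ the number of ``hubs'' of degree exceeding $K_0$ is allowed to be of order $nK_0^{1-\gamma}$, a positive proportion of all vertices; such hubs certainly do share components (two adjacent vertices both have degree $>K_0$ with probability bounded away from zero), so the proposed first-moment computation cannot show that hub components are disjoint, and the containment $\mathcal{C}(v)\subset\{v\}\cup\bigcup_{u\sim v}\mathcal{C}_{\mathrm{trunc}}(u)$ fails once all hubs are reinserted. To make a truncation argument work one must cut at a level growing with $n$, and then the real difficulty --- bounding how many large-degree vertices a single component can chain together --- is precisely the part your sketch leaves out and where Janson's proof does its work. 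Two smaller points: $\E(D^5)<\infty$ is a standing assumption of this paper, not a hypothesis of Theorem \ref{Janbd} (the tail bound with $\gamma>3$ only gives moments of order below $\gamma-1$), so it cannot be used inside this proof; and in the Molloy--Reed setting the $d^n_i$ are deterministic, so the maximum-degree bound should come from the empirical tail condition $\#\{i:d^n_i\geq k\}\leq Cnk^{1-\gamma}$ (which makes it deterministic) rather than from independence of the degrees --- that step is easily repaired, but the two gaps above are not.
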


	\noindent
	Theorem \ref{Janbd} can also be applied  to the setting where the degrees are random rather than deterministic. 
	We have assumed that in the original graph $\E D^5 < \infty$, so $\P( D \ge k ) \le k^{-3} \E (D^3)$ and we have
	$$
	\P(D^n\geq k)=\frac{1}{n}\sum_{i=1}^n \P(D_i\geq k)=\P(D_1\geq k)\leq  k^{-3} \E (D^3)\leq Ck^{-3}.
	$$
	It follows that
	\beq
	|{\cal C}_1| \le C_{\ref{maxbd}} n^{1/3}\quad\hbox{whp}.
	\label{C1bd}
	\eeq

	Let $N_d$ be the number of deleted edges in the exploration process of delSI. One vertex is removed from the construction on each step, so whp the number of steps is
	$\le r_0 := C_{\ref{maxbd}} n^{1/3}$. Note that $r_0\ll n^{1/3}\log n$ so  we can assume $J_t$ has independent increments by Lemma \ref{l:atst}. 
	It follows that
	$$
	N_d \leq r_0+\bar W_{r_0}\leq r_0+\psi_0+\sum_{r=1}^{r_0} \chi_r.
	$$
	where we recall that the $\chi_i$ are independent with the distribution $D^*-1$. The inequality comes from the fact that we are counting all the edges even if they are not deleted. 
	Since $\E(D^3)<\infty$,  we have
	$$\var(\chi_i) \le \E(\chi_i^2)=\E[(D^*-1)^2]\leq \E[(D^*)^2]=
	\frac{\E(D^3)}{\E(D)}
	<\infty$$ and $\var(\psi_0)\leq \E\psi_0^2=\E D^2<\infty$. Let $1/3 < a < 1/2$. Using Chebyshev's inequality
	\beq
	\P( N_d \ge r_0\E\chi_1 + \E\psi_0+r_0+n^{a} ) \le (r_0 \E\chi_1^2+\E \psi_0^2)/n^{2a}\leq Cn^{1/3-2a}.
	\label{rewires}
	\eeq
	A similar argument shows that
	\beq
	\P(A_{r_0} + R_{r_0} \ge r_0\E\xi_1 + \E \zeta_0+r_0+ n^{a} ) \le (r_0 \E\xi_1^2+\E \zeta_0^2)/n^{2a} \leq  Cn^{1/3-2a}.
	\label{infecteds}
	\eeq
	Since $a>1/3$, when $n$ is large we can upper bound $r_0\E\chi_1 + \E\psi_0+r_0+n^{a} $ and $ r_0\E\xi_1 + \E \zeta_0+r_0+ n^{a}$  by $2n^{a}$. 
	
	At step $r_0$ we use random variables independent of delSI to randomly rewire the deleted edges.
	Let $Y$ be the number of edges deleted up to time $r_0$ that rewire to the set $\mathcal{A}_{r_0+1}\cup \mathcal{R}_{r_0+1}$.  By construction
	$$
	Y = \text{Binomial}(N_d,(A_{r_0}+ R_{r_0})/n).
	$$
	Using \eqref{rewires}, \eqref{infecteds} and $r_0\le C_{\ref{maxbd}}n^{1/3}$ we see that on a set with probability $\ge 1 - C n^{-(2a-1/3)}$  
	$$
	Y \preceq  \text{Binomial}(2n^{a}, 2n^{a-1}) \equiv \bar Y,
	$$
	where $\equiv$ indicates that the last equality defines $\bar Y$.
	From this  we get 
	\beq
	\P(Y\ge 1 ) \leq \P(\bar Y\geq 1) \leq \E(\bar Y) =\frac{4 n^{2a}}{n} \to 0,
	\label{Y1bd}
	\eeq
	since $a<1/2$. 
	Since $\{ Y = 0\} \subset \{\mathcal{R} = \mathcal{R}'\}$, this shows $\P({\cal R} = {\cal R}') \to 1$ as $n\to\infty$ and completes the proof of (i). To prepare for the proof of (ii) note that the conclusion (the set of infected sites coincide in delSI and evoSI) holds as long as  the number of steps is smaller than $Cn^{1/3}$ even if the epidemic is supercritical.

	\subsection{Proof of Theorem \ref{critical_value}(ii)} 
	
	We need the following ingredient in the proof. 
	
	\begin{lemma} \label{upbd2}
		There is a $\gamma>0$ so that
		$$
		\P(0< A_{\log n} < \gamma \log n | A_0> 0) \le  \frac{C}{\log n}.
		$$
	\end{lemma}
	
	\begin{proof}[Proof of Lemma \ref{upbd2}]
		On the event $\{A_{\log n}>0\}$ we have $A_{\log n}=S_{\log n}$, which is, by definition,
		$$
		\zeta_0+\sum_{r=1}^{\log n}\xi_r.
		$$
		Hence if we take $\gamma=\E(\xi_1)/2$ then we have
		\begin{equation*}
			\begin{split}
				\P(0< A_{\log n} < \gamma \log n | A_0> 0) &\leq 
				\P\left( \sum_{r=1}^{\log n}\xi_r\leq \frac{ \E(\xi_1)\log n }{2} \right)\\
				&\leq  	\P\left( \abs{\sum_{r=1}^{\log n}\xi_r-
					\E(\xi_1)\log n} 
				\geq \frac{\E(\xi_1)\log n }{2} \right)\\
				&\leq \frac{4\var (\xi_1)\log n}{ (\E(\xi_1))^2 \log^2 n }\leq \frac{C}{\log n}.
			\end{split}
		\end{equation*}

	\end{proof}
	Let $B_d$ and $B_e$ be the events that there is a large epidemic in delSI and evoSI respectively.
	We now use Lemma \ref{upbd2} to show the difference between the probabilities of these two events vanishes asymptotically. 
	\begin{lemma}\label{upbd-3}
		Suppose $\lambda > \lambda_c(\mbox{delSI})$. As $n\to\infty$, $\P(B_e) - \P(B_d) \to 0.$
	\end{lemma}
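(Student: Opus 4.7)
The plan is to reduce to the argument already carried out for part (i). Since Lemma \ref{couple} gives $B_d \subseteq B_e$ on a coupled space, we have $\P(B_e) \ge \P(B_d)$ automatically, so it suffices to prove $\P(B_e \cap B_d^c) \to 0$. The strategy is to show that on the event $B_d^c$, the exploration process of delSI terminates in at most $r_0 = C n^{1/3}$ steps with high probability, and then invoke essentially verbatim the computations from the proof of part (i) to conclude that the rewirings fail to create a new infection whp.

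First, I would establish the following dichotomy for the delSI exploration. Using the coupling of Lemma \ref{l:atst}, for $t \le n^{1/3}\log n$ we have $A_t = \bar S_t$ whp, where $\bar S$ is the branching random walk whose increments are distributed as $\xi_r - 1$ with $\xi_r$ having generating function $\hat{G}^{\rho}$. The walk $\bar S$ corresponds to the exploration of the two-phase branching process $\bar Z_m$, which by Lemma \ref{avocrv} survives with probability $q(\lambda) > 0$ in the supercritical regime. On the non-survival event the total progeny $\tau^S_0$ is an almost-surely finite random variable (not depending on $n$), so $\tau^S_0 \le r_0$ for any $n$ large enough. On the survival event, standard configuration-model results (e.g.~combined with the finite-fifth-moment hypothesis $\E D^5 < \infty$) guarantee that delSI finds the giant component of size $\Theta(n)$, so $B_d$ holds. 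Consequently, on $B_d^c$ the exploration terminates at some $\tau^A_0 \le r_0$ whp.

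With this dichotomy in hand, I would now rerun the calculation from part (i) line for line. On $B_d^c$, the number of deleted edges $N$ satisfies
\[
N \le r_0 + \bar W_{r_0} \le r_0 + \psi_0 + \sum_{r=1}^{r_0} \chi_r,
\]
and by Chebyshev's inequality using $\E(D^3) < \infty$, one has $N \le 2 n^{a}$ and $A_{r_0} + R_{r_0} \le 2 n^{a}$ whp for any fixed $1/3 < a < 1/2$, with failure probability $O(n^{1/3 - 2a})$. The number of deleted edges that, upon rewiring, land in the already touched set $\mathcal{A}_{\tau^A_0+1} \cup \mathcal{R}_{\tau^A_0+1}$ is stochastically at most $\mathrm{Binomial}(2 n^a, 2 n^{a-1})$, which equals zero with probability at least $1 - 4 n^{2a-1} \to 0$ by the estimate \eqref{Y0bd}. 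On this event the set of infected vertices in evoSI coincides with that in delSI, so $B_e$ also fails. Hence $\P(B_e \cap B_d^c) \to 0$, as required.

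The main obstacle is the dichotomy step: one must justify that on $B_d^c$, the exploration really does die out in $O(n^{1/3})$ steps and not merely in $o(n)$ steps. Under $\E D^5 < \infty$, the second-largest component of the supercritical configuration model is in fact much smaller than $n^{1/3}$ (polynomial in $\log n$ for degree distributions with sufficiently light tails), so this bound is comfortable. The cleanest route is to use Lemma \ref{l:atst} together with the fact that the limiting branching process $\bar S$ either becomes extinct quickly or drifts to $+\infty$ at linear rate, and then argue that the linear-drift regime forces $A_t$ to hit $\epsilon n$ within $O(n)$ steps, i.e.~a large epidemic, which places us in $B_d$.
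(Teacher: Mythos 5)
Your overall plan is the same as the paper's: exploit the remark at the end of the proof of part (i) that the delSI and evoSI infected sets coincide whp as long as the exploration runs for at most $Cn^{1/3}$ steps (your ``rerun the calculation from part (i) line for line'' step is exactly that remark), and combine it with a dichotomy saying that if the delSI exploration does not die out quickly then a large outbreak occurs. The paper implements the dichotomy by looking at the active set at step $\log n$: Lemma \ref{upbd2} (a Chebyshev bound on the coupled walk) shows the intermediate regime $0<A_{\log n}<\gamma\log n$ has vanishing probability, $\P(B_d\mid A_{\log n}\ge\gamma\log n)\to 1$ is taken from the argument for Theorem 2.9(c) of \cite{JLW}, and the coincidence of $\mathcal{A}_{\log n}$ and $\mathcal{A}'_{\log n}$ transfers the trichotomy to evoSI. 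So structurally you and the paper agree; the difference is how you justify the crucial implication ``exploration survives the early window $\Longrightarrow$ $B_d$ holds whp.''

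That is where your proposal has a genuine gap. Your ``cleanest route'' -- that the linear drift of the coupled walk forces $A_t$ to hit $\epsilon n$ within $O(n)$ steps -- is not licensed by Lemma \ref{l:atst}: the coupling with the i.i.d.-increment walk $\bar S_t$ is only valid for $t\le n^{1/3}\log n$, and beyond that window the increments of $A_t$ are affected by depletion of vertices and half-edges, so you cannot extrapolate the random-walk drift up to times of order $n$. Your fallback, that the second-largest component of the supercritical percolated configuration model is much smaller than $n^{1/3}$, is a nontrivial duality-type statement that the paper neither proves nor cites (Theorem \ref{Janbd} is only for the subcritical regime), and even granted a bound on the second-largest component you would still need a ``gap'' statement excluding all components of intermediate size between $n^{1/3}\log n$ and $\epsilon n$ in order to conclude that surviving the early window puts vertex $1$ in the giant. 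The paper closes exactly this hole by conditioning on $F_2=\{A_{\log n}\ge\gamma\log n\}$ and invoking the dynamic argument of \cite{JLW} (pages 750--752) for $\P(B_d\mid F_2)\to 1$; with that (or an equivalent precise citation) in place of your appeal to ``standard configuration-model results,'' your argument goes through, and the remaining steps (extinction has a.s.\ finite progeny, hence $\le r_0$ for large $n$; the Binomial rewiring estimate \eqref{Y0bd}) are fine.
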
	
	
	\begin{proof}[Proof of Lemma \ref{upbd-3}] Clearly $\P(B_d) \leq \P(B_e)$. Let $S_t$ be the random walk defined in \eqref{RW}.
		To  prove Lemma \ref{upbd-3} we need the following lemma.
		Now we return to the proof of Lemma \ref{upbd-3}. 
		Let $F_0 = \{A_{\log n} = 0\}, F_1 = \{0 < A_{\log n}< \gamma \log n\}$ and
		$F_2 = \{A_{\log n} \geq \gamma \log n\}$. Decomposing $B_d$ into three parts
		and using Lemma \ref{upbd2}:
		$$
		\P(B_d) = \sum_{i=0}^2 \P(B_d \mid F_i) P(F_i). 
		$$
		We note that
		\begin{itemize}
			\item $\P(B_d|F_0)=0$ by the definition of $F_0$.
			\item $\P(F_1)$ converges to 0 by Lemma \ref{upbd2}. 
			\item $\P(B_d|F_2)$ converges to 1 by the same argument as the proof of Theorem 2.9(c) in \cite{JLW} (see page 750-752 in \cite{JLW}).
		\end{itemize}
		Therefore we have $\P(B_d)-\P(F_2)\to 0$. 
		As for $\P(B_e)$, we note that by the remark at the end of the proof of (i) one has $\P(\mathcal{A}_{\log n}=\mathcal{A}_{\log n}')\to 1$ where $\mathcal{A}'_{\log n}$ is the number of active sites in evoSI at step $\log n$. Using the decomposition $\P(B_e)=\sum_{i=0}^2 \P(B_e \cap F'_i)$
		where the event $F'_i$ is defined in a similar way to $F_i$ with $A_{\log n}$ replaced by $A'_{\log n}$,
		we see
		\begin{itemize}
			\item $\P(B_e \cap F_1')\leq \P(F_1')\leq \P(F_1)+o(1)=o(1)$.
			\item $\P(B_e \cap F_0')=o(1)$ by definition of $B_e$.
			\item $\P(B_e \cap F_2')=\P(F_2')+o(1)=\P(F_2)+o(1)$.
		\end{itemize}
		It follows that $\P(B_e)-\P(F_2)\to 0$. This implies that $\P(B_e)-\P(B_d)\to 0$.  It remains to compute the limit of $\P(F_2)$. Since $\P(F_1)=o(1)$, we have 
		\begin{equation}\label{eq;f1f2}
			\P(F_2)=\P(F_2\cup F_1)+o(1)=\P(A_{\log n}>0)+o(1).
		\end{equation}
		This completes the proof of Lemma \ref{upbd-3}. 
	\end{proof}
	To compute the limit of  $\P(A_{\log n}>0)$,
	note that due to Lemma \ref{l:atst}, $A_t$ can be coupled with the exploration process of the two-phase branching process $\bar Z_m$ defined in Section \ref{britton}. Therefore we have (recall that $q(\lambda)$ is the survival probability of $\bar Z_m$) $$\P(A_{\log n}>0)=\P(\bar Z_m>0, \forall m)+o(1)=q(\lambda)+o(1).$$ By \eqref{eq;f1f2} we see that $\P(F_2)\to q(\lambda)$ as well. This implies that both $\P(B_e)$ and $\P(B_d)$ converge to $q(\lambda)$ as $n\to\infty$ and completes the proof of Theorem \ref{critical_value}(ii). Note that using the fact that delSI is equivalent to independent bond percolation, the statement $\P(B_d)\to q(\lambda)$ also follows from standard results on percolation in random graphs. See, e.g., 
	\cite[Theorem 3.9]{Jperc}.
	It remains to prove Lemma \ref{l:atst} to complete the proof of Theorem \ref{critical_value}. 
	\begin{proof}[Proof of Lemma \ref{l:atst}]
		We only prove equation \eqref{eq:jtwt} since the other one follows from \eqref{eq:jtwt} and  the fact that  delSI is equivalent to percolation with edge retaining probability $\lambda/(\lambda+\rho)$. 
		The proof consists of two steps. First, we define an empirical version of $W_t$. Let $D_1, \ldots, D_n$ be i.i.d. random variables sampled from the distribution of $D$. Given a sample of $D_1, \ldots, D_n$, let $\psi^n_0$ be sampled from the (random) distribution 
		$$
		\P_n(\psi^n_0=k)=\frac{1}{n}\abs{\{1\leq i\leq n: D_i=k\}},\, \forall  k\geq 0,
		$$
		which is the sample empirical distribution. Let $\chi^n_r,r\geq 1$ have the distribution
		$$
		\P_n(\chi^n_r=k)=\frac{1}{D_1+\ldots+D_n}(k+1)\abs{\{1\leq i\leq n: D_i=k+1\}}\, \forall k\geq 0.
		$$
		Define $$
		W^n_t=\psi^n_0+\sum_{r=1}^t (\chi^n_r-1),\,t\in \N.
		$$
		In other words, $W^n_t$ is a random walk in the random environment given by $D_1,\ldots, D_n$. 
		We define $\bar W^n_t$ to be $W^n_{t\wedge \tau'}$ where $\tau'$ is the first time $W^n_t$ hits zero. 
		Note that the condition $\E(D^5)<\infty$ implies that $\max_{1\leq i\leq n}D_i=o(n^{1/4}\log n)$ whp for any $\ep>0$. Indeed we have
		\begin{equation}\label{eq:dmax}
			\P\left(\max_{1\leq i\leq n}D_i>n^{1/4}\log n\right)\leq n \P(D_1>n^{1/4}\log n)\leq n \cdot n^{-1}\log^{-4} n\E(D^4)\leq C \log^{-4}n.
		\end{equation}
		Consequently, 	for  $n$ large enough, whp  
		$$
		\sqrt{\frac{n}{\max_{1\leq i\leq n}D_i}}> n^{3/8}/\log n.
		$$
		Using Lemma 2.12 in \cite{vdH3}, we see that (recall that $J_t$ is the exploration process of the configuration model starting from a uniformly randomly chosen vertex)
		\begin{itemize}
			\item 	$J_t$ can be coupled with $\bar W^n_t$   with high probability up to time $ n^{3/8}/\log n$.
			\item Whp the subgraph obtained by exploring the neighborhoods of $n^{3/8}/\log n$ vertices is a tree.
		\end{itemize}
		Using $n^{1/3}\log n \ll n^{3/8}/\log n$,
		\begin{equation}
			\lim_{n\to\infty}	\P\left(J_t=\bar W^n_t,0\leq t\leq n^{1/3}\log n \right)=1.
		\end{equation}
		To prove \eqref{eq:jtwt} it remains to show that one can couple $W^n_t$
		and $W_t$ up to step $n^{1/3}\log n$. To this end, we use the characterization of total variation distance in terms of optimal coupling. It is well known that for any two random variables $X$ and $Y$, 
		$$
		d_{\mathrm{TV}}(X,Y)=\inf_{\textrm{all couplings of }X,Y} \P(X\neq Y).
		$$
		See \cite[Theorem 1.14]{VC} for instance. 
		Using ${\bf D_n}$ to denote the degree sequence $D_1,\ldots, D_n$,  it suffices to show that
		\begin{equation}\label{eq;tv2}
			d^{\BD}_{\mathrm{TV}}((\psi^n_0,\{\chi^n_r,1\leq r\leq n^{1/3}\log n\}),(\psi_0, \{\chi_r,1\leq r\leq n^{1/3}\log n\})) \CP 0.
		\end{equation}
		Here the superscript ${\bf D_n}$ indicates that we are considering the quenched law of $\psi^n_0$ and $\xi^n_0$. 
		Since conditionally on $\BD$, $\psi^n_0$ and $\chi^n_r,r\geq 1$ are all independent, 
		\begin{equation}\label{eq:tv}
			\begin{split}
				&		d^{\BD}_{\mathrm{TV}}((\psi^n_0,\{\chi^n_r,1\leq r\leq n^{1/3}\log n\}),(\psi_0, \{\chi_r,1\leq r\leq n^{1/3}\log n\}))\\
				\leq 	&	 	d^{\BD}_{\mathrm{TV}}(\psi^n_0, \psi_0)+
				\sum_{r=1}^{n^{1/3}\log n}	d^{\BD}_{\mathrm{TV}}(\chi^n_r,\chi_r)\\
				\leq &  	d^{\BD}_{\mathrm{TV}}(\psi^n_0, \psi_0)+(\log n)n^{1/3}
				d^{\BD}_{\mathrm{TV}}(\chi^n_1,\chi_1).
			\end{split}
		\end{equation}
		We need the following lemma to control $\BD$. Recall that we let $p_k=\P(D=k)$ and $m_1=\E(D)$. Also recall that we assume $\E(D^5)<\infty$. 
		\begin{lemma}\label{l;d_n}
			For any $\ep>0$, 
			\begin{equation}\label{eq:sumd}
				\P\left(\abs{\sum_{i=1}^n D_i-nm_1}>n^{1/2+\ep}\right)\leq Cn^{-2\ep}.
			\end{equation}
			Let $N_k$ be the cardinality of the set $\{i:D_i=k\}$. We have
			\begin{equation}\label{eq;n_k}
				\P\left(\abs{N_k-np_k}>n^{\ep}(np_k)^{1/2}  \right)\leq n^{-2\ep}.
			\end{equation}
		\end{lemma}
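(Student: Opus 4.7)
The plan is to prove both bounds by direct applications of Chebyshev's inequality, exploiting that $D_1,\ldots,D_n$ are i.i.d.\ copies of $D$ and that the standing hypothesis $\E(D^5)<\infty$ in particular ensures $\var(D)<\infty$. No sharper concentration tool (Bernstein, Azuma, etc.) is needed, since the thresholds $n^{1/2+\ep}$ and $n^{\ep}(np_k)^{1/2}$ are both polynomially large compared with the relevant standard deviations.

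For \eqref{eq:sumd}, the independence of the $D_i$ gives $\E(\sum_{i=1}^n D_i)=nm_1$ and $\var(\sum_{i=1}^n D_i)=n\var(D)$, so Chebyshev's inequality with threshold $n^{1/2+\ep}$ yields the upper bound $n\var(D)/n^{1+2\ep}=\var(D)/n^{2\ep}$, and one can take $C=\var(D)$.

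For \eqref{eq;n_k}, write $N_k=\sum_{i=1}^n 1_{\{D_i=k\}}$. Since the $D_i$ are i.i.d., $N_k$ is $\mathrm{Binomial}(n,p_k)$ with mean $np_k$ and variance $np_k(1-p_k)\le np_k$. Chebyshev's inequality with threshold $n^{\ep}(np_k)^{1/2}$ then gives the upper bound $np_k(1-p_k)/(n^{2\ep}\cdot np_k)\le n^{-2\ep}$.

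I do not foresee any real obstacle: this is a routine second-moment concentration estimate, stated separately only so that it can be fed into the total variation bound \eqref{eq:tv}, where one simultaneously needs the normalizing denominator $\sum_{i=1}^n D_i$ appearing in the definition of the empirical size-biased distribution $\chi_r^n$ to be close to $nm_1$, and the weighted empirical masses $kN_k$ to be close to $knp_k$. Combining the two inequalities then shows that the quenched laws of $\psi_0^n$ and $\chi_r^n$ are uniformly close in total variation to those of $\psi_0$ and $\chi_r$, which is exactly what is required to drive the coupling in \eqref{eq;tv2}.
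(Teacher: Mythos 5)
Your proposal is correct and follows essentially the same route as the paper: both inequalities are proved by Chebyshev's (second-moment Markov) inequality, using $\var(\sum_i D_i)=n\var(D)$ for the first and $\var(N_k)=np_k(1-p_k)\le np_k$ for the second, exactly as in the paper's proof of Lemma \ref{l;d_n}.
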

		\begin{proof}[Proof of Lemma \ref{l;d_n}]
			Both \eqref{eq:sumd} and \eqref{eq;n_k} follow from Markov's inequality. For \eqref{eq:sumd} we note that
			$$
			\P\left(\abs{\sum_{i=1}^n D_i-nm_1}>n^{1/2+\ep}\right)\leq \frac{n\mathrm{Var}(D_1)}{n^{1+2\ep}}\leq Cn^{-2\ep}. 
			$$
			For the second inequality, we note that  $\E(1_{\{D_1=k\}}-p_k)^2=p_k(1-p_k)\leq p_k$. 
			It follows that
			$$
			\E(N_k-np_k)^2=\E\left(\sum_{i=1}^n \left(1_{\{D_i=k\}}-p_k\right)\right)^2= n	\E(1_{\{D_1=k\}}-p_k)^2\leq np_k. 
			$$
			Therefore
			\begin{equation}
				\P\left(\abs{N_k-np_k}>n^{\ep}(np_k)^{1/2} \right)\leq 
				\frac{ \E(N_k-np_k)^2}{n^{2\ep}(np_k)}\leq n^{-2\ep}.
			\end{equation}
		\end{proof}
		Let $\ep_1=1/100,\ep_2=1/8+1/100$.
		Consider the event
		\begin{equation}
			\begin{split}
				\Omega_n=&\left\{\max_{1\leq i\leq n}D_i\leq n^{1/4}\log n, 
				\abs{\sum_{i=1}^n D_i-nm_1}\leq n^{1/2+\ep_1}\right\}\\
				&
				\cap 	\left\{ \abs{N_k-np_k}\leq n^{\ep_2}
				(np_k)^{1/2} \mbox{ for }1\leq k\leq  n^{1/4}\log n
				\right\}.
			\end{split}
		\end{equation}
		Lemma \ref{l;d_n} and equation \eqref{eq:dmax} imply that $\P(\Omega_n)\to 1$ by the union bound.
		We now control $d^{\BD}_{\mathrm{TV}}(\chi^n_1,\chi_1)$ on $\Omega_n$.
		For $k\geq n^{1/4}\log n$, we have $\P_n(\chi^n_1=k)=0$ on $\Omega_n$ almost surely. We also have
		$\sum_{k\geq n^{1/4}\log n}\P(\chi_1=k)\leq n^{-1} \E(D^4)\leq Cn^{-1}$.
		Hence we have
		\begin{equation}\label{eq:largek}
			\sum_{k\geq n^{1/4}\log n}\abs{\P_n(\chi^n_1=k)-\P(\chi_1=k)}\leq Cn^{-1}.
		\end{equation}
		For $0\leq k\leq n^{1/4}\log n$, using the definition of $\Omega_n$ we get, for $n$ large, 
		\begin{equation}\label{chi1}
			\begin{split}
				&		\abs{\P_n(\chi^n_1=k)-\P(\chi_1=k)}=
				\abs{\frac{(k+1)N_{k+1}}{\sum_{i=1}^n D_i}-\frac{(k+1)p_{k+1}}{m_1}}	\\
				\leq&\abs{\frac{(k+1)N_{k+1}}{\sum_{i=1}^nD_i}-
					\frac{n(k+1)p_{k+1}}{\sum_{i=1}^n D_i}	
				}+\abs{
					\frac{n(k+1)p_{k+1}}{\sum_{i=1}^nD_i}	- \frac{(k+1)p_{k+1}}{m_1}}\\
				\leq &  \frac{(k+1)n^{\ep_2}(np_k)^{1/2}  }{m_1n/2}+
				\frac{(k+1)p_{k+1}\abs{nm_1-\sum_{i=1}^n D_i}}{nm_1 \sum_{i=1}^n D_i}\\
				\leq& Cn^{\ep_2}n^{-1/2}(k+1)^{-1}  +C(k+1)p_{k+1}n^{-1/2+\ep_1},
			\end{split}
		\end{equation}
		where we have used the fact that $p_k\leq Ck^{-4}$ since $\E(D^4)<\infty$ in the last step. 
		Summing the last line of \eqref{chi1} over $k$ from 0 to $n^{1/4}\log n$ and using the facts
		$$\sum_{k=1}^{n} k^{-1}\sim \log n, \quad \sum_{k=0}^{\infty} (k+1)p_{k+1}=\E(D)< \infty,$$ we get, on $\Omega_n$, 
		\begin{equation}
			\sum_{k=0}^{n^{1/4}\log n} 	\abs{\P_n(\chi^n_1=k)-\P(\chi_1=k)} \leq Cn^{\ep_2} n^{-1/2}\log n+Cn^{-1/2+\ep_1}. 
		\end{equation}
		Inserting the values of $\ep_1=1/100,\ep_2=1/8+1/100$,
		\begin{equation}\label{eq:smallk}
			\sum_{k=0}^{n^{1/4}\log n} 	\abs{\P_n(\chi^n_1=k)-\P(\chi_1=k)} \leq Cn^{1/100}\log n(n^{1/8-1/2}+n^{-1/2}).
		\end{equation}
		Combining \eqref{eq:largek} and \eqref{eq:smallk}, 
		\begin{equation}\label{eq:xi}
			d^{\BD}_{\mathrm{TV}}(\chi^n_1,\chi_1)
			=\frac{1}{2}\sum_{k\geq 0} 	\abs{\P_n(\chi^n_1=k)-\P(\chi_1=k)}
			\leq Cn^{-73/200}\log n.
		\end{equation}
		One can similarly show that $Cn^{-73/200}\log n$ also serves as an upper bound for $	d^{\BD}_{\mathrm{TV}}(\psi^n_0,\psi_0)$.
		Thus  on $\Omega_n$
		$$
		d^{\BD}_{\mathrm{TV}}(\psi^n_0, \psi_0)+(\log n)n^{1/3}
		d^{\BD}_{\mathrm{TV}}(\chi^n_1,\chi_1) \leq Cn^{-73/200+1/3} \log^2 n,
		$$
		which converges to 0 as $n\to\infty$. This together with \eqref{eq:tv} implies \eqref{eq;tv2} and thus completes the proof of Lemma \ref{l:atst}. 
		
	\end{proof}
	\clearp
	
	\section{Upper bound on evoSI}\label{sec:ubsi}

	\subsection{avoSI} \label{sec:avoSI}

	As mentioned in the introduction, in this we will construct a model that serves as an upper bound for evoSI. We first introduce a model \emph{C-evoSI} where `C' stands for `coupled' which means we couple the structure of the graph with the epidemic. The `avo' stands for avoiding infection.
	
	The C-avoSI process is constructed as follows. First recall that in the construction of the configuration model each vertex is assigned a random number of half-edges initially. In the beginning all half-edges attached to the $n$ vertices are unpaired. The half-edges attached to infected nodes are called infected half-edges and those attached to susceptible nodes are susceptible half-edges. Recall that``randomly chosen'' and ``at random''  mean that the distribution of the choice is uniform over the set of possibilities.
	\begin{itemize}
		\item
		At rate $\lambda$ each infected half-edge pairs with a randomly chosen half-edge in the pool of all half-edges excluding itself. If the vertex $y$ associated with that half-edge is susceptible then it becomes infected. Note that if vertex $y$ changes from state $S$ to $I$ then all half-edges attached to $y$ become infected half-edges.
		\item 
		Each infected  half-edge gets removed from the vertex that it is attached to at rate $\rho$ and immediately becomes re-attached to a randomly chosen vertex in the pool of all vertices. 
	\end{itemize}
	
	For the purpose of comparisons it   is convenient to give a reformulation of C-avoSI where the graph has been constructed before the epidemic. We call this process \emph{avoSI}. To describe this process we define the notion of `stable edge'.
	We say that an edge between two vertices $x$ and $y$ is \emph{stable}  if one of the following conditions hold:
	\begin{itemize}
		\item Both $x$ and $y$ are in state $S$.
		\item Either $x$ or $y$ has sent an infection to the other one through this edge. 
	\end{itemize}
	We say that an edge is \emph{unstable} if both conditions fail. Note that an $S-I$ pair is necessarily unstable since the vertex in state $I$ has not sent an infection to the vertex in state $S$. We define the avoSI process as follows: 
	\begin{itemize}
		\item Each infected vertex sends infections to its neighbors at rate $\lambda$. If the neighbor has already been infected then nothing changes. Once a vertex receives an infection, it  stays infected forever. 
		\item A vertex in state $S$ will  break its connection with a vertex in state $I$ at rate $\rho$ and rewire to another randomly chosen vertex. The events for different $S-I$ connections are independent.
		\item For every  unstable $I-I$ edge, each of the two $I's$ will rewire at rate $\rho$ to another uniformly chosen vertex. (This also explains why we call such edges `unstable', since they may evolve.)
	\end{itemize}
	
	\begin{lemma}\label{c-avosi=avo}
		The C-avoSI process and avoSI process running on a configuration model have the same law in terms of the evolution of the set of infected vertices. 
	\end{lemma}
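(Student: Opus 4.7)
The plan is to prove Lemma \ref{c-avosi=avo} by a coupling argument based on the principle of deferred decisions. The configuration-model matching of half-edges is uniform at random, and since a uniform random matching can be revealed in any order---any subset of pairings first, and then the remainder uniformly among the remaining half-edges---the avoSI description (which samples the entire matching $M$ at time $0$) and the C-avoSI description (which samples pairings only when infected half-edges choose partners) can be coupled on a common probability space.

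I would equip both models with independent Poisson clocks of rate $\lambda$ for firings and of rate $\rho$ for rewirings attached to each half-edge, plus independent uniform random variables that drive partner selection and rewiring-target selection. In avoSI the partner $M(h)$ of an infected half-edge $h$ need not be revealed until $h$'s first firing; by the principle of deferred decisions, at that moment the partner is uniform over the still-unrevealed half-edges, which is exactly the C-avoSI pairing rule. Infection is transmitted in both models if and only if the chosen partner sits at a susceptible vertex, so infection-producing events agree in both rate and effect, and when a new vertex becomes infected its remaining half-edges are promoted to infected half-edges in both descriptions.

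Rewirings at the half-edge level---detach an infected half-edge and reattach it to a uniform random vertex---happen at the same rate $\rho$ per infected half-edge in both models. An $S-I$ edge rewiring in avoSI is described as the $S$-side rewiring, but its graph effect is precisely to transfer the $I$-side's half-edge to a new uniformly chosen vertex (the susceptible vertex keeps its degree and the infected vertex loses a degree while the target gains one), which matches a rewiring of the corresponding infected half-edge in C-avoSI. Similarly, each endpoint of an unstable $I-I$ edge in avoSI rewires at rate $\rho$ to a uniform target, matching the independent rate-$\rho$ rewirings of the two infected half-edges in C-avoSI, averaged over the symmetric choice of which end moves.

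The main obstacle is reconciling stable $I-I$ edges, which avoSI declares inert but whose paired infected half-edges in C-avoSI still carry a rate-$\rho$ rewiring clock. I plan to handle this by tracking the joint process consisting of the infected set together with the multiset of paired and unpaired half-edges at each vertex, and verifying that its generator agrees with that of avoSI when restricted to bounded functions of the infected set. The extra rewirings of stable pairs in C-avoSI shuffle half-edges around; however, the principle of deferred decisions applied to future firings ensures that the conditional distribution of the not-yet-revealed portion of the matching, given the infection history and the current state, remains uniform among all matchings consistent with the revealed pairings. This invariance forces the intensities of $S \to I$ transitions to coincide in the two models at every time $t$, so the set of infected vertices has the same law in C-avoSI and in avoSI.
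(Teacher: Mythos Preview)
Your overall strategy---coupling via deferred decisions so that the uniform matching of the configuration model is revealed only when an infected half-edge fires---is exactly the idea behind the paper's proof. The paper phrases it as constructing the graph $G$ from the C-avoSI run (pair half-edges in $G$ whenever they pair in C-avoSI, and uniformly match the leftovers at the end), but this is the same deferred-decisions argument written from the other side.

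However, your identified ``main obstacle'' rests on a misreading of C-avoSI. In C-avoSI, once two half-edges are paired they are \emph{removed from the pool of free half-edges}; they neither fire nor rewire thereafter (the paper follows the Janson--Luczak--Windridge convention in Section~\ref{evosi} that the rates apply only to free half-edges). Hence paired half-edges are inert in C-avoSI, in perfect agreement with stable edges being inert in avoSI. The paper makes exactly this point: ``an infected half-edge can be rewired in C-avoSI if and only if it has not been paired. This exactly corresponds to the notion of `unstable' that we used in the construction of avoSI process.''

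With this correction the argument becomes much shorter: there is no extra shuffling of stable pairs, no need for the generator/invariance paragraph at the end, and the coupling you outline in your second and third paragraphs already gives the full proof. The bookkeeping point that actually does the work (which you touch on but do not state sharply) is the bijection \emph{unpaired infected half-edge in C-avoSI $\leftrightarrow$ infected half-edge lying on an unstable edge in avoSI}, together with the observation that every such half-edge carries an independent rate-$\lambda$ firing clock and rate-$\rho$ rewiring clock in both models.
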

	
	\begin{proof}[Proof of Lemma \ref{c-avosi=avo}]
		It suffices to construct a graph $G$ that has the law of the configuration model such that the set of infected vertices in C-avoSI evolves in the same way as that of the avoSI with initial underlying graph being $G$. Given an outcome of C-avoSI, the graph $G$ can be constructed as follows. Let $H$ be the collection of vertices and half-edges.
		
		\begin{itemize}
			\item
			We assign a unique label to each half-edge in C-avoSI and correspondingly label the half-edges in $H$.
			\item
			Whenever two half-edges combine into one edge in C-evoSI process we pair the two half-edges with the same labels in $H$. It is clear that the pairings of half-edges are done at random.
			We pair the remaining half-edges at random 
			after there is no infected half-edge in the system. This forms the graph $G$. Since the pairings of all half-edges are at random,  we deduce that $G$ itself has the law of $\CM(n,D)$. 
			\item
			Whenever a pairing occurs in C-avoSI, an infected vertex has sent an infection to one of its neighbors(s) in avoSI. 
			
			\item
			Whenever an infected half-edge $h$ attached to $x$ rewires to another vertex $y$ in the C-avoSI, the corresponding edge $e$ in avoSI, which contains $h$ as one of its two half-edges, breaks from $x$ and reconnects to vertex $y$. 
		\end{itemize}
		
		The process we construct on $G$ has the same law as avoSI. Indeed, an infected half-edge can be rewired in C-avoSI if and only if it has not been paired. This exactly corresponds to the notion of `unstable' that we used in the construction of avoSI process. 
		Hence Lemma \ref{c-avosi=avo} follows. 
	\end{proof}
	
	From now on we will not distinguish between the C-avoSI and the avoSI since they are equivalent. The avoSI process stochastically dominates the evoSI process, as shown in the lemma below. 
	
	\begin{lemma}\label{avosi>evosi}
		The final size of infected vertices in avoSI stochastically dominates that in evoSI.
	\end{lemma}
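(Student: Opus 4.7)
The plan is to exhibit a coupling of evoSI and avoSI on a common probability space under which $V_{\text{evo}}(t) \subseteq V_{\text{avo}}(t)$ for all $t\ge 0$ almost surely, where $V_{\text{evo}}(t)$ and $V_{\text{avo}}(t)$ denote the sets of infected vertices at time $t$ in the two processes. Since these sets only grow and the final epidemic size is their eventual cardinality, pointwise containment gives pointwise (and hence stochastic) domination of final epidemic sizes.

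To set up the coupling, start both processes from the same realization of $\CM(n,D)$ and the same uniformly chosen initially infected vertex $u_0$. Mimic the graphical construction given for evoSI in Section \ref{sec:pfth2}, but work at the level of \emph{half-edges} as in the avoSI description of Section \ref{sec:avoSI}: attach to each half-edge independent rate-$\lambda$ infection clocks, rate-$\rho$ rewiring clocks, and i.i.d.\ uniform variables selecting rewiring targets. Use the \emph{same} clocks to drive both processes whenever the corresponding event is permissible in that process. Infection events along $S-I$ edges and rewirings of $S-I$ edges fire simultaneously in both models when the relevant edge is present in both graphs. The extra rewirings of unstable $I-I$ edges in avoSI, which have no counterpart in evoSI, are driven by independent auxiliary clocks.

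Then argue by induction on the ordered event times that the invariant $V_{\text{evo}}(t)\subseteq V_{\text{avo}}(t)$ is preserved. The easy direction is that neither process ever removes vertices from its infected set. The substantive claim is that every infection in evoSI occurs at the same or earlier time in avoSI: if $u\in V_{\text{evo}}$ transmits infection to a susceptible $v$ along an edge $\{u,v\}$ in the evoSI graph, then by the inductive hypothesis $u\in V_{\text{avo}}$, and either $v\in V_{\text{avo}}$ already or the coupled half-edge in avoSI (tracked via its label, not its current vertex partner) is pairing with a half-edge of $v$ in the avoSI graph, at which point the same clock infects $v$ in avoSI too. The extra $I-I$ rewirings in avoSI can only create additional $S-I$ edges relative to evoSI; they never destroy infection channels or un-infect anyone.

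The main obstacle will be the bookkeeping once the two graphs diverge: an edge present in evoSI may have been rewired away in avoSI (if its unstable $I-I$ image already rewired there), and vice versa, so ``the same edge'' has no canonical meaning. The way around this is to label the random clocks by half-edges and use the half-edge reformulation of avoSI from Lemma \ref{c-avosi=avo}; under that reformulation, the evoSI dynamics are realized as a sub-process in which only $S-I$ rewirings are allowed, and the extra unstable-$I-I$ rewirings in avoSI only have the effect of redirecting infected half-edges to fresh uniform targets, which can only produce additional $S-I$ pairings and hence additional future infections. Carrying out this induction rigorously on the half-edge level yields $V_{\text{evo}}(t)\subseteq V_{\text{avo}}(t)$ for all $t$, which proves the lemma.
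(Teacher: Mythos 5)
Your overall strategy (construct a coupling and show by induction that every vertex infected in evoSI is infected no later in avoSI) is the same as the paper's, but the coupling you propose does not support the inductive step, and the step you wave through is exactly where the difficulty lies. The problem is not only the unstable $I-I$ rewirings: because avoSI infects vertices earlier, an edge $e=\{x,y\}$ typically becomes an $S-I$ edge in avoSI \emph{before} it does in evoSI (e.g.\ $x$ is infected in avoSI while both ends are still susceptible in evoSI). Under your real-time scheme (``shared clocks fire when the event is permissible, auxiliary clocks otherwise''), the infection-versus-rewiring race on that earlier avoSI activation is resolved by clock rings that are independent of the rings that will later resolve the race in evoSI. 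So it can happen that in avoSI the edge rewires away from $x$ to a uniformly chosen, possibly susceptible vertex $U$ before transmitting, while in evoSI, once $x$ eventually becomes infected there, the infection ring comes first and $y$ is infected through $e$. At that moment the labelled half-edge sits at $U$ in avoSI, not at $x$, nothing infects $y$ in avoSI, and the containment $V_{\mathrm{evo}}(t)\subseteq V_{\mathrm{avo}}(t)$ is lost. Your assertion that the extra rewirings ``never destroy infection channels'' is precisely the claim that needs proof, and at the level of individual edges it is false: a rewired edge in avoSI is no longer available to carry the evoSI infection to its target.

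The paper's coupling is built to forbid exactly this divergence: the randomness is indexed per edge and per activation, $T_{e,\ell},R_{e,\ell},U_{e,\ell}$ for the $\ell$-th time $e$ becomes active, and the \emph{same} triple is used in both processes even though the $\ell$-th activation occurs at different times in the two dynamics. The doubled rates on unstable $I-I$ edges in avoSI are implemented by halving the residual clocks, which preserves the comparison $T_{e,\ell}$ versus $R_{e,\ell}$; together with an extra coin $V'_{e,\ell}$ for which end rewires, this yields Proposition \ref{prop:avoevo}: an edge rewired in evoSI is rewired, to the same target and no later, in avoSI, and the outcome of each infection/rewiring race agrees (cf.\ \eqref{tell}). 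Only with this agreement can the induction be closed, and even then one must track where the edge sits after its rewirings (the $m=1$ and $m>1$ cases, see Figure \ref{fig:case2}), since in avoSI either end of an unstable $I-I$ edge may be the one that moves. To repair your argument you would need to replace the real-time shared clocks by this activation-indexed coupling (or prove an equivalent statement guaranteeing that the race outcomes and rewiring targets coincide across the two processes), rather than relying on independent auxiliary randomness once the graphs diverge.
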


	We couple the evoSI and avoSI as follows. The evoSI is constructed in the same manner as we did in the comparison of evoSI and delSI. See Section \ref{sec:evodel}.  We will use the variables $T_{e,\ell}, R_{e,\ell}, U_{e,\ell}, \ell\geq 1$  (defined at the beginning of Section \ref{sec:evodel}) in the construction of avoSI to couple evoSI and avoSI. 
	
	\medskip
	
	{\bf Construction of avoSI.}
	For avoSI we need four sets to partition the set of all edges. We use
	$\mathcal{A}$ to avoid confusion with the sets used in the construction of evoSI (not to be confused with the $\mathcal{A}_t$ used in Section \ref{sec:pfth2}).
	\begin{itemize}
		\item {\it  Active edges with one end infected} at time $t$,  denoted by $\mathcal{A}^{a,1}_t$,  are the edges at time $t$ that connect an infected vertex and a susceptible vertex. 
		\item {\it  Active edges with both ends infected},  denoted by $\mathcal{A}^{a,2}_t$,  are the unstable edges at time $t$ that connect two infected vertices.
		\item 
		{\it Uninfected edges},  denoted by $\mathcal{A}^0_t$, connect two susceptible vertices. 
		\item  {\it Inactive edges}, denoted by $\mathcal{A}^i_t$, consist of stable infected edges. Once an edge becomes inactive it remains inactive forever.  
	\end{itemize}
	
	We set  $\mathcal{A}_t=  \mathcal{A}^{a,1}_{t} \cup  \mathcal{A}^{a,2}_{t} $.
	The four sets form a partition of all edges. They  are  right-continuous pure jump processes.  At time 0 we randomly choose a vertex $u_0$ to be infected.  $\mathcal{A}^{a,1}_{0}$ consists of the edges with one endpoint at $u_0$.  $\mathcal{A}^0_{0}$ is the collection of all edges in the graph minus the set $\mathcal{A}^{a,1}_{0}$. $\mathcal{A}^i_{0} = \mathcal{A}^{a,2}_{0} =\emptyset$. 
	
	For each undirected edge $e$, we let random variables $T_{e,\ell},R_{e,\ell},U_{e,\ell}$ be the same as those used in the construction of evoSI. We set $S_{e,\ell}=\min\{T_{e,\ell},R_{e,\ell}\}$.  We also let $V'_{e,\ell}$ be independent uniform random variables that take values in the two endpoints of $e$.  
	To make it easier to describe the dynamics, suppose that at time $\tau^a_{e,\ell}$ (the $\ell$-th time $e$ becomes active in avoSI) we have $e=\{x_{e,\ell},y_{e,\ell}\}$ with $x_{e,\ell}$ infected and $y_{e,\ell}$ susceptible.

	The difference between evoSI and avoSI in terms of these clocks is as follows. For any edge $e$ connecting $x$ and $y$, once one endpoint $x_{e,\ell}$ becomes infected, the clocks $T_{e,\ell}$ and $R_{e,\ell}$ start running. If the other endpoint $y_{e,\ell}$ also becomes infected through other edges 
	at (relative) time $w_{e,\ell} < S_{e,\ell}$ (here `relative' means we only count the time after the infection of $x$), then we replace the clocks $T_{e,\ell}$ and $R_{e,\ell}$ by  
	$$
	T'_{e,\ell}=\frac{T_{e,\ell}-w_{e,\ell}}{2} \mbox{\, and \,} R'_{e,\ell}=\frac{R_{e,\ell}-w_{e,\ell}}{2}
	$$
	since the rates are now twice as fast. 
	
	\mn
	To see that this construction give the correct dynamics of avoSI. We note that conditionally on $T_{e,\ell},R_{e,\ell}>w_{e,\ell}$, $(T_{e,\ell}-w_{e,\ell})/2$ and $(R_{e,\ell}-w_{e,\ell})/2$ are independent exponential random variables with parameters $2\lambda$ and $2\rho$, respectively. 
	This corresponds to an unstable $I-I$ pair where each of the two $I's$ attempt to send infection to the other $I$ at rate $\lambda
	$ and rewire from the other $I$ at rate $\rho$. The variable $V'_{e,\ell}$ corresponds to the vertex with small rewiring time.

	\mn
	If $T_{e,\ell}>R_{e,\ell}$ then $T'_{e,\ell}>R'_{e,\ell}$ and vice versa. 
	Using this and the fact that the uniform variable $U_{e,\ell}$ is the same in evoSI and avoSI we get the following proposition:
	\begin{prop}\label{prop:avoevo}
		If edge $e$ is rewired  in evoSI, then $e$ must be rewired  to the same vertex in avoSI (as long as either endpoint of $e$ becomes infected in avoSI). 
		Also, the time it take to $e$ to be rewired in evoSI is not smaller than the time in avoSI.
	\end{prop}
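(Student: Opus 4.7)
The plan is to unpack the coupling defined in the construction of avoSI. For each edge $e$ and each activation index $\ell$ the same triple of clocks $(T_{e,\ell},R_{e,\ell},U_{e,\ell})$ governs $e$ in both processes, and the only modification allowed in avoSI is the replacement of $T_{e,\ell}, R_{e,\ell}$ by $T'_{e,\ell}, R'_{e,\ell}$ once the currently susceptible endpoint of $e$ becomes infected through another edge before $S_{e,\ell}=\min(T_{e,\ell},R_{e,\ell})$ rings. I condition on $e$ being rewired in evoSI on its first activation, so that $R_{e,1}<T_{e,1}$ and the evoSI rewiring occurs at time $\tau_{e,1}+R_{e,1}$ toward the target $U_{e,1}$; the argument for a later activation is identical once one observes inductively that the shared history of $e$ through its previous activations coincides in the two processes.

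I split into two cases. In the first, in avoSI the other endpoint of $e$ is not externally infected during the relative time window $[0,S_{e,1})$; then no clock replacement happens on $e$, and both processes execute literally the same move, giving equality of target and equality of time. In the second case, the other endpoint is infected from elsewhere at some relative time $w_{e,1}<R_{e,1}$, and the clocks are replaced by
\[
T'_{e,1}=(T_{e,1}-w_{e,1})/2,\qquad R'_{e,1}=(R_{e,1}-w_{e,1})/2.
\]
The conceptual heart of the argument is that the map $x\mapsto (x-w_{e,1})/2$ is strictly increasing on $[w_{e,1},\infty)$, so $R_{e,1}<T_{e,1}$ forces $R'_{e,1}<T'_{e,1}$. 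Hence the next event on $e$ in avoSI is again a rewiring, performed toward the same preassigned vertex $U_{e,1}$ (which endpoint actually performs it is decided by the auxiliary variable $V'_{e,1}$, and this affects neither the target nor the time). The avoSI rewiring time works out to
\[
\tau_{e,1}+w_{e,1}+R'_{e,1}=\tau_{e,1}+\tfrac12 (R_{e,1}+w_{e,1})<\tau_{e,1}+R_{e,1},
\]
which proves the strict time inequality.

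I do not anticipate a substantive obstacle; the entire content is that the shared clocks $(T_{e,1},R_{e,1},U_{e,1})$ are used and that halving both $T_{e,1}-w_{e,1}$ and $R_{e,1}-w_{e,1}$ preserves which is smaller. The only minor point that needs to be verified is that we really are comparing the same activation of $e$ across the two processes: before $\tau_{e,1}$ both endpoints are susceptible in both processes, so no earlier clock for $e$ has been consumed, and the event at $\tau_{e,1}$ is triggered by shared clocks on other edges. Propagation to later activations follows by repeating the order-preservation observation inductively.
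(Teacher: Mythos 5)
Your proposal is correct and follows essentially the same route as the paper: both rest on the shared clocks $(T_{e,\ell},R_{e,\ell},U_{e,\ell})$ together with the observation that the replacement $T'_{e,\ell}=(T_{e,\ell}-w_{e,\ell})/2$, $R'_{e,\ell}=(R_{e,\ell}-w_{e,\ell})/2$ preserves the order of the two clocks, so a rewiring in evoSI forces a rewiring to the same target $U_{e,\ell}$ in avoSI, and at a time that is no later. The paper states this in one line after setting up the coupling; your case split and the explicit computation $\tau_{e,1}+\tfrac12(R_{e,1}+w_{e,1})<\tau_{e,1}+R_{e,1}$ simply spell out the same argument.
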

	
	We now formally construct the avoSI process by induction.
	
	\mn
	{\bf Initial step.} 
	At time 0, a randomly chosen vertex $u_0$ is infected. Let ${\cal N}^0(x,t),{\cal N}^{a,1}(x,t), \mathcal{N}^{a,2}(x,t)$ and ${\cal N}^i(x,t)$ be the sets of edges connected to $x$ that 
	belong to the sets $\mathcal{A}^0_t, \mathcal{A}^{a,1}_t, \mathcal{A}^{a,1}_t$ and $\mathcal{A}^i_t$, respectively.  At time 0  all edges in $\{e_{0,1},\ldots, e_{0,k}\}$ are added to the list of active edges so that $\mathcal{A}^{a,1}_{0}=\{e_{0,1},\ldots, e_{0,k}\}$. Suppose $e_j$ connects $u_0$ and $y_j$. At time
	$$
	J^a_1 =  \min_{1\le j \le k} S_{e_{0,j},1}
	$$
	the first event occurs. (The superscript `a' indicates the avoSI model.)  Let $i$ be the index that achieves the minimum. 
	
	\mn
	(i) If $R_{e_{0,i},1}<T_{e_{0,i},1}$, then at time $J^a_1$ vertex $y_i$ breaks its connection with $u_0$ and rewires to $U_{e_{0,i},1}$.  If $U_{e_{0,i},1}$ is susceptible at time $J^a_1$, we move the edge $e_{0,i}$  to $\mathcal{A}^0_{J^a_1}$. On the initial step this will hold unless $U_{e_{0,i},1}=u_0$ in which case nothing has changed.
	
	\mn
	(ii) If $T_{e_{0,i},1}<R_{e_{0,i},1}$ then at time $J^a_1$ vertex $y_i$ becomes infected by $u_0$. We move  $e_{0,i}$ to $\mathcal{A}^i_{J^a_1}$. We move edges in ${\cal N}^0(y_i,J^a_1-)$ to ${\cal A}^a_{J^a_1}$. 
	
	\mn
	{\bf Induction step.}
	For any active edge $e$ at time $t$, let $L^a(e,t) = \sup\{\ell : \tau^a_{e,\ell}\leq  t\}$
	and set $$V^a(e,t)=
	\begin{cases} \tau_{e,L^a(e,t)}+S_{e,L^a(e,t)}
		&\hbox{ if $e \in \mathcal{A}_t^{a,1}$,
		}\\
		\frac{\tau^a_{e,L^a(e,t)}+I(x^*,y^*)+S_{e,L^a(e,t)}}{2}
		&\hbox{ if $e\in \mathcal{A}^{a,2}_t$,}
	\end{cases}
	$$
	where $I(x^*,y^*)$ is the first time that both $x^*=x(e,L(e,t))$ and
	$y^*=y(e,L^a(e,t))$ become active vertices and the second line comes from the computation
	$$
	\tau^a_{e,L(e,t)}+(I(x^*,y^*) -	\tau^a_{e,L^a(e,t)} )+\frac{S_{e,L^a(e,t)} 
		-(I(x^*,y^*) -	\tau^a_{e,L^a(e,t)} )}{2}=\frac{\tau^a_{e,L^a(e,t)}+I(x^*,y^*)+S_{e,L^a(e,t)}}{2}.
	$$
	Then $V^a(e,t)$ is
	the time of the next event (infection or rewiring) to affect edge $e$. 
	Suppose we have constructed the process up to time $J_m$.
	If there are no active edges present at time $J_{m}$, the construction is done. Otherwise, we
	let
	$$
	J^a_{m+1}=\min_{e\in \mathcal{A}^a_{J_m}} V^a(e,J^a_{m}).
	$$
	Let $e_m$ be the edge that achieves the minimum of $V^a(e,J^a_m)$. If $e_m$ only has one endpoint infected at time $J^a_m$ then we
	let $x(e_m)$ be the infected endpoint of $e_m$ and $y(e_m)$ be the susceptible endpoint of $e_m$. To simplify notation let $L_m =L(e_m,J^a_m)$.

	\mn
	(i) If $R_{e_m,L_m}<T_{e_m,L_m}$ and $e_m\in \mathcal{A}^{a,1}_{J^a_m}$, then at time $J^a_{m+1}$ vertex $y(e_m)$ breaks its connection with $x(e_m)$ and rewires to $U_{e_m, L_m}$.   If $U_{e_m,L_m}$ is susceptible at time $J^a_{m+1}$, then $e_m$ is moved to  $\mathcal{A}^0_{J^a_{m+1}}$. Otherwise it remains in $\mathcal{A}^{a,1}_{J^a_{m+1}}$.

	\mn
	(ii) If $R_{e_m,L_m}<T_{e_m,L_m}$ and $e_m\in \mathcal{A}^{a,2}_{J^a_m}$ , then at time $J^a_{m+1}$ vertex $	V'_{e_m,L_m}$ breaks its connection with the other end of $e_m$ and rewires to $U_{e_m, L_m}$.   If $U_{e_m,L_m}$ is susceptible at time $J^a_{m+1}$, then $e_m$ is moved to  $\mathcal{A}^{a,1}_{J^a_{m+1}}$. Otherwise $e_m$ stays in the set $\mathcal{A}^{a,2}_{J^a_{m+1}}$
	
	\mn
	(iii) If $T_{e_m,L_m}<R_{e_m,L_m}$ and  $e_m \in \mathcal{A}^{a,1}_{J^a_m}$, then at time $J^a_{m+1}$ the vertex $y(e_m)$ is infected by $x(e_m)$ and $e_m$ is moved to $\mathcal{A}^i_{J^a_{m+1}}$. 
	
	\begin{itemize}
		\item
		All edges $e'$ in ${\cal N}^0_{y(e_m),J^a_{m+1}-}$ are moved to ${\cal A}^{a,1}_{J^a_{m+1}}$.  Since $y(e_m)$ has just become infected, the other end of $e'$ must be susceptible at time $J^a_{m+1}$.
		
		\item
		All edges $e''$ in ${\cal N}^{a,1}_{y(e_m),J^a_{m+1}-}$ are moved to ${\cal A}^{a,2}_{J^a_{m+1}}$.  Since $y(e_m)$ has just become infected, (i) the other end of $e''$ must be infected at time $J^a_{m+1}$, and (ii) $e''$ cannot have been inactive earlier.
	\end{itemize}

	\mn
	(iv) If $T_{e_m,L_m}<R_{e_m,L_m}$ and  $e_m \in \mathcal{A}^{a,2}_{J^a_m}$, then at time $J^a_{m+1}$,   $e_m$ is moved to $\mathcal{A}^i_{J^a_{m+1}}$. There are no changes for other edges.

	The avoSI process stops when there are no active edges.

	\begin{proof}[Proof of Lemma \ref{avosi>evosi}]
		We now prove by induction that all vertices infected in evoSI are also infected in avoSI and actually they are infected earlier in avoSI than evoSI.
		
		\mn
		The induction hypothesis holds for the first vertex since initially $u_0$ is infected in both evoSI and avoSI. Suppose the induction holds up to the $k$-th infected vertex in evoSI. Assume at time $t$,  $y$ becomes the $(k+1)$-th infected vertex in evoSI.
		We assume that $y$ is infected by vertex $x$ through edge $e$. Note that $e$ has possibly gone through a series of rewirings before connecting vertex $y$. We assume that $e$ connects vertices $x_{\ell}$ and $y_{\ell}$ after the $(\ell-1)$-th rewiring. 
		We also assume when $x$ infects $y$ through $e$, $e$ has been  rewired $r$ times. This implies that
		\begin{equation}\label{tell}
			T_{e,\ell}>R_{e,\ell} \mbox{ for } 1\leq \ell \leq r \mbox{ and }T_{e,r+1}<R_{e,r+1}.
		\end{equation}
		We let $m(x)=\inf\{i: x\in \{x_k,y_k\} \hbox{ for all }i \le k \le r+1 \}$. 
		We now divide the analysis into two cases: $m=1$
		and $1<m\leq r+1$. 
		
		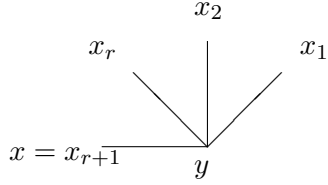
\begin{figure}[h]
			\begin{center}
				\begin{picture}(140,100)
					\put(80,40){\line(1,1){28}}
					\put(80,40){\line(0,1){40}}
					\put(80,40){\line(-1,1){28}}
					\put(80,40){\line(-1,0){40}}
					\put(75,30){$y$}
					\put(115,75){$x_1$}
					\put(35,75){$x_{r}$}
					\put(75,90){$x_2$}
					\put(5,35){$x=x_{r+1}$}
				\end{picture}
				\caption{Illustration of the case $m=1$. If $y$ is infected before time $t$ in avoSI then the induction step holds true. 
					If not, then since $x_1, \ldots, x_{r+1}$ are the rewirings in evoSI,  they are also infected in avoSI and the rewiring occurs as does the infection of $x$ by $x_{r+1}$.}
				\label{fig:case1}
			\end{center}
		\end{figure}
		
		\mn
		{\bf Case 1.} $m=1$ so that $y=y_k$ for all $1\leq k\leq r+1$. If we assume that $y$ has not been infected by time $t$,  then $x_1, \ldots x_{r}$ must have been infected at the time that the rewiring occurred
		and  $x_{r+1}=x$ infected $y$ at time $t$ in evoSI. By the induction hypothesis we see that $x_1,\ldots, x_{r+1}$ are also infected in avoSI. 	By Proposition \ref{prop:avoevo} and \eqref{tell}, $e$  breaks its connection with $x_1$ and reconnects to $x_2$, then to $x_3$ and after $r$ rewirings to $x_{r+1}$. If $y$ is already infected before $x_{r+1}$ sends an infection to it than we are done. Otherwise since $T_{e,\ell+1}<R_{e,\ell+1}$ we see that $x_{r+1}$ will send an infection to $y$ in avoSI as well. In any case we have proved that $x$ will also be infected in avoSI and is infected earlier. For a picture see Figure \ref{fig:case1}.

		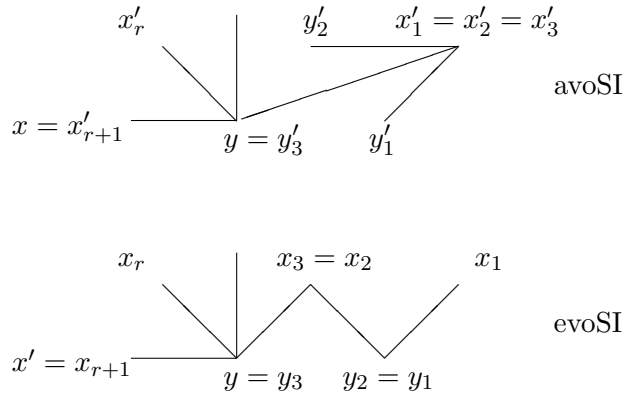
\begin{figure}[h!]
			\begin{center}
				\begin{picture}(220,190)
					\put(80,40){\line(1,1){28}}
					\put(80,40){\line(0,1){40}}
					\put(80,40){\line(-1,1){28}}
					\put(80,40){\line(-1,0){40}}
					\put(75,30){$y=y_3$}
					\put(120,30){$y_2=y_1$}
					\put(95,75){$x_3=x_2$}
					\put(170,75){$x_1$}
					\put(35,75){$x_r$}
					\put(-5,35){$x'=x_{r+1}$}
					\put(108,68){\line(1,-1){28}}
					\put(136,40){\line(1,1){28}}
					\put(200,50){evoSI}
					\put(80,130){\line(0,1){40}}
					\put(80,130){\line(-1,1){28}}
					\put(80,130){\line(-1,0){40}}
					\put(75,120){$y=y_3'$}
					\put(130,120){$y_1'$}
					\put(105,165){$y_2'$}
					\put(140,165){$x_1'=x_2'=x_3'$}
					\put(35,165){$x_r'$}
					\put(-5,125){$x=x_{r+1}'$}
					\put(164,158){\line(-3,-1){82}}
					\put(164,158){\line(-1,0){56}}
					\put(136,130){\line(1,1){28}}
					\put(200,140){avoSI}
				\end{picture}
				\caption{Illustration of the case $m>1$. We assume that at the time of the rewiring
					$x_1$, $y_1$ and $x_2$ are infected. Since one flips coins to determine 
					the end that rewires, the sequence of of edges $(x_k',y_k')$ in avoSI is different
					from the edges in evoSI. However, thanks to the use of $U_{e,\ell}$ to determine the new endpoint, the second rewiring brings the edge to $y$, and there is a correspondence between
					the vertices in the two processes indicated by the drawing.}
				\label{fig:case2}
			\end{center}
		\end{figure}
		
		\mn
		{\bf Case 2.} If $m>1$, then again by Proposition \ref{prop:avoevo}, the induction hypothesis and \eqref{tell} we see that in the avoSI picture, $e$ will be  rewired at least $r$ times and $y$ becomes an endpoint of $e$ exactly after $m-1$ rewirings. After this point we can  repeat the analysis in the case of $m=1$ to deduce that $x$ is also infected no later than $t$ in avoSI.
		For a picture see Figure \ref{fig:case2}.
	\end{proof}

	We now show that avoSI and delSI actually have the same critical value (and thus also have the same critical value as evoSI by Theorem \ref{critical_value}).
	\begin{lemma}\label{critical_avosi}
		Theorem \ref{critical_value} still holds if we replace evoSI by avoSI. 
	\end{lemma}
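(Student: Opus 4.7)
The argument parallels that of Theorem \ref{critical_value} with avoSI replacing evoSI. From Lemma \ref{couple} together with Lemma \ref{avosi>evosi} we obtain the stochastic ordering $\text{delSI}\preceq\text{evoSI}\preceq\text{avoSI}$, which immediately yields $\lambda_c(\text{avoSI})\le\lambda_c(\text{delSI})$ and, in the supercritical regime, $\liminf_n\P(B^{\text{avoSI}})\ge\lim_n\P(B^{\text{delSI}})=q(\lambda)$.

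For the matching inequality in part (i), fix $\lambda<\lambda_c(\text{delSI})$, couple delSI and avoSI through the common clocks $T_{e,\ell},R_{e,\ell},U_{e,\ell}$ of Section \ref{sec:avoSI}, and run the delSI exploration first. By Theorem \ref{Janbd} the delSI component satisfies $|\mathcal{C}_1|\le C_{\ref{maxbd}}\,n^{1/3}$ whp, and Chebyshev (as in \eqref{rewires}--\eqref{infecteds}) together with Lemma \ref{l:atst} bounds the total number of half-edges $\sum_{i\in\mathcal{C}_1}D_i$ revealed in the exploration by $n^a$ whp for any fixed $a\in(1/3,1/2)$. There are two possible sources of discrepancy between the two processes: (a) $S$-$I$ edges satisfying $R_{e,\ell}<T_{e,\ell}$, which are deleted in delSI but rewired in avoSI, and (b) unstable $I$-$I$ edges in avoSI, which correspond to back-edges (deleted CM edges with both endpoints in $\mathcal{C}_1$) in the delSI exploration. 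For (a) the argument used in Section \ref{sec:pfth2} for the evoSI--delSI comparison applies verbatim, bounding the expected number of rewirings that hit the infected/removed set by $4n^{2a-1}\to 0$. For (b) the expected number of back-edges in the delSI exploration is at most $(\sum_{i\in\mathcal{C}_1}D_i)^2/(m_1 n)=O(n^{2a-1})\to 0$ for $a<1/2$, so by Markov's inequality no unstable $I$-$I$ edge exists whp, hence no $I$-$I$ rewiring is triggered in avoSI. Combining (a) and (b), the infected sets of avoSI and delSI coincide whp, which gives $\lambda_c(\text{avoSI})\ge\lambda_c(\text{delSI})$ and completes part (i).

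For part (ii) I would adapt the proof of Theorem \ref{critical_value}(ii) by replaying the above at the smaller scale $\log n\ll n^{1/3}$; the estimates above become only sharper, so the avoSI and delSI explorations agree during their first $\log n$ steps with probability $1-o(1)$. Defining $F_0'',F_1'',F_2''$ from the avoSI active-set size $A''_{\log n}$ in analogy with $F_0,F_1,F_2$ of Section \ref{sec:pfth2}, the decomposition $\P(B^{\text{avoSI}})=\sum_i\P(B^{\text{avoSI}}\cap F_i'')$ combined with Lemma \ref{upbd2} and Lemma \ref{l:atst} yields $\P(B^{\text{avoSI}})=\P(F_2)+o(1)=q(\lambda)+o(1)$. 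The main obstacle is source (b) above: unstable $I$-$I$ rewirings have no counterpart in the evoSI--delSI comparison of Theorem \ref{critical_value}, and the essential new input is the second-moment estimate showing that the subcritical delSI exploration is whp a tree under the standing hypothesis $\E(D^5)<\infty$.
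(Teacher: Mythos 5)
Your plan is correct and follows essentially the same route as the paper: the domination chain plus the observation that, whp, the explored region contains no cycles and no rewiring hits the eventually infected set, so avoSI has no unstable $I$-$I$ edges and coincides with evoSI/delSI through the relevant $O(n^{1/3})$ (resp.\ $\log n$) exploration steps, after which the proof of Theorem \ref{critical_value} applies verbatim. The only cosmetic differences are that the paper rules out unstable $I$-$I$ pairs via an infected-cycle argument on the whp-tree neighborhood (citing Lemma 2.12 of \cite{vdH3}) where you count back-edges directly, and that such back-edges should include retained as well as deleted within-cluster edges (an edge with $T_{e,1}<R_{e,1}$ whose susceptible end is infected first by another route is also an unstable $I$-$I$ edge), though your second-moment bound already covers both cases.
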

	\begin{proof}
		As we will see, Lemma \ref{critical_avosi}  follows by repeating the proof of Theorem \ref{critical_value}.
		Since avoSI stochastically dominates evoSI (in terms of final epidemic size) and evoSI dominates delSI, avoSI must also dominate delSI. 
		We claim that if we run avoSI on a tree and no edge is rewired to  vertices that are infected up to time  $t$, then  there are no unstable $I-I$
		pairs up to time $t$. This immediately implies that the  evolution of the avoSI process is equal to that of the evoSI process starting from the same initially infected vertex up to time $t$. 
		To prove the claim, suppose there is an unstable $I-I$ pair connecting vertex $x$ and $y$, then there must be two  infection paths that lead to the infections of $x$ and $y$. Here an infection path for $x$ is just a sequence of vertices $u_0\to u_1 \to \cdots \to x$  where the former vertex in the chain infects the latter. Since the edge between $x$ and $y$ is unstable, we see that if we consider the union of the two infection paths together with the edge $(x,y)$ then we get a cycle of infected vertices. This contradicts with the assumption that the original graph is a tree and no edges are rewired to vertices infected by $t$ (so that rewirings will not help create any cycle of infected vertices). 
		
		As we mentioned in the proof of Lemma  \ref{l:atst}, by Lemma 2.12 in \cite{vdH3}, whp the subgraph obtained by exploring the neighborhoods of $n^{3/8}/\log n$ vertices of any fixed vertex is a tree. The proof of Theorem \ref{critical_value}(i) implies that with high probability no edge is rewired to infected vertices up to step $O(n^{1/3})$ (i..e, up to the exploration of the neighborhoods of $O(n^{1/3})$ vertices).  Hence the condition of the above claim is satisfied.
		Using the conclusion of the claim we see that whp we can couple the avoSI process and evoSI process such that they coincide up to step $O(n^{1/3})$. Therefore the proof of Theorem \ref{critical_value} also applies to the comparison of  avoSI and delSI and hence Lemma \ref{critical_avosi} follows. 
	\end{proof}
	
	\subsection{Tightness of $\{\S_{t,k}/n,t\geq 0\}_{n\geq 1}$} \label{sec:tight}

	We first consider $\tilde S_{t,k}$, the number of susceptible vertices with $k$ half-edges at time $t$ in the  original avoSI process (i.e., without the time change). We have the following equation 
	
	\begin{equation}\label{eqStk1}
		d\tilde S_{t,k}=-\left(\lambda\tilde X_{I,t}\frac{k\tilde S_{t,k}}{\tilde X_t-1}\right)dt+\left(1_{\{k\geq 1\}}\rho\tilde X_{I,t} 
		\frac{\tilde S_{t,k-1}}{n}\right)dt-\left(\rho\tilde X_{I,t} 
		\frac{\tilde S_{t,k}}{n}\right)dt+d\tilde M_{t,k}.
	\end{equation}
	
	\noindent To explain the terms
	\begin{enumerate}
		\item 
		At rate $\lambda \tilde X_{I,t}$ infections occur. The infected half-edge attaches to a susceptible vertex with $k$ half-edges, which we call an $S^k$, with probability $k\tilde S_{k,t}/(\tilde X_t-1)$. The $-1$ in the denominator is because the half-edge will not connect to itself.
		\item 
		At rate $\rho \tilde X_{I,t}$ rewirings occur. If $k\ge 1$, the half-edge gets attached to an $S^{k-1}$ with probability $\tilde S_{k-1,t}/n$, promoting it to an $S^k$.
		\item 
		If the rewired half-edge gets attached to an $S^{k}$, which occurs with probability $\tilde S_{k,t}/ n$, it is promoted to an $S^{k+1}$ and an $S^k$ is lost.
		\item
		If $Z_t$ is a Markov chain with generator $L$ then Dynkin's formula implies
		$$
		f(Z_t) - \int_0^t Lf(Z_s) \,ds \quad\hbox{is a martingale.}
		$$
		See Chapter 4, Proposition 1.7 in \cite{EK}. Fortunately, we do not need an explicit formula for the martingale. All that is important is that when $f(Z_t) =\tilde S_{t,k}$,  $\tilde M_{\cdot,k}$ has jumps equal to $\pm 1$. 
	\end{enumerate}
	
	The equation for $\S_{t,k}$ can be then obtained by multiplying the first three terms in the right hand side of \eqref{eqStk1} by the time change, leading to
	
	\begin{align}
		d\S_{t,k}=&-\left(\lambda \X_{I,t}\frac{\X_t-1}{\lambda \X_{I,t}}\frac{k\S_{t,k}}{\X_t-1}\right)dt
		+\left(1_{\{k\geq 1\}}\rho \X_{I,t} \frac{\X_t-1}{\lambda \X_{I,t}}\frac{\S_{t,k-1}}{n}\right)dt
		\nonumber\\
		&-\left(\rho \X_{I,t} \frac{\X_t-1}{\lambda \X_{I,t}}\frac{\S_{t,k}}{n}\right)dt +d\M_{t,k}.
		\label{eqStk}
	\end{align}
	Here $\M_{t,k}$ is a time-changed version of the previous martingale so it is also a margingale with jumps $\pm 1$.
	
	Canceling common factors and dividing both sides of \eqref{eqStk} by $n$  we get
	
	\begin{align}
		d\left(\frac{\S_{t,k}}{n}\right) & =-\left(k\frac{\S_{t,k}}{n}\right) dt
		+\left(1_{(k\geq 1)} \frac{\rho }{\lambda}\frac{\X_t-1}{n}\frac{\S_{t,k-1}}{n}\right)dt
		\nonumber\\
		&-\left(\frac{\rho }{\lambda}\frac{\X_t-1}{n}\frac{\S_{t,k}}{n}\right)dt+d\left(\frac{\M_{t,k}}{n}\right).
		\label{eqStk2}
	\end{align}
	
	We now show that for all  fixed $k\geq 0$ and $T>0$,
	\beq
	\sup_{0\leq t \leq \T}\abs{\M_{t,k}}/n\CP 0.
	\label{term4}
	\eeq
	To do this we note that the expected value of quadratic variation  of $\M_{\t,k}$ evaluated at time $T$, which is also equal to $\E(\M_{\T,k}^2)$, is bounded above by the expectation of total number of jumps in the whole avoSI process, which is equal to 
	$$
	\E\left(X_0/2+\sum_{j=1}^{X_0} N_j\right).
	$$ 
	Here we have a factor of 2 in the denominator  because each pairing event takes two half-edges and
	$N_j$ is the number of times that half-edge  $j$ gets transferred to another vertex. Note that an infected half-edge gets rewired before being paired with probability at most $\rho/(\lambda+\rho)$ and susceptible half-edge cannot get rewired unless the vertex it is attached to becomes infected. Thus, $N_j$ is stochastically dominated by a Geometric($\rho/(\lambda+\rho)$) distributed random variable, so that for all $j$, $\E(N_j)\le C$ for some constant $C$. Therefore by $L^2$ maximal inequality applied to the submartingale $\abs{\M_{\t,k}}$,  we obtain that
	\begin{equation}\label{mskctl}
		\E\left(\sup_{0\leq t \leq \T} \M_{t,k}^2\right)\leq 4\E(\M_{\T,k}^2  )\leq Cn,
	\end{equation}
	where $C$ is a constant whose value is unimportant.
	
	Since $\S_{0,k}/n\leq 1$, we know   $\{\S_{0,k}/n\}_{n\geq 1}$ is a tight sequence of random variables. To establish tightness of  $\{\S_{t,k}/n,t\geq 0\}_{n\geq 1}$  we need to show
	for any fixed $\ep,\delta>0$, there is a $\theta>0$ and integer $n_0$ so that for $n\geq n_0$
	\begin{equation}\label{tightness}
		\P\left (\sup_{\abs{t_1-t_2}\leq \theta, t_1,t_2\leq T  } \abs{\S_{t_1\wedge \gamma_n, k}-\S_{t_2\wedge\gamma_n,k}}/n \geq \delta \right)
		\leq \ep.
	\end{equation}
	Assuming \eqref{tightness} for the moment, 
	we see that  $\{\S_{t\wedge \gamma_n,k}/n,t\geq 0\}$, as an element of $\D$, the space of right continuous paths with left limits, satisfies condition (ii) of
	Proposition 3.26 in \cite{JS}. Consequently,  
	$\{\S_{t\wedge \gamma_n,k}/n,t\geq 0\}_{n\geq 1}$ is a tight sequence. 
	It remains to prove \eqref{tightness}. We note that there exists a constant $C$ so that
	\beq
	\P(\X_0/n>C)\leq \ep/3,
	\label{X0bd}
	\eeq
	since $\E(\X_0)=\E(\sum_{i=1}^n D_i)= nm_1$.  Hence using $\S_{t,k-1}+\S_{t,k}\leq n$  and $\X_t\leq \X_0 $ with \eqref{eqStk2} 
	\begin{align*}
		&\P\left(\sup_{\abs{t_1-t_2}\leq \theta, t_1,t_2\leq T  } \abs{\S_{t_1\wedge \gamma_n, k}
			-\S_{t_2\wedge\gamma_n,k}}/n \geq \delta \right)
		\\
		\leq & \P\left( k\theta>\frac{\delta}{4} \right) + 2\P\left(\frac{\rho}{\lambda} \frac{X_0}{n}\theta\geq \frac{\delta}{4}\right)
		+\P\left(2 \sup_{0\leq t \leq \T} \abs{\M_{t,k}/n} \geq \frac{\delta}{4} \right).
	\end{align*}
	Using \eqref{X0bd} and \eqref{mskctl}, we see that if we pick $\theta$ small and $n$ large then the last line is $\le \ep$.
	This proves \eqref{tightness} and thus completes the proof of  tightness of the sequence $\{\S_{t\wedge \gamma_n,k}/n,t\geq 0\}_{n\geq 1}$.

	\subsection{Convergence of $\{\S_{t,k}/n,t\geq 0\}_{n\geq 1}$} \label{sec:convStk}
	Note that the evolution for $\X_t$ has the same transition rates as the time-changed SIR dynamics defined in \cite{JLW} so their equation (3.4)  also holds true in avoSI, which gives for any fixed $T$,
	\begin{equation}\label{x_t}
		\sup_{0\leq t \leq \T}\abs{	\frac{\X_t}{n}- m_1 \exp(-2t)} \CP 0. 
	\end{equation}
	In order to upgrade $\sup_{0\leq t\leq \T}$ to $\sup_{0\leq t\leq \gamma_n}$, we note that for any $\ep>0$, we can pick a sufficiently large $T$ so that $m_1\exp(-2T)<\ep$. 
	Then by the monotonically decreasing property of $\X_t$,
	\begin{align}
		\label{Tgam}
		&	\P\left(\gamma_n>T, \sup_{T< t\leq \gamma_n} \abs{\frac{\X_t}{n}-m_1\exp(-2t)}>3\ep\right)
		\leq \P(\gamma_n>T,\X_T/n>2\ep) \\
		\leq & \P\left(\sup_{0\leq t \leq \T}\abs{	\frac{\X_t}{n}- m_1 \exp(-2t)}>\ep\right)\leq \ep, &
		\nonumber
	\end{align}
	for $n$ large enough. Thus we deduce 
	$$
	\P\left(	\sup_{0\leq t\leq  \gamma_n}\abs{\frac{\X_t}{n}-m_1\exp(-2t)}>\ep\right)\leq 2\ep
	$$ 
	for $n$ sufficiently large, which proves the first equation of \eqref{3lim}.
	
	By the tightness of $\{\S_{t\wedge \gamma_n,k}/n,t\geq 0\}_{n\geq 1}$, we see for any subsequence of $\S_{t,k}/n$ we can extract a further subsequence that converges in distribution to a  process $\s_{t,k}$ with continuous sample path. By the Skorokhod representation theorem we can assume the convergence is actually in the almost sure sense and we can also assume that 
	$\X_{t}/n$  converges a.s. to $m_1\exp(-2t)$.
	Having established tightness, a standard argument implies that we can show the convergence of $\S_{t,k}/n$ by establishing that the limit  $\s_{t,k}$ is independent of the subsequence. First consider the case $k=0$. The first two terms on the right-hand side of \eqref{eqStk2} are 0, so using \eqref{term4} and first equation of \eqref{3lim} we see that any subsequential limit $\s_{t,0}$ has to satisfy the equation
	
	\begin{equation}\label{s0t}
		\s_{t,0}=-\frac{\rho}{\lambda}m_1 \int_0^t \exp(-2z)\s_{z,0}\, dz.
	\end{equation}
	
	\noindent Since
	$z \to \exp(-2z)$ is Lipschitz continuous  this equation has a unique solution.
	
	Repeating this process for $k\geq 1$ we see that any subsequential limit $\s_{t,k}$ of  $\S_{t,k}/n$ satisfies the differential equation
	
	\begin{equation}\label{stk}
		\s'_{t,k}=-k\s_{t,k}-\alpha \exp(-2t)\s_{t,k}+1_{\{k\geq 1\}}\alpha \exp(-2t)\s_{t,k-1},
	\end{equation}
	
	\noindent
	where $\alpha=\rho m_1/\lambda$.
	This system of equations can be solved explicitly. First we rewrite the equations as 
	$$
	\s_{t,k}'+[k+\alpha \exp(-2t)]\s_{t,k}= 1_{(k\ge 1)}\alpha \exp(-2t)\s_{t,k-1}.
	$$
	Define
	\begin{equation}
		g_{t,k}=\exp\left(kt+ (\alpha/2) (1-\exp(-2t))\right)\s_{t,k},
	\end{equation}
	then 
	\begin{align*}
		g_{t, 0}'  &= \alpha \exp(-2t) g_{t,0} + \exp((\alpha/2)(1-\exp(-2t)) \s'_{t,0} \\
		& = \alpha \exp(-2t) \exp((\alpha/2) (1-\exp(-2t)))\s_{t,0} \\
		& + \exp(\alpha/2)(1-\exp(-2t))[-\alpha \exp(-2t)] \s_{t,0} = 0.
	\end{align*}
	Let $A_{k,t} = \exp(kt+ (\alpha/2) (1-\exp(-2t)))$.
	An almost identical calculation for $k \ge 1$ gives
	\begin{align*}
		g_{t,k}'  &= [k +\alpha \exp(-2t)] g_{t,k} + A_{k,t} \s'_{t,k} \\
		& = [k+\alpha \exp(-2t)]A_{t,k} \s_{t,k} \\
		& + A_{t,k} \{ [-k -\alpha \exp(-2t)] \s_{t,k} + \alpha \exp(-2t) \s_{t,k-1}\},
	\end{align*}
	so we have
	$$
	g'_{t,k}= A_{t,k}\alpha \exp(-2t) \s_{t,k-1} = \alpha \exp(-t)g_{t,k-1}.
	$$
	
	Making the change of variable $s=\alpha (1-\exp(-t))$ and letting $h_{s, k}= g_{t,k}$, we see that  $h_{s, 0}$ is constant in $s$ and
	$$
	h'_{s,k}=h_{s, k-1}, k\geq 1,
	$$
	from which we see $h_{s,k}$ is a polynomial of degree $k$ in $s$ and for all $\ell\leq k$ the $\ell$-th derivative of $h_{s,k}$ at $s=0$ equals 
	$h_{0,k-\ell}$. From this we obtain that for all $k$,
	$$
	h_{s,k}=\sum_{\ell=0}^k \frac{h_{0,k-\ell}}{l!}s^{\ell}.
	$$
	The initial conditions are $g_{0,k}=h_{0,k}=s_{0,k}=p_k=\P(D=k)$.
	It follows that
	\begin{equation}
		h_{s,k}= \sum_{\ell=0}^k \frac{p_{k-\ell}s^{\ell} }{\ell!},
	\end{equation}
	and hence using definitions of $\s$, $g$, and $h$
	\begin{align}
		g_{t,k}&=h_{s,k}= \sum_{\ell=0}^k \frac{p_{k-\ell}}{\ell!} (\alpha (1-w))^k,
		\label{gtkeq}\\
		\s_{t,k}&=\exp\left(-\frac{\alpha}{2}(1-w^2)\right)w^k\sum_{\ell=0}^k \frac{p_{k-\ell}}{\ell!} (\alpha (1-w))^{\ell},
		\label{stkeq2}
	\end{align}
	where $w=w(t)=\exp(-t)$.
	
	\subsection{Summing the $\s_{t,k}$}\label{sec:sum_stk}
	
	We pause to record the following fact which we will use later. From the explicit expression for $\s_{t,k}$ in \eqref{stkeq2},
	dropping the factor $\exp(-\alpha(1-w^2)/2)\leq 1$  and writing $k=(k-\ell) + \ell$, 
	\begin{align}
		\sup_{t\geq 0} \sum_{k \geq K}k\s_{t,k}\leq  &
		\sup_{0\leq w\leq 1} \sum_{k \geq K} \sum_{\ell=0}^{k} \frac{(k-\ell)p_{k-\ell}}{\ell!}(\alpha (1-w))^{\ell} w^k
		\nonumber \\
		&+\sup_{0\leq w\leq 1} \sum_{k \geq K} \sum_{\ell=1}^{k} \frac{\ell p_{k-\ell}}{\ell!}(\alpha (1-w))^{\ell} w^k.
		\label{splitup}
	\end{align}
	The $\ell=0$ term in the first sum is bounded by
	$$
	\sum_{k \ge K} k p_k.
	$$
	The remainder of the two sums is bounded by
	\begin{equation}\label{remaindsum}
		\sup_w w^K (1-w) \left[ \sum_{k \ge \ell, \ell \ge 1} \left( \frac{\alpha^\ell (k-\ell) p_{k-\ell}}{\ell!}
		+  \frac{\alpha^\ell \ell p_{k-\ell}}{\ell!} \right) \right].
	\end{equation}
	Interchanging the order of summation in the double sum and letting $m=k-\ell$, the double sum in \eqref{remaindsum} is bounded by
	$$
	\sum_{\ell=1}^\infty \ell \cdot \frac{\alpha^\ell}{\ell!} \left( \sum_{m=0}^\infty (m p_m +  p_m) \right) \le C_{\alpha,D}.
	$$
	Combining our calculations leads to
	\beq
	\limsup_{K\to \infty} \sup_{t\geq 0} \sum_{k \geq K}k\s_{t,k} 
	\leq C \limsup_{K\to\infty} \left(\sum_{k\geq K}kp_k+\sup_w w^K(1-w)\right)= 0.
	\label{limKstk=0}
	\eeq

	We can use this bound to show that
	$\sum_{k=0}^{\infty}\S_{t,k}/n$ converges to $\sum_{k=0}^{\infty}\s_{t,k}$
	as well as $\sum_{k=0}^{\infty}k\S_{t,k}/n$ converges to $\sum_{k=0}^{\infty}k\s_{t,k}.$
	Since the proofs are similar, we only prove the second result. We fix a large number $K$ and 
	observe that $\sum_{k\geq K} k\S_{t,k} $ satisfies the equation
	\begin{align}
		d\left(\sum_{k\geq K} k\S_{t,k} \right)= &
		-\left(\sum_{k\geq K} k^2 \S_{t,k}\right) dt
		+\rho \X_{I,t} \frac{X_t-1}{\lambda \X_{I,t}}\frac{K\S_{t,K-1}}{n}dt
		\nonumber \\
		&+\sum_{k\geq K} \rho \X_{I,t} \frac{\X_t-1}{\lambda \X_{I,t}}\frac{\S_{t,k}}{n}dt
		+d\hat{M}_{t,K}.
		\label{kstk}
	\end{align}
	Here $\hat{M}_{t,K}$ is a martingale that satisfies
	\begin{equation}\label{2moment}
		\sum_{t>0} (\hat{M}_{t,k}-\hat{M}_{t-,k})^2\leq 3
		\sum_{\ell=1}^n Q_{\ell}^2,
	\end{equation}
	where $Q_{\ell}$ is the number of half-edges that vertex $\ell$ has before it becomes infected. This follows from the observation that there are two sources for the jump of $\sum_{k \geq K}k\S_{t,k}$: 
	
	\begin{itemize}
		\item A susceptible vertex $\ell$ with at least $K$ half-edges gets infected. Then
		$\sum_{k \geq K}k\S_{t,k}$ drops by the number of half-edges of vertex $\ell$, which is $Q_{\ell}$. Each vertex can contribute to this type of jumps at most once.
		\item A half-edge of an infected vertex gets transferred to a susceptible vertex of degree at least $K-1$. Then  $\sum_{k \geq K}k\S_{t,k}$ increases by either $\le K$ (if the vertex gaining a half-edge had $K-1$ half-edges before) or 1 (if the vertex gainning a half-edge had at least $K$ half-edges before).
		Vertex $\ell$ can contribute at most $Q_{\ell}$ times to  jumps of size 1 and at most once to jumps of size $K$.  
	\end{itemize}
	
	Initially vertex $\ell$ has $D_\ell$ half-edges. As time grows the half-edges of other vertices might be transferred to vertex $\ell$, the number of which  is dominated by 
	\begin{equation}\label{V}
		V = \mbox{Binomial}(W ,1/n ) \quad\hbox{where $W= \sum_{m=1}^nD_{m}$}.
	\end{equation}
	Thus we have
	\begin{equation}\label{EQ_l^2}
		\E[Q_{\ell}^2 ]\leq \E[(D_{\ell} + V)^2]\leq 2\E D^2+ 2\E V^2.
	\end{equation}
	Conditioning on the value of $W$ we have
	\begin{equation}\label{EV^2}
		\E V^2 = (1/n)(1-1/n) \E W+ \E (W/n)^2 \leq C.
	\end{equation}
	It follows from \eqref{2moment} and \eqref{EV^2} that
	\begin{equation}\label{mhatzk}
		\E\left(\sup_{0\leq t\leq \T} \abs{\hat{M}_{t,k}}^2\right)\leq 4\E(\hat{M}_{\T,k}^2  )\leq Cn.
	\end{equation}
	
	Writing \eqref{kstk} as an integral equation and dropping the negative term $-(\sum_{k\geq K} k^2 \S_{t,k})$ we see 
	\begin{align*}
		\sum_{k\geq K} k\S_{\z,k} & \leq 	\sum_{k\geq K} k\S_{0,k}+ \frac{\rho}{\lambda}K \int_0^z \frac{\X_{u\wedge \gamma_n}}{n} \S_{u\wedge \gamma_n,K-1}du\\
		&+\frac{\rho}{\lambda}\int_0^z \frac{\X_{u\wedge \gamma_n}}{n} \sum_{k \geq K}\S_{u\wedge \gamma_n,k}+\hat{M}_{\z,k}.
	\end{align*}
	Take $\sup_{0\leq z\leq t}$  on both sides we have
	\begin{equation}\label{kSzk}
		\begin{split}
			\sup_{0\leq z\leq \t} \sum_{k\geq K} k\S_{z,k}
			&\leq \sum_{k\geq K} k\S_{0,k}+\frac{\rho}{\lambda}K\int_0^t \sup_{0\leq u\leq z\wedge \gamma_n}
			\frac{\X_{u}}{n} \S_{u,K-1}dz \\
			+\frac{\rho}{\lambda} &\int_0^t \sup_{0\leq u\leq z\wedge \gamma_n}
			\frac{\X_{u} }{n} \sum_{k \geq K}\S_{u,k}dz
			+\sup_{0\leq z \leq \t}\hat{M}_{z,k}.
		\end{split}
	\end{equation}
	
	Dividing both sides of \eqref{kSzk} by $n$, taking the square and using $(a+b+c+d)^2 \le 4(a^2+b^2+c^2+d^2)$ we have

	\begin{align}
		\left(\sup_{0\leq z\leq \t}\frac{\sum_{k\geq K} k\S_{z,k}}{n}  \right)^2  
		&\le 4 \left(\sum_{k\geq K} \frac{k\S_{0,k}}{n} \right)^2 
		+4 \left(\frac{\rho}{\lambda}K\right)^2 t \int_0^t \left( \sup_{0\leq u\leq \z}\frac{\X_{u}}{n} \S_{u,K-1} \right)^2 dz 
		\nonumber\\
		+4 \left(\frac{\rho}{\lambda} \right)^2t & \int_0^t \left( \sup_{0\leq u\leq z\wedge \gamma_n} \frac{\X_{u} }{n} \sum_{k \geq K}\S_{u,k} \right)^2 dz
		+4 \left( \sup_{0\leq z \leq \t}\hat{M}_{z,k} \right)^2, 
		\label{step1}
	\end{align}
	where we have also used the Cauchy-Schwarz inequality to conclude that for any function $g=g(u)$,
	$$ 
	\left(\int_0^t \left[\sup_{0\leq u \leq z\wedge \gamma_n} g(u) \right] \,  dz\right)^2
	\le t \int_0^t \left[ \sup_{0\leq u \leq z\wedge \gamma_n} g^2(u)\right] \, dz.
	$$
	
	If we use $\EE$ to denote the conditional expectation with respect to the $\sigma$-algebra generated by $\X_0$, then
	for any $\ep>0$, using equation \eqref{mhatzk} we can find a constant (depending on $\ep$) $C_{\ref{step-5}}>0$ so that
	\begin{equation}\label{step-5}
		\P\left(\EE\left(\sup_{0\leq z \leq \t}\hat{M}^2_{z,k}\right)>C_{\ref{step-5}} n\right)\leq \ep. 
	\end{equation}
	We then take another constant $C_{\ref{step-4}}$ such that 
	\begin{equation}\label{step-4}
		\P(\X_0/n>C_{\ref{step-4}}) \leq \ep.
	\end{equation}

	Taking the conditional expectation of \eqref{step1} with respect to $\X_0$ we see on the event 
	$$
	\Omega_0 = \{\X_0/n\leq C_{\ref{step-4}}\}\cap \left\{   \EE(\sup_{0\leq z \leq \t}\hat{M}^2_{z,k})\leq C_{\ref{step-5}}n  \right\}
	$$ 
	which has probability $\geq 1-2\ep$, there exists a constant $C_{\ref{step2}}$ such that 
	\begin{equation}
		\begin{split}
			&\EE\left(\sup_{0\leq z\leq \t}\frac{\sum_{k\geq K} k\S_{z,k}}{n}  \right)^2  \leq
			C_{\ref{step2}} t	 \int_0^t 	\EE\left[\left(\sup_{0\leq u\leq z\wedge \gamma_n}\frac{\sum_{k\geq K} k\S_{u,k}}{n} \right)^2\right] dz\\
			&+C_{\ref{step2}} \left(K^2t\EE\left[\left( \sup_{0\leq z\leq \t}\frac{S_{z,K-1}}{n} \right)^2\right]+\frac{1}{n}+\EE\left[\left(\sum_{k\geq K} \frac{k\S_{0,K}}{n} \right)^2\right]\right).
		\end{split}
		\label{step2}
	\end{equation}
	
	If we let 
	$$
	\phi(t) = \EE\left(\sup_{0\leq z\leq \t}\frac{\sum_{k\geq K} k\S_{z,k}}{n}  \right)^2,
	$$
	and $B$  equals the second line in \eqref{step2}, then for $0\leq t\leq T$ we have 
	$$
	\phi(t) \leq  C_{\ref{step2}} T\int_0^t \phi(s) \, ds + B.
	$$
	Gronwall's inequality gives
	\begin{equation}\label{gronwallfinal}
		\mbox{if }\phi(t) \le \alpha(t) + \int_0^t \beta(s) \phi(s) \, ds \mbox{, then }\phi(t) \le \alpha(t) \exp\left(\int_0^t \beta(s) \, ds \right),
	\end{equation}
	provided that $\beta(t)\ge 0$ and $\alpha(t)$ is nondecreasing.
	So applying \eqref{gronwallfinal} we have $$\phi(t) \le B \exp(C_{\ref{step2}} T^2)$$ on $\Omega_0$, that is,
	\begin{equation}\label{tail exp}
		\begin{split}
			&\EE\left(\sup_{0\leq t\leq \T}\frac{\sum_{k\geq K} kS_{t,k}}{n} \right)^2 \\
			& \leq C_{\ref{step2}}\exp(C_{\ref{step2}}T^2)\left(K^2t\EE\left(\sup_{0\leq t\leq \T}\frac{S_{t,K-1}}{n} \right)^2 + \frac{1}{n}
			+\EE\left(\sum_{k\geq K} \frac{k\S_{0,k}}{n} \right)^{\sqz 2}\ \right).
		\end{split}
	\end{equation}
	To control the first term on the right
	we use the convergence of $\S_{t,K-1}/n$ to $\s_{t,K-1}$ in probability as well as the bounded convergence theorem (since $\S_{t,K-1}/n\leq 1$) to obtain that
	\begin{equation}\label{step10}
		\limsup_{n\to\infty} \E\left(\sup_{0\leq t\leq \T}\frac{\S_{t,K-1}}{n} \right)^2 \leq  \left( \sup_{0\leq t\leq T}\s_{t,K-1}\right)^2.
	\end{equation}
	Using \eqref{step10} and \eqref{limKstk=0}, if we  first pick a large $K$,
	then for all $n$ sufficiently large all three terms on right hand side of \eqref{tail exp}
	smaller than $\ep^2/3$ with probability at least $1-\ep$, which in turn 
	implies that there is a set $\Omega_1$ with $\P(\Omega_1)\geq 1-3\ep$, so that on $\Omega_1$ 
	$$
	\EE\left(\sup_{0\leq t\leq \T}\frac{\sum_{k\geq K} kS_{t,k}}{n} \right)^2  \leq \ep^2.  
	$$
	It follows that
	$$
	\E\left(1_{\Omega_1} \EE\left(\sup_{0\leq t\leq \T}\frac{\sum_{k\geq K} kS_{t,k}}{n} \right)^{\sqz 2}  \right)
	=\E\left( 1_{\Omega_1} \sup_{0\leq t\leq \T}\frac{\sum_{k\geq K} kS_{t,k}}{n}   \right)^{\sqz 2}\leq \ep^2. 
	$$
	Using $\P(\Omega_1)\geq 1-3\ep$ and the Chebyshev's inequality, we see that with probability $\geq 1-4\ep$,
	$$
	\sup_{0\leq t\leq \T}\frac{\sum_{k\geq K} kS_{t,k}}{n}   \leq \ep.
	$$
	Fixing $t$ and $\ep$ and using the triangle inequality, we get
	\begin{equation}\label{sum_conv}
		\begin{split}
			\sup_{0\leq t\leq \T} \abs{\frac{\sum_{k\geq 0}k\S_{t,k}  }{n}-\sum_{k\geq 0}k \s_{t,k}} &\leq 	\sup_{0\leq t\leq \T} \abs{\frac{\sum_{k=0}^Kk\S_{t,k}  }{n}-\sum_{k=0}^Kk \s_{t,k}}\\
			&+\sup_{0\leq t\leq \T} \frac{\sum_{k\geq K}k\S_{t,k} }{n}+\sup_{0\leq t\leq T}
			\sum_{k\geq K} k\s_{t,k}. 
		\end{split}
	\end{equation}
	By first choosing $K$ large enough and then $n$ large enough we can make both the first and second terms on the right hand side of \eqref{sum_conv}  smaller than $\ep$ with probability at least $1-4\ep$. The third term can also be made smaller than $\ep$ using \eqref{limKstk=0}.
	
	Since $\ep$ is arbitrary we see that
	$$
	\sup_{0\leq t\leq T\wedge \gamma_n} \abs{\frac{\sum_{k=0}^{\infty}k\S_{t,k} }{n}-\sum_{k=0}^{\infty}k\s_{t,k}}\CP 0. 
	$$
	To find $\sum_{k=0}^{\infty} \s_{t,k}$ and $\sum_{k=0}^{\infty}k\s_{t,k}$,
	recall that we set $w=w(t)=\exp(-t)$ and $G(w) = \E(w^D)$. The limit of the fraction of susceptible nodes $\s_t$  satisfies
	\begin{equation}
		\begin{split}
			\s_t&=\sum_{k=0}^{\infty} \s_{t,k}= \exp(-(\alpha/2)(1-w^2))\sum_{r,\ell\geq 0}\frac{p_r}{\ell!}   (\alpha (1-w))^\ell w^{r+\ell}\\
			&=\exp(-(\alpha/2)(1-w)^2) G(w).
		\end{split}
	\end{equation}
	
	The limit of (scaled) number of susceptible half-edges satisfies
	$$
	\x_{S,t}=\sum_{k=0}^{\infty} k \s_{t,k}= \exp(-\frac{\alpha}{2}(1-w^2))
	\sum_{r,\ell\geq 0}(r+\ell)\frac{p_r}{\ell!} (\alpha (1-w))^\ell w^{r+\ell}.
	$$
	The double sum equals
	$$
	w \sum_{r\ge 0} r p_r w^{r-1} \sum_{\ell\ge 0} \frac{(\alpha(1-w)w)^\ell}{\ell!} 
	+ \alpha w(1-w) \sum_{r\ge 0}  p_r w^{r} \sum_{\ell\ge 1} \frac{(\alpha(1-w)w)^{\ell-1}}{(\ell-1)!},
	$$
	so we have 
	\beq
	\x_{S,t}=\exp\left(-(\alpha/2)(1-w)^2\right)w(G'(w)+\alpha (1-w)G(w)).
	\label{xSteq}
	\eeq

	{\bf Extension to time $\gamma_n$.} 
	We have proved the second and third statements of \eqref{3lim}  with $0\leq t\leq \gamma_n$ replaced by $0\leq t\leq \T$ for any fixed $T$. To upgrade this to $0\leq t\leq \gamma_n$, note that 
	$\sum_{k=0}^{\infty}k\S_{t,k}\leq \X_t$. Picking
	a large $T$ satisfying $(m_1+\alpha)\exp(-T)\leq \ep$ and
	re-using equation \eqref{Tgam} we obtain that
	\begin{align}
		&\P\left(\gamma_n>T, \sup_{T \leq t  \leq \gamma_n}
		\abs{	\frac{\sum_{k=0}^{\infty} k\S_{t,k}}{n}-\exp\left(-\frac{\alpha}{2}(w-1)^2 \right)w(G'(w)+\alpha (1-w)G(w)) }\geq 3\ep \right)
		\nonumber\\
		\leq &\P \left(\gamma_n>T,\sup_{T\leq t\leq \gamma_n}\sum_{k=0}^{\infty} k\S_{t,k}/n>2\ep\right) 
		\leq \P(\gamma_n>T,\X_T/n>2\ep )\leq \ep. \label{SL1'}
	\end{align}
	
	This proves the third equation of \eqref{3lim}. The proof of the second equation in \eqref{3lim} is slightly more  complicated. Again we fix a large $T$ such that $(m_1+\alpha)\exp(-T)\leq \ep$
	and
	\begin{equation}\label{largeT}
		\abs{\exp(-\alpha/2)G(0)-\exp\left(-\alpha/2 \left(\exp(-T)-1\right)^2\right)G(\exp(-T))}\leq \ep.
	\end{equation}
	Equation \eqref{largeT} and the fact that
	$ \exp\left(-\alpha/2 \left(\exp(-t)-1\right)^2\right)G(\exp(-t))$ is decreasing in $t$ imply that
	\begin{align}\label{largeT'}
		\sup_{t,t'>T} &\Bigl|   \exp\left(-\alpha/2 \left(\exp(-t)-1\right)^2\right)G(\exp(-t)) \\
		& -\exp\left(-\alpha/2 \left(\exp(-t')-1\right)^2\right)G(\exp(-t'))\Bigr| \leq \ep. \nonumber 
	\end{align}
	We first estimate the term $\S_{T,0}$ for large $T$. 
	Using the weaker version (i.e., with $\sup_{0\leq t\leq \T}$) of the second equation of \eqref{3lim}  we see that for $n$ large enough,
	\begin{equation}\label{ST0}
		\P\left(\gamma_n>T,  \abs{\S_{T,0}/n- \exp\left(-\alpha/2 \left(\exp(-T)-1\right)^2\right)G(\exp(-T))}  >\ep\right)\leq \ep. 
	\end{equation}
	On the event $\{\gamma_n>T\}$, $
	\sup_{T\leq t\leq \gamma_n} \abs{\S_{t,0}-\S_{T,0}}$ can be bounded by
	$\X_T$, since in order to lose a susceptible vertex of degree 0 there must be a half-edge transferred to it. It follows that
	\begin{equation}\label{ST>0}
		\P\left(\gamma_n>T, \sup_{T\leq t\leq \gamma_n} \abs{\S_{t,0}-\S_{T,0}}/n>2\ep\right)
		\leq \P\left(\gamma_n>T,\X_T/n>2\ep\right)\leq \ep. 
	\end{equation}
	Combining equations  \eqref{ST0}, \eqref{ST>0} and \eqref{largeT'} we see that
	\begin{equation}\label{0thterm}
		\P\left(\gamma_n>T, \sup_{T<t\leq \gamma_n}\abs{\S_{t,0}/n- \exp\left(-\alpha/2 \left(\exp(-t)-1\right)^2\right)G(\exp(-t)) }>4\ep \right)\leq 3\ep. 
	\end{equation}
	Using
	$\sum_{k \ge 1}\S_{t,k}\leq \sum_{k=0}^{\infty} k\S_{t,k}$ and equation \eqref{SL1'}
	we see that
	\begin{equation}\label{stk>1}
		\P\left(\gamma_n>T,\sup_{T\leq t\leq \gamma_n}\sum_{k \ge 1} \S_{t,k}/n>2\ep\right)\leq \ep.
	\end{equation}
	It follows from  \eqref{stk>1} and \eqref{0thterm} that 
	$$
	\P\left(\gamma_n>T, \sup_{T<t\leq \gamma_n}\abs{\sum_{k=0}^{\infty} \S_{t,k}/n-a_S \exp\left(-\alpha/2 (w(t)-1)^2\right)G(w(t)) }> 6\ep   \right)\leq 4\ep,
	$$
	which  proves the second equation of \eqref{3lim} and concludes the proof of \eqref{3lim}. 
	
	\subsection
	{Proof of Theorem \ref{epsize}}
	\label{sec:pfth6}
	
	Recall for all $t\leq \gamma_n$ we have $\X_t\geq \X_{S,t}$ and at $\gamma_n$ we have $\X_t=\X_{S,t}$ since $\gamma_n$ is the time that we run out of infected half-edges and the dynamics stop.  Note that by the definition of $f$ in \eqref{defoff} 
	we have
	$$
	\exp(f(w))=\frac{\x_t}{\x_{S,t}}.
	$$
	We  can rewrite $f$ as
	$$
	f(w)  = \log(m_1) + \log(w)- \log( G'(w) + \alpha(1-w) G(w) )  + \frac{\alpha}{2}(w-1)^2. 
	$$
	Taking the derivative we have
	\begin{equation}
		\label{1stderivative}
		f'(w)=\frac{1}{w}-\frac{G''(w)-\alpha G(w)+\alpha(1-w) G'(w)}{ G'(w)+\alpha (1-w)G(w)} +\alpha(w-1).
	\end{equation}
	To evaluate $f'(1)$, recall that $G(1)=1$, $G'(1)=m_1$ and $G''(1)=\E[D(D-1)] = m_2-m_1$. 
	The terms with $1-w$ vanish at $w=1$. Using $\alpha = \rho m_1/\lambda$ from  \eqref{alpha}, it follows that
	$$
	f'(1)=1-\frac{m_2 - m_1 - \rho m_1/\lambda}{m_1} = - \left( \frac{m_2-2m_1}{m_1} - \frac{\rho}{\lambda} \right).
	$$
	
	Theorem \ref{critical_value} tells us that in the supercritical case, we have $\lambda>(\rho m_1)/(m_2-2m_1)$,
	and hence  $f'(1)<0$, which implies that $f$ is positive on $(1-\delta,1)$ for some $\delta>0$.
	Theorem \ref{critical_value} also shows that when $\eta>0$ is small, 
	\begin{equation}
		\lim_{n\to\infty}\P(\I_{\infty}/n>\eta)=q(\lambda)>0,
	\end{equation}
	where $q(\lambda)$ is the survival probability of the two-phase branching process $\bar Z_m$ (defined in Section \ref{britton}).
	Let $t_{\eta}<\delta$ be some small number depending on $\eta$ such that
	\begin{equation}\label{teta}
		1-\exp\left(-\frac{\alpha}{2}(\exp(-t_{\eta})-1)^2\right)G(\exp(-t_{\eta}))<\eta/4.
	\end{equation}
	Conditionally on $\I_{\infty}/n>\eta$  for some small $\eta$, the second equation of \eqref{3lim} implies that with high probability 
	$\gamma_n$ is also bounded from below by  $t_{\eta}$ (depending on $\eta$),  since otherwise we would have 
	\begin{equation*}
		\begin{split}
			&	\limsup_{n\to\infty} \P(\gamma_n<t_{\eta},\I_{\gamma_n}/n>\eta)\\
			\leq & 	\limsup_{n\to\infty} \P\left(\gamma_n<t_{\eta}, \S_{\gamma_n}/n>
			\exp\left(-\frac{\alpha}{2}(\exp(-\gamma_n)-1)^2\right)G(\exp(-\gamma_n))
			-\eta/4,\I_{\gamma_n}/n>\eta\right)\\
			\leq &\limsup_{n\to \infty} \P\left( 
			\S_{\gamma_n}/n>\exp\left(-\frac{\alpha}{2}(\exp(-t_{\eta})-1)^2\right)G(\exp(-t_{\eta}))-\eta/4,\I_{\gamma_n}/n>\eta	\right)=0,
		\end{split}
	\end{equation*}
	where the last equality is due to \eqref{teta} and the fact that $\S_{\gamma_n}+\I_{\gamma_n}=n$. We have also used the definition of $\gamma_n$ so that $\I_{\infty}=\I_{\gamma_n}$ since no more vertices can be infected after $\gamma_n$.

	Recalling the definition of  the $\sigma$ in statement of Theorem \ref{epsize} we see that for any $\ep>0$, $$
	\inf_{t_{\eta}<t<-\log (\sigma+\ep) } \frac{\x_t}{\x_{S,t}}>1,
	$$
	which implies that for some $\ep'>0$ and all $t_{\eta}<t<-\log (\sigma+\ep)$,
	\begin{equation}\label{infxtxst}
		\x_t-\x_{S,t}>\ep'. 
	\end{equation}
	The first and third equations of \eqref{3lim} and  \eqref{infxtxst} imply that
	\begin{equation*}
		\begin{split}
			&\limsup_{n\to\infty} \P\left(t_{\eta}<\gamma_n<-\log(\sigma+\ep) \right) \\
			\leq & \limsup_{n\to\infty} \P\left(t_{\eta}<\gamma_n<-\log(\sigma+\ep),(\X_{\gamma_n}-\X_{S,\gamma_n})/n>\ep'/2 \right) =0,
		\end{split}
	\end{equation*}
	where we have used the fact that $\X_{\gamma_n}=\X_{S,\gamma_n}$. Since 
	we have already shown conditionally on $\I_{\infty}/n>\eta$ whp $\gamma_n>t_{\eta}$, we see that
	$$
	\lim_{n\to\infty}\P(\gamma_n>-\log (\sigma+\ep)|\I_{\infty}/n>\eta)=1.
	$$ 
	
	Using the monotonically decreasing property of $\S_t=\sum_{k=0}^{\infty}\S_{t,k}$ and the second equation of  \eqref{3lim},
	we see that conditionally on $\I_{\infty}/n>\eta$, for any $\ep>0$ whp 
	$$
	\S_{\gamma_n}/n \leq \exp\left(-\frac{\alpha}{2}(\sigma+\ep-1)^2\right)G(\sigma+\ep)+\ep.
	$$
	Hence for any $\ep>0$,
	$$
	\lim_{n\to\infty} \P(I_{\infty}/n>\nu-\ep |I_{\infty}/n>\eta)=1,
	$$
	where $\nu=1-\exp(-\alpha/2(\sigma-1)^2)G(\sigma)$.

	For the other direction, ($\star$) implies that we can pick $\delta'>0$ so that $f(w)<0$ on $(\sigma-\delta',\sigma)$. \eqref{3lim} thus implies with high probability $\gamma_n$ cannot be larger than $-\log(\sigma-\delta')$ since otherwise we would have $\X_{-\log(\sigma-\delta')}<\X_{S,-\log (\sigma-\delta')}$, which is impossible. Since $\delta'$ can be taken  arbitrarily small we conclude that for any $\ep>0$ 
	$$
	\lim_{n\to\infty} \P(\I_{\infty}/n<\nu+\ep)=1.
	$$ 
	
	\subsection{\bf Proof of Theorem  \ref{Q1}} \label{sec:pfth7}
	
	From \eqref{1stderivative} we see that as $\lambda\to\lambda_c$ we have $f'(1)\to 0$. The second derivative of $f$ is given by
	$$
	f''(w)=-\frac{1}{w^2}-\frac{G'''+\alpha(1-w) G''-2\alpha G'}{G'+\alpha (1-w)G}
	+\left(\frac{G''-\alpha G+\alpha (1-w)G' }{G'+\alpha (1-w)G}\right)^2+\alpha .
	$$
	Recall $\mu_k$ denotes the  factorial moment $\E[D(D-1) \cdots(D-k+1)]$.
	We have assumed that  $\E(D^5)<\infty$ so 
	$$
	G'''(1)=\E[D(D-1)(D-2)] =\mu_3<\infty. 
	$$
	Since the terms with $1-w$ vanish at $w=1$, inserting the values of $G(1)$, $G'(1)$, $G''(1)$ and $G'''(1)$ we get
	\begin{align*}
		f''(1) &=-1-\frac{G'''(1)-2\alpha G'(1)}{G'(1)}+ \left( \frac{ G''(1) - \alpha G(1)}{ G'(1)} \right)^2 + \alpha  \\
		& = - 1 - \frac{\mu_3}{m_1} + 2\alpha + \left( \frac{ m_2-m_1 - \alpha}{ m_1} \right)^2 + \alpha.
	\end{align*}
	Theorem \ref{critical_value} tells us that $\alpha_c = m_2-2m_1$ so 
	$$
	- 1 + \left(\frac{m_2-m_1 - \alpha_c}{m_1}\right)^2 = 0. 
	$$
	From this, we see that at $\alpha_c$
	$$
	f''(1)=\frac{-\mu_3+3m_1m_2-6m_1^2}{m_1}.
	$$
	Using $\mu_1=m_1, \mu_2=m_2-m_1$  this can be written  as
	\begin{equation}\label{f''1delta}
		f''(1) = -\frac{\mu_3}{\mu_1} + 3(\mu_2 - \mu_1) \equiv \Delta.
	\end{equation}

	By Theorem \ref{epsize} it suffices to prove that in the  case $\Delta<0$, $\sigma$  converges to 1 as $\lambda\to\lambda_c$. Equation \eqref{f''1delta} and the assumption $\Delta<0$ imply that for $\lambda$ close to $\lambda_c$, in a (non-shrinking) neighborhood of 1, $f''(w)$ has to be bounded from above by some negative constant. Since $f'(1)$ converges to 0 as $\lambda\to \lambda_c$ we conclude
	that for any fixed $w<1$ and all $\lambda$ sufficiently close to $\lambda_c$ one can find $\hat{w} \in (w,1)$ so that $f(\hat{w})<0$. Using the definition of $\sigma$ we see  $\sigma>\hat{w}>w$.
	Letting $w\to 1$, we see that $\sigma$ has to converge to 0 as $\lambda\to\lambda_c$ and thus $\nu$ converges to 0. Hence we have a continuous phase transition. 
	\clearp

	\section{Lower bound on evoSI} \label{sec:lbSI}

	\subsection{AB-avoSI} \label{sec:ABavoSI}
	
	Roughly speaking, the avoSI process serves as an upper bound because certain $I-I$ pairs can rewire, which may leads to additional infections. To get a lower bound, we need to find a way to ensure rewired $I-I$ edges will not transmit infections. This motivates the AB-avoSI process defined as follows.  For each half-edge $h$ we give it two indices:
	
	\begin{itemize}
		\item
		The infection index  $A(h,t)=0$ if $h$ has not been infected by time $t$. 
		If $i$ first becomes an infected half-edge at time $s$, then we set $A(h,t)=s$ for all
		$t\geq s$.

		\item
		The rewiring index $B(h,t)=0$ if the half-edge $h$ has not  rewired by time $t$.
		If $h$ gets rewired at time $s$, then we update the value of $B(h,s)$ to be $s$, no matter whether $h$ has been rewired before or not. In other words, if we let
		$\tau_m(h)$ be the time when $h$ is rewired for the $m$-th time (possibly $\infty$) with $\tau_0(h)=0$,
		then $B(h,t)=\tau_m(h)$
		for $\tau_{m}(h)\leq t<\tau_{m+1}(h)$. 
	\end{itemize}
	
	\mn
	We define the C-AB-avoSI process as follows. As in Section \ref{sec:avoSI}, C is for coupled.
	
	\begin{itemize}
		\item
		At rate $\lambda$ each infected half-edge $h_1$ pairs with a randomly chosen half-edge. Suppose $h_1$ gets paired with half-edge $h_2$ at time $t$. If $h_2$ is susceptible and $B(h_2,t)<A(h_1,t)$ then the vertex associated with half-edge $h_2$ becomes infected. Otherwise $h_1$ will not pass infection to the vertex associated with $h_2$. The reader will see the reason for this condition in the proof of Lemma \ref{abavosi<evosi}.
		Note that if vertex $y$ associated with $h_2$ changes from state $S$ to $I$ then all half-edges attached to $y$ become infected.
		\item 
		Each infected  half-edge gets removed from the vertex that it is attached to at rate $\rho$ and immediately becomes re-attached to a randomly chosen vertex. 
	\end{itemize}
	Similarly to the relation between C-avoSI and avoSI, one can also define the AB-avoSI such that the C-AB-avoSI has the same law as the AB-avoSI on the configuration model. The construction of the graph $G$ for AB-avoSI follows the same route of avoSI (see the proof of Lemma \ref{c-avosi=avo}). Given the graph $G$,  we view each (full) edge as being composed of two half-edges and assign the two indices $A(\cdot, t)$ and $B(\cdot, t)$ as defined above to every half-edge. The evolution of AB-avoSI is then similar to avoSI, except that each time when infected vertex $x$ tries to infect ssuceptible vertex $y$ through edge $e$, we will compare $A(h_1,t)$ and $B(h_2,t)$,
	where $h_1$ is the half-edge of $e$ with one end at $x$ and $h_2$ is the other half-edge of $e$ with one end at $y$.  If $A(h_1,t)>B(h_2,t)$ then the infection will pass through. Otherwise the infection will not pass through, which means $x$ has made an attempt but $y$ remains uninfected. 
	No matter whether the infection passes through or not, we let this $S-I$ edge be deemed stable and it will not get rewired later on. 
	Also from this point on, $x$ will never pass infection to $y$ through $e$. 
	As a comparison, $S-I$ edges in avoSI are always unstable and are subject to potential rewiring.

	One can show that evoSI stochastically dominates AB-avoSI. 
	\begin{lemma}\label{abavosi<evosi}
		There exists a coupling of avoSI and evoSI such that if a vertex is infected in AB-avoSI then it is also infected in evoSI. 
	\end{lemma}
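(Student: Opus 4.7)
The plan is to reuse the coupling already constructed in the proof of Lemma~\ref{avosi>evosi}, since the statement we want is precisely what that coupling delivers. On a common probability space we attach to every edge $e$ and every $\ell\ge 1$ a single family of exponential clocks $T_{e,\ell}$ (with rate $\lambda$) and $R_{e,\ell}$ (with rate $\rho$) governing infection and rewiring attempts, a uniform random variable $U_{e,\ell}$ giving the rewiring destination, and an auxiliary fair coin $V'_{e,\ell}$ that is used only by avoSI to decide which endpoint of an unstable $I$-$I$ edge initiates its rewiring. Both processes start from the same uniformly chosen initial infected vertex $u_0$. The avoSI dynamics additionally apply the ``speed-up'' rule of Section~\ref{sec:avoSI}: once the second endpoint of an edge $e$ becomes infected at relative time $w_{e,\ell}<S_{e,\ell}$, the remaining clocks are replaced by $(T_{e,\ell}-w_{e,\ell})/2$ and $(R_{e,\ell}-w_{e,\ell})/2$. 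As noted in Proposition~\ref{prop:avoevo}, this speed-up preserves which of $T$ or $R$ fires first, and the destination $U_{e,\ell}$ is unchanged, so evoSI and avoSI always agree on the sequence of attempted events for each edge---only the timing differs.

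The proof proceeds by strong induction on the order in which vertices become infected in evoSI, showing more strongly that every such vertex is infected no later in avoSI than in evoSI. The base case is immediate because $u_0$ is infected at time $0$ in both. For the inductive step, suppose $y$ is the $(k+1)$-th vertex to be infected in evoSI, at time $t$, via an edge $e$ that has been rewired $r$ times and whose successive endpoint pairs are $(x_\ell,y_\ell)$ for $1\le \ell\le r+1$; let $m$ be the smallest index after which $y$ remains an endpoint of $e$. The argument splits into the case $m=1$ (where $y$ is an endpoint of $e$ throughout, see Figure~\ref{fig:case1}) and the case $m>1$ (see Figure~\ref{fig:case2}). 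In both cases, the induction hypothesis guarantees that $x_1,\ldots,x_{r+1}$ (and, when $m>1$, the intermediate $y_1,\ldots,y_{m-1}$) are already infected in avoSI by the times when the corresponding rewirings take place in evoSI; Proposition~\ref{prop:avoevo} then forces $e$ to traverse the same sequence of destinations in avoSI, so either $y$ has already been infected in avoSI through some other edge, or the clock $T_{e,r+1}$ fires (possibly at an earlier time thanks to the speed-up) and infects $y$ via $e$ by time $t$.

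The main obstacle---and the only point where avoSI's richer dynamics need care---is that avoSI may rewire an edge that evoSI does not, namely an unstable $I$-$I$ edge whose evoSI counterpart has already become inactive, and the coin $V'_{e,\ell}$ may select a different endpoint to rewire than would the evoSI rule (where only a susceptible endpoint rewires). Tracking $e$ across the two processes in Case~$m>1$ requires establishing a correspondence between its endpoint sequences that respects these extra avoSI events. The resolution, implemented exactly as in the proof of Lemma~\ref{avosi>evosi}, is that whenever the two endpoint histories of $e$ diverge the ``extra'' endpoint picked up by avoSI is already infected there by the induction hypothesis, so the divergence can only accelerate the infection propagation and cannot prevent $y$ from being reached by time $t$. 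This completes the inductive step and hence the construction of the claimed coupling.
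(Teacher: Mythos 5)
Your proposal proves the wrong comparison. Although the statement of Lemma~\ref{abavosi<evosi} as printed repeats the wording of Lemma~\ref{avosi>evosi} (this is evidently a slip in the text), its intended and required content — clear from its placement in Section~\ref{sec:ABavoSI}, from the sentence immediately preceding it (``evoSI stochastically dominates AB-avoSI''), and from how it is invoked in Lemma~\ref{avocrv} and in the proof of Theorem~\ref{Q1new} — is the domination evoSI $\succeq$ AB-avoSI: every vertex infected in AB-avoSI is also infected in evoSI. This is the \emph{lower} bound on evoSI, the new ingredient needed beyond Lemma~\ref{avosi>evosi}. Your argument simply re-runs the coupling and induction of Lemma~\ref{avosi>evosi} (the clocks $T_{e,\ell},R_{e,\ell}$, the speed-up by $2$, the coin $V'_{e,\ell}$, Cases $m=1$ and $m>1$), which establishes avoSI $\succeq$ evoSI and nothing more; if the lemma really meant only that, it would be a verbatim duplicate and useless for the chain avoSI $\succeq$ evoSI $\succeq$ AB-avoSI $\succeq$ delSI.

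The missing idea is precisely the role of the two indices that give AB-avoSI its name. The paper's proof starts from the coupling of Lemma~\ref{avosi>evosi} and reduces the claim to showing that the extra events present in the AB process — rewirings of unstable $I$--$I$ edges — can never create additional infections, and this is where the infection index $A(\cdot,t)$ and rewiring index $B(\cdot,t)$ enter. If an edge $e$ with half-edges $h_1,h_2$ joining two infected vertices is rewired at time $t_1$, then both infection times satisfy $A(h_1,t)<t_1$ and $A(h_2,t)<t_1$ for all $t\ge t_1$, while the rewired half-edge has $B(h_2,t)\ge t_1$; hence $A(h_1,t)\le B(h_2,t)$ and the transmission rule blocks infection from $h_1$'s vertex to $h_2$'s vertex. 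If later $h_1$ itself rewires at some $t_2>t_1$, then $B(h_1,t)\ge t_2>A(h_2,t)$ blocks the reverse direction as well, and if $h_1$ never rewires its vertex is already infected, so no new vertex can be infected through $e$ in any case. Without this argument (or an equivalent one showing that every AB-avoSI infection event corresponds to an evoSI infection), your proposal does not address the AB-avoSI process at all and leaves the required inequality evoSI $\succeq$ AB-avoSI unproved.
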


	\begin{proof}[Proof of Lemma \ref{abavosi<evosi}]
		Following the proof of Lemma \ref{avosi>evosi}, we see it suffices to show $I-I$ rewirings will not create additional infections in AB-avoSI. 
		Assume at some time $t_1$ an edge $e$ between two infected vertices $x$ and $y$ is broken from $y$ and reconnects to $z$. For a picture see Figure \ref{fig:ABrewire}. 
		
		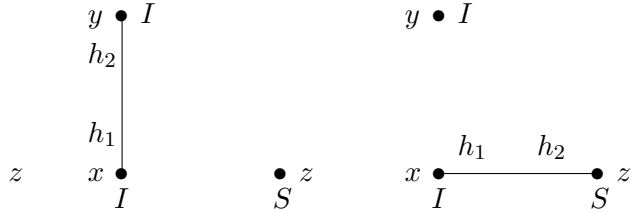
\begin{figure}[h]
			\begin{center}
				\begin{picture}(230,100)
					\put(50,80){$\bullet$}
					\put(50,20){$\bullet$}
					\put(110,20){$\bullet$}
					\put(170,80){$\bullet$}
					\put(170,20){$\bullet$}  	
					\put(230,20){$\bullet$}  	
					
					\put(53,22){\line(0,1){60}}
					\put(172,23){\line(1,0){60}}
					
					\put(40,20){$x$}
					\put(40,80){$y$}
					\put(10,20){$z$}
					\put(60,80){$I$}
					\put(50,10){$I$}
					\put(110,10){$S$}
					\put(120,20){$z$}
					\put(160,20){$x$}
					\put(160,80){$y$}
					\put(240,20){$z$}
					\put(180,80){$I$}
					\put(170,10){$I$}
					\put(230,10){$S$}
					\put(40,65){$h_2$}
					\put(40,35){$h_1$}
					\put(180,30){$h_1$}
					\put(210,30){$h_2$}
				\end{picture}
			\end{center}
			\caption{In the transition from the first drawing to the second, $x$ rewires its connection from $y$ to $z$. Note that $h_1$ and $h_2$ are the two half-edges comprising the edge $e$, which is not drawn in the figure. Vertex $x$ cannot pass an infection to $z$ throught $h_1$ and $h_2$ since $B(h_2,t)>A(h_1,t)$.}
			\label{fig:ABrewire}
		\end{figure}

		We would like to show that after time $t_1$ no infections can pass through $e$ to create additional infected vertices. Denote the half-edge attached to $x$ by $h_1$ and the other half-edge by $h_2$. Since $x$ and $y$ must be infected before time $t_1$, we see that,  according to the definition of the infection index, for all $t\geq t_1$,
		\begin{equation}\label{ah1h2}
			A(h_1,t)<t_1, A(h_2,t)<t_1. 
		\end{equation}
		We next consider the rewiring index. Since the half-edge $h_2$ is rewired at time $t_1$, using the definition of rewiring index we see that for all $t\geq t_1$,
		\begin{equation}\label{bh2}  	B(h_2, t)\geq t_1. 
		\end{equation}
		Combining \eqref{ah1h2} and \eqref{bh2} we see that
		$$
		B(h_2,t)>A(h_1, t).
		$$
		This implies that infections cannot pass from the vertex associated to $h_1$ to the vertex associated to $h_2$. Now we consider the other direction, i.e., from $h_2$ to $h_1$. Note that $x$ has already been infected.  There are two possible cases:
		\begin{itemize}
			\item   If $h_1$ stays with $x$ forever then the vertex associated with $h_1$ is always infected after time $t_1$ (and hence there is no additional infected vertex). 
			\item  If $h_1$ is rewired to some other vertex $x'$ at time $t_2>t_1$ then for all $t>t_2$,
			$$
			B(h_1,t)\geq t_2>t_1>A(h_2,t),
			$$
			which implies that after time $t_2$ infections cannot go from the vertex associated with $h_2$ to the vertex associated with $h_1$. 
		\end{itemize}
		In both cases there will be no additional infected vertices stemming from the rewiring of $e$. Thus we have completed the proof of Lemma \ref{abavosi<evosi}.
	\end{proof}
	\noindent
	We end this section by showing AB-avoSI dominates delSI. This is used in the proof Lemma \ref{avocrv}. 
	\begin{lemma}\label{abdel}
		There exists a coupling of AB-evoSI and delSI such that
		if a vertex is infected in AB-evoSI then it is also infected in delSI. 
	\end{lemma}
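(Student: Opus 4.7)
The plan is to couple AB-avoSI and delSI by running both processes on the same realisation of the configuration model $G$, using the same family of exponential clocks $T_{e,\ell}$, $R_{e,\ell}$ and uniform rewiring targets $U_{e,\ell}$ attached to each edge $e$ of $G$. delSI is constructed exactly as in Section~\ref{sec:pfth2} (it only consults $T_{e,1}$ and $R_{e,1}$), and AB-avoSI is constructed as in Section~\ref{sec:ABavoSI} with the half-edge indices $A(\cdot,t)$ and $B(\cdot,t)$. I would then process the events of AB-avoSI in chronological order and argue, by induction on the rank of infection events, that every vertex infected in AB-avoSI is also infected in delSI by the same time.

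For the inductive step, suppose at time $t$ vertex $y$ is infected in AB-avoSI by a neighbour $x$ via an edge whose two half-edges are $h_x,h_y$. The AB rule forces $A(h_x,t)>B(h_y,t)$, and by induction $x$ is already infected in delSI. Split into two cases according to whether $B(h_y,t)=0$. When $B(h_y,t)=0$, the carrying edge is an original edge $e=\{x,y\}$ of $G$, and the AB-avoSI transmission must have occurred before any rewiring clock of $h_y$ fired; in the shared-clock coupling this is exactly the event $T_{e,1}<R_{e,1}$, which is the condition under which delSI transmits along $e$. Combined with the inductive hypothesis (applied to $x$ at the time $x$ got infected), this gives $y$ infected in delSI by time $t$.

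The genuinely difficult case is $B(h_y,t)>0$: the infection in AB-avoSI traverses an edge that was produced by one or more rewirings of $h_y$, and this edge has no direct counterpart in the frozen delSI graph. To handle it, one would unwind the rewiring history of $h_y$ and use the strict inequality $A(h_x,t)>B(h_y,t)$ to locate an \emph{original} path in $G$ along which delSI infection has already reached $y$ by time $t$. The cleanest way would be to maintain a stronger coupling invariant: for every infected half-edge $h$ in AB-avoSI at time $s$ there is a delSI-infected vertex $v(h,s)$ such that any AB-avoSI transmission $h$ can still generate is matched by a delSI transmission out of $v(h,s)$. The main obstacle is closing this invariant across each AB-avoSI rewiring event, i.e.\ showing that the AB index comparison is strong enough to forbid every new infection pathway that rewiring creates in AB-avoSI but that delSI does not already realise along an original edge; this is both the conceptual purpose of the $A,B$ bookkeeping and the technical crux of the argument.
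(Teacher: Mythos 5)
The direction you set out to prove is the reverse of what this lemma is actually about, and as literally stated it is false, so the gap you flag at the end cannot be closed by any coupling. Lemma \ref{abdel} is introduced by the sentence ``we end this section by showing AB-avoSI dominates delSI'' and is used in Lemma \ref{avocrv} to complete the chain $\mbox{evoSI}\succeq\mbox{AB-avoSI}\succeq\mbox{delSI}$; the wording of the statement simply has the two processes interchanged, and the content to prove is that every vertex infected in \emph{delSI} is infected in \emph{AB-avoSI}. The containment you aim for fails: when the rewiring clock of an $S-I$ edge $e=\{x,y\}$ wins, delSI deletes $e$ forever, while AB-avoSI rewires the half-edge $h_1$ at the infected end $x$ to a random vertex $z$, which sets $B(h_1)>0$ but leaves $A(h_1)$ equal to the time $a>0$ at which $x$ was infected and leaves $B(h_2)=0$ for the half-edge $h_2$ at $y$. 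If $z$ is infected and the infection clock wins at the next activation of $e$, the transmission test compares $A(h_1)=a>0$ with $B(h_2)=0$ and passes, so $y$ (say of degree one) is infected in AB-avoSI although it is never infected in delSI. This happens with positive probability, and it must: delSI is bond percolation and always has a continuous transition, whereas AB-avoSI has a discontinuous one when $\Delta>0$ (Theorem \ref{Q1AB}); the $A,B$ indices are designed only to kill the extra infections created by $I-I$ rewirings (that is Lemma \ref{abavosi<evosi}, the comparison with evoSI), not those created by ordinary $S-I$ rewirings. Note also that this counterexample has $B(h_2,t)=0$, so your case split is misdrawn: $B=0$ at the receiving half-edge does not mean the carrying edge is an original edge of $G$ governed by $T_{e,1}$ versus $R_{e,1}$, because the \emph{sending} half-edge may have been rewired; the failure already occurs in what you call the easy case, and the invariant you hope to establish in the hard case does not exist.

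The intended statement is proved with exactly the coupling you set up, but running the induction of Lemma \ref{couple} from delSI into AB-avoSI. Suppose $y$ is the $(k+1)$-st vertex infected in delSI, infected by $x$ along the original edge $e$ at time $s_x+T_{e,1}$ with $T_{e,1}<R_{e,1}$. By induction $x$ is infected in AB-avoSI at some time $s'_x\le s_x$. In AB-avoSI the edge $e$ cannot be rewired while both of its ends are susceptible, so after $s'_x$ the same clocks $T_{e,1},R_{e,1}$ govern it, and since $T_{e,1}<R_{e,1}$ the infection attempt along $e$ occurs at time $s'_x+T_{e,1}\le s_x+T_{e,1}$, before any rewiring of $e$; at that moment neither half-edge of $e$ has ever been rewired, so both rewiring indices are $0$ and the $A,B$ comparison cannot block the transmission (this is the paper's remark that delSI transmissions never fail in AB-avoSI). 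Hence $y$ is infected in AB-avoSI no later than in delSI, unless it already was, and the induction closes, giving $\mbox{AB-avoSI}\succeq\mbox{delSI}$ as required in Lemma \ref{avocrv}. In short: the easy inclusion is the one that is true, and the step you correctly identified as the crux of your direction is precisely where the opposite inclusion breaks down.
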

	\begin{proof}
		This can be proved in a similar way to the proof of Lemma \ref{couple}. We construct 
		AB-avoSI using the variables $\{T_{e,\ell},R_{e,\ell},V'_{e,\ell},\ell\geq 1\}$ as we did in the proof of Lemma \ref{avosi>evosi} except that for AB-avoSI certain infections may not pass through depending on the relative size of infection index and rewiring index. We also construct delSI using $T_{e,1}$ and $R_{e,1}$  as we did in the proof of Lemma \ref{couple}. We can then prove Lemma \ref{abdel} by repeating the induction argument used in the proof of Lemma \ref{couple}. Note that if edge $e$ is initially present between $x$ and $y$ and 
		$T_{e,1}<R_{e,1}$ then either end of $e$ can be infected by the other end because the rewiring indices for both half-edges of $e$ are equal to 0 at the time of infection. 
		Therefore we don't need to worry about infections not being transmitted successfully in AB-evoSI. 
	\end{proof}
	\clearp

	\subsection{Moment bounds}\label{sec:moment}

	Let $\check{X}_t$ and $\check{X}_{I,t}$ be the number of total half-edges and infected half-edges in the AB-avoSI process.
	As in the analysis of avoSI,  we multiply the original transition rates by $(\check{X_t}-1)/(\lambda \check{X}_{I,t})$. We will use a hat to denote the quantities after the time change. The evolution equation for $\hat{X}_t$ has the same form  as avoSI and hence the first equation of \eqref{3lim} also holds for AB-avoSI. Now we consider the evolution of the number of susceptible half-edges $\hat{X}_{S,t}$. We need a bit more notation to describe this. For half-edge $i$, we let $I(i,t)=1$ if $i$ is an infected half-edge at time $t$ (which also means it hasn't been paired) and $I(i,t)=0$ otherwise. We can define $S(i,t)$ similarly. We also let $S(i,k,t)=1$ if $i$ is attached to a susceptible vertex with $k$ half-edges at time $t$. 
	Finally, we let $v(j,t)$ be the vertex that half-edge $j$ is attached to at time $t$ and
	$D(j,t)$ be the number of half-edges attached to  $v(j,t)$ at time $t$.
	
	We first write down the equation for $\hat{S}_{t,k}$. To reduce the size of 
	formulas we let
	\beq
	G_{i,j}=\{I(i,t)=1,  A(i,t)\leq B(j,t) \} \label{defGij}.
	\eeq
	Reasoning as in the derivation of \eqref{eqStk2} gives
	\begin{equation}\label{newStk}
		\begin{split}
			d\hat{S}_{t,k}&=-k\hat{S}_{t,k}\, dt
			+1_{\{k\geq 1\}} \frac{\rho}{\lambda}\frac{\hat{S}_{t,k-1}}{n}(\hat{X}_t-1) \, dt
			- \frac{\rho}{\lambda}\frac{\hat{S}_{t,k}}{n}(\hat{X}_t-1)\, dt\\
			&+\frac{1}{\hat{X}_{I,t}}\left( \sum_{i,j=1}^{\X_0} 
			1_{G_{i,j}} 1_{\{ S(j,k+1,t)=1 \}}  \right)dt+d\hat{M}_{t,k},
		\end{split}
	\end{equation} 
	where $\hat{M}_{t,k}$ is a martingale.
	Summing \eqref{newStk} over $k$ from 0 to $\infty$ and noting that the second and third term cancel, we get
	\begin{equation}\label{newSt}
		d\hat{S}_t=-\hat{X}_{S,t}\, dt+\frac{1}{\hat{X}_{I,t}}
		\left( \sum_{i,j=1}^{\hat X_0} 1_{G_{i,j}}  1_{\{ S(j,t)=1\} } \right)dt+dM_{1,t}.
	\end{equation}
	Multiplying both sides of \eqref{newStk} by $k$ and summing over $k$, we get
	\begin{equation}\label{newXst}
		\begin{split}
			d\hat{X}_{S,t}&=-\sum_{k=0}^{\infty} k^2\hat{S}_{t,k}dt
			+\frac{\rho}{\lambda}\frac{\hat{S}_t}{n}(\hat{X}_t-1)dt\\
			&+\frac{1}{\hat{X}_{I,t}}
			\left( \sum_{i,j=1}^{\hat X_0} 1_{G_{i,j}} (D(j,t)-1) 1_{\{ S(j,t)=1\}} \right)dt +dM_{2,t}
		\end{split}
	\end{equation}
	for some martingale term $M_{2,t}$.
	
	Analogously,  if we multiply both sides of \eqref{newStk} by $k^2$, $k^3$ and $k^4$, respectively, then we get
	\begin{equation}\label{newk^2Stk}
		\begin{split}
			&d\left(\sum_{k=0}^{\infty} k^2\hat{S}_{t,k}\right)=-\sum_{k=0}^{\infty} k^3\hat{S}_{t,k}dt+2\frac{\rho}{\lambda}\frac{\hat{X}_{S,t}}{n}(\hat{X}_t-1)+\frac{\rho}{\lambda} \frac{\hat{S}_t}{n}(\hat{X}_t-1) \\
			&+\frac{1}{\hat{X}_{I,t}}\left( \sum_{i,j=1}^{\hat X_0} 
			1_{G_{i,j}} (D(j,t)-1)^2 1_{\{ S(j,t)=1\}} \right)dt+dM_{3,t},
		\end{split}
	\end{equation}
	\begin{equation}\label{newk^3stk}
		\begin{split}
			&d\left(\sum_{k=0}^{\infty} k^3\hat{S}_{t,k}\right)=-\sum_{k=0}^{\infty} k^4\hat{S}_{t,k} dt
			+3\frac{\rho}{\lambda}\frac{\sum_{k=0}^{\infty} k^2\hat{S}_{t,k}}{n}(\hat{X}_t-1)
			+3\frac{\rho}{\lambda}\frac{\hat{X}_{S,t} }{n}(\hat{X}_t-1)\\&+ 
			\frac{\rho}{\lambda}\frac{\hat{S}_t}{n}(\hat{X}_t-1)
			+\frac{1}{\hat{X}_{I,t}}\left( \sum_{i,j=1}^{\X_0} 
			1_{G_{i,j}} (D(j,t)-1)^3 1_{\{ S(j,t)=1\}} \right)dt+dM_{4,t},
		\end{split}
	\end{equation}
	and
	\begin{equation}\label{newk^4stk}
		\begin{split}
			d\left(\sum_{k=0}^{\infty} k^4\hat{S}_{t,k}\right)&=-\sum_{k=0}^{\infty} k^5\hat{S}_{t,k} dt
			+4\frac{\rho}{\lambda}\frac{\sum_{k=0}^{\infty} k^3\hat{S}_{t,k}}{n}(\hat{X}_t-1)
			+6\frac{\rho}{\lambda}\frac{\sum_{k=0}^{\infty} k^2\hat{S}_{t,k}}{n}(\hat{X}_t-1)\\
			&+4\frac{\rho}{\lambda}\frac{\hat{X}_{S,t} }{n}(\hat{X}_t-1)+ 
			\frac{\rho}{\lambda}\frac{\hat{S}_t}{n}(\hat{X}_t-1)\\
			&		+\frac{1}{\hat{X}_{I,t}}\left( \sum_{i,j=1}^{\X_0} 
			1_{G_{i,j}} (D(j,t)-1)^4 1_{\{ S(j,t)=1\}} \right)dt+dM_{5,t}.
		\end{split}
	\end{equation}

	The main result of this section is the following lemma. 
	
	\begin{lemma}\label{lbstep1}
		There exists two constants $C_{\ref{s12}},C_{\ref{s13}}>0$, such that for every $\ep>0$ whp we have 
		\begin{equation}\label{s11}
			\sum_{j=1}^5 \left(\sup_{0\leq t\leq \gamma_n\wedge 1} 
			\abs{M_{j,t}}\right)\le n^{5/6},
		\end{equation}
		\begin{equation}\label{s12}
			\sup_{0\leq t\leq  \gamma_n\wedge 1} \sum_{k=0}^{\infty} (k+1)^4\hat{S}_{t,k} \le C_{\ref{s12}}n,
		\end{equation}
		\begin{equation}\label{s13}
			\sum_{j=1}^{\hat X_0}D(j,t)^2 1_{\{S(j,t)=1, B(j,t)>0\}}\leq n(C_{\ref{s13}}t+\ep),
			\quad\hbox{for all $0\leq t\leq \gamma_n\wedge 1$}
		\end{equation}
	\end{lemma}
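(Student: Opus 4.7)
I would prove the three estimates separately, each building on a common preliminary bound for the total number of rewirings. In the time-changed process each infected half-edge rewires at rate $\rho(\hat X_t-1)/(\lambda \hat X_{I,t})$, so the counting process $N(t)$ of rewiring events has total rate $\rho(\hat X_t-1)/\lambda\leq \rho \hat X_0/\lambda$; since $\hat X_0/n\to m_1$, standard Poisson concentration yields $N(t)\leq C_0 nt+\ep_1 n$ whp uniformly in $t\in[0,\gamma_n\wedge 1]$, with $\ep_1$ arbitrarily small. Because successive rewiring targets are i.i.d.\ uniform on $\{1,\ldots,n\}$, if $R_v(t)$ denotes the number of rewirings delivered to $v$ by time $t$, then $R_v(\gamma_n\wedge 1)$ is dominated by $\mathrm{Binomial}(C_0n,1/n)$, and $\max_v R_v(\gamma_n\wedge 1)=O(\log n)$ whp. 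A susceptible vertex never loses half-edges, so $D(v,t)=D_v+R_v(t)$ whenever $S(v,t)=1$; moreover the half-edges at such $v$ with $B(j,t)>0$ are exactly those delivered by rewirings, so their count equals $R_v(t)$.

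For \eqref{s11} I would apply Doob's $L^2$ maximal inequality to each $M_{j,t}$. Its jumps coincide with those of $\sum_k k^{j-1}\hat S_{t,k}$: an infection of $v$ produces a jump of $-D(v,t-)^{j-1}$, while a rewiring to a susceptible $v$ of degree $k$ produces a jump of absolute value at most $C(k+1)^{j-2}$. Using $D(v,t-)\leq D_v+O(\log n)$ whp, the moment assumption $\E(D^5)<\infty$, and the bound $\max_v D_v\leq n^{1/5}\log n$ whp (Markov applied to $D^5$), the worst case $j=5$ gives
\[
\sum_v D(v,\tau_v)^8\leq (\max_v D_v)^3\sum_v D_v^5\leq C n^{8/5}\log^3 n \quad\text{whp,}
\]
and analogously $\sum_{i=1}^N D(u_i,\tau_i)^6\leq (\max_v R_v)\sum_v D_v^6\leq C n^{6/5}\log^2 n$ whp, where $u_i$ ranges over rewiring targets. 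Combining, $\E\langle M_j\rangle_{\gamma_n\wedge 1}=O(n^{8/5}\log^8 n)=o(n^{5/3})$, so Doob plus Chebyshev give $\sup_{t\leq\gamma_n\wedge 1}|M_{j,t}|\leq n^{5/6}/5$ whp; summing over $j=1,\ldots,5$ yields \eqref{s11}.

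For \eqref{s12} the identity $D(v,t)=D_v+R_v(t)$ for susceptible $v$ and the monotonicity of $R_v$ in $t$ give
\[
\sup_{t\leq\gamma_n\wedge 1}\sum_k(k+1)^4\hat S_{t,k}\leq \sum_v(D_v+R_v(\gamma_n\wedge 1)+1)^4\leq C\sum_v D_v^4+C\sum_v R_v(\gamma_n\wedge 1)^4+Cn.
\]
The SLLN with $\E(D^4)<\infty$ gives $\sum_v D_v^4\leq Cn$ whp. For the middle term, $\E R_v^4=O(1)$ uniformly in $v$ since $R_v(\gamma_n\wedge 1)$ is dominated by $\mathrm{Binomial}(C_0n,1/n)$, while negative association of the balls-in-bins counts gives $\mathrm{Var}(\sum_v R_v^4)\leq \sum_v \mathrm{Var}(R_v^4)=O(n)$; Chebyshev then yields $\sum_v R_v(\gamma_n\wedge 1)^4\leq Cn$ whp, proving \eqref{s12}.

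The main obstacle is \eqref{s13}, since the bound must be small for small $t$. From the identifications,
\[
\sum_{j=1}^{\hat X_0}D(j,t)^2 1_{\{S(j,t)=1,B(j,t)>0\}}=\sum_{v:S(v,t)=1}(D_v+R_v(t))^2 R_v(t)\leq 2\sum_v D_v^2 R_v(t)+2\sum_v R_v(t)^3.
\]
Writing $\sum_v D_v^2 R_v(t)=\sum_{i=1}^{N(t)}D_{u_i}^2$ with $(u_i)$ i.i.d.\ uniform and independent of the $D_i$'s, the conditional mean given $N(t)$ and the degrees is $N(t)\bar{D^2}$ with $\bar{D^2}\to\E(D^2)$, and the conditional variance is bounded by $N(t)\bar{D^4}\leq CN(t)$ whp, so Chebyshev gives $\sum_v D_v^2 R_v(t)\leq N(t)(\E(D^2)+\ep_2)$ whp at any fixed $t$. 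Similarly, $\E R_v(t)^3=O(N(t)/n)$ and negative association yield $\mathrm{Var}(\sum_v R_v(t)^3)=O(n)$, hence $\sum_v R_v(t)^3\leq CN(t)+\ep_3 n$ whp at any fixed $t$. Combining with $N(t)\leq C_0 nt+\ep_1 n$ produces a pointwise bound $C_{\ref{s13}}nt+\ep_4 n$. The delicate step is uniformity: the LHS is not monotone in $t$ because a susceptible vertex becoming infected removes a positive term, but each of the two majorants $\sum_v D_v^2 R_v(t)$ and $\sum_v R_v(t)^3$ \emph{is} monotone in $t$ since $R_v(t)$ is. I would therefore discretize $[0,\gamma_n\wedge 1]$ on a grid of mesh $\delta=\ep/(4C_{\ref{s13}})$, apply the pointwise concentration by union bound over the $O(1/\delta)$ grid points, and extend to all $t$ by evaluating the majorants at the right endpoint of the enclosing subinterval; the resulting slack $C_{\ref{s13}}\delta n=\ep n/4$ is absorbed into the $\ep n$ allowance in \eqref{s13}.
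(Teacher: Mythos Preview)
Your route is different from the paper's and essentially works, but two technical claims fail as stated. First, in AB-avoSI a susceptible vertex \emph{can} lose a half-edge: when an infected half-edge $i$ pairs with a susceptible half-edge $j$ and $A(i,t)\le B(j,t)$, both half-edges are consumed though the vertex stays susceptible (the paper lists this explicitly as a separate jump type for $\sum_k k^4\hat S_{t,k}$). So only $D(v,t)\le D_v+R_v(t)$ holds; this is the direction you need for \eqref{s12} and \eqref{s13}, but for \eqref{s11} you must add these failed-infection jumps to the quadratic variation (their total contribution is $O(n^{7/5}\log^C n)$ whp, hence harmless). Second, ``$\E\langle M_j\rangle=O(n^{8/5}\log^8 n)$'' is incorrect: your bound $\sum_v D_v^8\le(\max_v D_v)^3\sum_v D_v^5$ holds only whp, and under $\E(D^5)<\infty$ alone $\E\sum_v D_v^8$ may be infinite, so Doob cannot be applied directly; you need a stopping-time truncation of the compensator before invoking the $L^2$ maximal inequality. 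For \eqref{s13} the asserted independence of $N(t)$ from the targets is also delicate, since $\gamma_n$ depends on where earlier rewirings landed; the clean fix is to regard $(u_i)$ as a fixed i.i.d.\ sequence, apply a maximal inequality to the martingale $m\mapsto\sum_{i\le m}(D_{u_i}^2-\bar{D^2})$ over $m\le C_0n$, and then read off the bound at $m=N(t)$.

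For comparison: the paper proves \eqref{s11} via Burkholder--Davis--Gundy with exponent $p=5/4$, using $(\sum_x Q_x^8)^{5/8}\le\sum_x Q_x^5$ to reduce to the finite fifth moment, which sidesteps your whp-versus-expectation issue entirely. For \eqref{s12} the paper bootstraps through the evolution equations for $\sum_k k^2\hat S_{t,k}$, then $k^3$, then $k^4$, each time absorbing the failed-infection correction into the next moment; your crude majorant $\sum_v(D_v+R_v+1)^4$ is considerably quicker. For \eqref{s13} the paper writes the left side as $\int_0^t h(s)\,ds+M_{6,t}$ via Dynkin's formula, observes that only rewirings contribute positively to $h$, bounds $h(s)\le C\sum_k(k+1)^2\hat S_{s,k}\le Cn$ using \eqref{s12}, and controls $M_{6,t}$ by BDG; this is cleaner than the balls-in-bins route since the drift/martingale decomposition is automatic and needs no independence between $N(t)$ and the targets.
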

	\begin{proof}[Proof of Lemma \ref{lbstep1}]
		We first prove equation \eqref{s11}. The proof is based on analyzing the quadratic variation of $M_{1,t}, \ldots, M_{5,t}$. Since the proofs for the five quantities are similar we only give the details for $M_{5,t}$, the martingale associated with $\sum_{k=0}^{\infty} k^4\hat{S}_{t,k}$.
		Let $Q_x$ be the number of half-edges that vertex $x$ originally has plus the half-edges that has been rewired to vertex $x$ before  $x$ becomes infected. Let $D_x(t)$ be the number of half-edges that $x$ has at time $t$. We necessarily have $D_x(t) \leq Q_x$ as long as $x$ is susceptible at time $t$.
		Note that the jumps of $\sum_{k=0}^{\infty} k^4\hat{S}_{t,k}$ have the following three sources.
		
		\begin{itemize}
			\item An infected half-edge pairs with a susceptible half-edge attached to a vertex $x$ with $D_x(t)$ half-edges and passes the infection to $x$. This decreases $\sum_{k=0}^{\infty} k^4\hat{S}_{t,k}$ by  $D_x(t)^4$. Such type of jumps can occur at most once for each susceptible vertex.
			\item An infected half-edge pairs with a susceptible half-edge attached to a vertex $x$ with $D_x(t)$ half-edges but does not pass the infection.  This decreases $\sum_{k=0}^{\infty} k^4\hat{S}_{t,k}$ by  
			$$
			D_x(t)^4-(D_x(t)-1)^4\leq  15 Q_x^3.
			$$
			Such type of jumps can occur at most  $Q_x$ times for vertex $x$.  
			\item An infected half-edge is rewired to a susceptible vertex $x$  with $D_x(t)$ half-edges. This gives an increase of 
			$$
			(D_x(t)+1)^4-D_x(t)^4 \leq 15Q_x^3
			$$ 
			to $\sum_{k=0}^{\infty} k^4\hat{S}_{t,k}$. Such type of jumps can happen at most $Q_x$ times for vertex $x$.
		\end{itemize}
		It follows from the above analysis that the quadratic variation of $M_{5,t}$
		is bounded by
		\beq
		\sum_{x=1}^n (Q_x^4)^2+\sum_{x=1}^n 15(Q_x^3)^2Q_x
		+\sum_{x=1}^n 15(Q_x^3)^2Q_x\leq C_{\ref{qvbd}}\sum_{x=1}^n Q_x^8.
		\label{qvbd}
		\eeq
		
		Using the Burkholder-Davis-Gundy inequality (see, e.g., \cite[Theorem 7.34]{Kl} with $p=5/4$),
		\beq
		\E\left[\sup_{0\leq t\leq  \gamma_n\wedge 1}\abs{M_{5,t}}^{5/4}   \right]
		\leq C_{\ref{BDG}} \E\left[\left( \sum_{x=1}^n Q_x^8 \right)^{5/8} \right]. 
		\label{BDG}
		\eeq
		To bound the right-hand side we need the following well-known fact:
		for any $p\ge 1$ and positive numbers $a_1,\ldots, a_m$,
		\begin{equation}\label{anineq}
			\left(\sum_{i=1}^m a_i^p \right)^{\sqz 1/p}  \leq \sum_{i=1}^m a_i.
		\end{equation}
		In words the $L^p$ norm  under the counting measure  decreases as $p$ increases.
		This is easily seen to be true by noting that
		\begin{equation*}
			\sum_{i=1}^m \frac{a_i}{	\left(\sum_{j=1}^m a_j^p \right)^{\sqz 1/p}  } =\sum_{i=1}^m \left( \frac{a_i^p}{\sum_{j=1}^m a_j^p} \right)^{1/p} \geq \sum_{i=1}^m  \frac{a_i^p}{\sum_{j=1}^m a_j^p}=1,
		\end{equation*}
		since 
		$$
		\frac{a_i^p}{\sum_{j=1}^m a_j^p} \in [0,1] \mbox{ and } 1/p \leq 1.
		$$
		Applying \eqref{anineq} with $p=8/5$ and $a_i=Q_i^8$ gives that
		\begin{equation}\label{q58}
			\left(\sum_{x=1}^n Q_x^8 \right)^{5/8}\leq \sum_{x=1}^n Q_x^5. 
		\end{equation}
		
		As we argued in the proof of \eqref{EQ_l^2}, if $V=\mbox{Binomial}(\hat X_0,1/n)$, then  $Q_x$ is dominated by $D_x+V$. To bound the fifth moment of the sum we note that if $Y$ and $Z$ are nonnegative random variables,
		\begin{equation}\label{y+z^5}
			\E(Y+Z)^5 \le \E( 2 \max\{Y,Z\} )^5 \le 32[ \E Y^5 + \E Z^5].
		\end{equation}
		We claim that
		the 5-th moment of Binomial($m,p$) is bounded by $$mp+\cdots +(mp)^5.$$ To see this, let $Y_1,\ldots, Y_m$ be i.i.d. Bernoulli variable with mean $p$. 
		\begin{equation}
			\begin{split}
				\E\left[\left(\sum_{i=1}^m Y_i\right)^5\right]=\sum_{i_1,\ldots, i_5}\E(Y_{i_1}\cdots Y_{i_5})=\sum_{i_1,\ldots, i_5} p^{\# \textrm{ of distinct elements among }i_1,\ldots, i_5}.
			\end{split}
		\end{equation}
		Note that the number of ordered tuples $(i_1,\ldots, i_5)$ such that 
		there are $\ell$ distinct elements among them is bounded by $m^{\ell}$. We conclude that
		\begin{equation}
			\E\left[\left(\sum_{i=1}^m Y_i\right)^5\right]\leq \sum_{\ell=1}^5 m^{\ell}p^{\ell},
		\end{equation}
		which verifies the claim. 
		Using this claim we have that 
		\beq
		\E(V^5)\leq \sum_{\ell=1}^{5}\frac{\E(\hat X_0^{\ell})}{n^{\ell}}
		\leq C_{\ref{binmom}},
		\label{binmom}
		\eeq
		so that by \eqref{y+z^5},
		\begin{equation}
			\sum_{x=1}^n \E(Q_x^5) \leq 32\sum_{i=1}^n (\E(D_i^5)+\E(V^5)) 
			\leq C_{\ref{m4tctl}}n.
			\label{m4tctl}
		\end{equation}
		
		Equations \eqref{BDG}, \eqref{q58} and \eqref{m4tctl} imply that
		\begin{equation}
			\P\left(\sup_{0\leq t\leq \gamma_n\wedge 1}\abs{M_{5,t}}>n^{5/6}\right)=
			\P\left(\sup_{0\leq t\leq \gamma_n\wedge 1}\abs{M_{5,t}}^{5/4}>
			(n^{5/6})^{5/4} \right)\leq \frac{Cn}{n^{25/24}}\to 0.
		\end{equation}
		Using the reasoning that led to \eqref{qvbd}, we can deduce the same bounds (with different constants) for $M_{1,t},M_{2,t},M_{3,t},M_{4,t}$ and hence equation \eqref{s11} follows by using Markov's inequality. 
		
		To prove equation \eqref{s12}, 
		define the event $\Omega_n$ to be	
		\begin{equation}\label{defomegan}
			\left\{\abs{\sum_{k=0}^{\infty} k^i\hat S_{0,k}-n\sum_{k=0}^{\infty} k^ip_k }\leq n,
			\sup_{0 \leq t\leq \gamma_n\wedge 1} \abs{M_{i,t}}\leq n, 	\quad\hbox{for $i=1,2,3,4$} \right\}.
		\end{equation}
		The assumption that $\E(D^5)<\infty$ and equation \eqref{s11} imply that $\P(\Omega_n)\to 1$ as $n\to\infty$. 
		By the definition of $G_{i,j}$ in \eqref{defGij}, 
		\begin{equation*}
			\frac{1}{\hat{X}_{I,t}}\left( \sum_{i,j=1}^{\hat X_0} 
			1_{G_{i,j}} (D(j,t)-1)^2 1_{\{ S(j,t)=1\}} \right) \leq  	\frac{1}{\hat{X}_{I,t}}\left( \sum_{i,j=1}^{\hat X_0} 
			1_{\{I(i,t)=1\}} (D(j,t)-1)^2 1_{\{ S(j,t)=1\}} \right),
		\end{equation*}
		which is bounded by
		\begin{equation}\label{s14}
			\begin{split}
				\sum_{j=1}^{\hat{X}_0} D(j,t)^21_{\{S(j,t)=1\}}
				&=  \sum_{r=1}^n \sum_{j=1}^{\hat{X}_0}1_{\{v(j,t)=r\}}  1_{\{r \textrm{ is susceptible at }t\}}  D_r(t)^2\\
				&= \sum_{r=1}^n  1_{\{r \textrm{ is susceptible at }t\}}  D_r(t)^3=\sum_{k=0}^{\infty} k^3 \hat{S}_{t,k}.
			\end{split}
		\end{equation}
		Here $D_r(t)$ is the number of half-edges that vertex $r$ has at time $t$. 
		Using \eqref{newk^2Stk} and \eqref{s14} we obtain that
		\begin{equation}\label{s15}
			\sum_{k=0}^{\infty} k^2\hat{S}_{t,k}-\sum_{k=0}^{\infty} k^2\hat{S}_{0,k}\leq  \int_0^t \left(2\frac{\rho}{\lambda}\frac{\hat{X}_{S,u}}{n}(\hat{X}_u-1)+\frac{\rho}{\lambda} \frac{\hat{S}_u}{n}(\hat{X}_u-1)\right)dt+M_{3,t}. 
		\end{equation}
		On the event $\Omega_n$, we have that $\sum_{k=0}^{\infty} k^2 \hat{S}_{0,k}\leq (m_2+1)n$, $\hat{X}_{S,u}\leq \hat{X}_0\leq (m_1+1)n$ and $M_{3,t}\leq n$. Therefore, using \eqref{s15} we see that, there exsits a constant $C_{\ref{s16}}$ such that
		\begin{equation}\label{s16}
			\sum_{k=0}^{\infty} k^2 \hat S_{t,k}\leq (m_2+1)n +\int_0^t \left(\frac{2\rho}{\lambda}(m_1+1)^2n+ \frac{\rho}{\lambda}(m_1+1)n\right)du+n \leq C_{\ref{s16}}n
		\end{equation}
		for all $0\leq t\leq \gamma_n\wedge 1$. 
		Analogously  to the proof of \eqref{s14}, we can show 
		\begin{equation}\label{s100}
			\frac{1}{\hat{X}_{I,t}}\left( \sum_{i,j=1}^{\hat X_0} 
			1_{G_{i,j}} (D(j,t)-1)^3 1_{\{ S(j,t)=1\}} \right)\leq \sum_{k=0}^{\infty} k^4\hat S_{t,k}.
		\end{equation}
		Using \eqref{s100} and \eqref{newk^3stk},
		\begin{align*}
			\sum_{k=0}^{\infty} k^3 \hat{S}_{t,k}  -\sum_{k=0}^{\infty} k^3 \hat{S}_{0,k} \leq \int_0^t   \biggl(&
			3\frac{\rho}{\lambda}\frac{\sum_{k=0}^{\infty} k^2\hat{S}_{u,k}}{n}(\hat{X}_u-1)\\
			&+3\frac{\rho}{\lambda}\frac{\hat{X}_{S,u} }{n}(\hat{X}_u-1)
			+	\frac{\rho}{\lambda}\frac{\hat{S}_u}{n}(\hat{X}_u-1) \biggr)du+M_{4,t}.
		\end{align*}
		On the event $\Omega_n$, using \eqref{s16}, we see that, for all $0\leq t\leq \gamma_n \wedge 1$,
		\begin{equation}\label{s17}
			\begin{split}
				\sum_{k=0}^{\infty} k^3 \hat{S}_{t,k}\leq &(m_1+1)n+n+\\
				&\int_0^t \left(\frac{3\rho}{\lambda}C_{\ref{s16}}(m_1+1)n+\frac{3\rho}{\lambda}(m_1+1)^2n+\frac{\rho}{\lambda}(m_1+1)n \right)\, du \leq C_{\ref{s17}}n,
			\end{split}
		\end{equation}
		for some constant $C_{\ref{s17}}>0$. Proceeding in a similar fashion and using \eqref{newk^4stk} we can show that on $\Omega_n$ there exists a constant $C_{\ref{s18}}>0$, such that whp for all $0\leq t\leq \gamma_n \wedge 1$,
		\begin{equation}\label{s18}
			\sum_{k=0}^{\infty} k^4\hat{S}_{t,k}\leq C_{\ref{s18}}n.
		\end{equation}
		Equation \eqref{s12} follows from \eqref{s18} and the fact that $\P(\Omega_n)\to 1$  since
		$$
		\sum_{k=0}^{\infty} (k+1)^4 \hat{S}_{t,k}\leq 16\sum_{k=0}^{\infty} (k^4+1)\hat{S}_{t,k}\leq 16\sum_{k=0}^{\infty} k^4\hat{S}_{t,k}+16 \sum_{k=0}^{\infty} \S_{t,k}\leq 16\sum_{k=0}^{\infty} k^4 \hat{S}_{t,k}+16n.
		$$
		
		We now turn to the proof of equation  \eqref{s13}. 
		Set $$H(t)=\sum_{j=1}^{\hat X_0}(D(j,t)-1)^2 1_{\{S(j,t)=1, B(j,t)>0\}}.$$
		Note that $H(0)=0$.  Using Dynkin's formula,
		$$
		H(t)=\int_0^t h(s)\, ds+M_{6,t}, 
		$$
		where $M_{6,t}$ is a martingale associated with 
		$H(t)$ and $h(t)$ is the rate of change of $H(t)$. We now control $h(t)$ and $M_{6,t}$ by analyzing the jumps of $H(t)$ ($H(t)$ is a pure jump process). Note that there are three types of jumps:
		\begin{itemize}
			\item An infected half-edge pairs with a half-edge attached to susceptible vertex $x$ and makes $x$ infected. This does not increase $H(t)$ and thus makes a non-positive contribution to $h(t)$. The absolute value of the jump size of $H(t)$ is bounded by $D_x(t)^3$ where $D_x(t)$ is the number of half-edges that $x$ has at time $t$. For each vertex $x$ such jumps can happen at most once.
			\item An infected half-edge pairs with a half-edge attached to susceptible vertex $x$ but $x$ stays susceptible after the pairing. This does not increase $H(t)$ and thus makes a non-positive contribution to $h(t)$. The absolute value of the jump size of $H(t)$ is bounded above by $$
			D_x(t)^2+\abs{(D_x(t)-1)^2-(D_x(x)-2)^2}D_x(t)\leq 3(D_x(t)+1)^2. 
			$$
			To see this, note that the loss of a half-edge $j$ attached to $x$ makes a twofold contribution to $H(t)$. First, $j$ is no longer a half-edge so $H(t)$ has to decrease by $(D(j,t)-1)^2=(D_x(t)-1)^2$. Second, the for each of the remaining $D_x(t)-1$ half-edges attached to $x$, its contribution to $H(t)$ changes from $(D_x(t)-1)^2$ to $(D_x(t)-2)^2$. 
			
			For each vertex $x$ such type of jumps can occur at most $Q_x$ times. 
			\item An infected half-edge is rewired to a susceptible vertex $x$. This increases $H(t)$ by at most 
			$$
			D_x^2(t)+		\abs{(D_x(t)^2-(D_x(t)-1)^2)}D_x(t)\leq 3(D_x(t)+1)^2\leq 3(Q_x+1)^2. 
			$$
			The rate that $x$ receives a rewired half-edge is equal to 
			$$
			\frac{\hat{X}_0}{\lambda \hat{X}_{I,t}} \rho \hat{X}_{I,t} \frac{1}{n}=\frac{\rho \hat{X}_0}{\lambda n}\leq \frac{(m_1+1)\rho}{\lambda}
			$$
			on the event $\Omega_n$ (defined in \eqref{defomegan}). Here the factor of $1/n$ comes from Poisson thinning since  each half-edge is a rewired to a uniformly chosen vertex independently. 
			For each vertex $x$ such type of  jumps can occur at most $Q_x$ times. 
		\end{itemize}
		Therefore  on $\Omega_n$ we have 
		$$
		h(t)\leq \frac{(m_1+1)\rho}{\lambda}
		\sum_{x=1}^n 3(D_x(t)+1)^21_{\{x \textrm{ is susceptible at }t\}}\leq \frac{(m_1+1)\rho}{\lambda}\sum_{k=0}^{\infty} 3(k+1)^4 \hat{S}_{t,k}.
		$$
		Equation \eqref{s12} shows that with high probability $\sum_{k=0}^{\infty} (k+1)^4 \hat{S}_{t,k} \leq C_{\ref{s12}}n$. Since $\Omega_n$ also holds with high probability, we deduce that
		\begin{equation}\label{s131}
			\lim_{n\to\infty}\P\left(\int_0^t h(s)ds\leq C_{\ref{s131}}nt, \forall 0\leq t\leq \gamma_n\right)=1.
		\end{equation}
		The above analysis of the jumps of $H(t)$ also implies that the quadratic variation of $M_{6,t}$ is bounded by 
		$$
		(Q_x^3)^2+ (3(Q_x+1)^2)^2Q_x+(3(Q_x+1)^2)^2Q_x\leq 20 (Q_x+1)^6.
		$$
		We now bound the $2/3$-th moment of the quadratic variation.
		Applying \eqref{anineq} with $p=3/2$ and $a_i=(Q_i+1)^6$ gives that
		$$
		\left(\sum_{i=1}^n (Q_i+1)^6 \right)^{2/3}\leq \sum_{i=1}^n (Q_i+1)^4. 
		$$	Using this and the Burkholder-Davis-Gundy inequality (\cite[Theorem 7.34]{Kl} with $p=4/3$),
		$$
		\E\left[\sup_{0\leq t\leq  \gamma_n\wedge 1}\abs{M_{6,t}}^{4/3}   \right]
		\leq C' \E\left[\left( \sum_{i=1}^n (Q_i+1)^6 \right)^{2/3} \right] \leq C'\E\left(\sum_{i=1}^n (Q_i+1)^4\right)\leq  Cn.
		$$
		This implies that 
		\begin{equation}\label{s132}
			\P\left(\sup_{0\leq t\leq \gamma_n\wedge 1}\abs{M_{6,t}}>n^{4/5}\right)=
			\P\left(\sup_{0\leq t\leq \gamma_n\wedge 1}\abs{M_{6,t}}^{4/3}>n^{4/5*4/3}\right)\leq \frac{Cn}{n^{16/15}}\to 0.
		\end{equation}
		Equation \eqref{s13} follows from equations \eqref{s131} and \eqref{s132}. 
	\end{proof}

	\subsection{Rough upper and lower  bounds for $\hat{X}_{I,t}$ and $\hat{I}_t$}\label{sec:rough}

	Define 
	\begin{equation}\label{et}
		E(t)=\frac{1}{\hat{X}_{I,t}}\left( \sum_{i,j=1}^{\hat{X}_0} 1_{G_{i,j}} D(j,t) 1_{\{ S(j,t)=1\}} \right)
	\end{equation}
	where $G_{i,j}=\{ I(i,t)=1, A(i,t) \le B(j,t) \}$ was defined in \eqref{defGij}. 
	\begin{lemma}\label{lbstep2}
		There exists two constants $\lambda_0,t_0\in (0,1)$, such that for all $\lambda<\lambda_c+\lambda_0$, $t<t_0$ and  $\ep>0$, the following four inequalities hold whp for $0\leq t\leq \gamma_n\wedge t_0$:
		\begin{equation}\label{lbstep11}
			\hat{X}_{I,t}\leq \left(\frac{2\rho m_1}{\lambda_c^2}(\lambda-\lambda_c)t+m_1\Delta t^2+\ep\right)n, 
		\end{equation}
		
		\beq\label{lbstep12}
		\hat{X}_{I,t} \geq \left(\frac{\rho m_1}{2\lambda_c^2}(\lambda-\lambda_c)t+\frac{m_1\Delta}{4}t^2-\ep\right)n 
		-\int_0^t E(u)\, du, 
		\eeq
		
		\begin{equation}\label{lbstep13}
			\hat{I}_t\leq (2m_1t+\ep)n,
		\end{equation}
		
		\begin{equation}\label{lbstep14}
			\hat{I}_t\geq \left(\frac{m_1t}{2}-\ep\right)n-\int_0^t E(u)\, du.
		\end{equation}
	\end{lemma}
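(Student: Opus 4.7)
The plan is to derive integral-plus-martingale representations for $\hat X_{I,t}$ and $\hat I_t$, Taylor-expand their drifts around $t=0$ to second order, and absorb all remainders using the moment and martingale bounds supplied by Lemma \ref{lbstep1}. Writing $\hat X_{I,t}=\hat X_t-\hat X_{S,t}$ and $\hat I_t=n-\hat S_t$, combining the time-changed ODE for $\hat X_t$ with \eqref{newSt} and \eqref{newXst} gives
\begin{align*}
\hat X_{I,t} & = \hat X_{I,0}+\int_0^t r_X(u)\,du-\int_0^t\tilde E(u)\,du+M^X_t,\\
\hat I_t & = \hat I_0+\int_0^t\hat X_{S,u}\,du-\int_0^t\tilde E_0(u)\,du+M^I_t,
\end{align*}
where $r_X(u)=-2(\hat X_u-1)+\sum_k k^2\hat S_{u,k}-(\rho/\lambda)(\hat S_u/n)(\hat X_u-1)$, $\tilde E(u)=\hat X_{I,u}^{-1}\sum_{i,j}1_{G_{i,j}}(D(j,u)-1)1_{\{S(j,u)=1\}}\ge 0$, and $\tilde E_0(u)=\hat X_{I,u}^{-1}\sum_{i,j}1_{G_{i,j}}1_{\{S(j,u)=1\}}\ge 0$; these satisfy $\tilde E+\tilde E_0=E$, so each is bounded above by $E$. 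By \eqref{s11} of Lemma \ref{lbstep1}, $M^X$ and $M^I$ are $o(n)$ on $[0,\gamma_n\wedge 1]$ whp, and the SLLN for the i.i.d.\ degrees yields $\hat X_0/n\to m_1$, $\hat S_0/n\to 1$, $\sum_k k^i\hat S_{0,k}/n\to m_i$ for $i\le 4$, with $\hat X_{I,0},\hat I_0=O(1)$.

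Using \eqref{newSt}, \eqref{newXst} and \eqref{newk^2Stk} together with the uniform fourth-moment bound \eqref{s12} to control the $O(u^2)$ Taylor remainders, I will show that for any $\ep_1>0$ there exist $\lambda_0,t_0>0$ so that whp on $[0,\gamma_n\wedge t_0]$
\begin{align*}
\hat X_u/n & = m_1-2m_1u+O(u^2)\pm\ep_1, \quad \hat S_u/n = 1-m_1u+O(u^2)\pm\ep_1,\\
\tfrac{1}{n}\!\sum_k k^2\hat S_{u,k} & = m_2+[\alpha(2m_1+1)-m_3]u+O(u^2)\pm\ep_1.
\end{align*}
Substituting into $r_X(u)$ and using $\alpha_c=m_2-2m_1$ gives $r_X(u)/n=(\alpha_c-\alpha)+[4m_1-m_3+3\alpha(m_1+1)]u+O(u^2)\pm\ep_2$. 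At $\lambda=\lambda_c$ the constant term vanishes, and a direct algebraic check with $\mu_3=m_3-3m_2+2m_1$ identifies the linear coefficient as $m_1\Delta$. For general $\lambda<\lambda_c+\lambda_0$, writing $\alpha_c-\alpha=\frac{\rho m_1}{\lambda_c^2}(\lambda-\lambda_c)+O((\lambda-\lambda_c)^2)$ and using continuity of the linear coefficient in $\alpha$, integration yields
$$\int_0^t r_X(u)\,du=\tfrac{\rho m_1}{\lambda_c^2}(\lambda-\lambda_c)tn+\tfrac{m_1\Delta}{2}t^2n+O(t^3 n)\pm\ep_3 n.$$
The upper bound \eqref{lbstep11} now follows by dropping the nonpositive term $-\int_0^t\tilde E\,du$, the lower bound \eqref{lbstep12} follows from $\tilde E\le E$, and the safety factors $2$ on the linear/quadratic terms of the upper bound and $1/2$ of the lower bound absorb the $O(t^3)+\ep_3$ remainders once $\lambda_0,t_0$ are chosen small depending on $\ep$. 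The estimates \eqref{lbstep13}--\eqref{lbstep14} for $\hat I_t$ are proved in parallel by integrating $\hat X_{S,u}/n=m_1+O(u)\pm\ep$ and using $\tilde E_0\le E$.

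The main obstacle is the middle step above: establishing the uniform Taylor expansions of $\hat X_u/n$, $\hat S_u/n$ and $\sum_k k^2\hat S_{u,k}/n$ valid whp on $[0,\gamma_n\wedge t_0]$. The integrands in \eqref{newSt}, \eqref{newXst} and \eqref{newk^2Stk} themselves contain $E$-type AB-correction terms that vanish at $u=0$ but must be shown to stay $O(u)$ uniformly, which requires a short bootstrap argument that feeds a crude bound on these corrections back into the evolution. The fourth-moment bound \eqref{s12} is what makes the $O(u^2)$ remainders uniform in $n$, while the smallness of $\lambda_0$ keeps the linear-in-$u$ coefficient of $r_X$ within $\ep$ of $m_1\Delta$.
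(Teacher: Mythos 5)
Your proposal is correct and follows essentially the same route as the paper: the integral representations \eqref{xitnew} and \eqref{newIt}, first-order expansion of the drifts with constant term $\alpha_c-\alpha\sim\frac{\rho m_1}{\lambda_c^2}(\lambda-\lambda_c)$ and linear-in-$u$ coefficient equal to $m_1\Delta$ at criticality, domination of the AB-correction terms by $E(t)$, and absorption of remainders and martingales via Lemma \ref{lbstep1}. The only difference is that the ``bootstrap'' you anticipate for keeping the correction terms $O(u)$ is not needed: \eqref{s13} yields $E(t)\le n(Ct+\ep)$ directly, which is exactly how the paper closes that step.
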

	
	\begin{proof}[Proof of Lemma \ref{lbstep2}]
		We first prove equation \eqref{lbstep11}.
		Using Lemma \ref{lbstep1} we see that there exist a constant $C$ such that whp the absolute value of the right hand sides of  \eqref{newXst}--\eqref{newk^3stk} are all upper bounded by $Cn$ for $t\leq \gamma_n \wedge 1$. Denote this event by $G_1(n)$. We have $\P(G_1(n))\to 1$ as $n\to\infty$. Denote 
		the event 
		$$ 
		\left\{\abs{\sum_{k=0}^{\infty} k^i\hat S_{0,k}-n\sum_{k=0}^{\infty} k^ip_k }\leq\ep n,
		\quad\hbox{for $i=1,2,3$}\right\}
		$$ 
		by $G_2(n)$.
		We have $\P(G_2(n))\to 1$ as $n\to\infty$ since $\E(D^3)<\infty$. 
		Therefore we get 
		$$
		\lim_{n\to\infty}\P(G_1(n)\cap G_2(n))=1,
		$$
		which implies that whp for all $0\leq t\leq \gamma_n\wedge 1$, we have
		\beq\label{level1}
		\abs{\sum_{k=0}^{\infty} k^3\S_{t,k}-n\sum_{k=0}^{\infty} k^3 p_k }
		+\abs{\hat{X}_{S,t}-n\sum_{k=0}^{\infty} kp_k} \leq 4n(Ct+\ep).
		\eeq
		The definition of $E(t)$ in \eqref{et} implies that
		\begin{equation*}
			E(t)\leq \sum_{j=1}^{\hat X_0}D(j,t)1_{\{S(j,t)=1,B(j,t)>0\}}.
		\end{equation*}
		Therefore by \eqref{s13} we see that whp for all $ t\leq \gamma_n\wedge 1$,
		\begin{equation}\label{etctl1}
			E(t)\leq n(Ct+\ep). 
		\end{equation}
		Now it follows from the integral form of \eqref{newSt}, \eqref{level1} and \eqref{etctl1} that whp
		\begin{equation}\label{level2st}
			\abs{\hat{S}_t-n\left(1-m_1t\right)}\leq n(Ct^2+\ep).
		\end{equation}
		Similarly to the proof of \eqref{level2st},  whp for all $ 0\leq t\leq \gamma_n\wedge 1$
		\begin{equation}\label{level2k^2}
			\abs{\sum_{k=0}^{\infty} k^2\S_{t,k}-n\left(\sum_{k=0}^{\infty} k^2d_k+\left(-\sum_{k=0}^{\infty} k^3p_k+2\frac{\rho}{\lambda}m_1^2+\frac{\rho}{\lambda}m_1\right)t \right)} \leq n(Ct^2+\ep).
		\end{equation}
		The evolution equation for $\hat X_{t}$ in AB-avoSI has the same form as avoSI, i.e.,
		\begin{equation}\label{xtnew}
			\hat{X}_t-\hat X_0=-\int_0^t 2(\hat{X}_u-1)du+M_{0,t}.
		\end{equation}
		Hence, as in the case of avoSI, equation (3.4) in \cite{JLW} holds for AB-avoSI, which implies 
		\begin{equation}\label{xtnew2}
			\sup_{0\leq t\leq \gamma_n\wedge 1}\abs{\frac{\hat{X}_t}{n}-m_1\exp(-2t)}\CP 0. 
		\end{equation}
		Combining \eqref{newXst} and \eqref{xtnew} and using $\hat{X}_{I,t}=\hat{X}_t-\hat{X}_{S,t}$, we get
		\begin{equation}\label{xitnew}
			\begin{split}
				\hat{X}_{I,t}&=\hat{X}_{I,0}+\int_0^t\left(-2(\hat{X}_u-1)+\sum_{k=0}^{\infty} k^2\hat S_{u,k}-\frac{\rho}{\lambda}\frac{\hat S_u}{n}(\hat{X}_u-1)  \right) du
				+(M_{0,t}-M_{2,t}) \\
				&-\frac{1}{\hat{X}_{I,t}}\int_0^t \left( \sum_{i,j=1}^{\X_0} 1_{G_{i,j}} (D(j,u)-1) 1_{\{ S(j,u)=1\}} \right)du. 
			\end{split}
		\end{equation}
		Dropping the term in the second line of \eqref{xitnew} and bounding $\sup_{0\leq t\leq \gamma_n\wedge 1}\abs{\hat X_{I,0}+M_{0,t}-M_{2,t}}$
		by $\ep n$ (which holds with high probability by \eqref{s11}),
		\begin{equation}
			\hat{X}_{I,t}\leq \ep n+\int_0^t\left(-2(\hat{X}_u-1)+\sum_{k=0}^{\infty} k^2\hat S_{u,k}-\frac{\rho}{\lambda}\frac{\hat S_u}{n}(\hat{X}_u-1)  \right) du.
		\end{equation}
		Using \eqref{xtnew2}, \eqref{level2st} and \eqref{level2k^2} to approximate $\hat{X}_t$, $\hat{S}_t$ and $\sum_{k=0}^{\infty} k^2\hat{S}_{t,k}$ up to the first order, respectively, we obtain that whp,
		\begin{equation}\label{xt-xst}
			\begin{split}
				\hat{X}_t-\hat{X}_{S,t}\leq & n\int_0^t\left(-2m_1\exp(-2u)+ \sum_{k=0}^{\infty} k^2p_k+\left(-\sum_{k=0}^{\infty} k^3p_k+2\frac{\rho}{\lambda}m_1^2+\frac{\rho}{\lambda}m_1\right)u 
				\right. \\
				&\left. -\frac{\rho}{\lambda}(1-m_1u)m_1\exp(-2u)  
				+Cu^2\right)du+2n\ep. 
			\end{split}
		\end{equation}
		We would like to expand the integrand of \eqref{xt-xst} in powers of $u$. Since 
		$\rho/\lambda_c = (m_2-2m_1)/m_1$, the constant term is
		\beq
		-2m_1+m_2-\frac{\rho}{\lambda}m_1 
		= m_1 \left( \frac{\rho}{\lambda_c} -\frac{\rho}{\lambda} \right) 
		= \frac{m_1\rho}{\lambda_c^2}(\lambda-\lambda_c)+O((\lambda-\lambda_c)^2).
		\label{cterm}
		\eeq
		for $\lambda>\lambda_c$. Therefore for $\lambda$ sufficiently close to $\lambda_c$ we have 
		\beq
		-2m_1+m_2-\frac{\rho}{\lambda}m_1\leq \frac{2m_1\rho}{\lambda_c^2}(\lambda-\lambda_c).
		\label{ctermbd}
		\eeq
		Note that
		$e^{-2u} = 1 - 2u +2u^2 + \ldots$, so the coefficient in front of $u$ is  
		\beq
		4m_1 + \left(-m_3+\frac{\rho}{\lambda}2m_1^2+\frac{\rho}{\lambda}m_1\right)
		+\frac{\rho}{\lambda}m_1^2 + \frac{\rho}{\lambda} 2m_1
		= 4m_1 - m_3 + \frac{\rho}{\lambda} (3m_1^2 + 3m_1).
		\label{uterm}
		\eeq
		At $\lambda=\lambda_c$, this coefficient is equal to
		\begin{equation}\label{m1delta}
			4m_1-m_{3}+(3+3m_1)(m_2-2m_1)=-m_3+3m_2-2m_1+3m_2m_1-6m_1^2.
		\end{equation}
		We claim that the quantity in \eqref{m1delta} is exactly equal to $m_1\Delta.$
		Indeed, from the definition of $\Delta$ we see that
		$$
		\Delta=-\frac{\mu_3}{\mu_1}+3(\mu_2-\mu_1)=
		-	\frac{m_3-3m_2+2m_1}{m_1}+3(m_2-2m_1)
		$$
		so that 
		$$
		m_1\Delta= -m_3+3m_2-2m_1+3m_2m_1-6m_1^2.
		$$
		Using \eqref{uterm} with the equations that follow, we see that for $\lambda$ close to $\lambda_c$ we have
		\beq
		4m_1-m_3+2\frac{\rho}{\lambda}m_1^2+\frac{\rho}{\lambda}m_1+\frac{\rho}{\lambda}m_1(m_1+2)\leq \frac{3m_1\Delta}{2}. 
		\label{utermbd}
		\eeq
		Using \eqref{ctermbd} and \eqref{utermbd} in \eqref{xt-xst}, we see that for some constant 
		$C'>0$ and 
		all $t\leq m_1\Delta/(2 C')$,
		\begin{equation}\label{xt-xst2}
			\begin{split}
				\hat{X}_{I,t}&\leq n\left(\int_0^t\left( \frac{2\rho}{\lambda_c^2}(\lambda-\lambda_c)+\frac{3m_1\Delta }{2}u+C'u^2\right)du+2\ep\right)\\
				&\leq \left(\frac{2\rho}{\lambda_c^2}(\lambda-\lambda_c)t+\frac{3m_1\Delta}{4} t^2+ \frac{C't^3}{3} +2\ep\right)n\\
				&\leq \left(\frac{2\rho}{\lambda_c^2}(\lambda-\lambda_c)t+m_1\Delta t^2+2\ep\right)n.
			\end{split}
		\end{equation}
		This proves \eqref{lbstep11} since $\ep$ is arbitrary. 
		
		\medskip
		{\bf The proof of \eqref{lbstep12}} is parallel to the proof of \eqref{lbstep11}, except that we now replace the second line of \eqref{xitnew} by
		$E(u)$ (defined in \eqref{et}). We can do this because
		$$
		\frac{1}{\hat{X}_{I,t}}\left( \sum_{i,j=1}^{\hat X_0} 1_{G_{i,j}}
		(D(j,t)-1) 1_{\{ S(j,t)=1\}} \right)\leq E(t),
		$$
		which is true by the definition of $E(t)$ in \eqref{et}.
		
		\medskip
		Since $\hat{I}_t=n-\hat{S}_t$, equation \eqref{newSt} implies 
		\begin{equation}\label{newIt}
			d\hat{I}_t=\hat{X}_{S,t}\, dt-\frac{1}{\hat{X}_{I,t}}
			\left( \sum_{i,j=1}^{\hat X_0} 1_{G_{i,j}}  1_{\{ S(j,t)=1\} } \right)dt-dM_{1,t}.
		\end{equation}
		Using $\hat{I}_0=1$ and the inequality
		$$
		\frac{1}{\hat{X}_{I,t}}\left( \sum_{i,j=1}^{\hat X_0} 1_{G_{i,j}}  
		1_{\{ S(j,t)=1\}} \right) \leq E(t),
		$$
		which follows from the fact that $D(j,t)\geq 1$, we see that
		$$
		\abs{\hat{I}_t-\left(1+\int_0^t\hat{X}_{S,u}du\right)}\leq \int_0^t E(u)du+\abs{M_{1,t}}. 
		$$
		{\bf  The rest of proofs for \eqref{lbstep13} and \eqref{lbstep14}} are parallel to \eqref{lbstep11} and \eqref{lbstep12}.
		We omit further details. 
	\end{proof}
	Let
	\beq
	L(t)=\sum_{j=1}^{\hat X_0}D(j,t)1_{\{S(j,t)=1, B(j,t)>0\}}.
	\label{defLt}
	\eeq
	Since $ \sum_{i=1}^{\hat X_0} 1_{\{I(i,t)=1\}} = \hat X_{I,t}$ we have
	\begin{equation}\label{et<lt}
		\begin{split}
			E(t)&=\frac{1}{\hat{X}_{I,t}}\left( \sum_{i,j=1}^{\hat X_0} 1_{\{I(i,t)=1\}} D(j,t) 1_{\{ S(j,t)=1\}} 1_{\{A(i,t)\leq B(j,t) \} } \right)\\
			&\leq \frac{1}{\hat{X}_{I,t}}\left( \sum_{i,j=1}^{\hat X_0} 1_{\{I(i,t)=1\}} D(j,t) 1_{\{ S(j,t)=1\}} 1_{\{ B(j,t)>0 \} } \right) =L(t).
		\end{split}
	\end{equation} 
	The definition of $L(t)$ and the fact $D(j,t)\leq D(j,t)^2$ imply that
	$$
	L(t)\leq \sum_{j=1}^{\hat X_0}D(j,t)^2 1_{\{S(j,t)=1,B(j,t)>0\}}.
	$$
	Using the fact $D(j,t)\leq D(j,t)^2$ again, \eqref{s13} implies that
	\begin{equation}\label{L1}
		\lim_{n \to \infty}\P(L(t)\leq n(C_{\ref{s13}}t+\ep), \forall 0\leq t\leq \gamma_n\wedge 1)=1.
	\end{equation}
	Combining \eqref{et<lt} and \eqref{L1} we see that for some constant $C_{\ref{eubound1}}>0$,
	\begin{equation}\label{eubound1}
		\lim_{n\to\infty}\P\left(\int_0^t E(u)du\leq (
		C_{\ref{eubound1}}
		t^2+\ep)n, \forall 0\leq t\leq \gamma_n \wedge t_0\right)=1. 	
	\end{equation}
	The bound provided in \eqref{eubound1} is not enough for our purpose (though we will also use it in the proof of Theorem  \ref{Q1AB}). We will prove refined bounded in the next section. 
	
	\clearp
	
	\subsection{More refined bounds}\label{sec:refined}

	Equation \eqref{lbstep12} implies that we can get a lower bound for $\hat{X}_{I,t}$ if we can upper bound the term $\int_0^t E(u)du$. To this end, we 	let $b$ be some number in $(0,1)$ to be determined. We can decompose $E(t)$ into two parts:
	\begin{equation}\label{ede}
		\begin{split}
			E_1(t) &:=\frac{1}{\hat{X}_{I,t}}
			\left( \sum_{i,j=1}^{\hat X_0} 1_{\{I(i,t)=1\}} D(j,t) 1_{\{ S(j,t)=1\}} 
			1_{\{A(i,t)<bt \} } \right),\\
			E_2(t)	&=\frac{1}{\hat{X}_{I,t}}\left( \sum_{i,j=1}^{\hat X_0} 1_{\{I(i,t)=1\}} D(j,t) 1_{\{ S(j,t)=1\}} 1_{\{B(j,t)>bt\}} \right).
		\end{split}
	\end{equation}
	Since $G_{i,j} = \{ I(i,t)=1, A(i,t) \le B(j,t)\}$, we have that  $E(t) \le E_1(t) + E_2(t)$.
	Now we set \begin{equation}\label{xll}
		\begin{split}
			X(I,b,t)&:=\sum_{i=1}^{\X_0}1_{\{A(i,t)\leq bt, I(i,t)=1\}} \le \hat X_{I,t}, \\
			L(b,t)&:=\sum_{j=1}^{\X_0}D(j,t)1_{\{S(j,t)=1, B(j,t)>bt\}} \le L(t).
		\end{split}
	\end{equation}
	Recalling the definition of $L(t)$ in \eqref{defLt}, we see that
	\begin{equation}\label{e1e2}
		E_1(t)=\frac{X(I,b,t)}{\hat{X}_{I,t}}L(t), \qquad E_2(t)=L(b,t).
	\end{equation}
	In the next two lemmas we give bounds on $L(b,t)$ and $X(I,b,t)$. 
	\begin{lemma}\label{ctlet2}
		There exists a constant $C_{\ref{lbtctl}}$ so that
		for any fixed $b\in (0,1)$ and any $\ep>0$, 
		whp for all $0\leq t\leq \gamma_n\wedge 1$,
		\begin{equation}
			L(b,t)\leq n(C_{\ref{lbtctl}}(1-b)^{1/2}t+\ep).
			\label{lbtctl}
		\end{equation}
	\end{lemma}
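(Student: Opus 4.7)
\textbf{Proof sketch of Lemma \ref{ctlet2}.} The strategy is to use Cauchy--Schwarz together with the second-moment estimate \eqref{s13} to trade one factor of $(1-b)$ for $(1-b)^{1/2}$. The key observation is that the degree-squared contribution of all rewired susceptible half-edges is already controlled by \eqref{s13}, while the number of rewirings occurring in the narrow time window $(bt,t]$ is of order $(1-b)tn$.

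For each vertex $v$ that is susceptible at time $t$, define
\begin{equation*}
N_v(b,t) = |\{j : v(j,t)=v,\, B(j,t)>bt\}|\quad\text{and}\quad M_v(t)=|\{j : v(j,t)=v,\, B(j,t)>0\}|,
\end{equation*}
so that $N_v(b,t)\le M_v(t)$. Since $D(j,t)=D_v(t)$ for any $j$ attached to $v$,
\begin{equation*}
L(b,t)=\sum_{v\text{ susc.\ at }t} D_v(t)\,N_v(b,t),
\end{equation*}
and Cauchy--Schwarz gives
\begin{equation*}
L(b,t)^2 \le \Bigl(\sum_v D_v(t)^2\, N_v(b,t)\Bigr)\cdot\Bigl(\sum_v N_v(b,t)\Bigr).
\end{equation*}
The first factor is bounded by $\sum_v D_v(t)^2 M_v(t) = \sum_j D(j,t)^2 1_{\{S(j,t)=1,\,B(j,t)>0\}}$, which by \eqref{s13} is at most $n(C_{\ref{s13}}t+\ep)$ uniformly in $t\in[0,\gamma_n\wedge 1]$ whp.

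For the second factor, let $R(t)$ denote the total number of rewiring events up to time $t$ in the time-changed AB-avoSI. Every half-edge counted by $N_v(b,t)$ must have rewired at some moment in $(bt,t]$, hence $\sum_v N_v(b,t)\le R(t)-R(bt)$. In the time-changed system the total rewiring rate is $\rho(\hat X_t-1)/\lambda$, so $\tilde R(t):=R(t)-\int_0^t \rho(\hat X_u-1)/\lambda\,du$ is a counting-process martingale whose predictable quadratic variation equals its compensator, which is $O(n)$ on $[0,1]$ because $\hat X_t\le \hat X_0=O(n)$. Doob's $L^2$ maximal inequality yields $\sup_{0\le t\le 1}|\tilde R(t)|=o(n)$ whp, so using the first line of \eqref{3lim} to control $\hat X_u/n$ by $m_1 e^{-2u}\le m_1$,
\begin{equation*}
R(t)-R(bt)=\tilde R(t)-\tilde R(bt)+\frac{\rho}{\lambda}\int_{bt}^t (\hat X_u-1)\,du \le C_1(1-b)\,tn+\ep n
\end{equation*}
uniformly in $t\in[0,\gamma_n\wedge 1]$ whp. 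Combining both estimates,
\begin{equation*}
L(b,t)^2 \le n(C_{\ref{s13}}t+\ep)\bigl(C_1(1-b)\,tn+\ep n\bigr)\le C_{\ref{s13}}C_1\,(1-b)\,t^2 n^2 + O(\ep n^2),
\end{equation*}
and taking square roots gives the claimed bound $L(b,t)\le C_{\ref{lbtctl}}(1-b)^{1/2}tn+\ep' n$ for suitable $C_{\ref{lbtctl}}$ and any prescribed $\ep'>0$.

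The main technical obstacle is the uniform-in-$t$ concentration of $R(t)-R(bt)$: because the window $(bt,t]$ moves with $t$, applying Chebyshev's inequality at each $t$ separately is not enough, and one must apply a maximal inequality to $\tilde R$ that controls the supremum simultaneously over all $t\in[0,\gamma_n\wedge 1]$, possibly supplemented by a discretization of $t$. Everything else is routine given Lemma \ref{lbstep1} and the approximation of $\hat X_t/n$ from \eqref{3lim}.
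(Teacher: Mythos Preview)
Your proof is correct and follows essentially the same approach as the paper: Cauchy--Schwarz to split $L(b,t)$ into a degree-squared factor controlled by \eqref{s13} and a counting factor bounded by the number of rewirings in $(bt,t]$, then a Dynkin decomposition of the rewiring count with Doob's $L^2$ maximal inequality on the martingale part. The paper applies Cauchy--Schwarz directly at the half-edge level rather than grouping by vertex, and it bounds the drift simply via $q(t)\le \rho\hat X_0/\lambda$ together with $\hat X_0\le 2m_1 n$ whp instead of invoking \eqref{3lim}, but these are cosmetic differences.
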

	\begin{proof} Using the Cauchy-Schwartz inequality,
		\begin{equation}\label{lbtctl3}
			\begin{split}
				L(b,t)=&\sum_{j=1}^{\hat X_0}D(j,t)1_{\{S(j,t)=1, B(j,t)>bt\}} \\
				\leq &
				\left(\sum_{j=1}^{\hat X_0} D(j,t)^21_{\{S(j,t)=1,B(j,t)>0\}} \right)^{1/2}
				\left(\sum_{j=1}^{\hat X_0} 1_{\{bt\leq B(j,t)\leq t\}} \right)^{1/2}.
			\end{split}
		\end{equation}
		The first term in the second line of \eqref{lbtctl3} has already been controlled by equation \eqref{s13}, i.e., 
		\begin{equation}\label{Nt-1}
			\lim_{n \to \infty}	\P\left(\sum_{j=1}^{\hat X_0}D(j,t)^2 
			1_{\{S(j,t)=1, B(j,t)>0\}} \leq n(C_{\ref{s13}}t+\ep), \forall 0\leq t\leq \gamma_n\wedge 1 \right)=1. 
		\end{equation}
		Let $N(t)$ be the number of rewiring events that occur by time $t$. Then we have
		\begin{equation}\label{Nt0}
			\sum_{j=1}^{\hat X_0} 1_{\{bt\leq B(j,t)\leq t\}} 
			\leq N(t)-N(bt).
		\end{equation}
		Now we write down the  evolution equation for  $N(t)$ 
		$$
		N(t)=\int_0^t q(u)\, du+M_{7,t},
		$$
		where 
		\begin{equation}\label{qt}
			q(t)=\rho \hat{X}_{I,t}\frac{\hat{X}_t-1}{\lambda \hat{X}_{I,t}}
			\leq\frac{\rho \hat X_0}{\lambda} 
		\end{equation}
		and $M_{7,t}$ is some martingale. The assumption $\E(D^5)<\infty$ implies that
		the event $\Omega^*_n=\{\hat X_0\leq 2m_1n\}$ has probability tending to 1 as $n\to\infty$. On $\Omega^*_n$, using \eqref{qt} we have
		$$
		q(t)\leq 2m_1\rho n/\lambda.
		$$
		It follows that
		\begin{equation}\label{Nt1}
			\lim_{n\to\infty}\P(q(t)\leq 2m_1\rho n/\lambda, \forall t\geq 0)=1. 
		\end{equation}
		Note that $N(t)$ is a pure jump process with jump size equal to 1. 
		It follows that the expected value of quadratic variation of $M_{7,t}$  up to time 1 is upper bounded by 
		$$
		\E\left(\sup_{t>0} q(t)\right)\leq \frac{\rho}{\lambda}\E(\hat{X}_0)\leq \frac{m_1\rho}{\lambda}.
		$$
		This implies that for any $\ep>0$,
		\begin{equation}\label{Nt2}
			\lim_{n\to\infty}	\P\left(\sup_{0 \leq t\leq \gamma_n \wedge 1} \abs{M_{7,t}}\leq \ep n\right)=1.  
		\end{equation}
		From the definition of $N(t)$, we see that for $t\leq 1$,
		$$
		\abs{N(t)-N(bt)}=\abs{\int_{bt}^t q(u)du\, 
			+M_{7,t}-M_{7,bt}}\leq \abs{\int_{bt}^t q(u)du}+2\sup_{0\leq t\leq \gamma_n\wedge 1}
		\abs{M_{7,t}}.
		$$
		Thus by \eqref{Nt1} and \eqref{Nt2},
		\begin{equation}\label{Nt3}
			\lim_{n\to\infty}\P(\abs{N(t)-N(bt)}\leq Cn((1-b)t+\ep),\forall  0\leq t\leq \gamma_n)=1. 
		\end{equation}
		Combining  \eqref{Nt0} and \eqref{Nt3},
		\begin{equation}\label{Nt5}
			\lim_{n\to\infty}	
			\P\left( \sum_{j=1}^{\hat X_0} 1_{\{bt\leq B(j,t)\leq t\}} \leq 
			Cn((1-b)t+\ep) 	\right)=1.
		\end{equation}
		Equation \eqref{lbtctl} now follows from \eqref{lbtctl3}, \eqref{Nt-1} and \eqref{Nt5}.
	\end{proof}
	Let $t_0$ and $\lambda_0$ be the two constants given in the statement of Lemma \ref{lbstep2}. 
	Based on the calculations that led to \eqref{fRHS} we let
	\begin{equation}\label{Utdef}
		\begin{split}
			U(t) = & C_{\ref{fRHS}} \biggl[ \exp\left(-\frac{C_{ \ref{Ddef} }(1-b)}
			{\lambda-\lambda_c+t} \right) (\lambda-\lambda_c) t + (1-b)t\\
			& + \exp\left(-\frac{C_{ \ref{Ddef} }(1-b)}
			{\lambda-\lambda_c+t} \right) t^2 + (\lambda-\lambda_c)t^2+t^3 \biggr]
			+C_{\ref{RHS2}}\sqrt{\ep}.
		\end{split}
	\end{equation}
	\begin{lemma} \label{ctlet3}
		For any $\lambda<\lambda_c+\lambda_0$,
		whp  for all $0\leq t\leq t_0\wedge \gamma_n$ we have 
		\beq
		X(I,b,t) \leq U(t)n.
		\label{xibt1}
		\eeq
	\end{lemma}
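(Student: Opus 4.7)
The plan is to derive a stochastic integral inequality for $X(I,b,t)$ whose drift exposes the pairing-removal rate of infected half-edges, then extract the bound $U(t)$ via a Gronwall-type estimate. The exponential factor in $U(t)$ will emerge from the integrated pairing rate $\int_{bt}^t (\hat X_u-1)/\hat X_{I,u}\,du$, which the lower bound \eqref{lbstep12} renders large enough to yield genuine decay.

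First, combining \eqref{lbstep12} with the crude control \eqref{eubound1} on $\int_0^u E(s)\,ds$ shows that with high probability, for $u \in [bt, t \wedge t_0]$ and $\lambda$ near $\lambda_c$,
$$
\hat X_{I,u} \ge C_{\ref{Ddef}}\, n\, u(\lambda-\lambda_c+u) - O(\epsilon n),
$$
where $\Delta>0$ (the running hypothesis of Theorem \ref{Q1AB}) is used to ensure the $u^2$ contribution survives the subtraction; $t_0$ is shrunk further if needed. In the time-changed dynamics each infected half-edge is paired at rate at least $(\hat X_u-1)/\hat X_{I,u}$, and $\hat X_u/n \to m_1\exp(-2u)$ by \eqref{xtnew2}, so
$$
\int_{bt}^t \frac{\hat X_u-1}{\hat X_{I,u}}\,du \ge \frac{C_{\ref{Ddef}}(1-b)}{\lambda-\lambda_c+t},
$$
the exponent appearing in $U(t)$.

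Next, Dynkin's formula yields a semimartingale decomposition
$$
dX(I,b,u) = -\left(\frac{\hat X_u-1}{\hat X_{I,u}} + R_u\right) X(I,b,u)\,du + \Phi_u\,du + dM_u,
$$
where $R_u \ge 0$ captures rewiring-to-susceptible losses and $\Phi_u$ collects two sources: (i) the sliding boundary $bu$ sweeping in half-edges whose infection time $\tau_i$ has just crossed below $bu$, with instantaneous rate $b$ times the rate of creation of new infected half-edges (controlled by the drift terms in \eqref{newSt} and \eqref{newXst}), and (ii) re-infection events in which a half-edge with $\tau_i \le bu$ that is currently susceptible becomes attached to an infected vertex. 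Applying Gronwall with integrating factor $\exp\!\left(\int_{\cdot}^t (\hat X_u-1)/\hat X_{I,u}\,du\right)$ gives
$$
X(I,b,t) \le X(I,b,bt)\,\exp\!\left(-\int_{bt}^t \frac{\hat X_u-1}{\hat X_{I,u}}\,du\right) + \int_{bt}^t \Phi_u\,\exp\!\left(-\int_u^t \frac{\hat X_v-1}{\hat X_{I,v}}\,dv\right)du + \mathcal{E}_M,
$$
with $\mathcal{E}_M$ a martingale remainder. Bounding $X(I,b,bt) \le \hat X_{I,bt} \le Cn[(\lambda-\lambda_c)bt + (bt)^2 + \epsilon]$ via \eqref{lbstep11} and multiplying by the exponential factor produces the two $\exp$-decorated terms of $U(t)$. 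The source integral contributes $(1-b)t$ from the sweep part and $t^3$ from the re-infection part; $(\lambda-\lambda_c)t^2$ arises from higher-order corrections to the expansion, and $C_{\ref{RHS2}}\sqrt{\epsilon}$ comes from BDG-style control of $\mathcal{E}_M$ following the moment arguments in Lemma \ref{lbstep1}.

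The principal obstacle is the circular dependence: $\hat X_{I,u}$ in the denominator is lower-bounded via $\int E$, while $E_1 = (X(I,b,\cdot)/\hat X_{I,\cdot})\,L(\cdot)$ depends on the very quantity being bounded. We break the circle by feeding only the crude bound \eqref{eubound1} into Step 1, which suffices to run the Gronwall argument on the short time scale $[0,t_0]$ without using Lemma \ref{ctlet3} itself. A secondary technicality is that all estimates must hold simultaneously on a single high-probability event over $[0,\gamma_n \wedge t_0]$; this is handled by the martingale moment bounds already developed for Lemma \ref{lbstep1}.
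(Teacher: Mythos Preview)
Your argument has the direction of the key inequality reversed. To lower-bound the integrated pairing rate
\[
\int_{bt}^{t}\frac{\hat X_u-1}{\hat X_{I,u}}\,du
\]
you need an \emph{upper} bound on $\hat X_{I,u}$, not a lower one: if $\hat X_{I,u}\ge L$ then $1/\hat X_{I,u}\le 1/L$, which goes the wrong way. You invoke \eqref{lbstep12} and write $\hat X_{I,u}\ge C n\,u(\lambda-\lambda_c+u)-O(\ep n)$, but this cannot yield the claimed lower bound on the integral. The correct input is \eqref{lbstep11}, which gives $\hat X_{I,u}\le C n[(\lambda-\lambda_c)u+u^2+\ep]$; combined with $\hat X_u\ge (m_1/2)e^{-2u}n$ this produces exactly the exponent $C(1-b)/(\lambda-\lambda_c+t)$. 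Once you use the right bound, the ``circular dependence'' you worry about disappears entirely: \eqref{lbstep11} does not involve $\int E$ at all, so there is nothing to break.

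Beyond this directional error, the semimartingale you write for $X(I,b,u)$ is delicate because the threshold $bu$ moves with $u$; your description of the ``sweep'' term is heuristic and would need care to make rigorous. The paper sidesteps this by a static decomposition at each fixed $t$: the event $\{A(i,t)\le bt,\,I(i,t)=1\}$ is split into four explicit scenarios $H_1,\dots,H_4$ according to what happened to half-edge $i$ on $[bt,t]$ (never rewired/paired; rewired to infected; rewired to susceptible later infected; susceptible at $bt$ then infected). Each $H_k(t)$ is dominated by a binomial whose parameters come from \eqref{lbstep11}, \eqref{lbstep13}, and \eqref{s13}, and Chernoff gives pointwise-in-$t$ bounds with exponentially small failure probability. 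Uniformity over $t\in[0,t_0]$ is then obtained by discretising into $n^{3/2}$ subintervals and controlling oscillations. This combinatorial route avoids both the sliding-boundary issue and any Gronwall machinery.
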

	
	\begin{proof}
		We define the events $H_k(i,t),1\leq k\leq 4$ for $1\leq i\leq \hat X_0$ ($i$ is any half-edge) as follows.
		\begin{align*}
			H_1(i,t)&=\{I(i,bt)=1, i \mbox{ didn't get rewired or paired in }[bt,t]\},\\
			H_2(i,t)&=\{I(i,bt)=1,  i \mbox{ got rewired to an infected vertex at its first rewiring in }[bt,t] \},\\
			H_3(i,t)&=\{I(i,bt)=1,  i \mbox{ got rewired to a susceptible vertex at its first rewiring in }[bt,t]\\
			& \mbox{ and that vertx later became infected in }[bt,t]\},\\
			H_4(i,t)&=\{0<A(i,t)\leq bt,S(i,bt)=1, v(i,bt) \mbox{ got infected in }[bt,t]\}.
		\end{align*}

		We claim that 
		$$
		\{A(i,t)\leq bt, I(i,t)=1\} \subset \cup_{k=1}^4 H_k(i,t). 
		$$
		Indeed, either $I(i,bt)=1$ or $S(i,bt)=1$ must hold. The case of $S(i,bt)=1$ corresponds to $H_4(i,t)$. On the other hand, if $I(i,bt)=1$ and $I(i,t)=1$, then there are three possible cases: $i$ didn't get rewired in $[bt,t]$, $i$ was rewired to an infected vertex or $i$ was rewired to a susceptible vertex which later became infected. The first case case corresponds to $H_1(i,t)$ while the second and third case are covered in $H_2(i,t)$ and $H_3(i,t)$, respectively.  
		
		Let $H_k(t)$ be the number of half-edges $i$ for which $H_k(i,t)$ occurs.
		It follows from the claim and the definition of $X(I,b,t)$ that
		\beq\label{xibt}
		X(I,b,t)\leq \sum_{k=1}^4 H_k(t).
		\eeq
		
		\mn
		{\bf We first estimate $H_1(t)$.} The rate for  a half-edge to be paired at time $u$ is $(\hat{X}_u-1)/(\lambda \hat{X}_{I,u})$. Hence conditionally on $\hat X_{I,s}$, $bt\leq s\leq t$, 
		$H_1(t)$ is stochastically dominated by a 
		$$
		\textrm{Binomial}\left(\hat{X}_{I,bt},\  \exp\left(-\int_{bt}^t \frac{\hat{X}_u}{\lambda \hat{X}_{I,u}}du\right)\right).
		$$
		random variable. 
		Here we have a binomial distribution because the Poisson clocks on different infected half-edges are independent of each other. 
		Let $C_{\ref{defl1n}}=\max\{2\rho m_1/\lambda_c^2,m_1\Delta,1\}$. Then the event
		\begin{align}
			L_1(n) =& \{\hat{X}_{I,t}\leq C_{\ref{defl1n}}n((\lambda-\lambda_c)t+t^2+\ep),
			\nonumber\\
			&\hat{X}_t\geq nm_1\exp(-2t)/2, \forall 0\leq t\leq \gamma_n\wedge t_0\}
			\label{defl1n}
		\end{align}
		has  probability tending to 1 as $n\to\infty$ by \eqref{lbstep11} of Lemma \ref{lbstep2} and \eqref{xtnew2}.
		On the event $L_1(n)$, we have that
		\beq
		\int_{bt}^t \frac{\hat{X}_u}{\hat{X}_{I,u}}du\geq
		\int_{bt}^t \frac{m_1\exp(-2u)/2}{C_{\ref{defl1n}}((\lambda-\lambda_c)u+u^2+\ep)}du
		\geq \frac{C_{\ref{Xintbd}}e^{-2t} (1-b)t}{(\lambda-\lambda_c)t+t^2+\ep}.
		\label{Xintbd}
		\eeq
		Thus on $L_1(n)$, $H_1(t)$ is stochastically dominated by
		\begin{equation}
			W_0(t):=	\textrm{Binomial}
			\left( C_{\ref{defl1n}}n((\lambda-\lambda_c)t+t^2+\ep), 
			\exp\left(	-\frac{C_{\ref{Xintbd}}e^{-2t} (1-b)t}{(\lambda-\lambda_c)t+t^2+\ep)} \right)\right).
			\label{ber0}
		\end{equation}
		For  $\sqrt{\ep}\leq t\leq t_0$, we have
		$$
		\frac{(1-b)t}{(\lambda-\lambda_c)t+t^2+\ep}
		\geq \frac{(1-b)t}{2((\lambda-\lambda_c)t+t^2)}
		=\frac{1-b}{2(\lambda-\lambda_c+t)}.
		$$
		Hence $W_0(t)$ is stochastically dominated by 
		\begin{equation}
			W_1(t):=\textrm{Binomial}\left( C_{\ref{defl1n}}n((\lambda-\lambda_c)t+t^2+\ep),
			\exp\left(-\frac{C_{\ref{Xintbd}}e^{-2t} (1-b)}{2(\lambda-\lambda_c+t)}  \right)\right)
			\label{ber1}
		\end{equation}
		for $\sqrt{\ep}\leq t\leq t_0$. Define
		\begin{equation}\label{defU0t}
			U_0(t)=C_{\ref{defl1n}}((\lambda-\lambda_c)t+t^2+\ep)\exp\left(
			-\frac{C_{\ref{Xintbd}} e^{-2t}(1-b)}{2(\lambda-\lambda_c+t)}  \right).
		\end{equation}
		For $t>\sqrt{\ep}$, there exists a constant $C_{\ref{U0tlb}}=C_{\ref{U0tlb}}(\lambda_0,t_0,\ep)$ depending on $\lambda_0$, $t_0$ and $\ep$ such that 
		\beq
		U_0(t)\geq C_{\ref{U0tlb}}.
		\label{U0tlb}
		\eeq
		
		We need a large deviations bound for sums of Bernoulli random variables. 
		
		\begin{lemma}\label{chernoff}
			Consider $n$ i.i.d. Bernoulli random variables $Y_1,\ldots, Y_n$. Let $\mu=\sum_{k=1}^n\E(Y_i)$. Then we have
			\begin{equation}\label{cher1}
				\P\left(\sum_{k=1}^n Y_k\geq 3\mu
				\right)\leq \exp(-\mu).
			\end{equation}
		\end{lemma}
		
		\begin{proof}[Proof of Lemma \ref{chernoff}]
			By \cite[Theorem 2.3.1]{V}, we have
			\begin{equation*}
				\P\left(\sum_{k=1}^n Y_k\geq 3\mu\right)\leq \exp(-\mu)\left(\frac{e\mu}{3\mu}\right)^{3\mu} \leq \exp(-\mu). 
			\end{equation*}
		\end{proof}
		
		Using  Lemma \ref{chernoff}, we have, for $t\geq \sqrt{\ep}$,
		\begin{equation}
			\P\left(W_1(t)>3U_0(t)n \right)\leq \exp(-U_0(t)n)
			\leq \exp(-C_{\ref{U0tlb}}n).
			\label{ber2}
		\end{equation}
		Using the definition of $L_1(n)$ in \eqref{defl1n} and the fact that for $\sqrt{\ep}\leq t\leq t_0$, $W _1(t)$ dominates $W_0(t)$ which in turn dominates $H_1(t)$ (see \eqref{ber0}), we get
		\begin{equation}
			\P\left(\{H_1(t)>3U_0(t)n\}\cap L_1(n) \right)
			\leq \exp(-C_{\ref{U0tlb}}(\lambda_0,t_0,\ep)n).
		\end{equation} 
		For $0\leq t\leq \sqrt{\ep}$, on the event $L_1(n)$,
		\begin{equation}
			\hat{X}_{I,t}\leq C_{\ref{defl1n}}n((\lambda-\lambda_c)t+t^2+\ep)
			\leq C_{\ref{defl1n}}n((\lambda-\lambda_c)\sqrt{\ep}+\ep+\ep)
			\leq C_{\ref{ber5}}n\sqrt{\ep}. 
			\label{ber5}
		\end{equation}
		for sufficiently small $\ep$. Since $H_1(t)\leq \hat{X}_{I,bt}$ (by the definition of $H_1(t)$),
		$$
		H_1(t)\leq C_{\ref{ber5}}n\sqrt{\ep} \quad\hbox{ for $t\leq \sqrt{\ep}$.}
		$$
		Thus if we define  
		\beq
		U_1(t)=U_0(t)+C_{\ref{ber5}}\sqrt{\ep},
		\label{defU1t}
		\eeq
		then we have 
		\begin{equation}\label{ber3}
			\P\left(\{H_1(t)>3 U_1(t)n\}\cap L_1(n) \right)
			\leq \exp(-C_{\ref{U0tlb}}n)
		\end{equation}
		for all $0\leq t\leq t_0$. 
		Setting $t^{\ell}_{0}=t_0\ell /n^{3/2},0\leq \ell\leq n^{3/2}$,
		we get 
		\begin{equation}\label{xibt2}
			\P\left(\left( \cup_{\ell=0}^{n^{3/2}-1} 
			\{H_1(t_0^{\ell})>3U_1(t_0^{\ell})n\}\right)\cap L_1(n) 
			\right) \leq n^{3/2}\exp(-C_{\ref{U0tlb}}n).
		\end{equation}
		Denote  the oscillation of $H_1(t)$ in $[t_0^{\ell},t_0^{\ell+1}]$ by
		$\omega(H_1(t),t_0^{\ell},t_0^{\ell+1})
		$. Here, the oscillation of any function (deterministic or random) $g(t)$ in an interval $[a,b]$ is defined to be 
		$$
		\sup_{a\leq t_1\leq t_2\leq b}\abs{g(t_1)-g(t_2)}.
		$$
		Consider the event that there is at most six pairings  (i.e., a half-edge pairs with another half-edge) and rewirings (i.e., a half-edge is rewired to another vertex) occurring in $[bt_0^{\ell},bt_0^{\ell+1}]\cup [t_0^{\ell},t_0^{\ell+1}]$ and denote it by $\Omega^{\ell}$. On $\Omega^{\ell}$ we have 
		\begin{equation}\label{w6qi}
			\omega(H_1(t),t_0^{\ell},t_0^{\ell+1})\leq 6\max_{1\leq i\leq n}Q_i,
		\end{equation}
		where  $Q_i$ is the number of half-edges that vertex $i$ has before it becomes infected. By \eqref{w6qi}, Markov's inequality and \eqref{m4tctl} (together with $Q_i^5\geq Q_i^4$) we have
		\begin{equation}\label{h11}
			\P(\{\omega(H_1(t),t_0^{\ell},t_0^{\ell+1})\geq \ep n\}\cap \Omega^{\ell})\leq 
			\P\left(6\max_{1\leq i\leq n}Q_i \geq \ep n\right) \leq 
			\frac{6^4\E(\sum_{i=1}^n Q_i^4)}{\ep^4n^4}\leq \frac{C}{\ep^4 n^3}.
		\end{equation}
		Now we control the probability of $(\Omega^{\ell})^c$. Note that the rate for a rewiring or pairing to occur is 
		equal to $(\rho \hat{X}_{I,t}+\lambda\hat{X}_{I,t})(\hat{X}_t-1)/(\lambda\hat{X}_{I,t})$ which is bounded by $(\lambda+\rho)\hat X_0/\lambda$. On the event $\Omega^*_n:=\{\hat X_0\leq 2m_1n\}$ (which holds with high probability) this quantity is upper bounded by $2(\rho+\lambda)m_1n/\lambda$. Using this we get
		\begin{equation}\label{h12}
			\P((\Omega^{\ell})^c)\cap \Omega_n^*) \leq C(n\cdot n^{-3/2})^6\leq Cn^{-3}. 
		\end{equation}
		Combining \eqref{h11} and \eqref{h12} we get
		\begin{equation}
			\P(\{\omega(H_1(t),t_0^{\ell},t_0^{\ell+1})>\ep n\}\cap \Omega_n^*)\leq \frac{C}{\ep^4 n^3}.
		\end{equation}
		By the union bound for probabilities,
		\begin{equation}\label{xibt3}
			\P\left(\left(\cup_{\ell=0}^{n^{3/2}-1} \{w(H_1(t),t_0^{\ell},t_0^{\ell+1})>\ep n\}\right) \cap \Omega_n^*\right)\leq \frac{C}{\ep^4 n^{3/2}}.
		\end{equation}
		Combining \eqref{xibt2},\eqref{xibt3} and the facts  
		$\P(L_1(n))\to 1,\P(\Omega^*_n)\to 1$, we get
		\begin{equation}
			\lim_{n\to\infty}\P\left( 
			\left(\cap_{\ell=0}^{n^{3/2}-1} 
			\{w(H_1(t),t_0^{\ell},t_0^{\ell+1})\leq \ep n\}\right)
			\cap \left( \cap_{\ell=1}^{n^{3/2}-1} 
			\{H_1(t_0^{\ell})\leq 3 U_1(t_0^{\ell})n\}\right)
			\right)=1.
		\end{equation}
		On the event 
		$$
		\left(\cap_{\ell=0}^{n^{3/2}-1} \{w(H_1(t),t_0^{\ell},t_0^{\ell+1})\leq \ep n\}\right)
		\cap \left( \cap_{\ell=1}^{n^{3/2}} 
		\{H_1(t_0^{\ell})\leq 3 U_1(t_0^{\ell})n\}\right),
		$$
		we necessarily have
		$H_1(t)\leq 3U_1(t)n +\ep n\leq 4U_1(t)n$
		for all $0\leq t\leq t_0\wedge \gamma_n$.  Hence we get
		\begin{equation}\label{ctlh1}
			\lim_{n\to\infty}	\P(H_1(t)\leq 4U_1(t)n,\forall 0\leq t\leq t_0\wedge \gamma_n)=1. 
		\end{equation}
		
		\mn
		{\bf Now we turn to the control of $H_2(t)$.} 
		We set the event 
		$$
		L_2(n):=\{\hat{I}_t\leq (2m_1t+\ep)n,\forall 0\leq t\leq \gamma_n\wedge t_0\}.
		$$
		By equation \eqref{lbstep13} we have
		\begin{equation}\label{ItbdH2t}
			\lim_{n\to\infty} \P(L_2(n))=1.
		\end{equation}
		On  $L_2(n)$, $H_2(t)$ is stochastically dominated by
		\begin{equation}\label{w_2}
			W_2(t):=\textrm{Binomial}(C_{\ref{w_2}}n((\lambda-\lambda_c)t+t^2+\ep), C_{\ref{w_2}}'(t+\ep)).
		\end{equation}
		for some constants $C_{\ref{w_2}}$ and $C'_{\ref{w_2}}$. 
		Now we define
		\beq
		U_2(t)=C_{\ref{w_2}} ((\lambda-\lambda_c)t+t^2+\ep)
		C'_{\ref{w_2}}(t+\ep)+C_{\ref{ber5}}\sqrt{\ep}.
		\label{defU2t}
		\eeq
		Following the proof of \eqref{ctlh1}, one can derive  analogous inequalities to \eqref{ber3} and \eqref{xibt3} for $H_2(t)$. Combining these two inequalities we obtain that
		\begin{equation}\label{ctlh2}
			\lim_{n\to\infty}	\P(H_2(t)\leq 4U_2(t),\forall 0\leq t\leq t_0\wedge \gamma_n)=1. 
		\end{equation}
		We omit further details. 
		
		\mn
		{\bf It remains to control $H_3(t)$ and $H_4(t)$.}  For any vertex $x$,
		let $R(x)$ be the indicator function of the event that vertex $x$ has received at least one rewired edge when $x$ first becomes infected and let $Q_x$ be the number of half-edges that $x$ has just before it becomes infected. Let $R(x,t)$ be the indicator of the event that $x$ has received at least one rewired half-edge by time $t$.
		Then we have, by the definitions of $H_3(t)$ and $H_4(t)$,
		\begin{equation}\label{h3h4}
			H_3(t)+H_4(t)\leq \sum_{x=1}^n R(x,t) Q_x 1_{\{x \textrm{ was infected in }[bt,t]\}}.
		\end{equation}
		Denote the right hand side of \eqref{h3h4} by $N(bt,t)$, then
		we can decompose $N(bt,t)$ into a drift part and a martingale part for any fixed $t$:
		\begin{equation}\label{nbtt}
			N(bt,t)=\int_{bt}^t \bar h_t(u) \, du +M_{8,t}.
		\end{equation}
		Let $D_x(u)$  be the number of half-edges of vertex $x$ at time $u$ and $D(j,u)$
		the number of half-edges that $v(j,u)$ has at time $u$ (recall that $v(j,u)$ is the vertex that half-edge $j$ is attached to at time $u$).  The process  $N(bt,u)$, $bt\leq u\leq t$ has a positive jump whenever a susceptible vertex with at least one rewired half-edge gets infected. The probability that $x$ is infected (given an infection event occurs) is equal to
		$D_x(u)/(\hat{X}_u-1)$ and the contribution to $N(bt,t)$ is equal to $R(x,u)D_x(u)$.
		Thus  $\bar h_t(u)$ satisfies
		\begin{equation}\label{barh}
			\begin{split}
				\bar h_t(u)&\leq \lambda \hat{X}_{I,u} \frac{\hat{X}_u-1}{\lambda \hat{X}_{I,u}} \cdot \frac{\sum_{x=1}^n R(x,u)D^2_x(u)1_{\{x \textrm{ is susceptible at time }u\}}}{\hat{X}_u-1}\\
				&\leq \sum_{j=1}^{\hat X_0}
				D(j,u)^21_{\{S(j,t)=1,B(j,t)>0\}},
			\end{split}
		\end{equation}
		where the second inequality  follows from changing the order of summation:
		\begin{equation}\label{d^2ju}
			\begin{split}
				\sum_{v=1}^n D_x^2(u)1_{\{x \textrm{ is susceptible at time }u\}} R(x,u)
				&\leq \sum_{x=1}^n D_x^2(u)1_{\{x \textrm{ is susceptible at time }u\}} \sum_{j=1}^{\hat X_0} 1_{\{v(j,u)=x,B(j,u)>0\}}\\
				&=	 \sum_{j=1}^{\hat X_0} \sum_{x=1}^n 1_{\{v(j,u)=x\}} 	D(j,u)^21_{\{S(j,u)=1,B(j,u)>0\}}\\
				&	=	 	 \sum_{j=1}^{\hat X_0}
				D(j,u)^21_{\{S(j,u)=1,B(j,u)>0\}}.
			\end{split}
		\end{equation}
		In the first step  of \eqref{d^2ju}  we used the definition of $R(x,u)$ so that 
		$$
		\sum_{j=1}^{\hat X_0} 1_{\{v(j,u)=x,B(j,u)>0\}}\geq R(x,u).
		$$

		We denote the event
		$$L_3(n)=
		\left\{
		\sum_{j=1}^{\X_0}D(j,t)^2 1_{\{S(j,t)=1, B(j,t)>0\}}\leq n(C_{\ref{s13}}t+\ep), \forall 0\leq t\leq \gamma_n\wedge 1
		\right\}. $$
		Then by \eqref{s13} we have $\P(L_3(n))\to 1$ as $n\to\infty$. On $L_3(n)$,
		using \eqref{barh} and setting $C_{\ref{barh2}}=C_{\ref{s13}}$, we see that for all $0\leq t\leq t_0$,
		\begin{equation}\label{barh2}
			\bar h_t(u)\leq n(C_{\ref{barh2}}t+\ep), \forall bt\leq u\leq t.
		\end{equation} 
		The definition of $N(bt,t)$ as the right hand side of \eqref{h3h4} implies that
		we can upper bound the quadratic variation of   $M_{8,t}$  by  $\sum_{i=1}^n Q_i^2$ where $Q_i$ is the number of half-edges that vertex $i$ has before it becomes infected. Using this and the Burkholder-Davis-Gundy inequality we have
		\begin{equation}\label{m7t}
			\E\left(M_{8,t}^4\right)
			\leq C \E\left[\left(\sum_{i=1}^n Q_i^2\right)^2\right]\leq C\E\left(n\sum_{i=1}^n Q_i^4\right)
			\leq C'n^2.
		\end{equation}
		The second inequality in \eqref{m7t} is due to the Cauchy-Schwartz inequality
		$$
		\left(\sum_{i=1}^n (Q_i^2)^2\right)\left(\sum_{i=1}^n 1^2\right)\geq \left(\sum_{i=1}^n Q_i^2\right)^2,
		$$
		and the third inequality follows from \eqref{m4tctl}. 
		Using \eqref{m7t} we have
		\begin{equation}\label{m7t2}
			\P(\abs{M_{8,t}}\geq \ep n)\leq \frac{\E(M_{8,t}^4)}{\ep^4 n^4}\leq \frac{C}{\ep^4 n^2}.  
		\end{equation}
		Now using \eqref{h3h4}, \eqref{nbtt}, \eqref{barh2} and \eqref{m7t2}, we get, for any $0\leq t\leq t_0$,
		\begin{equation}
			\P(L_3(n)\cap \{H_3(t)+H_4(t)\geq (1-b)t(C_{\ref{barh2}}t+\ep)n+\ep n \})\leq \frac{C}{\ep^4 n^2}.
		\end{equation}
		Define 
		\begin{equation}\label{defU3t}
			U_3(t)=(1-b)t(C_{\ref{barh2}}t+\ep)+\ep.
		\end{equation}
		Now we can repeat the proof of \eqref{ctlh1} (i.e., divide $[0,t_0]$ into $n^{3/2}$ intervals and use a union bound) to get
		\begin{equation}\label{ctlh3h4}
			\lim_{n\to\infty}	\P(H_3(t)+H_4(t)\leq 2U_3(t)n,\forall 0\leq t\leq  t_0   \wedge \gamma_n )=1. 
		\end{equation}
		Combining \eqref{ctlh1}, \eqref{ctlh2} and \eqref{ctlh3h4}, we have that with high probability
		$$
		X(I,b,t) \le 5(U_1(t)+U_2(t)+U_3(t))n.
		$$
		Using \eqref{defU0t}, \eqref{defU1t}, \eqref{defU2t}, and \eqref{defU3t}, the right-hand side is
		\begin{align}
			5 \biggl[& C_{\ref{defl1n}}((\lambda-\lambda_c)t+t^2+\ep)
			\exp\left(-\frac{C_{\ref{Xintbd}}e^{-2t}(1-b)}
			{2(\lambda-\lambda_c+t)} \right)
			\nonumber\\
			& + (C_{\ref{w_2}} (\lambda-\lambda_c)t+t^2+\ep)
			C'_{\ref{w_2}}(t+\ep)+2 C_{\ref{ber5}}\sqrt{\ep}
			\label{RHS}\\
			&+ (1-b)t(C_{\ref{barh2}}t+\ep)+\ep \biggr]n.
			\nonumber
		\end{align}
		To make the computation easier to write we note that when $t \le t_0\leq 1$,
		\beq
		\exp\left(-\frac{C_{\ref{Xintbd}}e^{-2t}(1-b)}
		{2(\lambda-\lambda_c+t)}  \right)
		\le \exp\left(-\frac{C_{ \ref{Ddef} }(1-b)}
		{\lambda-\lambda_c+t} \right)
		:= F.
		\label{Ddef}
		\eeq
		Expanding the terms in \ref{RHS} and putting the terms  with $\ep$ or $\sqrt{\ep}$ together, we bound \eqref{RHS} by
		\begin{equation}	\label{RHS2}
			\begin{split}
				5 \biggl[& C_{\ref{defl1n}}((\lambda-\lambda_c)t+t^2) F
				+(C'_{\ref{w_2}} (\lambda-\lambda_c)t+t^2)C'_{\ref{w_2}}t\\
				&+ C_{\ref{barh2}} (1-b)t \biggr]n +C_{\ref{RHS2}}\sqrt{\ep}n.
			\end{split}
		\end{equation}
		We also used the fact that $\ep\leq \sqrt{\ep}$ in \eqref{RHS2}. 
		Sorting the terms by powers of $t$ we get
		\begin{align}
			&5 \biggl[ C_{\ref{defl1n}}F (\lambda-\lambda_c)t + C_{\ref{barh2}} (1-b)t 
			\label{RHS3}\\
			&  +C_{\ref{defl1n}}F t^2
			+C_{\ref{w_2}}C'_{\ref{w_2}} [(\lambda-\lambda_c)t^2+t^3] \biggr]n
			+C_{\ref{RHS2}}\sqrt{\ep}n.
			\nonumber
		\end{align}
		Simplifying constants we have
		\begin{align}
			X(I,b,t) \le & C_{\ref{fRHS}} \biggl[ \exp\left(-\frac{C_{ \ref{Ddef} }(1-b)}
			{\lambda-\lambda_c+t} \right) (\lambda-\lambda_c) t 
			\label{fRHS} \\
			& + \exp\left(-\frac{C_{ \ref{Ddef} }(1-b)}
			{\lambda-\lambda_c+t} \right) t^2 + (\lambda-\lambda_c)t^2+ (1-b)t^2+t^3 \biggr]n
			+C_{\ref{RHS2}}\sqrt{\ep}n,
			\nonumber
		\end{align}
		which completes the proof of Lemma \ref{ctlet3}.
	\end{proof}
	
	\clearp
	
	\subsection{Completing the proof of Theorem \ref{Q1AB}}\label{sec:complete}
	
	\begin{proof}[Proof of Theorem \ref{Q1AB}] We set $\gamma_n=\inf\{t>0:\hat{X}_{I,t}=0\}$. 	We now condition on a large outbreak so that $\hat I_{\infty}>\eta n$ for some fixed 	$\eta>0$. By \eqref{lbstep13}, we see that, conditionally on $\hat I_{\infty}>\eta n$, with high probability $\gamma_n>\ep$ for some $\ep>0$. That is, 
		\begin{equation}\label{epeta}
			\lim_{n\to\infty}\P(\gamma_n>\ep|\hat{I}_{\infty}/n>\eta)=1.
		\end{equation}
		Let $t_0$ and $\lambda_0$ be given by the statement of Lemma \ref{lbstep2}. 
		We let $\lambda_1<\lambda_0,t_1<t_0$ be two constants (independent of $\ep$)  and $\ep_1,\ep_2,\ep_3,\ep_4,\ep_5,\ep_6$ be some small numbers (depending on $\ep$) to be determined. 
		Recall the definition of $U(t)$ in \eqref{Utdef}.  We set $C_{\ref{RHS2}}\sqrt{\ep}$ in \eqref{Utdef} to be $\ep_4$. In other words, 
		\begin{equation}\label{defUt2}
			\begin{split}
				U(t)&=	 	
				C_{\ref{fRHS}} \left(\exp\left(-\frac{C_{ \ref{Ddef} }(1-b)}{\lambda-\lambda_c+t} \right)(\lambda-\lambda_c)t+\right.\\
				&	 	\left.	\left(\exp\left(-\frac{C_{ \ref{Ddef} }(1-b)}{\lambda-\lambda_c+t} \right)+(\lambda-\lambda_c)+(1-b) \right)t^2+t^3\right)+\ep_4,
			\end{split}
		\end{equation}
		
		Previous results imply that the following inequalities hold whp on $0 \le t \le \gamma_n \wedge t_0$. The numbers on the left give the formula numbers for these statements.
		\begin{align*}
			&\eqref{lbstep12} \quad \hat{X}_{I,t}\geq \left(\frac{\rho m_1(\lambda-\lambda_c)}{2\lambda_c^2}t+\frac{m_1\Delta}{4}t^2-\ep_2\right)n-\int_0^t E(u) du. \\
			&\eqref{lbstep14} \quad \hat{I}_t\geq \left(\frac{m_1t}{2}-\ep_5\right)n-\int_0^t E(u)\, du.\\
			&\eqref{eubound1}\quad \int_0^t E(u)du\leq (C_{\ref{eubound1}} t^2+\ep_1)n.\\
			& \eqref{L1} \quad  L(t)\leq n(C_{\ref{s13}}t+\ep_1).\\
			&\eqref{lbtctl} \quad  L(b,t)\leq n( C_{\ref{lbtctl}}(1-b)^{1/2}t+\ep_3).	\\
			& \eqref{xibt1} \quad  X(I,b,t) \leq U(t)n.
		\end{align*}
		
		Let $\Omega_n$ be the event that all of the last six formulas together with the event $\{\gamma_n>\ep\}$ hold.
		Combining \eqref{epeta} and the fact that $\liminf_{n\to\infty}\P(\hat{I}_{\infty}/n>\eta)>0$ (since $\lambda>\lambda_c$), 
		\begin{equation}
			\lim_{n\to\infty}	\P(\Omega_n|\hat{I}_{\infty}/n>\eta)=1.
		\end{equation}
		Now we define
		\begin{equation}\label{lasttau}
			\tau=\inf\left\{\ep \leq t\leq t_0\wedge \gamma_n: \int_0^t E(u)\, du> \left(\frac{\rho m_1(\lambda-\lambda_c) }{4\lambda_c^2}t+
			\frac{m_1\Delta}{8}t^2+\ep_6\right)n
			\right\},
		\end{equation}
		where $\inf\emptyset$ here is set to be $t_0 \wedge \gamma_n$. 
		We want to select the parameters $\lambda_1$ and $t_1$ so that whenever $\lambda-\lambda_c<\lambda_1$ and the outcome is in $\Omega_n$, we have 
		\begin{align}
			t_1 \le & \tau < \gamma_n \label{timebds},\\
			\hat{I}_{t_1}\geq 	\left(\frac{m_1t_1}{2}-\ep_5\right)n & -\int_0^{t_1} E(u)du>\frac{m_1t_1}{8}n.
			\label{final1}
		\end{align}
		This implies $$\frac{\hat{I}_{\infty}}{n}>\frac{m_1t_1}{8},$$
		which proves \eqref{delta>0AB}, as desired. We now divide the proof of
		\eqref{timebds} and \eqref{final1}
		into five steps:
		\begin{itemize}
			\item In Step 1, we choose appropriate $\ep_1$, $\ep_2$ and $\ep_6$ to ensure $\gamma_n>\tau>\ep$.
			\item In Step 2, we show that, under  conditions \eqref{cond2a} and \eqref{cond2b} below, there exists a constant $t_1$ such that $\tau\geq t_1$.
			\item In Step 3, we show that \eqref{cond2a} and \eqref{cond2b} can be satisfies by choosing appropriate values of the parameters involved. The first three steps combined give \eqref{timebds}.
			\item In Step 4, we prove \eqref{final1}. 
			\item Finally, we summarize the choices of  the parameters. 
		\end{itemize}
		
		\mn
		{\bf Step 1.} We first show that $\tau>\ep$. Using \eqref{eubound1},
		\begin{equation}\label{e0t}
			\int_0^{\ep}E(u)\, du\leq (
			C_{\ref{eubound1}} \ep^2+\ep_1)n,
		\end{equation}
		We  set
		\begin{equation}\label{cond1.5}
			\ep_1=	C_{\ref{eubound1}} \ep^2,
		\end{equation}
		and 
		\begin{equation}\label{cond1}
			\ep_2=\ep_6= \frac{\rho m_1(\lambda-\lambda_c) }{16\lambda_c^2}\ep.
		\end{equation}
		We now require
		\begin{equation}\label{cond0}
			C_{\ref{eubound1}} \ep^2+\ep_1=2	C_{\ref{eubound1}} \ep^2 \leq \frac{\ep_6}{2}=\frac{\rho m_1(\lambda-\lambda_c) }{32\lambda_c^2}\ep,
		\end{equation}
		which holds true if $2C_{\ref{eubound1}} \ep^2 \leq 
		\rho m_1(\lambda-\lambda_c)\ep/32\lambda_c^2$ or equivalently,
		\begin{equation}\label{condep}
			\ep<\frac{\rho m_1(\lambda-\lambda_c)}{64C_{\ref{eubound1}}\lambda_c^2}.
		\end{equation}
		We now prove $\tau>\ep$. Indeed, using \eqref{e0t}-\eqref{cond0},  $$
		\int_0^{\ep} E(u)du \leq \frac{\rho m_1 (\lambda-\lambda_c)}{32\lambda_c^2}\ep n<
		\left(\frac{\rho m_1(\lambda-\lambda_c)}{2\lambda_c^2}t+\frac{m_1\Delta}{4}t^2-\ep_2\right)n.
		$$
		This proves $\tau>\ep$ by the definition of $\tau$. 
		
		Now we show that  $\gamma_n>\tau$. If $\gamma_n>t_0$ then this is trivial.
		So we assume that $\gamma_n\leq t_0$. 
		Equation  \eqref{lbstep12} and the definition of $\tau$  in \eqref{lasttau} imply that, for $\ep<t\leq \tau$,
		\begin{equation}
			\begin{split}\label{xit4}
				\hat{X}_{I,t}&\geq \left(\frac{\rho m_1(\lambda-\lambda_c)}{2\lambda_c^2}t+\frac{m_1\Delta}{4}t^2-\ep_2\right)n-\int_0^t E(u)du\\
				&\geq \left(\frac{\rho m_1(\lambda-\lambda_c)}{2\lambda_c^2}t+\frac{m_1\Delta}{4}t^2-\ep_2\right)n-\left(\frac{\rho m_1(\lambda-\lambda_c) }{4\lambda_c^2}t+
				\frac{m_1\Delta}{8}t^2+\ep_6\right)n\\
				&\geq \left(\frac{\rho m_1(\lambda-\lambda_c) }{4\lambda_c^2}t+
				\frac{m_1\Delta}{8}t^2-\ep_2-\ep_6\right)n.
			\end{split}
		\end{equation}
		Note that when $t>\ep$,
		$$
		\frac{\rho m_1(\lambda-\lambda_c) }{4\lambda_c^2}t+
		\frac{m_1\Delta}{8}t^2-\ep_2-\ep_6>\frac{\rho m_1(\lambda-\lambda_c) }{4\lambda_c^2}\ep+
		\frac{m_1\Delta}{8}\ep^2-\ep_2-\ep_6>0,
		$$
		due to \eqref{cond1}. 
		This proves $\gamma_n>\tau$. 
		
		\mn
		{\bf Step 2.} We turn now to the second requirement that $\tau\geq t_1$ for some appropriately chosen constant $t_1$. We will show this by contradiction.  Assume $\tau<t_1$ which is also smaller than $t_0$. Since we have already proved  $\gamma_n>\tau>\ep$ in {\bf Step 1}, it follows that 
		\begin{equation}\label{contra}
			\int_0^{\tau} E(u)du= \left(\frac{\rho m_1(\lambda-\lambda_c) }{4\lambda_c^2}\tau+\frac{m_1\Delta}{8}\tau^2+\ep_6\right)n.
		\end{equation}
		We split the integral $\int_0^{\tau} E(u) \,du$ into two parts: 
		$\int_0^{\ep}E(u)\,du$ and $\int_{\ep}^{\tau}E(u)\, du$. This first part has already been controlled in \eqref{e0t} and \eqref{cond0}. 
		For the second part  $\int_{\ep}^{\tau}E(u)du$, \eqref{e1e2} implies that 
		\begin{equation}\label{ee1e2}
			E(t)\leq E_1(t)+E_2(t)=\frac{X(I,b,t)}{\hat{X}_{I,t}}L(t)+L(b,t).
		\end{equation}
		For $\ep<t<\tau$, using \eqref{xibt} and \eqref{xit4}, we get
		\begin{equation}
			\frac{X(I,b,t)}{\hat{X}_{I,t}}\leq
			\frac{U(t)n}{\left(\frac{\rho m_1(\lambda-\lambda_c) }{4\lambda_c^2}t+
				\frac{m_1\Delta}{8}t^2-\ep_2-\ep_6\right)n}.
			\label{Xratiobd}
		\end{equation}
		Using  \eqref{cond1} and $t >\ep$, we see 
		$$
		\left(\frac{\rho m_1(\lambda-\lambda_c) }{4\lambda_c^2}t+
		\frac{m_1\Delta}{8}t^2-\ep_2-\ep_6\right)n\geq \max\left\{\frac{\rho m_1(\lambda-\lambda_c)}{8\lambda_c^2}tn,\frac{m_1\Delta}{8}t^2n, \ep_2 n  \right\}.
		$$
		Hence using  \eqref{defUt2} and \eqref{Xratiobd} we get
		\begin{equation}\label{xibt5}
			\begin{split}
				\frac{X(I,b,t)}{\hat{X}_{I,t}}	\leq
				& \frac{ C_{\ref{fRHS}}\exp\left(-\frac{C_{ \ref{Ddef} }(1-b)}{\lambda-\lambda_c+t} \right)(\lambda-\lambda_c)t}{\frac{\rho m_1(\lambda-\lambda_c)}{8\lambda_c^2}t}\\
				&+
				\frac{ C_{\ref{fRHS}}\left(\exp\left(-\frac{C_{ \ref{Ddef} }(1-b)}
					{\lambda-\lambda_c+t} \right)
					+(\lambda-\lambda_c)+(1-b) +t\right)t^2}{\frac{m_1\Delta}{8}t^2}+\frac{\ep_4}{\ep_2}\\
				\leq & \exp\left(-\frac{C_{ \ref{Ddef} }(1-b)}{\lambda-\lambda_c+t} \right)\left(\frac{8 C_{\ref{fRHS}}\lambda_c^2 }{\rho m_1}
				+\frac{8 C_{\ref{fRHS}}}{m_1\Delta}\right)\\
				&+\frac{8 C_{\ref{fRHS}}}{m_1\Delta}\left((\lambda-\lambda_c)+(1-b)+t\right)+\frac{\ep_4}{\ep_2}.
			\end{split}
		\end{equation}
		Let $V(\lambda,t)$ denote the quantity on the last two lines of \eqref{xibt5}. $V(\lambda,t)$ 
		is increasing with respect to both $\lambda$ and $t$. Therefore for $\lambda\leq \lambda_c+\lambda_1$ and $t\leq t_0$, which we have supposed is $<1$,  $V(\lambda,t)$ is bounded above by its value at $(\lambda_c+\lambda_1, t_1)$
		\begin{equation}\label{defM}
			V^*:=\exp\left(-\frac{C_{ \ref{Ddef} }(1-b)}{\lambda_1+t_0} \right)\left(\frac{8 C_{\ref{fRHS}}\lambda_c^2 }{\rho m_1}
			+\frac{8 C_{\ref{fRHS}}}{m_1\Delta}\right)
			+\frac{8 C_{\ref{fRHS}}}{m_1\Delta}\left((\lambda-\lambda_c)+(1-b)+t_1\right)+\frac{\ep_4}{\ep_2}.
		\end{equation}
		Using  \eqref{ee1e2}, \eqref{defM}, \eqref{L1}, and \eqref{lbtctl}
		we see that
		\begin{equation}\label{eeptau}
			\begin{split}
				\int_{\ep}^{\tau}E(u)\,du 
				&\leq \int_{\ep}^{\tau}\frac{X(I,b,u)}{\hat{X}_{I,u}}L(u)\, du
				+\int_{\ep}^{\tau}L(b,u)\, du\\
				&\leq  \int_{\ep}^{\tau} V^*(C_{\ref{s13}}u+\ep_1) n\, du
				+\int_{\ep}^{\tau} n(C_{\ref{lbtctl}}(1-b)^{1/2}u+\ep_3)\, du\\
				&\leq \left(V^*C_{\ref{s13}}\frac{\tau^2}{2}
				+V^*\ep_1\tau + C_{\ref{lbtctl}} (1-b)^{1/2}\frac{\tau^2}{2}
				+\ep_3\tau \right)n.
			\end{split}
		\end{equation}
		We want to choose our parameters so that
		\begin{align}\label{cond2a}
			&	\frac{V^*C_{\ref{s13}}+C_{\ref{lbtctl}}(1-b)^{1/2} }{2}\leq \frac{m_1\Delta}{16},\\
			&V^*\ep_1+\ep_3\leq \frac{\ep_6}{2}.
			\label{cond2b}
		\end{align}
		Indeed, if \eqref{cond2a} and \eqref{cond2b} hold, then by \eqref{e0t}, \eqref{cond0} and \eqref{eeptau} we have
		\begin{equation}
			\int_0^{\tau}E(u)du=\int_0^{\ep}E(u) \, du+\int_{\ep}^{\tau}E(u)\, du\leq 
			\left(	\frac{\ep_6}{2}+\frac{m_1\Delta}{16}\tau^2+\frac{\ep_6}{2}\right)n,
		\end{equation}
		which is smaller than
		$$
		\left(\frac{\rho m_1(\lambda-\lambda_c) }{4\lambda_c^2}\tau+
		\frac{m_1\Delta}{8}\tau^2+\ep_6\right)n.
		$$
		This contradicts with \eqref{contra} and proves $\tau\geq t_1$ on $\Omega_n$. 
		
		\mn
		{\bf Step 3.} \eqref{cond2a} will hold if
		$$
		V^*\leq \frac{m_1\Delta }{32C_{\ref{s13}}} \quad\hbox{and} \quad
		(1-b)^{1/2}\leq \frac{m_1\Delta}{32C_{\ref{lbtctl}}}.
		$$
		We have $\ep_1\leq \ep_6/2$ by \eqref{cond0}, so
		\eqref{cond2b} will hold if
		$$
		V^* \le 1/4 \quad\hbox{and} \quad \ep_3=\frac{\ep_6}{4}.
		$$
		To make sure \eqref{cond2a} and \eqref{cond2b} are satisfied, it suffices to have
		\begin{equation}\label{cond3}
			V^*\leq K := \min\left\{\frac{m_1\Delta }{32C_{\ref{s13}}},\frac{1}{4}\right\},
			\quad		(1-b)^{1/2}\leq \frac{m_1\Delta}{32C_{\ref{lbtctl}}}, \quad
			\ep_3=\frac{\ep_6}{4C_{\ref{lbtctl}}}.
		\end{equation}
		Using the definition of $V(\lambda,t)$ in \eqref{defM}, \eqref{cond3} can be satisfied if we first choose $b$ sufficiently close to 1 such that
		\begin{equation}
			\frac{8 C_{\ref{fRHS}}}{m_1\Delta}(1-b)\leq \frac{K}{8},
			\label{cond4a}
		\end{equation}
		then choose $\lambda_1$, $t_1$ and $\ep_4$  such that
		\beq\label{cond4b}
		\exp\left(-\frac{C_{ \ref{Ddef} }(1-b)}{\lambda_1+t_1} \right) \left(\frac{b C_{\ref{fRHS}}\lambda_c^2 }{\rho m_1} 
		+\frac{8 C_{\ref{fRHS}}}{m_1\Delta} \right) 
		+\frac{8 C_{\ref{fRHS}}}{m_1\Delta}\left(\lambda-\lambda_c\right)+\frac{8 C_{\ref{fRHS}}}{m_1\Delta}t_1
		<	 \frac{K}{8},
		\eeq 
		and finally take
		\begin{equation}\label{cond4c}
			\ep_4= \frac{\ep_2}{8}K.
		\end{equation}
		
		\mn
		{\bf Step 4.} It remains to take care of \eqref{final1}. Using \eqref{lbstep14} and \eqref{eubound1}, we have
		\begin{equation}
			\hat{I}_{t_1}\geq 	\left(\frac{m_1t_1}{2}-\ep_5\right)n-\int_0^{t_1} E(u)du
			\geq \left(\frac{m_1t_1}{2}-\ep_5-C_{\ref{eubound1}}t_1^2-\ep_1\right)n.
		\end{equation}
		Recall that we set $\ep_1=C_{\ref{eubound1}}\ep^2$ in \eqref{cond1.5}.
		Thus we have 
		\begin{equation}
			\hat{I}_{t_1}\geq  \left(\frac{m_1t_1}{2}-\ep_5-C_{\ref{eubound1}}t_1^2-C_{\ref{eubound1}}\ep^2\right)n,
		\end{equation}
		which is bigger than $nm_1t_1/8$ if
		\begin{equation}\label{cond6}
			t_1\leq \frac{m_1}{16C_{\ref{eubound1}}}, \quad
			\ep_5= \frac{m_1 t_1}{16}, \quad
			\ep<\left(\frac{m_1t_1}{16C_{\ref{eubound1}}}\right)^{1/2}.
		\end{equation}
		
		\mn
		{\bf Summary} we can first choose $b$ close to 1,  then tale $\lambda_1$ and $t_1$ sufficiently small such that \eqref{cond4b} holds true and
		$$
		t_1\leq \min\left\{\frac{m_1}{16C_{\ref{eubound1}}},t_0\right\}.
		$$ 
		Then we let $\ep$ be smaller than 
		$$
		\min\left\{\frac{\rho m_1 (\lambda-\lambda_c)}{64C_{\ref{eubound1}}\lambda_c^2}, \left(\frac{m_1t_1}{16 C_{\ref{fRHS}}}\right)^{1/2}\right\}.
		$$
		Finally we determine $\ep_1,\ldots, \ep_6$ using \eqref{cond1}, \eqref{cond1.5}, \eqref{cond3}, \eqref{cond4c} and \eqref{cond6}. 
		\begin{align*}
			\ep_2=\ep_6 & = \frac{\rho m_1(\lambda-\lambda_c)}{16\lambda_c^2} \ep,
			\quad \ep_1 = C_{\ref{eubound1}}\ep^2 ,
			\quad \ep_3 = \frac{\ep_6}{4},\\
			\ep_4& = \frac{\ep_2}{8}
			\min\left\{\frac{m_1\Delta }{32C_{\ref{s13}}},\frac{1}{4}\right\}, 
			\quad \ep_5= \frac{m_1 t_0}{16}.
		\end{align*}
	\end{proof}
	
	\newpage 
	


\begin{acks}
	We are grateful to the referees for constructive comments.  Dong Yao is supported by  NSF of Jiangsu Province (No. BK20220677). 
\end{acks}

	
\end{document}